\numberwithin{equation}{section}
\newcommand{\op}{\operatorname}
\newcommand{\C}{\mathbb{C}}
\newcommand{\Q}{\mathbb{Q}}
 \newcommand{\E}{{\mathcal E}}
\newcommand{\Hzero}{{\mathcal H}_{(0)}}
\providecommand{\abs}[1]{\left\lvert#1\right\rvert}
\newcommand{\abracket}[1]{\left\langle#1\right\rangle}
\newcommand{\bbracket}[1]{\left[#1\right]}
\newcommand{\fbracket}[1]{\left\{#1\right\}}
\newcommand{\bracket}[1]{\left(#1\right)}
\newcommand{\mc}{\mathcal}
\newcommand{\cinfty}{C^{\infty}}
\newcommand{\pa}{\partial}
\renewcommand{\dbar}{\bar\partial}
\newcommand{\OO}{{\mathcal O}}
\newcommand{\into}{\hookrightarrow}
\newcommand{\iso}{\cong}
\newcommand{\A}{\mathcal A}
\renewcommand{\Im}{\op{Im}}
\DeclareMathOperator{\Spec}{Spec}
\DeclareMathOperator{\Tr}{Tr}
\DeclareMathOperator{\PV}{PV}
\DeclareMathOperator{\Jac}{Jac}
\DeclareMathOperator{\Res}{Res}
\theoremstyle{plain}
\newtheorem{thm}{Theorem}[section]
\newtheorem{thm-defn}{Theorem/Definition}[section]
\newtheorem{lem}[thm]{Lemma}
\newtheorem{lem-defn}[thm]{Lemma/Definition}
\newtheorem{prop}[thm]{Proposition}
\newtheorem{cor}[thm]{Corollary}
\newtheorem{prop-defn}[thm]{Proposition-Definition}
\newtheorem{defn}[thm]{Definition}%[section]
\newtheorem{eg}[thm]{Example}
\newtheorem{thm-alg}[thm]{Theorem/Algorithm}
\newtheorem{rmk}[thm]{Remark}
\begin{document}

 \title{Primitive forms via polyvector fields}
  \author{Changzheng Li}
  \address{Kavli Institute for the Physics and Mathematics of the Universe (WPI),
Todai Institutes for Advanced Study, The University of Tokyo,
5-1-5 Kashiwa-no-Ha,Kashiwa City, Chiba 277-8583, Japan}
\email{changzheng.li@ipmu.jp}

\author{Si Li}
  \address{Department of Mathematics and Statistics, Boston University, 111 Cummington Mall, Boston, U.S.A.
  }
 \email{sili@math.bu.edu}

\author{Kyoji Saito}
  \address{Kavli Institute for the Physics and Mathematics of the Universe (WPI),
Todai Institutes for Advanced Study, The University of Tokyo,
5-1-5 Kashiwa-no-Ha,Kashiwa City, Chiba 277-8583, Japan}
\email{kyoji.saito@ipmu.jp}
  \date{}

  % ∑‚√Ê
  \maketitle

%%%%%%%%%%%%%%%%%%%%%%%%%%%%%%
%% «∞—‘≤ø∑÷
%%%%%%%%%%%%%%%%%%%%%%%%%%%%%%

\begin{abstract}
We develop a complex differential geometric approach to the theory of higher residues and primitive forms from the viewpoint of Kodaira-Spencer gauge theory, unifying the semi-infinite period maps for Calabi-Yau models and Landau-Ginzburg models. We give an explicit perturbative construction of primitive forms with respect to opposite filtrations and primitive elements. This leads to a concrete algorithm to compute the Taylor expansions of primitive forms as well as the  description of their moduli space for all weighted homogenous cases. As an example, we present unknown perturbative expressions for the primitive form of $E_{12}$-singularity and illustrate its application to Landau-Ginzburg mirror symmetry with FJRW-theory.
\end{abstract}

  % ƒø¬º
 \tableofcontents
  % ±Ì∏Òƒø¬º
%  \listoftables
  % ≤ÂÕºƒø¬º
%  \listoffigures

\section{Introduction}

\subsection{Motivations} Around early 1980's, the third author introduced \emph{primitive forms}  \cite{Saito-lecutures, Saito-residue, Saito-unfolding,Saito-primitive} in his study of periods   as to generalize the elliptic period integral theory to higher dimensions.  One important  consequence of a primitive form is the \emph{flat structure} associated to the universal unfolding of a singularity. Later Dubrovin \cite{Dubrovin} introduced the notion of \emph{Frobenius manifold structure} to axiomatize two-dimensional topological field theories  on the sphere. It was soon realized that the aforementioned flat structure describes precisely the Frobenius manifold structure of topological Landau-Ginzburg models. The birth of mirror symmetry explodes the context of singularity or Landau-Ginzburg theory both in the mathematics and physics literature. As a consequence, primitive forms not only produce intrinsic structures associated to singularities, but also predict geometric quantities as periods on Calabi-Yau manifolds, Gromov-Witten type invariants, etc.

The concept of higher residues  \cite{Saito-lecutures, Saito-residue} plays a key role in the theory of primitive forms. Motivated by mirror symmetry, Barannikov and Kontsevich \cite{Barannikov-Kontsevich} constructed a large class of Frobenius manifold structures on the formal extended moduli space of complex structures on Calabi-Yau manifolds. In the Calabi-Yau case,
the structures of higher residues were formulated by Barannikov as the notion of \emph{variation of semi-infinite   Hodge structures} \cite{Barannikov-thesis, Barannikov-period}, within which the \emph{semi-infinite period map} plays the role of a primitive form. Related concepts on semi-infinite Hodge structures are further interpreted by Givental as the symplectic geometry of Langrangian cone in a loop space formalism \cite{Givental-symplectic}. In \cite{Barannikov-projectivespace}, Barannikov used oscillating integrals to obtain the variation of semi-infinite   Hodge structures for the singularity describing the mirror of projective spaces, and proved the corresponding mirror symmetry. This construction is generalized by Sabbah \cite{Sabbah} and Douai-Sabbah \cite{Douai-Sabbah-I,Douai-Sabbah-II} to wider examples of Laurent polynomials. We refer to  \cite{Hertling-book} for an introduction to this subject.

The Barannikov-Kontsevich construction is closely modeled on the Kodaira-Spencer gauge theory introduced by the physicists Bershadsky, Cecotti, Ooguri and Vafa \cite{BCOV} (which we will call BCOV theory). The systematic mathematical treatment of BCOV theory has been developed recently by Costello and the second author \cite{Si-BCOV} based on the effective quantization of Givental's symplectic formalism. The precise relation between BCOV theory and the variation of semi-infinite  Hodge structures is explained in  \cite{Si-Frobenius}. See \cite{Si-review} also for a review.

The BCOV theory is a gauge theory of polyvector fields on Calabi-Yau manifolds, describing the closed string field theory of the B-twisted topological string.  It has a natural extension \cite{Si-LG} to Landau-Ginzburg model for the pair $(X, f)$, where $X$ is a Stein manifold and $f$ is a holomorphic function on $X$ with finite critical set.  The gauge field is given by $\PV_c(X)[[t]]$, where $\PV_c(X)$ is the space of smooth polyvector fields with compact support and $t$ is a formal variable. The action functional is constructed with the help of the \emph{trace map} (See section \ref{polyvector-descendant})
$$
\Tr: \PV_c(X)[[t]] \to \C[[t]]
$$
by integrating out the polyvector fields with respect to a choice of holomorphic volume form. Surprisingly, this simple integration map lifts the third author's higher residue map at the cochain level (Section \ref{polyvector-descendant}). In particular, the ordinary residue is lifted at the cochain level as the leading order of the trace map (Proposition \ref{compatible-residue}). It is then an extremely intriguing question whether there is a deep connection between the theory of primitive forms and BCOV theory. A related work from string theoretical viewpoint was provided by Losev \cite{Losev}.

The initial purpose of the current paper is to reveal this connection and develop a complex differential geometric approach to the theory of higher residues and primitive forms from the viewpoint of BCOV theory. It turns out that such viewpoint leads to an algebraic perturbative formula of primitive forms, unifying the well-known results on $ADE$ and simple elliptic singularities. In particular, this allows us to compute the potential function of the associated Frobenius manifold structure up to any finite order for arbitrary weighted homogeneous singularities. This is applied in \cite{LLS} to prove the  mirror symmetry for exceptional unimodular  singularities.

Another Hodge theoretical aspect is called the $tt^*$-geometry discovered by Cecotti and Vafa \cite{tt} , whose integrability structure was studied by Dubrovin \cite{Dubrovin-tt}. The algebraic formulation of $tt^*$-geometry was due to Hertling \cite{Hertling-tt}, and recently Fan \cite{Fan} presented an approach through harmonic analysis in the spirit of $N = 2$ supersymmetry  \cite{Cecotti-SUSY}. One essential aspect of $tt^*$-geometry is about the real structure, which is naturally built into the smooth forms and their Hodge theoretical aspects.  This is another motivation for us to develop the theory of higher residues inside the smooth category.

\subsection{Higher residue and primitive forms}
To explain primitive forms, we start with a pair $(X, f)$, where $X\subset \C^n$ is a  Stein domain,  and $f:X\to \C$ is a holomorphic function with finite critical set.
With respect to the choice  of a nonwhere vanishing holomorphic volume form $\Omega_X$ on $X$, we can identify the space $\PV(X)$ of smooth polyvector fields with the space $\mc A(X)$ of smooth complex differential forms on $X$, by using contraction operator $\vdash$ with $\Omega_X$. This gives an isomorphism of cochain complexes (Definition \ref{isom-PV-Diff})
                  $$(\PV(X)((t)), Q_f)\to (\mc A(X)((t)), d+{df\over t}\wedge),$$
where $Q_f$  is the coboundary operator on $\PV(X)((t))$ that is defined by using $d+{df\over t}\wedge$ together with the isomorphism $\PV(X)((t))\cong\mc A(X)((t))$ of vector spaces. Let $\PV_c(X)\subset \PV(X)$ be the subspace of smooth polyvector fields with compact support. It is easy to observe that
the natural embedding of complexes
$$
   \iota: (\PV_c(X)[[t]], Q_f) \into (\PV(X)[[t]], Q_f)
$$
is in fact a quasi-isomorphism. Therefore, we obtain a canonical  isomorphism
   $$\mc H_{(0)}^{f, \Omega}:=H^*(\PV(X)[[t]], Q_f)\overset{\,\iota^{-1}}{\to} H^*(\PV_c(X)[[t]], Q_f).$$
Combining this with the trace map defined by
 $$\Tr: \PV_c(X)\to \C;\,\, \alpha\mapsto\Tr(\alpha):=\int_X(\alpha\vdash \Omega_X)\wedge \Omega_X,$$
we obtain a map  $\widehat{\mbox{Res}}_f$ as the composition of $\Tr$ and the inverse of $\iota$ in the following diagram
$$
\xymatrix{
   \mc H^{f, \Omega}_{(0)}:= H^*( \PV(X)[[t]], Q_{f})\ar[r]^{{}\qquad\iota^{-1}}\ar[dr]_{\widehat{\mbox{Res}}^f} & H^*( \PV_c(X)[[t]], Q_{f})\ar[d]^{\Tr}\\
    & \C[[t]]
    }
$$
One of our main observations is that $\widehat{\mbox{Res}}^f$ realizes the original higher residue map in our smooth set-up of polyvector fields. More precisely,
every holomorphic function $g$ on $X$ represents a cohomology class $[g]$ in $\mc H^{f, \Omega}_{(0)}$. If we write
$$
\widehat{\Res}^f([g])=\sum_{k\geq 0}\widehat{\Res}_{(k)}([g])t^k,
$$
then $\widehat{\Res}_{(0)}(g)$ recognizes the ordinary residue (Proposition \ref{compatible-residue}), and $\{\widehat{\Res}_{(k)}\}_{k}$ constitutes the tower of higher residues.  The higher residue pairing
  $$\mc K^f_{\Omega}(\mbox{-}, \mbox{-}): \Hzero^{f, \Omega}\times \Hzero^{f, \Omega}\to \C[[t]]$$ is constructed in a similar fashion such that $\widehat{\Res}(\mbox{-})=\mc K^f_{\Omega}\bracket{\mbox{-},[1]}$.

Primitive forms are defined with respect to the universal unfolding $F$ of $f$ parametrizes by $S$. We assume that $o\in S$ is the reference point such that $F|_{o}=f$, and fix a family of holomorphic volume form $\Omega$. The cohomology $\mc H^{f, \Omega}_{(0)}$ extends to an    $\mc O_S[[t]]$-module  $\mc H^{F, \Omega}_{(0)}$ on $S$, and the higher residue pairing extends to
$$
\mc K^F_{\Omega}(\mbox{-}, \mbox{-}): \Hzero^{F, \Omega}\times \Hzero^{F, \Omega}\to \OO_S[[t]]
$$
with a compatible Gauss-Manin connection
$$
   \nabla^{\Omega}: \mc H^{F, \Omega}_{(0)}\to \Omega_S^1\otimes t^{-1}\mc H^{F, \Omega}_{(0)}
$$

The basic properties of the triple $(\Hzero^{F, \Omega}, \mc K^F_\Omega(\mbox{-}, \mbox{-}), \nabla^\Omega)$ are summarized to define a variation of semi-infinite Hodge structures (see section \ref{sec-VSHS}).
A primitive form is a section $\zeta\in \Gamma(S, \Hzero^{F, \Omega})$ that satisfies the following four properties: \textit{Primitivity}, \textit{Orthogonality}, \textit{Holonomicity} and \textit{Homogeneity} (see Definition \ref{define-primitive-form}). Such definition is equivalent to   the original description of primitive forms (see Theorem \ref{rmk-identifyprimitiveforms}).

Since we are working with polyvector fields, we use the  superscript notation to distinguish from the original notation of $\mc H_f$, $\mc K_f$, etc.  We have made a choice of the family of holomorphic volume form $\Omega$. However, different choices are essentially equivalent (see section \ref{subsec-intrinsicity}).  The combined volume form $\zeta \Omega$  represents a primitive form in the original approach \cite{Saito-primitive}. Therefore it is equivalent to work inside differential forms as in \cite{Saito-primitive}.  We have made extra efforts to present in the current form in order to compare with BCOV theory on compact Calabi-Yau manifolds \cite{Barannikov-period, Si-BCOV, Si-Frobenius}. This motivates the perturbative formula for primitive forms (Theorem \ref{thm-algorithm}) as a unification of Calabi-Yau and Laudan-Ginzburg models, and it sheds lights on the Landau-Ginzburg/Calabi-Yau correspondence in the B-model.

\subsection{Construction of primitive forms and applications}
In \cite{Saito-primitive}, the third author introduced the notion of \emph{good sections}, which we will call \emph{good opposite filtrations}, to produce analytic primitive forms by solving a version of Riemann-Hilbert-Birkhoff problem. It leads to
M. Saito's general solution on the existence of primitive forms \cite{Mo.Saito-existence} . This rather delicate construction provides a complete description of the space of primitive forms locally, but on the other hand makes the computation of primitive forms very difficult in general. Explicit expressions of primitive forms were only known, when the holomorphic function $f$ is given by either (1) two classes of weighted homogeneous polynomials (namely $ADE$-singularities and simple elliptic singularities)  \cite{Saito-primitive} or (2) a few Laurent polynomials  \cite{Takahashi,Barannikov-projectivespace,Douai-Sabbah-I, Douai-Sabbah-II}.  It has also been known very recently, when   $f$ is an affine cusp polynomial \cite{Ishibashi-Shiraishi-Takahashi-primtive, Shiraishi-Takahashi-primtive}.

In the present paper, we develop new techniques to describe primitive forms locally with respect to good opposite filtrations. The basic idea is to apply Barannikov's construction of semi-infinite period map for compact Calabi-Yaus to the Landau-Ginzburg case once we have completely similar settings via polyvector fields. In the formal neighborhood of $o\in S$, this leads to explicit expressions for the primitive forms which we now briefly describe. Let
$$
  \mc H^{f, \Omega}:=H^*( \PV(X)((t)), Q_{f})
$$
and $\mc H^{F, \Omega}$ be the corresponding extension to the universal unfolding. The vector space $\mc H^{f, \Omega}$ is equipped with a symplectic structure, with $\mc H^{f, \Omega}_{(0)}$ being an isotropic linear subspace. A good opposite filtration $\mc L$ is given by a splitting
$$
    \mc H^{f, \Omega}=\mc H^{f, \Omega}_{(0)}\oplus \mc L
$$
such that $\mc L$ is (1) isotropic, (2) preserved by multiplying by $t^{-1}$ and (3)  a version of $\C^*$-invariance (Definition \ref{def-good-oppofil}).  We will be working in the formal neighborhood of $o\in S$, and there exists a similar notion of formal primitive forms (Definition \ref{formal-primitive-form}). Let $\check{\mc H}^{F, \Omega}$ ($\check{\mc H}^{F, \Omega}_{(0)}$) denote the pull-back of $\mc H^{F, \Omega}$ ($\mc H^{F, \Omega}_{(0)}$) to the formal neighborhood. The advantage of formal setting is that the Gauss-Manin connection $\nabla^\Omega$ on $\check{\mc H}^{F, \Omega}$ is easily trivialized via the transformation $e^{(f-F)/t}$ (Lemma \ref{lem-flatextension}). Let $\mc L_{\mc R}\subset \check{\mc H}^{F, \Omega}$ be the extension of $\mc L$ with respect to $\nabla^\Omega$ (see Definition \ref{lem-def-LR} for precise descriptions). Then there is an induced splitting
$$
  \check{\mc H}^{F, \Omega}=\check{\mc H}^{F, \Omega}_{(0)}\oplus \mc L_{\mc R}
$$
Our main result is the following

\begin{thm}[Theorem \ref{thm-primitive-local}]\label{thm-introduction-main} %\hfill
   There is a bijection between  formal primitive forms and  pairs $(\mc L, \zeta_0)$, where $\mc L\subset \mc H^{f, \Omega}$ is a good opposite filtration and $\zeta_0\in \Hzero^{f, \Omega}$ is a primitive element with respect to $\mc L$. Precisely, the primitive form corresponding to $(\mc L, \zeta_0)$ is given by the projection of
 the formal section  $e^{f-F\over t}\zeta_0$ of $\check {\mc H}^{F, \Omega}=\check {\mc H}_{(0)}^{F, \Omega}\oplus \mc L_{\mc R}$ to $\check {\mc H}_{(0)}^{F, \Omega}$.
  \end{thm}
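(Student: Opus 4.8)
The plan is to pass to the flat frame supplied by Lemma~\ref{lem-flatextension} and to read off both directions of the asserted bijection from the single splitting $\check{\mc H}^{F,\Omega}=\check{\mc H}^{F,\Omega}_{(0)}\oplus\mc L_{\mc R}$. First I would check that the section $e^{(f-F)/t}\zeta_0$ genuinely lies in $\check{\mc H}^{F,\Omega}$: since $F-f\in\mf{m}_{o}\,\OO_S$ vanishes at the reference point, the negative powers of $t$ in $e^{(f-F)/t}=\sum_{k}(f-F)^k/(k!\,t^k)$ are each accompanied by a positive power of the maximal ideal, so the series is well defined in the formal setting and its projection onto $\check{\mc H}^{F,\Omega}_{(0)}$ along $\mc L_{\mc R}$ makes sense. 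This produces the forward assignment $(\mc L,\zeta_0)\mapsto\zeta$. For the inverse I would send a formal primitive form $\zeta$ to the pair with $\zeta_0:=\zeta|_{o}$ and with $\mc L$ reconstructed from the flat structure, as described below.

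The heart of the forward direction is the elementary but decisive observation that $e^{(f-F)/t}\zeta_0$ is $\nabla^\Omega$-flat, whereas $\mc L_{\mc R}$ is $\nabla^\Omega$-stable, being spanned by flat sections (Definition~\ref{lem-def-LR}). Writing $\zeta=e^{(f-F)/t}\zeta_0-\ell$ with $\ell\in\mc L_{\mc R}$, flatness of the first term gives $\nabla^\Omega_v\zeta=-\nabla^\Omega_v\ell\in\mc L_{\mc R}$, so that $t\nabla^\Omega_v\zeta\in\check{\mc H}^{F,\Omega}_{(0)}\cap t\,\mc L_{\mc R}$ for every $v\in T_S$; iterating, the sections $t\nabla^\Omega_{v_1}\cdots t\nabla^\Omega_{v_k}\zeta$ remain in the lattice because $\nabla^\Omega$ has pole order one, while acquiring one extra factor of $t$ in the $\mc L_{\mc R}$-direction per derivative. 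This yields a two-sided control on $t$-orders: membership in $\check{\mc H}^{F,\Omega}_{(0)}$ bounds the $t$-order of $\mc K^F_\Omega$ of such derivatives from below, while membership in $t^{\bullet}\mc L_{\mc R}$, combined with the fact that the isotropy and $t^{-1}$-stability of $\mc L$ confine $\mc K^F_\Omega(\mc L,\mc L)$ to $t^{-2}\C[t^{-1}]$ (by $t$-sesquilinearity of the higher residue pairing), bounds it from above. Matching the two bounds I would deduce \textbf{Orthogonality} — with isotropy responsible for the first-order pairing, $\mc K^F_\Omega(t\nabla^\Omega_u\zeta,t\nabla^\Omega_v\zeta)$ collapses to a single power of $t$, namely the flat metric — and, running the same squeeze with one further derivative, \textbf{Holonomicity}, where $t^{-1}$-stability of $\mc L$ is precisely what keeps the extra derivative inside $t^{\bullet}\mc L_{\mc R}$. \textbf{Primitivity} reduces at $t$-leading order to the statement that $\zeta_0$ cyclically generates $\Hzero^{f,\Omega}/t\,\Hzero^{f,\Omega}$ under the Kodaira--Spencer action, which is exactly the primitive-element condition; and \textbf{Homogeneity} is the $\C^*$-equivariance of the whole construction, inherited from the $\C^*$-invariance of $\mc L$ in Definition~\ref{def-good-oppofil} and the weight of $\zeta_0$.

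For the inverse and the bijection I would argue as follows. Evaluating the forward construction at $o$ gives $\zeta|_{o}=\zeta_0$, since $e^{(f-F)/t}|_{o}=1$, $\mc L_{\mc R}|_{o}=\mc L$, and $\zeta_0\in\Hzero^{f,\Omega}$ has vanishing $\mc L$-component; thus $\zeta_0$ is recovered. To recover $\mc L$ I would use that the unique flat section agreeing with a given formal primitive form $\zeta$ at $o$ is $e^{(f-F)/t}\zeta_0$, so that $\zeta-e^{(f-F)/t}\zeta_0$, together with its $t^{-1}$- and $\OO_S$-multiples, generates the flat submodule $\mc L_{\mc R}$; restricting to $o$ then returns $\mc L$. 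The content of this step is that the four properties of Definition~\ref{define-primitive-form} translate back, through the same $t$-order squeeze run in reverse, into the three axioms on $\mc L$ together with primitivity of $\zeta_0$: Orthogonality forces isotropy, Holonomicity forces $t^{-1}$-stability, and Homogeneity forces $\C^*$-invariance. The two assignments are then mutually inverse by construction.

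The step I expect to be the main obstacle is precisely this equivalence between the analytic conditions on $\zeta$ and the linear-algebraic axioms on $\mc L$ — establishing that isotropy and $t^{-1}$-stability are not merely sufficient for Orthogonality and Holonomicity but also necessary, and that the $\mc L$ reconstructed from a primitive form is genuinely isotropic, $t^{-1}$-stable and $\C^*$-invariant. This is the formal avatar of the Riemann--Hilbert--Birkhoff problem solved analytically in \cite{Saito-primitive, Mo.Saito-existence}; in the present formal setting it becomes an algebraic matching of $t$-orders, but keeping careful track of the pole order of $\nabla^\Omega$ under repeated differentiation, and verifying that no anomalous powers of $t$ survive in the higher residue pairings, is where the real care is needed.
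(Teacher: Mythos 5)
Your forward direction reproduces the paper's own proof (Lemma \ref{lem-formpritive}): well-definedness of $e^{(f-F)/t}\zeta_0$ by $\mathfrak{m}$-adic convergence, flatness of that section combined with the $\nabla^\Omega$-stability of $\mc L_{\mc R}$ (Lemma \ref{lem-GMpreserve-L}) to place $t\nabla^\Omega_V\zeta_+$ inside $\check{\mc H}_{(0)}^{F,\Omega}\cap t\,\mc L_{\mc R}$, and the order-squeeze $R[[t]]\cap R[t^{-1}]=R$ to get (P2)${}^\vee$ and (P3)${}^\vee$, with primitivity of $\zeta_0$ propagating from the central fiber by Nakayama. The genuine gap is in your inverse map.

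You propose to recover $\mc L$ by forming $\zeta-e^{(f-F)/t}\zeta_0$, taking its $t^{-1}$- and $\OO_S$-multiples, and restricting to the reference point $0\in S$. But $\zeta-e^{(f-F)/t}\zeta_0=-\zeta_-$ vanishes at the central fiber: $(e^{(f-F)/t}\zeta_0)|_0=\zeta_0$ lies in $\Hzero^{f,\Omega}$, so its $\mc L$-component is zero, i.e.\ $\zeta_-|_0=0$. Since restriction to $0$ commutes with multiplication by $t^{-1}$ and by $\OO_S$, every element of the module you generate restricts to $0$; your recipe returns $\{0\}$, not $\mc L$. The only way to get something nonzero at the central fiber is to differentiate, and that is exactly what the paper does in Lemma \ref{lem-detgoodpair}: it takes $\mathcal{B}(R_N)$ to be the image of $t\nabla^\Omega\zeta_+(R_N)$ --- nonzero at $0$ precisely because of (P1)${}^\vee$ --- and sets $\mc L_{\mc R}(R_N):=t^{-1}\mathcal{B}(R_N)[t^{-1}]$, $\mc L:=\mc L_{\mc R}(\C)$. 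If your phrase ``flat submodule'' is read as also closing up under $\nabla^\Omega$, you land on this same construction, but then the substantive step still remains, namely the paper's Claims A and B: the module so defined is $\nabla^\Omega$-stable and coincides with the flat extension $e^{(f-F)/t}(\mc L\otimes_\C R_N)$ of its central fiber, with only \emph{polynomial} dependence on $t^{-1}$. The paper proves this by recursively constructing flat sections with prescribed initial condition, using the holonomicity relations $\nabla^\Omega_i\nabla^\Omega_j\zeta_+=\beta^k_{ij}\nabla^\Omega_k\zeta_+$ with $\beta^k_{ij}\in\hat{\OO}_{S,0}\oplus t^{-1}\hat{\OO}_{S,0}$, and checking order by order that all coefficients stay in $R_N[t^{-1}]$. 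You explicitly flag this as ``where the real care is needed'' but do not supply it; without it, neither your inverse map nor the verification that the two maps are mutually inverse (Lemma \ref{lem-uniqueness}) is established.
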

\noindent Furthermore if $f$ is a weighted homogenous polynomial with single critical point, then every formal primitive form is an
 analytic primitive forms in the germ of the universal unfolding (see Theorem \ref{thm-analytic}).

 The above theorem  unifies the semi-infinite period maps for   Calabi-Yau models and Landau-Ginzburg models.
It has particularly nice applications when $f$ is a weighted homogenous polynomial (of weight degree 1). In this case,  we  always take the standard  fixed family of volume form $\Omega=dz_1\wedge\cdots \wedge dz_n$, where $(z_1, \cdots, z_n)$ denotes the  coordinates of $X=\mathbb{C}^n$.
    %%We note that $\zeta_0\in \Hzero^{f, \Omega}$ is  a primitive element if and only if $\zeta_0$ is represented by a nonzero constant.
Take a set  $\{\phi_i\}_{i=1}^\mu$ of weighted homogeneous polynomials that represent a basis of the Jacobian ring, with the order of  their weight degrees ascending: $\deg\phi_1\leq \deg\phi_2\leq \cdots \leq \deg\phi_\mu$. Then we can completely determine the space of good opposite filtrations and primitive elements with elementary methods. Denote $r(i, j):=\deg\phi_i-\deg\phi_j$ and set
\begin{align*}
D:=\sharp\{(i, j) ~|~r(i, j)\in \mathbb{Z}_{>0}, i+j<\mu+1 \}
+\sharp\{(i, j)~|~r(i, j)\in \mathbb{Z}_{>0}^{\scriptsize\mbox{odd}}, i+j=\mu+1 \}.
\end{align*}
Applying the first part of  Theorem \ref{thm-introduction-main} to the weighted homogeneous polynomial $f$, we obtain

\begin{thm}[Theorem  \ref{thm-space-of-primitiveforms}] The moduli space of primitives forms,
  $$\mc M:=\{\mbox{primitive forms of }\Hzero^{F, \Omega}\}/\sim\,\,,$$
 is parametrized  by $\C^D$. Here we say two primitive forms $\zeta_1, \zeta_2$ to be equivalent, denoted as $\zeta_1\sim \zeta_2$, if
  $\zeta_1=c\zeta_2$ in the germ of the universal unfolding, for some nonzero constant $c\in\C^*$.
\end{thm}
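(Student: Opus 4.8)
The plan is to deduce the result from the bijection of Theorem~\ref{thm-introduction-main}, which identifies the set of formal primitive forms with the set of pairs $(\mc L,\zeta_0)$ consisting of a good opposite filtration $\mc L\subset\mc H^{f,\Omega}$ and a primitive element $\zeta_0\in\Hzero^{f,\Omega}$, together with the fact that in the weighted homogeneous case formal primitive forms are automatically analytic (Theorem~\ref{thm-analytic}). Thus it suffices to (i) parametrize the good opposite filtrations for a weighted homogeneous $f$, (ii) show that, once $\mc L$ is fixed, the primitive elements form a single $\C^*$-orbit, and (iii) check that this residual scaling is exactly the equivalence $\sim$. Granting these, $\mc M$ is in bijection with the space of good opposite filtrations, and the whole problem reduces to a graded linear-algebra computation.

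For the setup I would use that, because $f$ is weighted homogeneous, the weight grading on the Jacobian ring (with homogeneous basis $\phi_1,\dots,\phi_\mu$ of ascending degrees $d_i:=\deg\phi_i$) combines with the scaling of $t$ to give a $\C^*$-action on $\mc H^{f,\Omega}\cong\Jac(f)((t))$, under which each $\phi_i t^k$ is homogeneous; the standard volume form $\Omega=dz_1\wedge\cdots\wedge dz_n$ makes $\Hzero^{f,\Omega}=\Jac(f)[[t]]$ the non-negative part and singles out the canonical good opposite filtration $\mc L_0=\Jac(f)\cdot t^{-1}\C[t^{-1}]$. The higher residue pairing $\mc K^f_\Omega$ has leading term the residue pairing $\eta_{ij}=\langle\phi_i,\phi_j\rangle$, which is nondegenerate and concentrated on the anti-diagonal $i+j=\mu+1$ (where $d_i+d_j$ equals the constant top degree). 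Any good opposite filtration is then the graph over $\mc L_0$ of a $t^{-1}$-equivariant, degree-preserving operator $R$ valued in $\Hzero^{f,\Omega}$; writing its components as $\phi_i t^{-1}\mapsto \phi_j t^{k}$ with $k\ge 0$, the $\C^*$-invariance forces $d_i-1=d_j+k$, i.e. $k=r(i,j)-1$. Hence a component can be nonzero only when $r(i,j)\in\Z_{>0}$, and these are the a priori free parameters $c_{ij}$.

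Next I would impose isotropy of $\mc L$ with respect to the symplectic form obtained from $\mc K^f_\Omega$. Using the $(t\mapsto -t)$-symmetry of the higher residue pairing and the anti-diagonal form of $\eta$, the isotropy relation pairs the component $c_{ij}$ with $c_{\mu+1-j,\,\mu+1-i}$, noting $r(\mu+1-j,\mu+1-i)=r(i,j)$ and $(\mu+1-j)+(\mu+1-i)=2(\mu+1)-(i+j)$. For pairs off the anti-diagonal this identifies the two parameters, so one free parameter survives per reflection-orbit, represented by $i+j<\mu+1$; this yields the first term of $D$. On the anti-diagonal $i+j=\mu+1$ the relation becomes a self-constraint whose sign, dictated by the parity of the $t$-power $r(i,j)$ in the $(t\mapsto -t)$-symmetry, kills the component unless $r(i,j)$ is odd; this yields the second term of $D$. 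Therefore the space of good opposite filtrations is an affine space isomorphic to $\C^D$.

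Finally, fixing $\mc L$, a primitive element must be homogeneous of minimal degree and project to a generator of $\Jac(f)$; normalizing its leading term to $1\in\Jac(f)$, the homogeneity together with the splitting determined by $\mc L$ pin down all higher-order corrections uniquely, so the primitive elements form exactly one $\C^*$-orbit, which is precisely the equivalence $\sim$. Combining the three steps gives $\mc M\cong\C^D$. I expect the main obstacle to be the isotropy analysis on the anti-diagonal: one must track precisely how the $(t\mapsto -t)$-symmetry of $\mc K^f_\Omega$ enters the symplectic pairing, so that the surviving self-paired components are exactly those with $r(i,j)$ odd, and one must confirm that homogeneity together with integrality are the only constraints, so that the parametrization by $(c_{ij})$ is both complete and free.
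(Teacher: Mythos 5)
Your overall strategy coincides with the paper's proof: reduce via Theorem \ref{thm-primitive-local} and Theorem \ref{thm-analytic} to classifying good pairs $(\mc L,\zeta_0)$; observe that homogeneity forces a primitive element to be weighted homogeneous of degree zero, hence a nonzero multiple of $1$ (this is Proposition \ref{prop-ofthm-primitiveform}, and in fact the degree argument is even simpler than you suggest---there are no higher-order corrections to pin down, since $1$ is the only degree-zero homogeneous element of $\Hzero^{f,\Omega}$ up to scale); hence $\mc M$ is in bijection with the set of good opposite filtrations. Your description of a good opposite filtration as the graph over the canonical $\mc L_0$ of a degree-preserving, $t^{-1}$-equivariant operator is equivalent to the paper's parametrization by the basis $\Phi_i=\phi_i+\sum_{j<i}c_{ij}\phi_jt^{r(i,j)}$ of $B=\Hzero^{f,\Omega}\cap t\mc L$, with $c_{ij}$ supported on $r(i,j)\in\Z_{>0}$ (Proposition \ref{prop-Y-bij}).

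The gap is in the isotropy analysis, which you correctly flag as the main obstacle, but whose mechanism you describe inaccurately. The condition is not a linear pairing-off of coordinates: writing $a_{kl}:=\mc K^f_\Omega(\phi_k,\phi_l)$ and calling $k-l$ the \emph{step} of $c_{kl}$, the relevant relation for $r(i,j)\in\Z_{>0}$ is that the monomial
\begin{equation*}
c_{ij}\,a_{j,\mu+1-j}\,t^{r(i,j)}+c_{\mu+1-j,\mu+1-i}\,a_{i,\mu+1-i}\,(-t)^{r(i,j)}+\big(\mbox{terms involving only coordinates of step}<i-j\big)
\end{equation*}
be constant, i.e.\ vanish. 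So off the anti-diagonal the relation does not \emph{identify} $c_{ij}$ with $c_{\mu+1-j,\mu+1-i}$; it determines the latter as a nonzero multiple of the former \emph{plus a polynomial in lower-step coordinates}. And on the anti-diagonal with $r(i,j)$ even, $c_{ij}$ is \emph{not} killed: it equals such a polynomial, which vanishes only when the lower-step coordinates do (e.g.\ when $D=0$); only the odd anti-diagonal case is vacuous, by the $(t\mapsto -t)$-symmetry, as you say. What rescues the dimension count and, more importantly, the structure of the moduli space is the triangular form of this system with respect to the step filtration: in each relation the two step-$(i-j)$ coordinates enter linearly with invertible coefficients (nondegeneracy of the anti-diagonal residue pairing), and every other term has strictly smaller step, so the relations are solved recursively in the $D$ free coordinates, yielding a biregular---polynomial, not linear---isomorphism $Y\cong\C^D$. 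This recursion is exactly the content of Proposition \ref{propspaceY}; once it is supplied, your argument is complete and agrees with the paper's.
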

\noindent
This statement was implicitly contained in a  related statement  for Brieskorn lattices \cite{Hertling-classifyingspace} (see also   \cite[Lemma 10.21]{Hertling-book}).
A dimension formula for arbitrary isolated singularities has  been given in \cite{Mo.Saito-uniqueness} recently. As a direct consequence of the theorem, there exists a unique primitive form (up to a scalar) for $ADE$-singularities  \cite{Saito-primitive} and exceptional unimodular singularities (Corollary \ref{thm-primitive-exceptionalsing}).

Applying the second  part of  Theorem \ref{thm-introduction-main} to  $f$, we obtain a  concrete algorithm  to compute the Taylor series expansions of primitive forms up to an arbitrary finite order. The precise formula is explained in \textbf{Theorem \ref{thm-algorithm}}. Actually,  such  perturbative formula
unifies the following known results  \cite{Saito-primitive}.   For $ADE$-singularities, the unique  primitive form is
    given by  $\zeta=1$;   for simple elliptic singularities, there is a   one-parameter family of primitive forms  (up to   a nonzero constant scalar), given by the period integrals on the corresponding elliptic curves.
   Beyond this, our   perturbative formula also give precise expressions of primitive forms for other cases up to a finite order. As an example, we consider $f=x^3+y^7$, which is an exceptional unimodular singularity of type $E_{12}$. The  primitive forms for it were unknown, due to
   the difficulty of the generally expected phenomenon of mixing between deformation parameters of positive and negative degrees. As an application of Theorem \ref{thm-algorithm}, the unique primitive $\zeta$ has the leading expression
   $$\zeta=1+{4\over 3\cdot 7^2}u_{11}u_{12}^2-{64\over 3\cdot 7^4}u_{11}^2u_{12}^4 -{76\over 3^2\cdot 7^4}u_{10}u_{12}^5 +({1\over 7^2}u_{12}^3-{ 101\over 5\cdot 7^4} u_{11}u_{12}^5)x  - {53 u_{12}^6\over 3^2 \cdot 7^4} x^2 \mod\mathfrak{m}^7,$$
    where $\mathfrak{m}$ denotes the maximal ideal of the reference point $0\in S$, and $u_{10}, u_{11}, u_{12}$ are part of the coordinates of $S$ appearing in the coefficients of  the deformations by $\phi_{10}=xy^3$, $\phi_{11}=xy^4$ and $\phi_{12}=xy^5$, respectively.
    It is clear from this result that the mixing phenomenon mentioned above appears in a nontrivial way, and non-trivial elements in the Jacobian ring are also generated along deformation, unlike the $ADE$ and simple elliptic singularities. This strongly suggests that the primitive form not only favors for special flat coordinates on the deformation space, but also for the fibration itself.
 All these cases are discussed in detail in the last   section of examples.

 %% It is well known that a primitive form of $\Hzero^{f, \Omega}$ (for general $f$, not necessary to be weighted homogeneous polynomials) induces a   Frobenius manifold structure  on the deformation space $S$ in a canonical way.
 Our perturbative formula for primitive forms are extremely useful for applications in mirror symmetry. The theory of primitive forms presents the mathematical context of Landau-Ginzburg B-model in physics. The mirror mathematical theory of Landau-Ginzburg A-model is established by Fan, Jarvis and Ruan \cite{FJRW} following Witten's proposal \cite{Witten}, which is nowadays popularized as the FJRW-theory. The following theorem is proved in \cite{LLS} as a direct application of our method:
 
 \begin{thm}[\cite{LLS}]
 There is an equivalence between the theory of primitive forms for Arnold's exceptional unimodular singularities and the FJRW-theory of their mirror singularities with maximal orbifold group. 
 \end{thm}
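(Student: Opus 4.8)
The plan is to exhibit an isomorphism of Frobenius manifolds (equivalently, of variations of semi-infinite Hodge structures in the sense of section \ref{sec-VSHS}) between the two sides, and to reduce it to a finite matching problem via a reconstruction/rigidity argument. On the B-side one takes a weighted homogeneous representative $f$ of one of Arnold's fourteen exceptional unimodular singularities; on the A-side one takes the Berglund--H\"{u}bsch transpose $f^{T}$ together with its maximal admissible diagonal symmetry group $G_{\max}$, and forms the FJRW Frobenius manifold of the pair $(f^{T}, G_{\max})$ \cite{FJRW}. Both constructions output a Frobenius manifold of the same conformal dimension $\hat{c}=n-2\sum_i q_i$, carrying an Euler vector field coming from weighted homogeneity, a flat pairing, and a genus-zero potential; the goal is an isomorphism intertwining all of this structure.

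First I would settle the B-side using the technology of the present paper. By Corollary \ref{thm-primitive-exceptionalsing} the primitive form $\zeta$ is unique up to scale, so the B-model Frobenius manifold on the germ of the universal unfolding of $f$ is canonical, and the perturbative algorithm of Theorem \ref{thm-algorithm} computes its flat coordinates, its pairing $\mc{K}^{F}_{\Omega}$, and its genus-zero potential up to any prescribed finite order. This is exactly the output displayed for $E_{12}$ in the introduction, and the same algorithm applies verbatim to the remaining thirteen singularities.

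Next I would organize the A-side. The FJRW state space of $(f^{T}, G_{\max})$ is built from a sum over $g\in G_{\max}$ of the $G_{\max}$-invariants of the Milnor rings of the fixed loci $\op{Fix}(g)$, graded by FJRW degree; the first task is the Frobenius-algebra matching, namely a graded ring isomorphism between this state space at the origin and the Jacobian ring of $f$, compatible with the pairings. This is the Berglund--H\"{u}bsch--Krawitz mirror isomorphism, which is verified case by case from the combinatorics of $f$, $f^{T}$ and $G_{\max}$. The nontrivial analytic input is a reconstruction theorem for genus-zero FJRW invariants, expressing all primary correlators in terms of finitely many basic three- and four-point correlators through the WDVV equations together with the selection rules (the dimension axiom and concavity); evaluating these finitely many correlators pins down the A-model potential to the same order as on the B-side.

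The final step, and the main obstacle, is to promote the matching of Frobenius algebras at the origin to an isomorphism of the full Frobenius manifolds. Here I would invoke rigidity: since the conformal dimension, the Euler grading, and the Frobenius algebra at the reference point all agree, such a Frobenius manifold is determined by finitely many structure constants, so it suffices to match the potentials up to the explicitly bounded reconstruction order. The genuine difficulty is that the identification of flat coordinates is \emph{not} linear --- as already visible for $E_{12}$, the primitive form mixes deformation parameters of positive and negative weight degree, so the mirror map must be constructed as a nonlinear change of coordinates, solved order by order and checked to be simultaneously compatible with the B-model expansion of Theorem \ref{thm-algorithm} and with the A-model reconstruction. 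Controlling this mixing uniformly across all fourteen cases, rather than merely matching the naive linear parts, is where the bulk of the work lies.
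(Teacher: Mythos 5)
Your proposal follows essentially the same route as the paper and its companion work \cite{LLS} (where the actual proof lives, this paper only citing it): settle the B-side by the uniqueness of the primitive form (Corollary \ref{thm-primitive-exceptionalsing}) and compute its Frobenius potential with the perturbative formula of Theorem \ref{thm-algorithm}, match the finitely many basic FJRW correlators on the A-side, and propagate the identification through WDVV reconstruction together with the nonlinear mirror map. This is precisely the strategy the paper sketches in the $E_{12}$ example, where the degree-$4$ part of $\mc F_0$ is matched against the FJRW four-point correlators before invoking the reconstruction argument of \cite{LLS}.
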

 
 Our current construction is mainly focused on the local theory, and it would be extremely interesting to understand the global behavior of primitive forms. Related works on global mirror symmetry along this line have appeared in \cite{Coates-Iritani-Tseng,Coates-Iritani,Milanov-Ruan,IMRS,Krawitz-Shen}.

The present paper is organized as follows: Section 2 is devoted to the basic set-up of polyvector fields. We formulate the complex differential geometric description of higher residues via the trace map on compactly supported polyvector fields. In section 3, we explain the variation of semi-infinite   Hodge structures associated with our model and define the corresponding primitive forms. In section 4, we describe the general constructions of primitive forms in the formal neighborhood of the universal unfolding and explain our perturbative formula. In section 5, we describe the moduli space of primitive forms for weighted homogeneous polynomials, and
 provide a concrete  algorithm to compute the Taylor series expansions of the primitive form. Finally in section 6, we apply our method to explicit examples.

\bigskip

\noindent \textbf{Acknowledgement}: The authors would like to thank   Alexey Bondal, Kwok-Wai Chan,  Kentaro Hori, Hiroshi Iritani, Todor Eliseev Milanov, Hiroshi Ohta, Mauricio Andres Romo Jorquera, Yefeng Shen, and especially, Claus Hertling,  Morihiko Saito and Atsushi Takahashi    for valuable comments and inspiring  discussions.
   The second author would like to thank in addition Huai-Liang Chang,
  Kevin Costello,  Yongbin Ruan, and Cumrun Vafa, for various communications on Landau-Ginzburg theory. Part of the work was done while the second author was visiting Kavli IPMU at University of Tokyo   and MSC at Tsinghua University, and he thanks for their hospitality.
 The first author is partially supported by JSPS Grant-in-Aid for Young Scientists (B) No. 25870175. The second author is partially supported by NSF DMS-1309118. The third author is partially supported by  JSPS Grant-in-Aid for Scientific Research (A) No. 25247004.
\section{Polyvector fields}\label{polyvector}

Throughout the present paper, we will
 assume $X$ to be  a Stein domain  in $\C^n$, and assume   $f: X\to \C$ to be a holomorphic function with
 finite  critical points Crit$(f)$. We will fix a nowhere vanishing holomorphic volume form $\Omega_X$ on $X$ once and for all.
As we will see in section \ref{subsec-intrinsicity} (Proposition \ref{primitive-change}), the choice of $\Omega_X$ is not essential.

As a notation convention, we will always denote by $[\mbox{-}, \mbox{-}]$ the graded-commutator. That is, for operators $A, B$ of degree $|A|, |B|$, the bracket  is given by
 \begin{align}\label{eqn-gradedcomm}
    [A, B]:=AB-(-1)^{|A|\cdot |B|}BA.
 \end{align}

\subsection{Compactly supported polyvector fields}
 Let $T_X$ denote the holomorphic tangent bundle of $X$, and  $\A^{0,j}_c (X, \wedge^i T_X) $ denote the space of compactly supported smooth $(0,j)$-forms valued in $\wedge^i T_X$. We consider
 the space of compactly supported smooth polyvector fields on $X$:
\begin{align*}
\PV_c(X)=\bigoplus_{0\leq i,j \leq n}\PV^{i,j}_c(X) \ \ \ \ \ \
\mbox{with}\quad \PV^{i,j}_c(X):= \mathcal A^{0,j}_c (X, \wedge^i T_X).
\end{align*}
 Clearly, $\PV_c(X)$ is a bi-graded  $C^\infty(X)$-module. As we will see below, the space  $\PV_c(X)$ is a double  complex: the total degree of $\alpha\in \PV_{c}^{i, j}(X)$ is defined\footnote{The convention for the degree of $\alpha$ is  $j+i$ in \cite{Si-BCOV}.}  to be $|\alpha|:=j-i$.
 There is a natural wedge product structure (see formula \eqref{eqnwedgeprod}) on polyvector fields, which makes $\PV_c(X)$ a graded-commutative algebra.

Using   the contraction   $\vdash$ with  $\Omega_X$,  we obtain an     isomorphism of $C^\infty(X)$-modules
between compactly supported polyvector fields and compactly supported differential forms
$$
   \Gamma_{\Omega_X}:\PV^{i,j}_c(X)\stackrel{\vdash \Omega_X}{\iso} \A_c^{n-i,j}(X).
$$
As a consequence, every operator $P$ on $\mc A_c(X):=\A_c^{*,*}(X)$ induces an operator $P_{\Omega_X}$ on $\PV_c(X)$:
$$
P_{\Omega_X}(\alpha):= \Gamma_{\Omega_X}^{-1}\big(P(\Gamma_{\Omega_X}(\alpha))\big), \quad \forall \alpha\in \PV_c(X).
$$
We will simply denote
 $P_{\Omega_X}$ as $P_\Omega$. Furthermore, if the induced operator $P_{\Omega_X}$ is independent of choices of $\Omega_X$, we simply denote it as $P$ by abuse of notation. For instance,
%where $\vdash$ denotes the contraction map.
  the differential operators $\dbar, \pa$ on differential forms, respectively, induce   operators    on polyvector fields
\begin{align*}
    \dbar_{\Omega_X}&: \PV_c^{*,*}(X)\to \PV_c^{*,*{\scriptscriptstyle+1}}(X)\quad\mbox{and}\quad  %\quad \dbar (\alpha):=\Gamma_{\Omega_X}^{-1}\big(\dbar (\Gamma_{\Omega_X}(\alpha))\big),\\
      \pa_{\Omega_X}: \PV_c^{*,*}(X)\to \PV_c^{*{\scriptscriptstyle -1},*}(X). %;  \quad \pa_{\Omega}(\alpha):=\Gamma_{\Omega_X}^{-1}\big(\pa (\Gamma_{\Omega_X}(\alpha))\big).
\end{align*}
The operator $\dbar_{\Omega_X}$  is simply denoted as  $\dbar$, while the operator  $\pa_{\Omega_X}$ that depends on $\Omega_X$  is denoted by $\pa_\Omega$. In other words,
$$
(\dbar \alpha)\vdash\Omega_X=\dbar(\alpha\vdash \Omega_X), \ \mbox{and}\  (\pa_\Omega \alpha)\vdash\Omega_X=\pa(\alpha\vdash \Omega_X)
$$
for any $\alpha\in \PV_c(X)$. Both $\dbar$ and $\pa_\Omega$ are of cohomology degree $1$.

The operator $\pa_\Omega$ further defines a bracket     $$\{\mbox{-}, \mbox{-}\}: \PV_c(X)\times \PV_c(X)\rightarrow \PV_c(X),$$
\begin{align}\label{bracked}
  \fbracket{\alpha, \beta}:=\pa_\Omega\bracket{\alpha\wedge\beta}-\bracket{\pa_\Omega\alpha}\wedge\beta-(-1)^{|\alpha|}\alpha\wedge\pa_\Omega\beta
\end{align}
 for   $\alpha\in\PV^{i, j}_c(X)$ and $\beta\in \PV_c^{k, l}(X)$.
 Here $\wedge$ is the wedge product (see formula \eqref{eqnwedgeprod}). Unless otherwise stated, we will simply denote
 $\alpha\beta:=\alpha\wedge\beta$. In the special case when the polyvector field $\alpha=g$ is a function, one can easily check that
   \begin{equation}\label{eqnWbracket}
      \{g, \beta\}\vdash \Omega_X=\pa g\wedge (\beta\vdash\Omega_X).
   \end{equation} %, which equips $\PV_c(X)$ an algebraic structure.

 The bracket $\{\mbox{-}, \mbox{-}\}$  corresponds to the Schouten-Nijenhuis bracket (up to a sign), which is independent of the choice of $\Omega_X$.

We   also consider the larger space
$$
\PV(X)=\bigoplus\limits_{0\leq i, j\leq n} \PV^{i, j}(X)
$$
of smooth polyvector fields (not necessarily compactly supported), which is naturally identified with the smooth differential forms  by extending $\Gamma_{\Omega_X}$ above.
The operators $\dbar$, $\pa_\Omega$ and  $\{\mbox{-}, \mbox{-}\}$  are all defined on $\PV(X)$.

\begin{rmk} The triple  $\big\{\{\mbox{-}, \mbox{-}\}, \wedge, \pa_\Omega\big\}$  make  $\PV(X)$ a   \emph{Batalin-Vilkovisky algebra}, which is used in \cite{Bogomolov, Tian, Todorov, Barannikov-Kontsevich} to prove the Formality Theorem for polyvector fields in Calabi-Yau geometry. %, in which way it is also called the Todorov-Tian's lemma. Relevant  details   can be found in

 \end{rmk}

We describe the above constructions in   coordinates $\mathbf{z}=(z_1, \cdots, z_n)$ of $\mathbb{C}^n$ and fix our conventions. We will write
$$
\Omega_X= {dz_1\wedge dz_2\wedge\cdots \wedge dz_n \over \lambda(\mathbf{z})}
$$
where $\lambda: X\to \C$ is a nowhere vanishing holomorphic function on $X$.  For an ordered subset $I=\{a_1, a_2, \cdots, a_i\}$ of $\{1, 2, \cdots, n\}$ with $|I|=i$, we use the %following
 following notations:
 $$
 d\bar z_I:=d\bar z_{a_1} \wedge\cdots \wedge d\bar z_{a_i}, \quad  {\partial_I}:={\partial\over \partial z_{a_1}}\wedge{\partial\over \partial z_{a_2}}\wedge\cdots \wedge {\partial\over \partial z_{a_i}}.
 $$
In particular, we will use $\pa_k$ for the polyvector field ${\pa\over \pa z_k}\in \PV^{1, 0}(X)$ in order to distinguish it from the partial derivative
 $\pa_{z_k}:= {\pa\over \pa z_k}$ on $C^\infty(X)$. We will occasionally use ${\pa\over \pa z_k}$ when its meaning is self-evident.

 For $\alpha\in \PV^{i, j}(X)$ and
 $\beta\in \PV^{k, l}(X)$,
  $$
  \alpha=\sum\limits_{|I|=i, |J|=j}\alpha^I_{J} d\bar z_{J}\otimes {\partial_I}
,\quad  \beta=\sum_{|K|=k, |L|=l}\beta^K_{ L}d\bar z_{L}\otimes {\partial_K},
  $$
where $\alpha^I_J$, $\beta^K_L\in C^\infty(X)$,
 the wedge product is given by
   \begin{equation}\label{eqnwedgeprod} \alpha\wedge \beta :=\sum_{I, J, K, L}(-1)^{il}\alpha^I_{J}\beta^K_{L}(d\bar z_{J}\wedge d\bar z_{L})\otimes  ({\partial_I} \wedge {\pa_K}),\end{equation}
which satisfies the graded-commutativity
$$
\alpha\wedge\beta=(-1)^{|\alpha|\cdot|\beta|}\beta \wedge \alpha.
$$

We will use  ${\pa\over \pa\pa_r}$ for the derivation with respect to the polyvector \mbox{field $\pa_r$:}
$$
{\pa \over {\pa{\pa_r}}}\bracket{\alpha^I_Jd\bar z_{J}\otimes {\partial_I}}:=\begin{cases}(-1)^{|J|+m-1}\alpha^I_Jd\bar z_{J}\otimes{\pa_{a_1}}\cdots \pa_{a_{m-1}} {\partial_{a_{m+1}}}\cdots {\partial_{a_i}},& \mbox{if} \ a_{m}=r\ \mbox{for some}\ m,\\
0, & \mbox{otherwise}.
\end{cases}
$$
These operators are of degree $1$, satisfying the graded-commutativity relations
$$
   {\pa\over \pa\pa_k}  {\pa\over \pa\pa_l}=-  {\pa\over \pa\pa_l}  {\pa\over \pa\pa_k}, \quad \forall 1\leq k,l\leq n.
$$

Under the above conventions, we have
$$
 \dbar \alpha=\sum_{I, J}(\dbar \alpha^I_{J})\wedge d\bar z_{J}\otimes {\partial_I},\quad
  \pa_\Omega \alpha=\sum_{I, J}\sum_{r=1}^n{\lambda} \pa_{z_r}\big({\alpha^I_{J}\over \lambda}\big){\pa \over \pa{\pa_r}}(d\bar z_{J}\otimes {\partial_I}),
  $$
and
 $$\{\alpha, \beta\}=\sum_{r=1}^n \Big({\pa\over \pa\pa_r} \alpha\Big)\wedge\Big(\sum_{K, L}(\pa_{z_r}\beta^{K}_L)d\bar z_{L}\otimes {\pa_K}\Big)
              +(-1)^{|\alpha|}\sum_{r=1}^n \Big(\sum_{I, J}(\pa_{z_r}{\alpha^I_J}) d\bar z_{J}\otimes {\pa_I}\Big)\wedge {\pa\over \pa\pa_r} \beta.
$$

 \subsection{Cohomology of polyvector fields}\label{subsec-coh-of-PV} In this subsection, we will study the cohomology of (compactly supported) polyvector fields with respect to the following  coboundary operator
  $$\dbar_f:=\dbar+\fbracket{f,\mbox{-}},$$
acting on $\PV(X)$ and preserving $\PV_c(X)$.
It is easy to see that $\dbar_f^2=0$ and $\dbar_f g=0, \forall g\in \Gamma(X, \OO_X)$. Let $\Jac(f):=\Gamma(X, \mc O_X)/(\pa_{z_1}f, \cdots, \pa_{z_n}f)$, which is called the \textit{Jacobian ring} or  \textit{Chiral ring}.
\begin{lem}\label{cohomology-polyvectorfield} $H^k(\PV(X), \dbar_f)$ vanishes unless $k=0$. Furthermore, there is a natural isomorphism
 {\upshape $\Jac(f)\overset{\cong}{\rightarrow} H^0(\PV(X), \dbar_f)$} sending  $[g]$ in $\Jac(f)$, $g\in \Gamma(X, \OO_X)$, to the cohomology class $[g]$.
\end{lem}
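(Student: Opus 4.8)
The plan is to treat $(\PV(X), \dbar_f)$ as the total complex of a double complex and compute its cohomology via the associated spectral sequence. First I would record how $\dbar_f = \dbar + \fbracket{f, \mbox{-}}$ splits with respect to the bigrading $\PV^{i,j}(X) = \mathcal A^{0,j}(X, \wedge^i T_X)$: the operator $\dbar$ raises $j$ and preserves $i$, whereas by \eqref{eqnWbracket} the operator $\fbracket{f,\mbox{-}}$ corresponds under $\Gamma_{\Omega_X}$ to $\pa f \wedge(\mbox{-})$, i.e. it is contraction against $\pa f$, lowering $i$ by one and preserving $j$. Since $f$ is holomorphic, $\dbar$ kills each $\pa_{z_r}f$ and commutes with each ${\pa\over\pa\pa_r}$, so $\dbar$ and $\fbracket{f,\mbox{-}}$ anticommute; this exhibits the bigraded pieces as a genuine double complex (and re-proves $\dbar_f^2 = 0$).

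Next I would filter by the polyvector degree $i$. As $0 \le i \le n$ the filtration is bounded, so the spectral sequence converges to $H^\bullet(\PV(X), \dbar_f)$, and the associated graded differential is $\dbar$. The $E_1$-page is therefore the columnwise $\dbar$-cohomology: by the Dolbeault isomorphism it equals $H^\bullet(X, \wedge^i T_X)$, and since $X$ is Stein and $\wedge^i T_X$ is a holomorphic vector bundle, Cartan's Theorem B makes these vanish in positive degree. Thus $E_1$ collapses onto the row $j = 0$, where it is $\Gamma(X, \wedge^i T_X)$, the holomorphic $i$-vector fields.

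The $d_1$-differential on $E_1$ is the induced action of $\fbracket{f,\mbox{-}}$, which under the coordinate frame $\pa_1, \dots, \pa_n$ of $T_X$ is precisely the Koszul differential $\iota_{\pa f}$ for the sequence $(\pa_{z_1}f, \dots, \pa_{z_n}f)$ over $\Gamma(X, \OO_X)$. This is the crux of the argument: because $\op{Crit}(f)$ is finite, the ideal $(\pa_{z_1}f, \dots, \pa_{z_n}f)$ is $0$-dimensional, so at each point of $X$ the partials form a regular sequence in the local ring; consequently the sheaf-level Koszul complex $\wedge^\bullet T_X$ is a locally free resolution of $\OO_{\op{Crit}(f)}$. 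Applying Theorem B once more to pass to global sections over the Stein manifold $X$ shows that $(\Gamma(X, \wedge^\bullet T_X), \iota_{\pa f})$ is acyclic except at $i = 0$, where its cohomology is $\Gamma(X, \OO_X)/(\pa_{z_1}f, \dots, \pa_{z_n}f) = \Jac(f)$. Hence $E_2$ is concentrated at the single bidegree $(0,0)$; the spectral sequence degenerates there, giving $H^k(\PV(X), \dbar_f) = 0$ for $k \ne 0$ and $H^0 \iso \Jac(f)$.

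Finally I would pin down the map. A holomorphic function $g \in \PV^{0,0}(X)$ satisfies $\dbar g = 0$ and $\fbracket{f, g} = 0$ (contraction annihilates functions), so it is $\dbar_f$-closed and sits in the bottom of the filtration; its image under the edge homomorphism into $E_\infty^{0,0} = \Jac(f)$ is exactly the Jacobian class $[g]$, which yields the asserted isomorphism $\Jac(f) \overset{\cong}{\to} H^0$ sending $[g]$ to $[g]$. I expect the one genuinely delicate step to be the acyclicity of the global Koszul complex: the pointwise regular-sequence property is immediate from $\dim \op{Crit}(f) = 0$, but the descent from exactness of the sheaf resolution to exactness on global sections is precisely where the Stein hypothesis, through Theorem B, is indispensable.
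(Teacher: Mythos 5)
Your proof is correct, and its skeleton is exactly the paper's: the same double complex with vertical $\dbar$ and horizontal $\fbracket{f,\mbox{-}}$, the same filtration by polyvector degree, the identification of $E_1$ with holomorphic sections $\Gamma(X,\wedge^\bullet T_X)$ via the Stein hypothesis, the identification of $d_1$ with the holomorphic Koszul differential, and degeneration at $E_2$. The single point of divergence is how the acyclicity of the global Koszul complex is established: the paper simply cites K.~Saito's de Rham lemma (\cite{Saito-deRham}) for the statement that the cohomology of $\bigl(\Gamma(X,\wedge^\bullet T_X), \{f,\mbox{-}\}\bigr)$ is concentrated in degree $0$, whereas you prove it from scratch --- observing that finiteness of $\op{Crit}(f)$ makes $(\pa_{z_1}f,\dots,\pa_{z_n}f)$ a system of parameters, hence a regular sequence, in each (Cohen--Macaulay) local ring $\OO_{X,x}$, so the sheaf-level Koszul complex resolves $\OO_{\op{Crit}(f)}$, and then descending to global sections by splitting into short exact sequences of coherent sheaves and applying Cartan's Theorem B. This substitute is valid and has the merit of being self-contained; what the citation buys the paper is brevity and a statement (the de Rham lemma) that also covers relative and more degenerate situations used later in the unfolding setting. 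Your identification of the edge map $[g]\mapsto[g]$ matches the paper's conclusion as well.
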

\begin{proof}   $\PV(X)$ is a double complex with horizontal operator $\{f, \mbox{-}\}$ and vertical operator $\dbar$.  We consider the spectral sequence associated to the descending filtration
$$
\mc F^k \PV(X)=\bigoplus_{i\leq n-k}\PV^{i, *}(X)
$$
for $k\in \mathbb{Z}$. Since $X$ is Stein, the $E_1$-term is given by holomorphic sections  $H^*(\PV(X), \dbar)=\bigoplus_k H^0(X, \wedge^k T_X)$, with the differential $\{f, \mbox{-}\}$. Since Crit$(f)$ is   finite,  by the de Rham lemma in \cite{Saito-deRham},  the cohomology of the holomorphic Koszul complex
$$
  0\overset{\{f, \mbox{-}\}}{\longrightarrow} H^0(X, \wedge^n T_X)\overset{\{f, \mbox{-}\}}{\longrightarrow} H^0(X, \wedge^{n-1}T_X)
        %\stackrel{\vdash dW}{\to}
       \overset{\{f, \mbox{-}\}}{\longrightarrow} \cdots \overset{\{f, \mbox{-}\}}{\longrightarrow} H^0(X, \OO_X)\overset{\{f, \mbox{-}\}}{\longrightarrow} 0,
$$
which is $E_2$,
is concentrated at the top term ${H^0(X, \OO_X)\over \Im\big(\{f, \mbox{-}\}:  H^0(X, T_X)\to H^0(X, \OO_X)\big)}=\Jac(f)$. Clearly, the  spectral sequence degenerates at $E_2$-stage,   and the statement follows.
\end{proof}

In order to represent  cohomology classes by compactly supported polyvector fields, we introduce some operators.
Let
$$
\|df\|^2:=\sum\limits_{k=1}^n \abs{\pa_{z_k} f}^2,
$$
and we consider the following operator
\begin{align*}
    V_f= \sum_{i=1}^n {\overline{\pa_{z_i} f}\over \|df\|^2} \pa_{i}\wedge:\,\,\, \PV^{*, *}({X\setminus \mbox{Crit}(f)})\longrightarrow \PV^{*{\scriptscriptstyle +1}, *}({X\setminus  \mbox{Crit}(f)})\,.
\end{align*}
Let $\rho$ be a smooth cut-off function on $X$, such that $\rho|_{U_1}\equiv 1$ and $\rho|_{X\setminus U_2}\equiv 0$
 for some  relatively compact open neighborhoods $U_1\subset \overline{U_1}\subset U_2$ of Crit$(f)$ in $X$. %with smooth boundaries  $\pa U_1$, $\pa U_2$.

The operators $\dbar$ and $V_f$ are of degree $1$ and $-1$ respectively (where  defined), and
 $[\dbar, V_f]=\dbar V_f+ V_f\dbar$ is of degree $0$. We   define
 $$
 T_\rho: \PV(X)\to \PV_c(X), \quad \mbox{and}\quad R_\rho: \PV(X)\to \PV(X)
 $$
 by
     $$T_\rho(\alpha):=\rho \alpha+(\dbar\rho)V_f {1\over 1+[\dbar, V_f]}(\alpha),\quad R_\rho(\alpha):= (1-\rho) V_f {1\over 1+[\dbar, V_f]}(\alpha), \quad \alpha\in \PV(X).
$$
Here as an operator
$$
{1\over 1+[\dbar, V_f]}:=\sum\limits_{k=0}^\infty(-1)^k[\dbar, V_f]^k,
$$
which is well-defined since $[\dbar, V_f]^k(\alpha)=0$ for any $k>n$.
%%Clearly,  $T_\rho$, $R_\rho$ are both well-defined on the whole space $\PV(X)$.

\begin{lem}\label{lemmaforquasiiso}
 %%{\upshape $\dbar_W  R_\rho + R_\rho \dbar_W=\mbox{id}-T_\rho$  }
  {\upshape $[\dbar_f,  R_\rho]=1-T_\rho$  } as operators on $\PV(X)$.
\end{lem}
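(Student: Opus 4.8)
The plan is to reduce the whole identity to one algebraic fact about $V_f$, namely that it is a contracting homotopy for the Koszul-type operator $\fbracket{f,\mbox{-}}$, and then to propagate this through the resolvent $\frac{1}{1+[\dbar,V_f]}$. Throughout I write $\dbar_f=\dbar+\fbracket{f,\mbox{-}}$, and abbreviate $G:=[\dbar,V_f]$, $P:=\frac{1}{1+G}=\sum_{k\geq0}(-1)^kG^k$ and $A:=V_fP$, so that $R_\rho=(1-\rho)A$ and $T_\rho=\rho+(\dbar\rho)\wedge A$. Since $R_\rho$ has degree $-1$, the graded commutator $[\dbar_f,R_\rho]$ is the anticommutator $\dbar_fR_\rho+R_\rho\dbar_f$.

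First I would record the structural relations. From the coordinate formula $\fbracket{f,\mbox{-}}=\sum_r(\pa_{z_r}f)\dpa{\pa_r}$ and the Clifford relation $\dpa{\pa_r}(\pa_i\wedge\mbox{-})+\pa_i\wedge\dpa{\pa_r}=\delta_{ri}$ between the contraction operators $\dpa{\pa_r}$ and the wedgings $\pa_i\wedge$, a short computation gives the key homotopy identity
$$[\fbracket{f,\mbox{-}},V_f]=\sum_{r}(\pa_{z_r}f)\frac{\overline{\pa_{z_r} f}}{\|df\|^2}=1\qquad\text{on }X\setminus\mbox{Crit}(f).$$
Next, $\dbar_f^2=0$ together with $\dbar^2=\fbracket{f,\mbox{-}}^2=0$ yields $[\dbar,\fbracket{f,\mbox{-}}]=0$, and $\dbar^2=0$ gives $[\dbar,G]=0$ directly. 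Finally the graded Jacobi identity applied to $\fbracket{f,\mbox{-}},\dbar,V_f$, together with $[\fbracket{f,\mbox{-}},\dbar]=0$ and $[\fbracket{f,\mbox{-}},V_f]=1$, shows $[\fbracket{f,\mbox{-}},G]=0$. Hence both $\dbar$ and $\fbracket{f,\mbox{-}}$ commute with $G$, and therefore with $P$; in particular $[\dbar,P]=[\fbracket{f,\mbox{-}},P]=0$.

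The heart of the argument is then the operator identity $[\dbar_f,A]=1$ on $X\setminus\mbox{Crit}(f)$. Using the graded Leibniz rule $[D,BC]=[D,B]C+(-1)^{|D||B|}B[D,C]$, I would compute $[\dbar,A]=[\dbar,V_f]P-V_f[\dbar,P]=GP$ and $[\fbracket{f,\mbox{-}},A]=[\fbracket{f,\mbox{-}},V_f]P=P$, and add them to get
$$[\dbar_f,A]=GP+P=(1+G)P=1.$$
To finish, I expand $[\dbar_f,R_\rho]=\dbar_fR_\rho+R_\rho\dbar_f$ by treating $1-\rho$ as a multiplication operator: since $\fbracket{f,\mbox{-}}$ is $C^\infty(X)$-linear and $\dbar(1-\rho)=-\dbar\rho$, the Leibniz rule gives $\dbar_f\circ(1-\rho)A=-(\dbar\rho)\wedge A+(1-\rho)\dbar_fA$, so that $[\dbar_f,R_\rho]=-(\dbar\rho)\wedge A+(1-\rho)[\dbar_f,A]$. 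Invoking $[\dbar_f,A]=1$ then yields $[\dbar_f,R_\rho]=-(\dbar\rho)\wedge A+(1-\rho)=1-T_\rho$.

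The one genuinely delicate point, and the reason the cut-off $\rho$ is present at all, is global well-definedness across $\mbox{Crit}(f)$: the operators $V_f$, $G$, $P$ and $A$ are only defined on $X\setminus\mbox{Crit}(f)$, but $1-\rho$ and $\dbar\rho$ vanish on the neighborhood $U_1$ where $\rho\equiv1$, so the combinations $(1-\rho)A$ and $(\dbar\rho)\wedge A$ extend smoothly by zero over $\mbox{Crit}(f)$ and the identity $(1-\rho)[\dbar_f,A]=(1-\rho)\cdot1=1-\rho$ is valid as an identity of operators on all of $\PV(X)$. The remaining work---verifying the Clifford relation and the homotopy identity in coordinates, and the sign bookkeeping in the graded Leibniz and Jacobi identities---is routine.
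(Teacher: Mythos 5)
Your proof is correct and follows essentially the same route as the paper's: the key homotopy identity $[\{f,\mbox{-}\},V_f]=1$, the commutation of $\dbar$ and $\{f,\mbox{-}\}$ with $[\dbar,V_f]$ (hence with the resolvent), and the graded Leibniz expansion $[\dbar_f,(1-\rho)V_fP]=[\dbar_f,1-\rho]V_fP+(1-\rho)(1+[\dbar,V_f])P$. You simply supply more detail than the paper does, namely the Clifford-relation and Jacobi-identity derivations of the stated commutators and the remark on smooth extension across $\mathrm{Crit}(f)$.
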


\begin{proof}
% For any $\alpha\in \PV(X)|_{X\setminus \scriptsize \mbox{Crit}(W)}$, we note $(V_W(\alpha)) \vdash \Omega_X= V_W\vdash (\alpha\vdash \Omega_X)$, on the right hand side of which  $V_W$ is treated as an element in $\PV^{1, 0}(X)|_{X\setminus \scriptsize \mbox{Crit}(W)}$ naturally.
It is easy to see that
  $$[\{f, \mbox{-}\},  V_f]=1\quad \text{on}\  \PV({X\setminus   \mbox{Crit}(f)}).
  $$
Moreover,
       $$[P, [\dbar, V_{f}]]=0$$
 for   $P$ being   $\{f, \mbox{-}\}, \dbar$ or $V_{f}$. Therefore, where defined,
   \begin{align*}
             [\dbar_f,  R_\rho]
                           &=[\dbar_f, 1-\rho]V_f {1\over 1+[\dbar, V_f]}+(1-\rho)[\dbar_f,  V_f] {1\over 1+[\dbar, V_f]} \\
                           &=-(\dbar\rho)V_f {1\over 1+\bbracket{\dbar, V_f}}+(1-\rho)\\
                          &=1-T_\rho.
         \end{align*}
\end{proof}

\begin{cor}\label{quasi-isom}
The embedding $\bracket{\PV_c(X), \dbar_f}\into \bracket{\PV(X),\dbar_f}$ is a quasi-isomorphism.
\end{cor}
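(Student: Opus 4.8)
The plan is to promote the homotopy of Lemma \ref{lemmaforquasiiso} into a two-sided homotopy inverse for the inclusion $\iota\colon\bracket{\PV_c(X),\dbar_f}\into\bracket{\PV(X),\dbar_f}$, with $T_\rho$ playing the role of the inverse. The first thing to record is that $T_\rho$ is a genuine cochain map landing in $\PV_c(X)$: the target is built into its definition, while from Lemma \ref{lemmaforquasiiso} and $\dbar_f^2=0$ one computes
$$
\dbar_f T_\rho=\dbar_f\bracket{1-[\dbar_f,R_\rho]}=\dbar_f-\dbar_f R_\rho\dbar_f=\bracket{1-[\dbar_f,R_\rho]}\dbar_f=T_\rho\dbar_f .
$$
Granting this, I would prove directly that $\iota$ induces an isomorphism on $\dbar_f$-cohomology by checking surjectivity and injectivity, both fueled by the identity $1-T_\rho=[\dbar_f,R_\rho]=\dbar_f R_\rho+R_\rho\dbar_f$ (the sign is the symmetric one because $R_\rho$ has cohomology degree $-1$).

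For surjectivity, take a $\dbar_f$-closed class represented by $\beta\in\PV(X)$. Applying the homotopy identity to $\beta$ and using $\dbar_f\beta=0$ gives $\beta=T_\rho\beta+\dbar_f\bracket{R_\rho\beta}$, so $\beta$ is cohomologous to $T_\rho\beta$, which lies in $\PV_c(X)$ and is $\dbar_f$-closed since $T_\rho$ is a cochain map; hence $[\beta]=\iota_*[T_\rho\beta]$. For injectivity, suppose $\gamma\in\PV_c(X)$ is $\dbar_f$-closed and becomes exact in $\PV(X)$, say $\gamma=\dbar_f\eta$ with $\eta\in\PV(X)$. Then $T_\rho\gamma=\dbar_f\bracket{T_\rho\eta}$, while the homotopy identity gives $\gamma=T_\rho\gamma+\dbar_f\bracket{R_\rho\gamma}$, so $\gamma=\dbar_f\bracket{T_\rho\eta+R_\rho\gamma}$. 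As $T_\rho\eta\in\PV_c(X)$ automatically, the entire argument rests on the one genuinely analytic point below.

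The step I expect to be the main obstacle is verifying that $R_\rho$ preserves compact support, i.e.\ that $R_\rho\gamma\in\PV_c(X)$ whenever $\gamma\in\PV_c(X)$; without this the primitive produced in the injectivity argument would fail to be compactly supported. Here I would argue that $V_f$ and $[\dbar,V_f]$ are \emph{local} operators on $X\setminus\mbox{Crit}(f)$ and therefore do not enlarge supports, so the polyvector field $V_f{1\over 1+[\dbar,V_f]}\gamma$ is defined on $X\setminus\mbox{Crit}(f)$ with support inside $\Supp(\gamma)$. Multiplying by the cutoff $1-\rho$, which vanishes identically on the neighborhood $U_1$ of $\mbox{Crit}(f)$, annihilates any singular behaviour at the critical set and extends the product smoothly by zero across $\mbox{Crit}(f)$; the resulting $R_\rho\gamma$ is then smooth on all of $X$ with support contained in the compact set $\Supp(\gamma)\setminus U_1$, hence compactly supported. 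The analogous observation, using that $\dbar\rho$ is supported in $U_2\setminus U_1$, reconfirms that $T_\rho$ maps $\PV(X)$ into $\PV_c(X)$. With these support facts in hand, $T_\rho$ is a two-sided homotopy inverse to $\iota$ and the corollary follows.
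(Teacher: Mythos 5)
Your proposal is correct and rests on exactly the same mechanism as the paper's proof: the homotopy identity $[\dbar_f, R_\rho]=1-T_\rho$ of Lemma \ref{lemmaforquasiiso}, together with the support properties of $T_\rho$ and $R_\rho$ (the paper, too, implicitly needs $R_\rho$ to preserve $\PV_c(X)$, so that it descends to the quotient complex). The only difference is packaging: the paper deduces the result from acyclicity of $\PV(X)/\PV_c(X)$, whereas you unpack the same identity into explicit surjectivity and injectivity checks on cohomology.
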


\begin{proof}
  This follows from Lemma \ref{lemmaforquasiiso} that $
  H^*(\PV(X)/\PV_c(X), \dbar_f)\equiv0.
  $
\end{proof}

We will let $\iota$ be the natural embedding
 $$
 \iota: \PV_c(X)\hookrightarrow  \PV(X).
 $$
Corollary \ref{quasi-isom} and Lemma \ref{cohomology-polyvectorfield} lead to a canonical  isomorphism
$$
\iota: H^*\bracket{\PV_c(X), \dbar_f}%%\overset{\cong}{\to}  H^*\bracket{\PV_c(X), \dbar_W}
  \overset{\cong}{\longrightarrow}  \Jac(f),
$$
which we still denote by $\iota$. By Lemma \ref{lemmaforquasiiso}, a representative of $\iota^{-1}([g])$ for $[g]\in \Jac(f)$, where $g\in \Gamma(X, \OO_X)$, is given by the compactly supported polyvector field $T_\rho(g)$. In particular, the class $[T_\rho(g)]$ represented by $T_\rho(g)$ depends only on the class $[g]\in \Jac(f)$, not on the choice of the coordinates or the cut-off function $\rho$.

For compactly supported polyvector fields, we can  define the \emph{trace map}
 %$\Tr: \PV_c(X)\to \C$ by
\begin{align*}
  \Tr: \PV_c(X)\to \C,\, \Tr(\alpha):=\int_X \bracket{\alpha\vdash \Omega_X}\wedge \Omega_X.
\end{align*}
This induces  a non-degenerate
pairing  $\abracket{\mbox{-},\mbox{-}}:   \PV_c(X)\otimes \PV_c(X) \to \C$  defined by
\begin{align*}
%%    \PV_c(X)\otimes \PV_c(X) &\to \C\\
        \alpha\otimes \beta  &\mapsto \abracket{\alpha,\beta}:= \Tr\bracket{\alpha\beta}.
\end{align*}
For any $\alpha\in \PV_c^{i, j}(X)$ and $\beta\in\PV_c^{k, l}(X)$, it is easy to see that
%see that $\dbar$ is skew self-adjoint for this pairing and $\pa$ is self-adjoint.
 \begin{align}\label{Trproperties}
    \langle \dbar \alpha, \beta\rangle =-(-1)^{|\alpha|}\langle \alpha, \dbar \beta\rangle \quad  \mbox{ and } \quad
  \langle \pa_\Omega\alpha, \beta\rangle =(-1)^{|\alpha|}\langle \alpha, \pa_\Omega \beta\rangle.
   \end{align}
Furthermore, we have  $$\Tr(\dbar \alpha)=0\quad\mbox{and}\quad \Tr(\{f, \alpha\})=0,$$
which imply   that the trace map is well defined on the cohomology
$$  \Tr:  H^*(\PV_c(X), \dbar_f) \to \C. $$
The above pairing descends to a pairing $\abracket{\mbox{-},\mbox{-}}$ on the cohomology which will be non-degenerate as well (Proposition \ref{compatible-residue}). %$:  H^*(\PV_c(X), \dbar_W) \otimes H^*(\PV_c(X), \dbar_W) \to \C$.

The trace map is closely related to the residue as follows. Let $\{V_1, \cdots, V_n\}$ be a $\OO_X$-basis of holomorphic  vector fields on $X$ such that
 $(V_n\wedge V_{n-1}\wedge \cdots \wedge V_1)\vdash \Omega_X\equiv 1$. Then $V_1(f), \cdots, V_n(f)$ form a regular sequence on $X$, and we have the concept of residue map (see e.g. \cite{Hartshorne-residue}, \cite{Griffiths-Harris} for details)
 $$
  \Res: \Jac(f)\to \C, \quad g\to \Res(g)=\sum_{x\in\scriptsize \mbox{Crit}(f)} \Res_x {g\Omega_X\over V_1(f)V_2(f)\cdots V_n(f)}.
$$
which is independent of the choice of $\{V_1,\cdots, V_n\}$ (but depends on  the choice of $\Omega_X$). In  coordinates $\mathbf{z}$ of $\C^n$, we have
 $$
 \Res(g)=\sum_{x\in\scriptsize \mbox{Crit}(f)} \Res_{x} {g(z) dz_1\wedge dz_2\wedge\cdots \wedge dz_n\over \lambda^2 (\pa_{z_1}f)\cdots (\pa_{z_n}f)}
 $$
 for $\Omega_X= {dz_1\wedge dz_2\wedge\cdots \wedge dz_n \over \lambda(\mathbf{z})}$.

One of the key observations is that the trace map is compatible with the residue map in the following sense.
\begin{prop}\label{compatible-residue}
There exists a constant $C_n$, depending only on $n=\dim X$, such that the following diagram commutes
$$
   \xymatrix{ \Jac(f)\ar[dr]_{C_n \Res}\ar[r]^{\iota^{-1}\qquad{}}& H^*(\PV_c(X), \dbar_f) \ar[d]^{\Tr}\\ & \C  }
$$
\end{prop}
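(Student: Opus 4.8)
The plan is to reduce the identity to an explicit local integral near each point of $\mathrm{Crit}(f)$ and to identify it with the Grothendieck residue through the Bochner--Martinelli kernel. Since $\Tr$ is well defined on $H^*(\PV_c(X),\dbar_f)$ and $T_\rho(g)$ represents $\iota^{-1}([g])$ (Lemma \ref{lemmaforquasiiso}), it suffices to prove $\Tr(T_\rho(g))=C_n\Res(g)$ for a constant depending only on $n$. First I would observe that the trace sees only the top component: for $\alpha\in\PV^{i,j}(X)$ one has $(\alpha\vdash\Omega_X)\wedge\Omega_X\in\A^{2n-i,j}(X)$, which has top degree $(n,n)$ exactly when $i=j=n$, so $\Tr$ depends only on the $\PV^{n,n}$-part of its argument. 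Writing $\eta:=V_f\frac{1}{1+[\dbar,V_f]}(g)$, so that $R_\rho(g)=(1-\rho)\eta$, and using $\dbar_f g=0$, Lemma \ref{lemmaforquasiiso} gives $T_\rho(g)=g-\dbar_f R_\rho(g)$. As $g\in\PV^{0,0}$ contributes nothing in bidegree $(n,n)$, the only surviving term is
$$
(T_\rho(g))^{(n,n)}=(-1)^{n-1}(\dbar\rho)\wedge V_f[\dbar,V_f]^{n-1}(g),
$$
which is supported on the collar where $\dbar\rho\neq0$, hence away from $\mathrm{Crit}(f)$ where $V_f$ is defined.

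Next I would run a Stokes argument localizing onto small spheres around the critical points. Set $\eta^{(n-1)}:=(-1)^{n-1}V_f[\dbar,V_f]^{n-1}(g)\in\PV^{n,n-1}$ and $\Phi:=\eta^{(n-1)}\vdash\Omega_X\in\A^{0,n-1}(X\setminus\mathrm{Crit}(f))$. From $[\{f,\mbox{-}\},V_f]=1$ and $[\dbar_f,[\dbar,V_f]]=0$ one gets $[\dbar_f,V_f]=1+[\dbar,V_f]$, whence $\dbar_f\eta=g$ on $X\setminus\mathrm{Crit}(f)$; extracting the bidegree $(n,n)$ part of this identity gives $\dbar\eta^{(n-1)}=0$, so $\Psi:=\Phi\wedge\Omega_X\in\A^{n,n-1}$ is $\dbar$-closed, and since $\pa\Psi\in\A^{n+1,n-1}=0$ it is in fact $d$-closed on $X\setminus\mathrm{Crit}(f)$. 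Because $\pa\rho\wedge\Psi=0$ for type reasons, $\Tr(T_\rho(g))=\pm\int_X(d\rho)\wedge\Psi$; excising $\epsilon$-balls about the critical points and applying Stokes (with $\rho\equiv1$ near $\mathrm{Crit}(f)$) collapses this to
$$
\Tr(T_\rho(g))=\pm\sum_{p\in\mathrm{Crit}(f)}\int_{S_\epsilon(p)}\Phi\wedge\Omega_X.
$$

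Finally I would evaluate each sphere integral. Writing $h_i:=\pa_{z_i}f$, the coefficients $\overline{\pa_{z_i}f}/\|df\|^2$ in $V_f$ together with the $(n-1)$-fold application of $[\dbar,V_f]$ assemble precisely into the pullback under $h=(h_1,\dots,h_n)$ of the Bochner--Martinelli kernel on $\C^n$. The Bochner--Martinelli representation of the Grothendieck residue (see \cite{Griffiths-Harris,Hartshorne-residue}) then identifies $\int_{S_\epsilon(p)}\Phi\wedge\Omega_X$ with a universal constant times $\Res_p\big(g\,\Omega_X/(h_1\cdots h_n)\big)$, the factor $1/\lambda^2$ coming from $\Omega_X=dz_1\wedge\cdots\wedge dz_n/\lambda$ matching the $\lambda^2$ in the denominator of $\Res$. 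Summing over $p$ yields $\Tr(T_\rho(g))=C_n\Res(g)$ with $C_n$ built only from the Bochner--Martinelli normalization and combinatorial signs, hence depending only on $n$. The main obstacle is this last step: expanding $V_f[\dbar,V_f]^{n-1}$ explicitly enough to recognize the Bochner--Martinelli kernel, fixing the constant $C_n$ and all signs, and invoking the transformation law that turns the pulled-back kernel integral over $S_\epsilon(p)$ into the local residue. By contrast, the reduction to the $(n,n)$-component and the Stokes localization are routine once the identity $\dbar_f\eta=g$ is established.
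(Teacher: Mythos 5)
Your proposal is correct and follows essentially the same route as the paper's proof: reduce to $\Tr(T_\rho(g))$, observe that only the top term $(\dbar\rho)\wedge V_f[\dbar,V_f]^{n-1}(g)$ contributes to the trace, localize by Stokes onto small spheres around the critical points, and identify the sphere integrals with Grothendieck residues via the residue theorem of Griffiths--Harris (i.e.\ the Bochner--Martinelli representation). The only minor difference is that you obtain $\dbar$-closedness of the top component by degree reasons from the homotopy identity $\dbar_f\eta=g$, whereas the paper reads it off an explicit formula; the expansion of $V_f[\dbar,V_f]^{n-1}(g)$ that you defer as ``the main obstacle'' is exactly the paper's Claim computation, which produces the kernel and the constant $(-1)^{n(n-1)/2}(n-1)!$.
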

\begin{proof}   The statement follows from similar calculations as in \cite{Griffiths-Harris} page 654.  The proof is given below, while our reader may skip the details.

Let Crit$(f)=\{x_1, \cdots, x_r\}$. %% be  critical loci  of $W$ (where $p_i$'s are distinct).
For  $0<\varepsilon<<1$, we denote by $B_{\varepsilon}$   a closed ball  of radius $\varepsilon$  contained in  $X$, $S_{\varepsilon}$ being the boundary of $B_\varepsilon$.
We choose $\varepsilon_1, \cdots, \varepsilon_r$ %(where $1\leq r\leq m$)
and small disjoint balls $B_{2\varepsilon_i}$ such that each $\mbox{Int}(B_{\varepsilon_i})$ contains precisely $x_i$ in Crit$(f)$, hence Crit$(f)\subset \bigcup_{i=1}^r \mbox{Int}(B_{\varepsilon_i})$.
Take a smooth cut-off function $\rho$ such that
$$
\rho|_{\bigcup_{i=1}^r  B_{\varepsilon_i}}\equiv 1, \quad \rho|_{X\setminus \bigcup_{i=1}^r  B_{2\varepsilon_i}}\equiv 0.
$$
Let  $X'=\bigcup_{i=1}^r \mbox{Int}(B_{2\varepsilon_i})-\bigcup_{i=1}^r B_{\varepsilon_i}$. For
 $1\leq k\leq n$, we denote  $f_k:=\pa_{z_k} f$ and $h_k:={\bar f_k\over \|df\|^2}$, where
   we recall $\|df\|^2=\sum\limits_{j=1}^n|f_j|^2$.

  The operator $T_\rho$ gives a realization of $\iota^{-1}$. Let $g$ be a holomorphic function on $X$ representing an element of $\Jac(f)$. Let
  $$
  A_j:= V_f[\dbar, V_f]^j(g)\in\PV^{j+1, j}(X).
  $$

\noindent \textbf{Claim}: $\dbar A_{n-1} =0$ and
    $$A_{n-1}=C_n'g\sum_{i=1}^n(-1)^{i-1}{\bar f_i \over \|df\|^{2n}}d\bar f_1\wedge \cdots \wedge \widehat{d\bar f_i}\wedge \cdots \wedge d \bar f_n\otimes (\pa_1\cdots \pa_n)$$
for some nonzero constant $C_n'$   depending only on $n$.

Let us assume the Claim first, and let
   $\Omega_{X'}:=\Omega_X|_{X'}$. Then
  \begin{align*}
    & \Tr(\iota^{-1}(g))= \Tr(T_\rho g)=\Tr((1-\rho)g+(\dbar \rho)\sum_{k=1}^n(-1)^kA_k)\\
            &=(-1)^{n-1}\int_X\big(((\dbar \rho)A_{n-1})\vdash\Omega_X\big)\wedge \Omega_{X}=(-1)^{n-1}\int_{X'}\big(((\dbar \rho)A_{n-1})\vdash\Omega_{X'}\big)\wedge \Omega_{X'}\\
 %&=(-1)^{n-1}\int_{X'}\dbar \big((\rho A_{n-1}\vdash\Omega_{X'})\wedge \Omega_{X'}\big)\\
          &=(-1)^{n-1}\int_{X'}d \big((\rho A_{n-1}\vdash\Omega_{X'})\wedge \Omega_{X'}\big)  =(-1)^{n-1}\int_{\pa X'}(\rho A_{n-1}\vdash\Omega_{X'})\wedge \Omega_{X'}\\
          &  =(-1)^{n}\sum_{j=1}^r\oint_{S_{\varepsilon_j}}(A_{n-1}\vdash\Omega_{X'})\wedge \Omega_{X'}.
  \end{align*}
Hence, there exists  nonzero constant $C_n$  depending only on $n$ such that
 \begin{align*}
     \Tr(T_\rho g)%%%&=\Tr((1-\rho)g+(\dbar \rho)\sum_{k=1}^d(-1)^kA_k)\\
          %%% &=(-1)^{d-1}\int_X\big(((\dbar \rho)A_{d-1})\vdash\Omega_X\big)\wedge \Omega_{X}\\
         %%%  &=(-1)^{d-1}\int_{X'}\big(((\dbar \rho)A_{d-1})\vdash\Omega_{X'}\big)\wedge \Omega_{X'}\\
            %&=(-1)^{d-1}\int_{X'}\dbar \big((\rho A_{d-1}\vdash\Omega_{X'})\wedge \Omega_{X'}\big)\\
         %%%  &=(-1)^{d-1}\int_{X'}d \big((\rho A_{d-1}\vdash\Omega_{X'})\wedge \Omega_{X'}\big)\\
         %%%  &=(-1)^{d-1}\int_{\pa X'}(\rho A_{d-1}\vdash\Omega_{X'})\wedge \Omega_{X'}\\
        %%%   &=(-1)^{d}\sum_{j=1}^r\oint_{S_{\varepsilon_j}}(A_{d-1}\vdash\Omega_{X'})\wedge \Omega_{X'}\\
          &=C_n\sum_{j=1}^r\oint_{S_{\varepsilon_j}}{1\over \lambda^2}g\sum_{i=1}^n(-1)^{i-1}{\bar f_i \over \|df\|^{2n}}d\bar f_1\wedge \cdots \widehat{d\bar f_i}\wedge \cdots \wedge d \bar f_n \wedge dz_1\cdots \wedge dz_n\\
          &\overset{(*)}{=}C_n \sum_{x_j}\mbox{Res}_{x_j}{g\over (\pa_{{z}_1}f)\cdots (\pa_{z_n}f)}\cdot {1\over \lambda^2} dz_1\wedge \cdots \wedge dz_n\\
          &=C_n \Res(g).
  \end{align*}
 The equality $(*)$  follows from the Residue Theorem (see e.g. section 5.1  of \cite{Griffiths-Harris}).

 It remains to show our claim. We have
 $$
 V_f=\sum_i h_i \pa_i, \quad  \bbracket{\dbar, V_f}^k(g)=\bracket{\sum_i\dbar h_i \pa_i}^k g
 $$
 where $\dbar h_i={\dbar \bar f_i\over \|df\|^2}-{\bar f_i \over \|df\|^4}\dbar \|df\|^2$. Using
 $$
 \dbar \|df\|^2\wedge \dbar \|df\|^2=0\ \mbox{and}\  \bracket{\sum_i h_i \pa_i} \bracket{\sum_i {\bar f_i \over \|df\|^4}\dbar \|df\|^2 \pa_i}=-{\dbar \|df\|^2\over \|df\|^2} \bracket{\sum_i h_i \pa_i}^2=0,
 $$
 it follows easily that
 \begin{align*}
    A_{n-1}&=\bracket{\sum_i h_i \pa_i}\bracket{\sum_i {\dbar \bar f_i\over \|df\|^2}\pa_i }^{n-1} g-(n-1)\bracket{\sum_i h_i \pa_i}\bracket{\sum_i{\dbar \bar f_i\over \|df\|^2}\pa_i}^{n-2} \bracket{\sum_i {\bar f_i \over \|df\|^4}\dbar \|df\|^2\pa_i}g\\
    &=\bracket{\sum_i h_i \pa_i}\bracket{\sum_i {\dbar \bar f_i\over \|df\|^2}\pa_i }^{n-1} g\\
    &=(-1)^{n(n-1)/2}(n-1)! g\sum_{i=1}^n(-1)^{i-1}{\bar f_i \over \|df\|^{2n}}d\bar f_1\wedge \cdots \widehat{d\bar f_i}\wedge \cdots \wedge d \bar f_n\otimes (\pa_1\cdots \pa_n).
 \end{align*}
The equation $\dbar A_{n-1}=0$ follows from the above formula. This proves the Claim.
\end{proof}

\subsection{Polyvector fields with a descendant variable}\label{polyvector-descendant}
Following  \cite{Saito-lecutures, Saito-residue}, we consider the  vector space of $\PV(X)$ (resp. $\PV_c(X)$) valued Laurent series in $t$:
$$
   \PV(X)((t))=\PV(X)[[t]][t^{-1}] \quad(\mbox{resp.}\quad \PV_c(X)((t))=\PV_c(X)[[t]][t^{-1}] ).
$$
Here $t$ is a formal variable of cohomology degree zero, called descendant variable.

\begin{rmk}
 We are following the notation $t$ in \cite{Costello, Si-BCOV} for the   descendant variable, which   is also denoted by various notations like $\delta^{-1}$ \cite{Saito-lecutures, Saito-residue, Saito-unfolding,Saito-primitive}, $\hbar$
  \cite{Barannikov-projectivespace,Barannikov-period}, $u$ \cite{Pantev-Konstevich}, $z$ \cite{Givental-symplectic}, etc.
\end{rmk}

 Both $\PV(X)((t))$ and $\PV_c(X)((t))$   are  complexes with respect to the following twisted coboundary operator
\begin{align}\label{Qf-fiber}
    Q_{f}:=\dbar_f+t\pa_\Omega.
\end{align}
The complex $\PV(X)((t))$ has a natural decreasing filtration $\mc F^\bullet \PV(X)((t))$ preserved by $Q_f$, which is defined by
\begin{align}\label{eqn-filtrcpx}
  \mc F^k (\PV(X)((t))):=t^k \PV(X)[[t]].
\end{align}

  On the other hand,   the space  $\mc A(X)((t))$ (resp.  $\mc  A_c(X)((t))$ of $\mc A(X)$ (resp. $\mc A_c(X)$) valued   Laurent series  is a complex with
respect to either of the following   twisted coboundary operator: % $d^{\pm}$,
\begin{align}
   d^+_f:=d+{df\over t}\wedge,\qquad d^-_f:=td+(df)\wedge.
\end{align}
\begin{prop-defn}\label{isom-PV-Diff}
  There are two   isomorphisms $\hat\Gamma_{\Omega_X}^{\pm}$ of filtered complexes,
   $$\hat\Gamma_{\Omega_X}^+: (\PV(X)((t)), Q_{f})\to (\mc A(X)((t)), d^+_f),\qquad \hat\Gamma_{\Omega_X}^-: (\PV(X)((t)), Q_{f})\overset{\cong}{\to} (\mc A(X)((t)), d^-_f),$$
 which, for  $t^k\alpha \in t^k \PV^{i,j}(X)$, are respectively defined by
 $$\hat\Gamma_{\Omega_X}^+(t^k\alpha):=t^{k+i-1}\bracket{\alpha\vdash \Omega_X}\in t^{k+i-1} \A^{n-i,j}(X)\quad\mbox{and}\quad \hat\Gamma_{\Omega_X}^-(t^k\alpha):=t^{k+j}\bracket{\alpha\vdash \Omega_X}\in t^{k+j} \A^{n-i,j}(X).$$
Here the filtered complex structure of  $(\mc A(X)((t)), d^{\pm})$  are respectively given by
 \begin{align}
        \hat\Gamma_{\Omega_X}^+(t^k\PV(X)[[t]])&=\prod_{m\in \mathbb{Z}} \bigoplus\limits_{i\geq m+k, j\in\mathbb{Z}}\A^{i,j}(X)\otimes t^{n-m-1},\\
          \hat\Gamma_{\Omega_X}^-(t^k\PV(X)[[t]])&=\prod_{m\in \mathbb{Z}}   \bigoplus_{j\leq m-k, i\in \mathbb{Z}}\A^{i,j}(X)\otimes t^{m},
 \end{align}
% \noindent
%%In the present paper,  we will denote   $\hat\Gamma_{\Omega_X}^+$ as $\hat\Gamma_{\Omega_X}$, and denote $d^+$ as $d_f$.
\end{prop-defn}
\begin{proof}
   Clearly,   $\hat\Gamma_{\Omega_X}^+$ and $\Gamma_{\Omega_X}^-$ are both isomorphisms of $C^\infty(X)((t))$-modules. Furthermore, it is easy to check   $\hat\Gamma_{\Omega_X}^+\circ Q_{f}=d^+_f\circ \hat\Gamma_{\Omega_X}^+$ and  $\hat\Gamma_{\Omega_X}^-\circ Q_{f}=d^-_f\circ \hat\Gamma_{\Omega_X}^-$. Therefore  $\hat\Gamma_{\Omega_X}^{\pm}$ are isomorphisms of complexes.
It is direct to check that  $\hat\Gamma_{\Omega_X}^{\pm}(\mc F^\bullet \PV(X)((t)))$  makes   $(\mc A(X)((t)), d^{\pm}_f)$ a filtered-complex as desired.
  \end{proof}
We remark that the filtration $\mc F^\bullet \PV(X)((t))$ plays the role of the Hodge filtration under the isomorphism $\hat\Gamma_{\Omega_X}^+$.
 The restriction of  $\hat\Gamma_{\Omega_X}^-$ to the    holomorphic data $\Gamma(X, \wedge^*T_X)[[t]]$ gives   the same filtration $\hat\Gamma_{\Omega_X}^-\big(t^k\Gamma(X, \wedge^*T_X)[[t]]\big)= t^k \Omega_X^*[[t]]$ as in
 \cite{Saito-primitive}. The coboundary operator $d^-_f=td+(df)\wedge$ was   introduced by the third author  in \cite{Saito-lecutures, Saito-residue}.
As we will see in Proposition-Definition \ref{prop-def-hodgefil},  $\hat\Gamma_{\Omega_X}^-$  provides a precise identification of the filtrations of cohomology between our approach of using polyvector fields and the original setting up via differential forms. However, we have chosen to work with polyvector fields, $Q_{f}$ and $\hat\Gamma_{\Omega_X}^+$ in order to compare with the compact Calabi-Yau manifolds \cite{Si-BCOV} and illustrate aspects of mirror symmetry.
The variable $t$ is related to the gravitational descendant in physics, mirror to the descendant variable in Gromov-Witten theory.
The complex $(\PV(X)[[t]], Q_{f})$ realizes a natural deformation of $(\PV(X), \dbar_f)$, capturing the central aspects of the non-commutative Hodge structures \cite{Pantev-Konstevich}.

\subsubsection{Cohomology of polyvector fields with descendants}

%%In analogy with the isomorphism $\Gamma_{\Omega_X}: \PV(X)\to \mc A(X)$,

\begin{defn}
We define the following vector spaces $\mc H^{f, \Omega}_{(0)}, \mc H^{f, \Omega}$ associated to $f$
    %%and simply denote $\Gamma(X, \OO_X)$ as $\OO_X$ whenever there is no confusion.     We define
   $$\mc H^{f, \Omega}_{(0)}:= {\Gamma(X, \OO_X)[[t]]\over \Im\big(Q_{f}: \Gamma(X, T_X)[[t]]\rightarrow \Gamma(X, \OO_X)[[t]]\big)},\, \mc H^{f, \Omega}:=\mc H^{f, \Omega}_{(0)}\otimes_{\mathbb{C}[[t]]}\mathbb{C}((t)).$$
 where $\Gamma(X, T_X)$  is the space of  holomorphic vector fields  on $X$.
There is a canonical isomorphism
$$
\mathcal{H}_{(0)}^{f, \Omega}/t\mathcal{H}_{(0)}^{f, \Omega}\iso \Jac(f).
$$
%%and $\mathcal{H}_{(0)}^{f, \Omega}$ describes a deformation of $\Jac(f)$.
\end{defn}

\begin{prop}\label{cohomology-field}
There are canonical isomorphisms
$$
  \mc H^{f, \Omega}_{(0)}\overset{\iso}{\longrightarrow}   H^*(\PV(X)[[t]], Q_{f})\quad \mbox{and}\quad
   \mc H^{f, \Omega}\overset{\iso}{\longrightarrow} H^*(\PV(X)((t)), Q_{f}),
$$
both defined by $[\alpha]\mapsto [\alpha]$, where $\alpha\in \Gamma(X, \OO_X)[[t]]$ represents a class on both sides of the first isomorphism, and similarly for the second isomorphism.
\end{prop}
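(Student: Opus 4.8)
The plan is to reduce the computation to the holomorphic subcomplex, and then to a ($t$-deformed) Koszul complex, using the two facts already at our disposal: Lemma \ref{cohomology-polyvectorfield} (which says the $\dbar_f$-cohomology is concentrated in degree zero and equals $\Jac(f)$), together with the elementary but crucial observation that $\pa_\Omega$ annihilates functions, since $\pa_\Omega$ lowers the holomorphic degree and $\PV^{-1,*}(X)=0$. First I would check that the asserted maps are well defined and $\C[[t]]$-linear (resp. $\C((t))$-linear): any $\alpha\in\Gamma(X,\OO_X)[[t]]$ is a genuine $Q_f$-cocycle because $\dbar\alpha=0$, $\{f,\alpha\}=0$ and $\pa_\Omega\alpha=0$ term by term, while the relations defining $\Hzero^{f,\Omega}$, namely the elements $Q_fV$ with $V\in\Gamma(X,T_X)[[t]]$, are by construction $Q_f$-coboundaries; hence $[\alpha]\mapsto[\alpha]$ descends as claimed. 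The second map is the localization of the first at $t$.

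Next I would introduce the subcomplex $K^\bullet:=(\Gamma(X,\wedge^\ast T_X)[[t]],Q_f)$ of holomorphic polyvector fields. This is indeed a subcomplex: on holomorphic sections $\dbar$ vanishes, while $\{f,-\}$ and $\pa_\Omega$ preserve holomorphicity, so $Q_f=\{f,-\}+t\pa_\Omega$ restricts to $K^\bullet$, which is supported in cohomological degrees $0,-1,\dots,-n$. The key structural claim is that the inclusion $\iota_K\colon K^\bullet\into(\PV(X)[[t]],Q_f)$ is a quasi-isomorphism. I would prove this with the $t$-adic filtration \eqref{eqn-filtrcpx}: on the associated graded the perturbation $t\pa_\Omega$ drops out, so $\op{gr}\iota_K$ is the inclusion $(\Gamma(X,\wedge^\ast T_X),\{f,-\})\into(\PV(X),\dbar_f)$, and this is a quasi-isomorphism by (the proof of) Lemma \ref{cohomology-polyvectorfield}: on the Stein manifold $X$ the holomorphic polyvector fields compute $\dbar$-cohomology, and the induced $\{f,-\}$-complex is exactly the holomorphic Koszul complex computing $\Jac(f)$. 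A filtered chain map inducing a quasi-isomorphism on the associated graded of complete, exhaustive and Hausdorff filtrations — which the $t$-adic filtrations are — is itself a quasi-isomorphism, so $\iota_K$ is one.

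It then remains to compute $H^\ast(K^\bullet)$ and to match the explicit map. By definition of $\Hzero^{f,\Omega}$ one has $H^0(K^\bullet)=\op{coker}\big(Q_f\colon\Gamma(X,T_X)[[t]]\to\Gamma(X,\OO_X)[[t]]\big)=\Hzero^{f,\Omega}$, and I would show $H^{d}(K^\bullet)=0$ for $d<0$ by the same device: the $t$-adic associated graded of $K^\bullet$ is the undeformed Koszul complex $(\Gamma(X,\wedge^\ast T_X),\{f,-\})$, which by the de Rham lemma \cite{Saito-deRham} (the $V_i(f)$ form a regular sequence since $\mbox{Crit}(f)$ is finite) is exact except at the top, so the convergent $t$-adic spectral sequence has $E_1$ concentrated in cohomological degree zero, forcing $H^{<0}(K^\bullet)=0$. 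Composing, $\Hzero^{f,\Omega}=H^0(K^\bullet)\xrightarrow{(\iota_K)_\ast}H^0(\PV(X)[[t]],Q_f)$ is an isomorphism, and since $\iota_K$ is literally the inclusion it sends $[\alpha]\mapsto[\alpha]$, which is the asserted map. For the second isomorphism I would simply tensor with $\C((t))$ over $\C[[t]]$: localization at $t$ is exact and commutes with cohomology, and $\PV(X)((t))=\PV(X)[[t]][t^{-1}]$, so $H^\ast(\PV(X)((t)),Q_f)\iso\Hzero^{f,\Omega}\otimes_{\C[[t]]}\C((t))=\mc H^{f,\Omega}$.

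The main obstacle I anticipate is not any single computation but the completeness bookkeeping: turning ``quasi-isomorphism on the $t$-adic associated graded'' into an honest quasi-isomorphism of the power-series complexes, i.e. justifying the filtered comparison and the convergence of the $t$-adic spectral sequences despite the filtration being infinite. Concretely this amounts to the order-by-order lifting in which one repeatedly uses $\pa_\Omega(\text{function})=0$ to see that, after subtracting a $Q_f$-coboundary, the next obstruction is again $\dbar_f$-closed of degree zero and hence representable by a holomorphic function; the corrections converge $t$-adically, but verifying that they assemble into an actual (rather than merely graded) isomorphism is the point requiring care. Everything else — well-definedness, the identification of the graded pieces through Lemma \ref{cohomology-polyvectorfield} and the de Rham lemma, and the localization step — is routine.
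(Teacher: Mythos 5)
Your proposal is correct, and it runs on the same two inputs as the paper's own proof --- the $t$-adic filtration \eqref{eqn-filtrcpx} and Lemma \ref{cohomology-polyvectorfield} --- but it is organized along a genuinely different route. The paper never isolates the holomorphic subcomplex: it defines $\varphi\colon \Gamma(X,\OO_X)[[t]]\to H^0(\PV(X)[[t]],Q_f)$ directly and proves, by an explicit order-by-order induction in $t$ (invoking Lemma \ref{cohomology-polyvectorfield} at each order, together with the fact that $Q_f$ kills holomorphic functions), that $\varphi$ is surjective with kernel exactly $Q_f(\Gamma(X,T_X)[[t]])$; the vanishing in nonzero degrees and the Laurent case are then dispatched as ``similar arguments.'' You instead factor everything through $K^\bullet=(\Gamma(X,\wedge^*T_X)[[t]],Q_f)$: the inclusion $\iota_K$ is a filtered quasi-isomorphism by the associated-graded criterion, $H^0(K^\bullet)=\Hzero^{f,\Omega}$ is tautological, and $H^{<0}(K^\bullet)=0$ follows from Koszul exactness via the de Rham lemma. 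What this buys: the kernel identification --- the fiddliest step in the paper, where a smooth $Q_f$-primitive must be replaced by a holomorphic one order by order --- is absorbed into the quasi-isomorphism statement, and your localization observation $H^*(\PV(X)((t)),Q_f)\iso H^*(\PV(X)[[t]],Q_f)[t^{-1}]$ settles the second isomorphism in one line rather than by repeating the whole argument. What it costs: you must supply the comparison theorem for complete, exhaustive filtrations (and the convergence of the degenerate $t$-adic spectral sequence), whose proof is precisely the order-by-order lifting you flag as ``completeness bookkeeping'' --- so the induction the paper writes out explicitly reappears, once, inside your abstract lemma. The two proofs thus spend the same effort in different places: yours is more structural and transports more easily to other settings; the paper's is self-contained and keeps the lifting visible.
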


\begin{proof}
\iffalse
We prove the case for $\PV(X)[[t]]$. We filter the complex by the power of $t$:
$$
 \mc F^k(\PV(X)[[t]]):= t^k \PV(X)[[t]].
$$
The first stage  of the spectral sequence has differential $\dbar_f$. By Lemma \ref{cohomology-polyvectorfield}, the $E_1$-term is
$$
   E_1= \Jac(f)[[t]].
$$
Clearly, $t\pa_{\Omega}$ is zero on $E_1$, and the spectral sequence degenerates. Since the filtration is complete, it follows that the natural map
$$
\mc H^{f}_{(0)}\overset{\iso}{\longrightarrow}   H^*(\PV(X)[[t]], Q_{f})
$$
is an isomorphism. The proof for $\PV(X)((t))$ is similar.
\fi

 This is essentially an application of the spectral sequence associated to the   filtration \eqref{eqn-filtrcpx}. We present the details here. Denote $A^k=\bigoplus_{j-i=k}\PV^{i, j}(X)$.  Since  $Q_f(\Gamma(X, \mathcal{O}_X)[[t]])=0$,   we have a   well-defined morphism $\varphi$ of abelian groups,
          $$\varphi: \Gamma(X, \mathcal{O}_X)[[t]]\longrightarrow H^0(\PV(X)[[t]], Q_f)={\mbox{Ker}(Q_f: A^0[[t]]\rightarrow A^1[[t]])\over\Im(Q_f: A^{-1}[[t]]\rightarrow A^0[[t]])},$$
given by $\alpha\mapsto [\alpha]$.
Let $\sum_{m=0}^\infty a_mt^m\in \mbox{Ker}(Q_f: A^0[[t]]\rightarrow A^1[[t]])$  and   $a_{-1}:=0$. We have
\begin{align}\label{eqn-relation}
   \sum_{m=0}^\infty (\dbar_f a_m+\pa_\Omega a_{m-1})t^m=0.
\end{align}
Since  $[a_0]\in H^0(\PV(X), \dbar_f)$, by Lemma \ref{cohomology-polyvectorfield} there exist $b_0\in \Gamma(X, \mathcal{O}_X)$ and $c_0\in A^{-1}$ such that
  $a_0=b_0+\dbar_f c_0$. Using \eqref{eqn-relation} and Lemma \ref{cohomology-polyvectorfield},   we can  inductively   define  sequences $b_0, b_1, \cdots$ of holomorphic functions on $X$ and sequences $c_0, c_1,  \cdots$
      of elements in $A^{-1}$ such that $$\sum_{m=0}^{\infty}a_mt^m=\sum_{m=0}^\infty b_mt^m+ Q_f(\sum_{m=0}^\infty c_mt^m).$$ Therefore $\varphi$ is surjective.

Similarly, let $\alpha=\sum_{m\geq 0}a_m t^m\in \ker(\varphi)$. Then $\alpha=Q_f(\beta)$ for $\beta=\sum_{m\geq 0}b_m t^m\in A^{-1}[[t]]$. The leading term gives the relation
$
   a_0=\dbar_f b_0
$
which implies that $[a_0]=0$ in $\Jac(f)$ by Lemma \ref{cohomology-polyvectorfield}.  Therefore we can find $c_0\in \Gamma(X, T_X)$ such that $\dbar_f c_0=a_0$, and
$$
  \alpha- Q_f(c_0)\in \ker(\varphi)\cap t \Gamma(X, \OO_X)[[t]]
$$
Inductively, we find $c=\sum_{m\geq 0}c_m t^m\in \Gamma(X, T_X)[[t]]$ such that $\alpha=Q_f(c)$. Thus   $\mbox{Ker}(\varphi) =Q_f(\Gamma(X, T_X)[[t]])$.

   By similar but easier arguments, we conclude $H^k(\PV(X)((t)), Q_f)=0$ if $k\neq 0$.

   The proof for   $\mc H^{f, \Omega}\overset{\iso}{\rightarrow} H^*(\PV(X)((t)), Q_f)$ is completely similar.
\end{proof}

\begin{prop-defn} \label{prop-def-hodgefil}
For any $k\in \mathbb{Z}$, there are canonical isomorphisms of $\C[[t]]$-modules:
$$
    \mathcal{H}_{(-k)}^{f, \Omega}:=t^k \mathcal{H}_{(0)}^{f, \Omega}\overset{\iso}{\to}H^*\bracket{t^k \PV(X)[[t]], Q_{f}}\overset{\cong}{\to}H^*\bracket{t^k \Omega_X^*[[t]], d^-_f}=:\mathcal{H}_f^{(-k)},
$$
where the second isomorphism is induced by $\hat\Gamma_{\Omega_X}^-$. The   composition of the two isomorphisms is given by $[\alpha]\mapsto[\alpha\vdash \Omega_X]$. Here we have used
the notation  $ \mathcal{H}_{(-k)}^{f, \Omega}$ to distinguish it from  $\mathcal{H}_f^{(-k)}$.
The family $\{\mc H_{(-k)}^{f, \Omega}\}_{k\in \mathbb{Z}_{\geq 0}}$ form a filtration of  $\Hzero^{f, \Omega}$, called the  (semi-infinite) Hodge filtration following \cite{Saito-primitive}. The associated graded piece of the filtration $\mc F^k \Hzero^{f, \Omega}=\mc H_{(-k)}^{f, \Omega} $
   is given by
$$
   Gr_{\mc F}^k (\Hzero^{f, \Omega})\iso t^k \Jac(f).
 $$

\end{prop-defn}

\begin{proof} By Lemma \ref{isom-PV-Diff}, we have isomorphic filtered complexes
$$
  \hat\Gamma_{\Omega_X}^-: \bracket{\PV(X)[[t]], Q_f}\to \prod_{m\in \mathbb{Z}}   \bigoplus_{j\leq m, i\in \mathbb{Z}}\A^{i,j}(X)\otimes t^{m}, d^-_f).
$$
It is easy to see that it identifies the filtered sub-complexes
$$
  \hat\Gamma_{\Omega_X}^-: (\Gamma(X, \wedge^*T_X)[[t]], Q_f)\to (\Omega_X^*[[t]], d^-_f)
$$
passing to the cohomology yields the proposition.

 \end{proof}

Using the same $V_f$ and $\rho$ as in the previous subsection, we can define the following operators,  in analogy with  $T_\rho$, $R_\rho$.
\begin{defn}
   Let $Q:=\dbar + t\pa_\Omega$. We define   $T_\rho^t: \PV(X)((t))\to \PV_c(X)((t))$ and $R_\rho^t:\PV(X)((t))\to \PV(X)((t))$ by
    \begin{align}\label{TrhoRrho}
       T_\rho^t:=\rho +[Q, \rho]V_f {1\over 1+[Q, V_f]} \quad \mbox{and}\quad R_\rho^t:=(1-\rho) V_f {1\over 1+[Q, V_f]}.
   \end{align}
\end{defn}
 Clearly,  $R_\rho^t$   preserves $\PV_c(X)[[t]]$, and   $T_\rho^t(\PV(X)[[t]])\subset \PV_c(X)[[t]]$.
  Furthermore, Lemma \ref{lemmaforquasiiso} has the following generalization whose proof is completely similar.

\begin{lem}\label{lemmaforquasiisoWithdescendant}
 {\upshape $[Q_{f},  R_\rho^t]=1-T_\rho^t$  } as operators on $\PV(X)((t))$.
\end{lem}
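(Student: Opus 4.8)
The plan is to follow the proof of Lemma \ref{lemmaforquasiiso} essentially verbatim, replacing $\dbar$ by $Q=\dbar+t\pa_\Omega$ and $\dbar_f$ by $Q_f=\dbar_f+t\pa_\Omega$ throughout. The only genuinely new inputs are that the resolvent $\frac{1}{1+[Q,V_f]}$ still commutes with the coboundary operator, and that the extra $t\pa_\Omega$-contribution to $[Q_f,\rho]$ is precisely the one recorded in the new definition of $T_\rho^t$.

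First I would record the algebraic facts that drive the computation. The relation $[\{f,\mbox{-}\},V_f]=1$ on $\PV(X\setminus\mathrm{Crit}(f))$ is unchanged, since neither $\{f,\mbox{-}\}$ nor $V_f$ involves $t$; hence $[Q_f,V_f]=1+[Q,V_f]$. For the resolvent the key point is that $Q_f$ commutes with $[Q,V_f]$, which I would deduce cleanly from $Q_f^2=0$ (already established when $Q_f$ was introduced in \eqref{Qf-fiber}): by the graded Jacobi identity for the odd operator $Q_f$ one has $[Q_f,[Q_f,V_f]]=[Q_f^2,V_f]=0$, and since $[Q_f,V_f]=1+[Q,V_f]$ this yields $[Q_f,[Q,V_f]]=0$. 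Consequently $Q_f$ commutes with every power of $[Q,V_f]$, hence with $\frac{1}{1+[Q,V_f]}=\sum_{k\ge 0}(-1)^k[Q,V_f]^k$, which is well-defined on $\PV(X)((t))$ by the same degree-and-$t$-power truncation as before.

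Next I would compute $[Q_f,\rho]$ (the commutator with multiplication by the cutoff $\rho$). One has $[\dbar,\rho]=(\dbar\rho)\wedge$, $[\{f,\mbox{-}\},\rho]=\{f,\rho\}$, and $[\pa_\Omega,\rho]=\{\rho,\mbox{-}\}+(\pa_\Omega\rho)\wedge$. Since $\rho$ is a function, $\pa_\Omega\rho=0$; moreover $\{f,\rho\}\vdash\Omega_X=\pa f\wedge(\rho\,\Omega_X)=\rho\,(\pa f\wedge\Omega_X)=0$ because $\pa f\wedge\Omega_X$ is an $(n{+}1,0)$-form, so $\{f,\rho\}=0$. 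Therefore $[Q_f,\rho]=(\dbar\rho)\wedge+t\{\rho,\mbox{-}\}=[Q,\rho]$, which is exactly the operator entering the definition of $T_\rho^t$.

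Finally I would run the Leibniz expansion as in the original. Using that the resolvent is even and commutes with $Q_f$,
\[
[Q_f,R_\rho^t]=\Big([Q_f,1-\rho]\,V_f+(1-\rho)[Q_f,V_f]\Big)\frac{1}{1+[Q,V_f]}.
\]
Substituting $[Q_f,1-\rho]=-[Q,\rho]$ and $[Q_f,V_f]=1+[Q,V_f]$ gives
\[
[Q_f,R_\rho^t]=-[Q,\rho]\,V_f\frac{1}{1+[Q,V_f]}+(1-\rho)=1-\Big(\rho+[Q,\rho]\,V_f\frac{1}{1+[Q,V_f]}\Big)=1-T_\rho^t,
\]
as claimed. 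The only real obstacle beyond Lemma \ref{lemmaforquasiiso} is the commutation of the resolvent with $Q_f$; folding $\dbar$ and $\{f,\mbox{-}\}$ together into $Q_f$ and invoking $Q_f^2=0$ sidesteps any need to verify the individual identities $[\{f,\mbox{-}\},[\pa_\Omega,V_f]]=0$ and $[\pa_\Omega,[\pa_\Omega,V_f]]=0$ by hand.
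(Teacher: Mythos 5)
Your proof is correct and is essentially the paper's intended argument: the paper proves this lemma by declaring it ``completely similar'' to Lemma \ref{lemmaforquasiiso}, which is exactly the Leibniz expansion you carry out, resting on $[Q_f,V_f]=1+[Q,V_f]$ and $[Q_f,\rho]=[Q,\rho]$ (the latter via $\{f,\rho\}=0$ and $\pa_\Omega\rho=0$, just as the original proof implicitly uses $\{f,\rho\}=0$). The one point where you deviate---deducing that $Q_f$ commutes with the resolvent $\frac{1}{1+[Q,V_f]}$ from $Q_f^2=0$ and the graded Jacobi identity, instead of extending the paper's case-by-case checks $[P,[\dbar,V_f]]=0$ to the extra commutators involving $\pa_\Omega$---is a valid and cleaner shortcut, not a gap.
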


 As a direct consequence of Proposition \ref{cohomology-field} and Lemma \ref{lemmaforquasiisoWithdescendant}, we have
\begin{cor}\label{cohomology-field-cpt-supp}
The embeddings
$$
  \bracket{\PV_c(X)[[t]], Q_{f}}\into \bracket{\PV(X)[[t]], Q_{f}}, \quad \bracket{\PV_c(X)((t)), Q_{f}}\into \bracket{\PV(X)((t)), Q_{f}}
$$
are both quasi-isomorphisms.
\end{cor}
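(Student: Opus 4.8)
The plan is to deduce both statements at once from the single chain-homotopy identity of Lemma \ref{lemmaforquasiisoWithdescendant}, by reducing each quasi-isomorphism to the acyclicity of a quotient complex. Concretely, $Q_f$ preserves $\PV_c(X)$, so for each completion we have a short exact sequence of complexes
$$0\to \bracket{\PV_c(X)((t)), Q_f} \overset{\iota}{\into} \bracket{\PV(X)((t)), Q_f}\to \bracket{\PV(X)((t))/\PV_c(X)((t)), Q_f}\to 0,$$
and likewise with $((t))$ replaced by $[[t]]$. Since $\iota$ is a quasi-isomorphism if and only if the associated quotient complex is acyclic, it suffices to prove that $(\PV(X)((t))/\PV_c(X)((t)), Q_f)$ and $(\PV(X)[[t]]/\PV_c(X)[[t]], Q_f)$ have vanishing cohomology.

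To establish this acyclicity I would pass Lemma \ref{lemmaforquasiisoWithdescendant} to the quotient. The point is that the two operators behave oppositely on $\PV_c$: the operator $R_\rho^t$ preserves $\PV_c(X)[[t]]$ (and hence $\PV_c(X)((t))$), so it descends to a well-defined operator $\bar R_\rho^t$ on each quotient complex; whereas $T_\rho^t$ takes values in $\PV_c(X)[[t]]$, so it induces the \emph{zero} operator on the quotients. Applying the quotient map to the identity $[Q_f, R_\rho^t]=1-T_\rho^t$ therefore yields
$$[Q_f, \bar R_\rho^t]=1$$
on each quotient complex. This exhibits $\bar R_\rho^t$ as a contracting homotopy for the identity, so the cohomology of the quotient vanishes. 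Feeding this back into the long exact sequences shows that both embeddings $\iota$ induce isomorphisms on cohomology, which is the assertion. One may then use Proposition \ref{cohomology-field} to identify the resulting groups explicitly as $H^*(\PV_c(X)[[t]], Q_f)\iso \mc H^{f, \Omega}_{(0)}$ and $H^*(\PV_c(X)((t)), Q_f)\iso \mc H^{f, \Omega}$.

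Since all the analytic content is already packaged in Lemma \ref{lemmaforquasiisoWithdescendant}, there is no substantial obstacle here; the argument is purely homological. The only points that genuinely require care are (i) verifying that $R_\rho^t$ and $T_\rho^t$ interact with the two completions $[[t]]$ and $((t))$ as claimed, which follows directly from their definitions in \eqref{TrhoRrho} because $V_f$ and $(1+[Q,V_f])^{-1}$ are $t$-linear and the Neumann series $\sum_k(-1)^k[Q,V_f]^k$ terminates after finitely many terms; and (ii) confirming that $\bar R_\rho^t$ is genuinely well-defined on the quotient, i.e. that $R_\rho^t$ maps compactly supported polyvector fields to compactly supported ones, which is precisely the stated invariance of $\PV_c(X)$ under $R_\rho^t$. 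Once these are noted, the conclusion is immediate.
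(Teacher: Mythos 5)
Your proof is correct and is essentially the paper's own argument: the paper deduces the corollary directly from Lemma \ref{lemmaforquasiisoWithdescendant} in just the way Corollary \ref{quasi-isom} was deduced from Lemma \ref{lemmaforquasiiso}, namely that $T_\rho^t$ lands in compactly supported fields while $R_\rho^t$ preserves them, so the quotient complex is acyclic (Proposition \ref{cohomology-field} is invoked only to identify the resulting cohomology groups, as you do at the end). One small correction to your aside: the Neumann series $\sum_k(-1)^k[Q,V_f]^k$ does \emph{not} terminate --- $[Q,V_f]$ is not nilpotent, as the computation in Example \ref{example-An} shows --- but it does converge $t$-adically, because $[\dbar,V_f]$ is nilpotent and every occurrence of $[\pa_\Omega,V_f]$ carries a factor of $t$; this weaker statement still yields the $t$-linearity and the preservation of $[[t]]$ and $((t))$ that your argument actually needs.
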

Similarly, we will denote by $\iota_t$ the induced canonical isomorphisms
$$
\iota_t:  H^*(\PV_c(X)[[t]], Q_{f})\overset{\iso}{\longrightarrow} \mc H_{(0)}^{f, \Omega}, \quad \iota_t:  H^*(\PV_c(X)((t)), Q_{f})\overset{\iso}{\longrightarrow} \mc H^{f, \Omega}
$$
whose inverse can be realized by
$$
  \iota_t^{-1}= T_\rho^t.
$$

 The trace map $\Tr: \PV_c(X)\to \C$ is naturally extended to
 $\PV_c(X)[[t]] \to \C[[t]]$.  It  further descends to cohomologies, still denoted   as $\Tr$,
\begin{align}\label{higher-trace}
   \Tr: H^*(\PV_c(X)[[t]], Q_{f})\to \C[[t]]. %;\quad \Tr: H^*(\S_c(X), Q_{F})\to \C((t)).
\end{align}
The composition
\begin{align}\label{higher-residue-fiberversion}
   \widehat{\mbox{Res}}^f=\Tr\circ \iota^{-1}_t:  \mc H_{(0)}^{f, \Omega}  {\to}  \C[[t]]
\end{align}
provides a complex differential geometric interpretation  of the \emph{higher residue map} introduced by the third author \cite{Saito-lecutures, Saito-residue}.

Moreover, we can use the   trace map $\Tr: \PV_c(X)\to \C$ to define a pairing
  $$\PV_c(X)[[t]]\times \PV_c(X)[[t]]\to \C[[t]]\quad\mbox{given by }\,\, (\alpha_1 \nu_1(t), \alpha_2 \nu_2(t))\mapsto \nu_1(t)\nu_2(-t)\Tr(\alpha_1\alpha_2),$$
where $\alpha_1, \alpha_2\in\PV_c(X)$ and $\nu_1, \nu_2\in \C[[t]]$.
It   follows from Equations \eqref{Trproperties} that the differential  $Q_{f}$ is graded skew-symmetric: for   $\alpha_1\in\PV_c^{i, j}(X)$,
            $$
             \Tr\big(Q_{f}(\alpha_1 \nu_1(t))\wedge \overline{\alpha_2 \nu_2(t)}\big)=-(-1)^{|\alpha_1|}\Tr\big(\alpha_1 \nu_1(t) \wedge \overline{Q_{f}(\alpha_2 \nu_2(t))}\big),
            $$
where       the operator
$
  {}^- : \PV(X)((t))\to \PV(X)((t))$ is defined by
  $$
  \overline{\nu(t)\alpha}:= \nu(-t)\alpha, \quad \text{where}\ \alpha\in \PV(X), \nu(t)\in \C((t))
  $$
 Hence,  the above pairing   gives rise to a paring on the cohomology:
  $$H^*(\PV_c(X)[[t]], Q_{f})\times H^*(\PV_c(X)[[t]], Q_{f})\to \C[[t]].$$ Combining it with the isomorphism $\iota_t$,  we obtain a pairing
  \begin{align}\label{higher-residue-pairing-fiberversion}
     \mc K^f_{\Omega}: \Hzero^{f, \Omega}\times \Hzero^{f, \Omega}\to \C[[t]],
  \end{align}
called the  \emph{higher residue pairing}. We  remark that $\widehat{\mbox{Res}}^f=\mc K^f_{\Omega}(\cdot, 1)$.

The above constructions are naturally extended to $\PV_c(X)((t))$ and $\mc H^{f, \Omega}$ by replacing $\C[[t]]$ with $\C((t))$, and we have
$$
   \mc K^f_{\Omega}: \mc H^{f, \Omega}\times \mc H^{f, \Omega}\to \C((t)).
$$

\begin{rmk}
  The isomorphisms $\hat\Gamma_{\Omega_X}^+$ and $\hat\Gamma_{\Omega_X}^-$ are related via a pairing as follows.
 Consider  $$\PV_c(X)((t))\times \PV_c(X)((t))\to \C((t))$$ defined by
  $(\alpha_1 \nu_1(t), \alpha_2 \nu_2(t))\mapsto t^{1-n}\int_X \hat\Gamma_{\Omega_X}^+(\alpha_1 \nu_1(t))\wedge \overline{\Gamma_{\Omega_X}^-(\alpha_2 \nu_2(t))}$,
  which differs slightly from the pairing defined above by a sign. It is easy to check that this pairing descends to the cohomology,
which coincides with $\mc K^f_\Omega$.
\end{rmk}

\begin{eg} [$A_m$-singularity]\label{example-An} Let $X=\C$, $f={z^{m+1}\over m+1}$, and $\Omega_X=dz$. It is easy to compute that
$
    V_f={1\over f^\prime}\pa_z\wedge
$. Let $h(z)$ be a holomorphic function, and $C$ be a small circle centered at the origin inside which the value of the cut-off function $\rho$ is identically equal  to $1$. Then
{\upshape  \begin{align*}
   \widehat{\mbox{Res}}^f(h)&=\sum_{r\geq 0}(-t)^r \Tr\bracket{ \bracket{\dbar \rho}V_f \bbracket{Q, V_f}^r h} =\sum_{r\geq 0}(-t)^r \Tr\bracket{ \bracket{\dbar \rho}{\pa_z \wedge\over f^\prime} \bracket{{\pa\over \pa z}{1\over f^\prime}}^r h}\\
   &=\sum_{r\geq 0}(-t)^r \int_{\C}\bracket{ \bracket{\dbar \rho}{dz \over f^\prime} \bracket{{\pa\over \pa z}{1\over f^\prime}}^r h}=\sum_{r\geq 0}(-t)^r \oint_{C}\bracket{ {dz \over f^\prime} \bracket{{\pa\over \pa z}{1\over f^\prime}}^r h}\\
  &= \sum_{r\geq 0}(-t)^r\Res_{z=0}\bracket{{1\over f^\prime} \bracket{{\pa\over \pa z}{1\over f^\prime}}^r h dz}\\
&=\sum_{r\geq 0}(-t)^r \prod_{k=0}^{r-1}(m+k(m+1))\Res_{z=0}\bracket{hdz\over z^{r(m+1)+m}}.
\end{align*}
}
In particular, the leading term with $r=0$ gives the ordinary residue $\displaystyle \Res_{z=0}{hdz\over f^\prime}$.
\end{eg}

\subsubsection{Opposite filtrations}
%%\subsubsection{Hodge filtration}

%%%\begin{rmk}
 %%% It will be very interesting to compare this set-up of smooth category with the algebraic set-up of Steenbrink {\color{red}{(add a reference here)}}.
%%%\end{rmk}

Following Givental \cite{Givental-quantization}, we equip
 $\PV_c(X)((t))$ with a  (graded) symplectic pairing
\begin{align*}
  \omega(\alpha_1 \nu_1(t), \alpha_2 \nu_2(t)):=\Res_{t=0}(\nu_1(t)\nu_2(-t)dt)\Tr(\alpha_1  \alpha_2),% \quad \mbox{for }\,\,\alpha,\beta\in \PV_c(X)\mbox{ and } f, g\in \C((t)).
\end{align*}
 where $\alpha_1,\alpha_2\in \PV_c(X)$   and $\nu_1, \nu_2\in \C((t))$.
Clearly,   $t^k\PV_c(X)[[t]]$ is an isotropic  subspace of $\PV_c(X)((t))$ with respect to $\omega$, for any $k\in \mathbb{Z}_{\geq 0}$.
This gives rise to a paring on the cohomology. Combing it with the isomorphism $\iota_t$, we obtain a symplectic paring:
  $$\omega: \mc H^{f, \Omega}\times \mc H^{f, \Omega}\to \C \quad\mbox{given by }\,\, \omega(s, s'):= \Res_{t=0}\mc K^f_{\Omega}(s, s')dt.$$
With respect to this pairing, all  the Hodge filtered  pieces  $\mc H^{f, \Omega}_{(-k)}$ are   isotropic  subspaces of $\mc H^{f, \Omega}$.

\begin{lem-defn}\label{opposite filtration} Let $\mc L$ be a linear  subspace of $\mc H^{f, \Omega}$ that satisfies  both  $(1)\,\,\mc H^{f, \Omega}=\Hzero^{f, \Omega}\oplus \mc L $ and $(2) \,\,  t^{-1}\mc L\subset \mc L.$ Let $B:=\Hzero^{f, \Omega}\cap t\mc L$. Then the following are equivalent:
$$(3)\,\,\,\omega(\mc L, \mc L)=0; \qquad (3)'\,\,\, \mc K^f_\Omega(\mc L, \mc L)\subset t^{-2}\C[t^{-1}]; \qquad (3)''\,\,\, \mc K^f_\Omega(B, B)\subset \C.$$
 If $\mc L$ further satisfies $(3)$, $(3)'$ or $(3)''$,  we call it an
  \emph{opposite filtration} of $\mc H^{f, \Omega}$.
\end{lem-defn}
\begin{proof}
   Since $\mc L$ satisfies (1) and (2), we have $\mc H^{f, \Omega}=t^k\mc H^{f, \Omega}=\mc H_{(-k)}^{f, \Omega}\oplus t^k\mc L, \forall k\in \mathbb{Z}_{\geq 0}$. It leads to the decomposition $\mathcal H_{(-k)}^{f, \Omega}=\mc H^{f, \Omega}_{(-k-1)}\oplus (\mathcal H_{(-k)}^{f, \Omega}\cap t^{k+1}\mc L)$ and induces an isomorphism of vector spaces
     $$
   \mc H_{(-k)}^{f, \Omega}/\mc H_{(-k-1)}^{f, \Omega}\iso  \mc H_{(-k)}^{f, \Omega}\cap t^{k+1}\mc L
$$
for any $k\in \mathbb{Z}_{\geq 0}$. In particular, we have $B=  \Hzero^{f, \Omega}\cap t\mc L\subset \Hzero^{f, \Omega}$ and $B\cong  \mc H_{(0)}^{f, \Omega}/\mc H_{(-1)}^{f, \Omega}\cong \Jac(f)$.
As a consequence, we have
   $$\mc H^{f, \Omega}=B((t)), \quad \Hzero^{f, \Omega}=B[[t]], \quad\mbox{and } \,\mc L=t^{-1}B[t^{-1}].$$
Clearly,  $(3)''\Rightarrow (3)'$, and $(3)'\Rightarrow (3)$. We show that $(3)\Rightarrow (3)''$.

Assuming $(3)$, we only need to show that $\mc K^f_\Omega(B, B)\subset \C[t^{-1}]$ since $\mc K^f_\Omega(B, B) \subset \C[[t]]$. We prove it by contradiction. Let $s, s'\in B$ such that $\mc K^f_{\Omega}(s, s')=\sum_{i=-N}^\infty a_i t^i$ with $a_k\neq 0$ for some $k>0$.
This would imply that $t^{-k}s, t^{-1}s'\in \mc L$ and $\omega(t^{-k}s, t^{-1}s')=-a_k\neq 0$,  contradicting to  the  hypothesis (3).
\end{proof}

\begin{rmk} The notion of opposite filtration, introduced by Deligne \cite{Deligne}, was used by M. Saito for the Hodge structures of the singularity \cite{Scherk-Steenbrink,Steenbrink} to construct primitive forms. However, in the present paper, we will use it for Deﬁnition 2.16 since in the parallel compact Calabi-Yau case, the analogue opposite filtration is
%used to define the correlation functions in BCOV theory \cite{Si-BCOV}. In such case an opposite filtration is
equivalent to a usual splitting of the Hodge filtration on $H^*(X)$ (see for example \cite[Lemma 5.2.2]{Si-BCOV}). Therefore we will keep the terminology of opposite filtration with respect to the semi-infinite Hodge filtration $\{\mc H_{(-k)}^{f, \Omega}\}$, which is sometimes called a polarization in the context of symplectic geometry \cite{Givental-symplectic}.

\end{rmk}

   As we will see in section \ref{sectgeneralconstruction}, the space of primitive forms is essentially identified with the space of appropriate  opposite filtrations.

\section{Primitive forms}

\subsection{Frame setup}\label{subsecTechCond}  We recall some notions from  \cite{Saito-lecutures}, \cite{Saito-Takahashi}.
Recall that   $X\subset \mathbb{C}^n$ is a Stein domain and $f: X\rightarrow \mathbb{C}$ is a holomorphic function  with finite critical points.
\begin{defn}\label{def-fourtuple}
 We call a 4-tuple    $(Z, S, p, F)$ a frame associated to $(X, f)$, if the following are satisfied.
\begin{enumerate}
  \item The space $Z$ is a Stein domain in  $\mathbb{C}^{n+\mu}$,  $S$ is a Stein open neighborhood    of the origin $\mathbf{0}$ of $\C^\mu$,
             $p: Z\to S$ is a Stein map with $p^{-1}(\mathbf{0})\cong X$, and  $F: Z\rightarrow \mathbb{C}$ is a holomorphic function on $Z$.

   %%%     the origin $\mathbf{0}$ of $\C^\mu$

  %%%$\mathbb{C}^{n}\times \mathbb{C}^\mu$, and $\mc W: Z\rightarrow \mathbb{C}$ is a holomorphic function on $Z$.
  %%%   The map $p$ is the restriction of    the  natural projection  $ \mathbb{C}^n\times \mathbb{C}^\mu \rightarrow \mathbb{C}^\mu$ to $Z$, whose image is given by %%%a  Stein open neighborhood  $S=p(Z)$ of
   %%%     the origin $\mathbf{0}$ of $\C^\mu$. The fiber $p^{-1}(\mathbf{0})$ is given by $X\times \{0\}$.

      \item The restriction $p|_{\mc C(F)}$ of $p$ to the relative critical set $\mc C(F)$ (see Equation \eqref{relative-criticalset})   is a proper morphism.

   \item The function  $F$ is a universal unfolding of $f$ in the following sense.
      \begin{enumerate}
        \item The restriction $F|_{p^{-1}(\mathbf{0})}$ coincides with the composition $p^{-1}(\mathbf{0})\overset{\cong}{\rightarrow} X\overset{f}{\rightarrow}\C$.
        \item The     Kodaira-Spencer map
{\upshape\begin{align}\label{KSmap}
 \mbox{KS}: \mc T_{S}\to p_*\OO_{\mc C(F)}, %%= p_*\big(\OO_{Z}\big / (\pa_{z_1}F, \cdots, \pa_{z_n}F)\big),
\end{align}}
to be described below, is an isomorphism of sheaves of $\mc O_{S}$-modules.  Let $\mc T_{S}$  (resp. $\mc T_{Z}$)  denote the holomorphic tangent sheaf of $S$  (resp. ${Z}$). For any open set $U\subset S$ and any   $V\in \Gamma\bracket{U,\mc T_{S}}$,  we take a lifting $\widetilde V\in \Gamma\bracket{p^{-1}(U), \mc T_{Z}}$ with $p_*(\widetilde V)=V$. Then the Kodaira-Spencer map is defined by  {\upshape $\mbox{KS}|_{U}(V):=  \widetilde{V} (F)|_{\mc C(F)}$}, which  is independent of the choice of the lifting $\widetilde V$.
     \end{enumerate}
    \end{enumerate}
\end{defn}
 In practice, it is convenient to fix a
     projection $\pi_X: Z\to X$ (where we make $Z$ smaller if necessary), so that there is an embedding $Z\hookrightarrow X\times S\subset \C^n\times \C^\mu$.
      In particular, the fiber   $p^{-1}(\mathbf{0})$ is    given by $X\times \{0\}$.
       With respect to such embedding,    the Stein map $p$ and the projection $\pi_X$ are the restriction of the natural projections $X\times S\to S$ and $X\times S\to X$ to $Z$, respectively. In terms of coordinates $(\mathbf{z}, \mathbf{u})$ of $\C^n\times\C^\mu$, the relative critical set $\mc C(F)$
       is a subvariety  of $Z$  defined by
    \begin{align}\label{relative-criticalset}
       \mc C(F):=\{\pa_{z_1}F= \cdots=\pa_{z_n}F=0\},\quad\mbox{and}\,\,
    \OO_{\mc C(F)}= \OO_{Z}\big / (\pa_{z_1}F, \cdots, \pa_{z_n}F) .
    \end{align}

We can always achieve a frame   $(Z, S, p, F)$  (see e.g. \cite{Saito-Takahashi} for a precise choice when $f: X\to \C$ has an isolated critical point). Furthermore, we have
\begin{prop}[\cite{Saito-lecutures}]
  The structure sheaf
   $\OO_{\mc C({F})}$ is flat over $\OO_S$, so that    $p_*\OO_{\mc C(F)}$ is a locally free sheaf on $S$ of
  rank $\dim_{\mathbb{C}}\Jac(f)$.
\end{prop}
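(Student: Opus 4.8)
The plan is to reduce the global statement to a local, fiberwise question in commutative algebra and then invoke the local criterion of flatness by fibres (the \emph{crit\`ere de platitude par fibres}). First I would observe that the morphism $p|_{\mc C(F)}\colon \mc C(F)\to S$ is finite: it is proper by hypothesis (2) of Definition \ref{def-fourtuple}, and its fibres are finite sets, since the fibre over $s\in S$ is the critical set of the restriction $F_s:=F|_{p^{-1}(s)}$, which---after shrinking $S$ around the origin if necessary---is an unfolding of $f$ with only isolated critical points. Consequently $p_*\OO_{\mc C(F)}$ is a coherent $\OO_S$-module, and it suffices to prove that $\OO_{\mc C(F)}$ is flat over $\OO_S$ at each point $z_0\in\mc C(F)$: a finite module that is flat over the regular (hence reduced and connected) base is automatically locally free, and its rank may then be read off on any single fibre.

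The heart of the argument is the regularity of the sequence $\pa_{z_1}F,\ldots,\pa_{z_n}F$. Since we may take $Z$ to be an open subset of $X\times S$ with $p$ the restriction of the projection (as arranged after the embedding $Z\hookrightarrow X\times S$), the map $p$ is a submersion, so $\OO_{Z,z_0}$ is flat over $\OO_{S,s_0}$ where $s_0:=p(z_0)$. By definition $\OO_{\mc C(F),z_0}=\OO_{Z,z_0}/(\pa_{z_1}F,\ldots,\pa_{z_n}F)$. Restricting these $n$ functions to the fibre $p^{-1}(s_0)$ yields the partial derivatives $\pa_{z_1}F_{s_0},\ldots,\pa_{z_n}F_{s_0}$, whose common zero locus is the critical set of $F_{s_0}$; because this set is isolated at $z_0$, the localized Jacobian ring is Artinian, and hence these restrictions form a system of parameters, and therefore a regular sequence, in the regular local ring $\OO_{p^{-1}(s_0),z_0}$ of dimension $n$. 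This fiberwise Koszul regularity is exactly the input furnished by the de Rham/Koszul argument already used in Lemma \ref{cohomology-polyvectorfield}, applied here to each fibre rather than to the central one.

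The relative local criterion of flatness then upgrades this: given that $\OO_{Z,z_0}$ is $\OO_{S,s_0}$-flat and that the images of $\pa_{z_1}F,\ldots,\pa_{z_n}F$ form a regular sequence on the closed fibre, the elements $\pa_{z_i}F$ themselves form a regular sequence in $\OO_{Z,z_0}$ and the quotient $\OO_{\mc C(F),z_0}$ is flat over $\OO_{S,s_0}$. (Alternatively, one reaches the same conclusion by miracle flatness: the quotient is a complete intersection, hence Cohen--Macaulay of dimension $\mu$, mapping finitely onto the regular base of dimension $\mu$ with zero-dimensional fibres.) Combining flatness with the finiteness of $p|_{\mc C(F)}$ from the first step shows that $p_*\OO_{\mc C(F)}$ is a finite flat, hence locally free, $\OO_S$-module. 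For the rank, I would evaluate on the central fibre over $\mathbf{0}\in S$, where $F_{\mathbf{0}}=f$ and the restriction of $\OO_{\mc C(F)}$ is precisely $\OO_X/(\pa_{z_1}f,\ldots,\pa_{z_n}f)=\Jac(f)$; flatness forces this rank $\dim_{\C}\Jac(f)$ to be constant over the connected base $S$.

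The step I expect to be the main obstacle is making the isolatedness of the fibre critical sets \emph{uniform}, i.e.\ justifying the shrinking of $S$ so that every nearby fibre $F_s$ has only isolated critical points, with none escaping towards the boundary of $Z$. This is exactly what properness of $p|_{\mc C(F)}$ (hypothesis (2)) is designed to supply: it simultaneously guarantees the finiteness of $p|_{\mc C(F)}$ and the fiberwise Koszul regularity on a common neighborhood of $z_0$. Once this uniformity is secured, the commutative-algebra engine runs formally and the remaining claims are bookkeeping.
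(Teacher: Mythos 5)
The paper itself offers no proof of this proposition: it is imported wholesale from Saito's unpublished lecture notes \cite{Saito-lecutures}, so there is no internal argument to compare yours against. Judged on its own, your proof is correct and is the standard one: finiteness of $p|_{\mc C(F)}$, a fiberwise regular sequence coming from isolatedness of the critical points, the local criterion of flatness (equivalently, miracle flatness for a complete intersection over a regular base), and then finite $+$ flat $\Rightarrow$ locally free, with the rank computed on the central fiber, where $\OO_{\mc C(F)}$ restricts to $\OO_X/(\pa_{z_1}f,\dots,\pa_{z_n}f)$ and Cartan's Theorem B on the Stein manifold $X$ identifies its space of global sections with $\Jac(f)$. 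All the commutative algebra you invoke is valid for the convergent power series rings involved, and flatness of $\OO_{Z,z_0}$ over $\OO_{S,s_0}$ does follow from $p$ being (locally) the projection $X\times S\to S$.

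One soft spot should be repaired. In your first paragraph you justify finiteness of the fibers of $p|_{\mc C(F)}$ by ``shrinking $S$ so that every $F_s$ has only isolated critical points''; that assertion is itself what needs proof, and shrinking $S$ changes the frame that the proposition speaks about. The correct mechanism is the one you only gesture at in your closing paragraph: hypothesis (2) of Definition \ref{def-fourtuple} makes $\mc C(F)\cap p^{-1}(s)$ a \emph{compact} analytic subset of the Stein fiber $p^{-1}(s)$, and a compact analytic subset of a Stein space is a finite set of points. With this observation, every fiber (not merely fibers near $\mathbf{0}$) has a finite critical set, no shrinking is required, and the remainder of your argument --- the system-of-parameters/regular-sequence step in the regular local ring $\OO_{p^{-1}(s_0),z_0}$ and the \emph{crit\`ere de platitude par fibres} --- runs verbatim.
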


 \begin{defn}
The \emph{Euler vector field} associated to $(Z,  S, p,  F)$ is defined by %the global section
{\upshape \begin{align}\label{def-Euler-field}
  E:=\mbox{KS}^{-1}([F])\in \Gamma(S, \mc T_{S}).
  \end{align}}
%%and the \emph{identity vector} is defined by $$    \delta_0=KS^{-1}(1) \in T_{\mc M} $$
\end{defn}

\noindent\textbf{Notations: } In this section, we will study properties in the unfolding  case parallel to those in the previous section. Therefore, we will use the same notations
 as in section 2 to denote the relevant operators in the unfolding case, such as $\rho, T_\rho, R_\rho, T^t_\rho, \iota, \iota_t, \Tr$, etc..
\subsection{Relative polyvector fields}\label{subset-relativepoly} Given a frame $(Z, S, p, F)$,  we denote by
 $T_{Z/S}$ the fiberwise holomorphic tangent bundle on $Z$ relative to $S$, and consider the space of smooth
  relative polyvector fields
  $$
    \PV(Z/S)=\!\!\bigoplus_{0\leq i\leq n\atop 0\leq j\leq n+\mu}\!\! \PV^{i,j}(Z/S),\quad\mbox{where}\quad
\PV^{i,j}(Z/S):=\mc A^{0,j}\bracket{Z, \wedge^i T_{Z/S}}.
$$
Similarly, we consider the space of smooth relative differential forms
$$
\mathcal{A}({Z/S})=\!\!\bigoplus_{0\leq i\leq n\atop 0\leq j\leq n+\mu}\!\! \mc A^{i,j}(Z/S), \quad \mc A^{i, j}(Z/S)=\mc A^{0,j}\bracket{Z, \Omega^i_{Z/S} }.
$$
We note that  $T_{Z/S}$ contains $n$-directions tangent to the fiber of $p$, whereas $\mc A^{0,1}$ contains $(n+\mu)$-directions of anti-holomorphic cotangent vectors.
We   fix a family of  holomorphic volume form (which is nowhere vanishing)
\begin{align}\label{relativevolumeform}
   \Omega_{Z/S} \in H^0(Z, \Omega^n_{Z/S}).
 \end{align}
 Contraction with $\Omega_{Z/S} $   induces an isomorphism
   $$
   \Gamma_{\Omega}:  \PV^{i,j}(Z/S)\rightarrow  \mc A^{n-i,j}({Z/S})
   $$
of $C^\infty(Z)$-modules.
Consequently, we   have a
  relative version of the  operators
$$
  \dbar:  \PV^{i,j}(Z/S)\to  \PV^{i,j+1}(Z/S), \quad   \pa_{\Omega}:  \PV^{i,j}(Z/S)\to  \PV^{i-1,j}(Z/S),
$$
where $\pa_{\Omega}$ is defined by
       $$ \pa_{\Omega} (\alpha):=\Gamma_{\Omega}^{-1}\big(\pa(\Gamma_{\Omega}(\alpha))\big).$$
\begin{eg}
In     coordinates   $(\mathbf{z}, \mathbf{u})$   of $\mathbb{C}^n\times \mathbb{C}^\mu$,
  every     $\alpha \in  \PV^{i,j}(Z/S)$ is of the form
     $$
     \alpha=\sum_{I, J_1, J_2}\alpha^I_{J_1, J_2}(\mathbf{z}, \mathbf{u})d\bar z_{J_1}d\bar u_{J_2}\otimes {\pa \over \pa z_I}
     $$
  where the summation is over  subsequences $I$, $J_1$, $J_2$  of $[1, \cdots, n], [1, \cdots, n]$ and $[1, \cdots, \mu]$, respectively, with  $|I|=i$ and $|J_1|+|J_2|=j$.
The volume form can be expressed by
$$
\Omega_{Z/S}={1\over \lambda(\mathbf{z}, \mathbf{u})}{dz_1\wedge \cdots \wedge dz_n }.
$$
Then we have
          $$\pa_\Omega  \alpha = \sum_{k=1}^n\sum_{I, J_1, J_2}\lambda\pa_{z_k} \big({{\alpha^I_{J_1, J_2}\over \lambda}}\big){\pa\over\pa\pa_k} \big(d\bar z_{J_1}d\bar u_{J_2}\otimes {\pa \over \pa z_I}\big) \quad\mbox{and}\quad \dbar  \alpha=  \sum_{I, J_1, J_2}(\dbar   {\alpha^I_{J_1, J_2}}) d\bar z_{J_1}d\bar u_{J_2}\otimes {\pa \over \pa z_I}.$$
\end{eg}

The operator $\pa_\Omega$ induces a
  bracket $\{\mbox{-},\mbox{-}\}$ on $\PV(Z/S)$,  defined in the same way as formula \eqref{bracked}. We define the twisted coboundary operator  in the relative setting by
$$
   \dbar_{F}=\dbar+\{F, \mbox{-}\}: \PV(Z/S)\rightarrow \PV(Z/S).
$$

We also consider the subspace
  $$\PV_c(Z/S):=\{\alpha\in \PV(Z/S)~\big|~ p|_{\scriptsize\mbox{supp}(\alpha)} \mbox{ is proper}\}.$$
 Clearly, all the operators $\dbar, \pa_\Omega, \{\mbox{-}, \mbox{-}\},  \dbar_{F}$ preserve the subspace $\PV_c(Z/S)$.
 Since $p: Z\to S$ is a Stein map, we can obtain  sheaf-theoretic version of similar propositions in the previous section as follows.

Let $(\mathbf{z}, \mathbf{u})$ be the   coordinates of $\mathbb{C}^n\times \mathbb{C}^\mu\supset Z$. Consider the following smooth operator $V_{F}$ of degree $-1$ on $\PV((Z\setminus \mc C(F))/S)$:
    $$V_{F} = {1\over \sum\limits_{j=1}^n|\pa_{z_j}F|^2}\sum_{i=1}^n ({\overline{\pa_{z_i} F}}) \pa_{i}\wedge.$$
Choose a smooth function $\rho \in C^\infty({Z})$     such that    $\rho|_{U_1}\equiv 1$ and $\rho|_{Z\setminus U_2}\equiv 0$ for
       some  open neighborhoods  $U_1, U_2$  of $\mc C(F)$ in $Z$ with the properties  (a) $U_1\subset \overline{U_1}\subset U_2\subset \overline{U_2}\subset Z$, and (b) $p|_{\overline{U_2}}$ is proper. %%; (c) the boundaries $\pa U_1$, $\pa U_2$ are both smooth in $Z$.
   Then   the   operators  %$T_\rho, R_\rho$  (where we use the same notations as in formula \eqref{TrhoRrho} by abuse of notation), defined by
     $$T_\rho:=
    \rho+ (\dbar\rho)V_{F} {1\over 1+[\dbar, V_{F}]}  \quad\mbox{and}\quad R_\rho:=(1-\rho)  V_{F}{1\over 1+[\dbar, V_{F}]}$$ are both defined on the whole space
   $\PV(Z/S)$. Furthermore, we have
   $$
   T_\rho(\PV(Z/S))\subset \PV_c(Z/S),
   $$
such that
   \begin{align}\label{dbarRrho}
      \bbracket{\dbar_{F},R_\rho}=1-T_\rho.
  \end{align}

Let  $R^\bullet p_*\bracket{\PV_c(Z/S), \dbar_{F}}$ be  the sheaf of $\OO_S$-modules on $S$ obtained by the sheafification of     $\{H^*\bracket{\PV_c(p^{-1}(U)/U), \dbar_{F}}~|~ U\subset S \mbox{ is open Stein}\}$. Similarly, we obtain  $R^\bullet p_*\bracket{\PV(Z/S), \dbar_{F}}$.

\begin{prop}\label{prop-PVZoverS}
There is a natural  isomorphism of sheaves
$$
  p_*\OO_{\mc C(F)}\overset{\iso}{\longrightarrow}
       R^\bullet p_*\bracket{\PV(Z/S), \dbar_{F}}.
$$
\end{prop}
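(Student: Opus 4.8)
The plan is to establish the relative, sheaf-theoretic analogue of Lemma \ref{cohomology-polyvectorfield} by running the same double-complex spectral sequence argument over Stein opens of $S$ and then sheafifying. First I would note that the assertion is local on $S$: since $p$ is a Stein map, $p^{-1}(U)$ is Stein for every open Stein $U \subset S$, so it suffices to compute $H^*\bracket{\PV(p^{-1}(U)/U), \dbar_{F}}$ for such $U$ and to check naturality in $U$. The candidate map sends a holomorphic $g$ to its class $[g]$; this is well defined because $\dbar_{F}g = \dbar g + \{F, g\} = 0$ for holomorphic $g$, the bracket of two functions vanishing.

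I would then regard $\PV(Z/S)$ as a double complex with vertical operator $\dbar$ and horizontal operator $\{F, \mbox{-}\}$, filtered by exterior-power degree,
$$
\mc F^k \PV(Z/S) = \bigoplus_{i \leq n-k} \PV^{i,*}(Z/S), \qquad k \in \Z.
$$
This filtration is bounded ($0 \leq i \leq n$), so the associated spectral sequence converges. Taking vertical cohomology first, $\wedge^i T_{Z/S}$ is a holomorphic bundle on $Z$ and $p^{-1}(U)$ is Stein, so Dolbeault's theorem gives vanishing of $H^{0,j}_{\dbar}$ for $j>0$ and $H^0(p^{-1}(U), \wedge^i T_{Z/S})$ for $j=0$. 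Hence the $E_1$-term is the holomorphic relative Koszul complex
$$
0 \to H^0(p^{-1}(U), \wedge^n T_{Z/S}) \to \cdots \to H^0(p^{-1}(U), \OO_Z) \to 0
$$
with differential $\{F, \mbox{-}\}$, which is contraction against $\sum_k (\pa_{z_k}F)\, dz_k$.

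The core step is to show this Koszul complex is exact except at the $\wedge^0 = \OO$ term, where it computes $\OO_Z/(\pa_{z_1}F, \dots, \pa_{z_n}F) = \OO_{\mc C(F)}(p^{-1}(U))$. By the properness of $p|_{\mc C(F)}$ in Definition \ref{def-fourtuple}, the fibers of $\mc C(F) \to S$ are finite, so $\mc C(F)$ has dimension $\mu = \dim S$; since $\dim Z = n+\mu$, the ideal $(\pa_{z_1}F, \dots, \pa_{z_n}F)$ has codimension exactly $n$ in the regular (hence Cohen--Macaulay) ring $\OO_Z$. Thus the $n$ relative partials form a regular sequence, and the de Rham lemma of \cite{Saito-deRham} applies to give exactness of the complex in all positive degrees. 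Consequently $E_2$ is concentrated in a single spot, the spectral sequence degenerates, and we obtain $H^0\bracket{\PV(p^{-1}(U)/U), \dbar_{F}} \iso \OO_{\mc C(F)}(p^{-1}(U))$ with all higher cohomology vanishing.

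Finally, these identifications are all induced by $g \mapsto [g]$ and so are compatible with restriction to smaller Stein opens; sheafifying the presheaf $U \mapsto H^*\bracket{\PV(p^{-1}(U)/U), \dbar_{F}}$ therefore yields the desired isomorphism $p_*\OO_{\mc C(F)} \iso R^\bullet p_*\bracket{\PV(Z/S), \dbar_{F}}$, the flatness of $\OO_{\mc C(F)}$ over $\OO_S$ cited from \cite{Saito-lecutures} merely confirming that the target is the expected locally free sheaf. I expect the main obstacle to be this core step: verifying that the properness axiom forces the relative partials to be a regular sequence \emph{uniformly over} $S$ rather than only on each fiber, so that the $E_2$-degeneration and the top-cohomology identification hold at the level of $\OO_S$-modules and not just pointwise.
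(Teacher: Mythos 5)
Your proposal is correct and takes essentially the same route as the paper: the paper's entire proof is that the statement ``follows immediately from the sheafification of Lemma \ref{cohomology-polyvectorfield}'', and your argument is precisely that lemma's filtration/spectral-sequence/Koszul computation carried out relatively over Stein opens $p^{-1}(U)$ and then sheafified. The one detail you add explicitly---that properness of $p|_{\mc C(F)}$ forces the relative partials $\pa_{z_1}F,\dots,\pa_{z_n}F$ to be a regular sequence uniformly over $S$, so the de Rham lemma applies at the level of $\OO_S$-modules---is exactly what the paper leaves implicit.
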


\begin{proof}
  The statement follows immediately from the sheafification of Lemma \ref{cohomology-polyvectorfield}.
\end{proof}
\begin{prop} The natural embedding  $\bracket{\PV_c(Z/S), \dbar_{F}}\into \bracket{\PV(Z/S), \dbar_{F}}$ is quasi-isomorphic, inducing a canonical isomorphism of sheaves
 $$
    \iota:  R^\bullet p_*\bracket{\PV_c(Z/S), \dbar_{F}}\overset{\iso}{\longrightarrow}p_*\OO_{\mc C(F)}
 $$
whose inverse morphism  can be represented by $T_\rho$.
\end{prop}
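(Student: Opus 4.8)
The plan is to reproduce, in the relative and sheafified setting, the exact argument that produced Corollary \ref{quasi-isom} in the absolute case, and then to compose with Proposition \ref{prop-PVZoverS}. The entire proof rests on the homotopy identity \eqref{dbarRrho}, namely $\bbracket{\dbar_{F}, R_\rho} = 1 - T_\rho$ on $\PV(Z/S)$, together with the two structural facts recorded just above it: that $T_\rho$ maps $\PV(Z/S)$ into $\PV_c(Z/S)$, and that $R_\rho$, being assembled from the local operators $V_{F}$, $\dbar$ and multiplication by $1-\rho$, does not enlarge supports and hence preserves $\PV_c(Z/S)$.

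First I would pass to the quotient complex $\bracket{\PV(Z/S)/\PV_c(Z/S), \dbar_{F}}$. Since $T_\rho$ lands in $\PV_c(Z/S)$, it induces the zero operator on the quotient, while $R_\rho$ descends to an operator $\bar R_\rho$. Reducing \eqref{dbarRrho} modulo $\PV_c(Z/S)$ then gives $\bbracket{\dbar_{F}, \bar R_\rho} = 1$, so the identity on the quotient complex is null-homotopic and the quotient is acyclic. From the long exact sequence attached to $0 \to \PV_c(Z/S) \to \PV(Z/S) \to \PV(Z/S)/\PV_c(Z/S) \to 0$, the inclusion $\bracket{\PV_c(Z/S), \dbar_{F}} \into \bracket{\PV(Z/S), \dbar_{F}}$ is a quasi-isomorphism.

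Next I would sheafify this over $S$. Because $\rho$, $V_{F}$, $T_\rho$ and $R_\rho$ are defined globally on $Z$ --- the cutoff $\rho$ existing thanks to the hypothesis that $p|_{\overline{U_2}}$ be proper --- the homotopy above is a homotopy of complexes of presheaves $U \mapsto \PV(p^{-1}(U)/U)$ over Stein opens $U \subset S$. Each $p^{-1}(U)$ is again Stein since $p$ is a Stein map, so the quasi-isomorphism holds sectionwise and therefore after sheafification, yielding an isomorphism $R^\bullet p_* \bracket{\PV_c(Z/S), \dbar_{F}} \iso R^\bullet p_* \bracket{\PV(Z/S), \dbar_{F}}$. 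Composing with the isomorphism $R^\bullet p_* \bracket{\PV(Z/S), \dbar_{F}} \iso p_* \OO_{\mc C(F)}$ of Proposition \ref{prop-PVZoverS} defines the asserted map $\iota$.

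Finally, to see that $T_\rho$ represents $\iota^{-1}$: by \eqref{dbarRrho} the operator $T_\rho$ is chain-homotopic to the identity on $\PV(Z/S)$ via $R_\rho$, hence induces the identity on $R^\bullet p_* \bracket{\PV(Z/S), \dbar_{F}}$; since $T_\rho$ factors through $\PV_c(Z/S)$, the composite of the map it induces with the inclusion isomorphism is the identity, forcing $T_\rho$ to induce the inverse of that inclusion. Transporting through Proposition \ref{prop-PVZoverS}, one finds that $g \mapsto \bbracket{T_\rho(g)}$ realizes $\iota^{-1}$. I expect the only point genuinely requiring care to be the sheafification step --- verifying that the globally defined homotopy commutes with restriction to $p^{-1}(U)$ and with sheafification, and that properness of $p$ keeps $T_\rho\bracket{\PV(Z/S)} \subset \PV_c(Z/S)$ compatibly over $S$ --- whereas the underlying algebraic identity is already in hand from \eqref{dbarRrho}.
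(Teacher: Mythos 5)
Your proposal is correct and follows the same route as the paper: the paper's proof likewise deduces acyclicity of the quotient complex $\PV(p^{-1}(U)/U)/\PV_c(p^{-1}(U)/U)$ from the homotopy identity $\bbracket{\dbar_F, R_\rho}=1-T_\rho$ over each Stein open $U\subset S$, and then composes with Proposition \ref{prop-PVZoverS} to obtain $\iota$ and its representation by $T_\rho$. Your write-up merely spells out the steps (long exact sequence, chain-map property of $T_\rho$, compatibility with restriction) that the paper leaves implicit.
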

\begin{proof}
Formula \eqref{dbarRrho} implies that $H^*\big(\PV(p^{-1}(U)/U)/\PV_c(p^{-1}(U)/U), \dbar_{F}\big)=0$ for any open (Stein) subset $U$ of $S$. The first statement follows, and the second statement becomes a direct consequence of   Proposition  \ref{prop-PVZoverS}.
\end{proof}

In analogy with the descriptions in section \ref{polyvector-descendant}, we consider the complex $\PV(Z/S)((t))$ (resp. $\PV_c(Z/S)((t))$) of
$\PV(Z/S)$ (resp. $\PV_c(Z/S)$)-valued Laurent series in the descendant variable $t$, with respect to the following  twisted coboundary operator
$$
   Q_{F}:=\dbar_{F}+t\pa_\Omega.
$$
  %On the $\OO_S$-modules of  descendant relative polyvector fields, we

\begin{prop-defn}\label{isom-complex-sheaf} There are isomorphisms of sheaves of cochain complexes $$\hat \Gamma_{\Omega}^{\pm}: \bracket{\PV(Z/S)((t)), Q_{F}}\rightarrow \bracket{\mc A(Z/S)((t)), d_{F}^{\pm}}\quad\mbox{with}\quad d_{F}^+:=d+{dF\over t} \wedge,\,\, d_{F}^-=td+({dF})\wedge,
$$
defined by
\begin{align*}
   t^k\alpha&\in t^k\PV^{i, j}(Z/S)\mapsto  \hat\Gamma_\Omega^+(t^k\alpha):= t^{k+i-1}\alpha\vdash \Omega_{Z/S} \in t^{k+i-1}\mc A^{n-i, j}(Z/S), \\
      t^k\alpha&\in t^k\PV^{i, j}(Z/S)\mapsto \hat\Gamma_\Omega^-(t^k\alpha):= t^{k+j}\alpha\vdash \Omega_{Z/S} \in t^{k+j}\mc A^{n-i, j}(Z/S).
\end{align*}
\begin{proof} The proof is the same as Proposition-Definition \ref{isom-PV-Diff}.
\end{proof}

\end{prop-defn}

The following operators are defined on $\PV(Z/S)((t))$:
   $$T_\rho^t:=\rho +[Q, \rho]V_{F} {1\over 1+[Q, V_{F}]}\quad \mbox{and}\quad
 R_\rho^t:=(\mbox{id}-\rho) V_{F} {1\over 1+[Q, V_{F}]}, $$
 where   $Q:=\dbar +t\pa_\Omega$.
The relation
$$
[Q_{F},  R_\rho^t]=1-T_\rho^t
$$
still holds where defined. In addition, we have
$$
T_\rho^t(\PV(Z/S)[[t]])\subset \PV_c(Z/S)[[t]], \quad T_\rho^t(\PV(Z/S)((t)))\subset \PV_c(Z/S)((t)).
$$

Consider the sheaves $\Hzero^{F, \Omega}$ and  $\mc H^{F, \Omega}:=\Hzero^{F, \Omega}\otimes_{\C[[t]]}\C((t))$, where
    $$\Hzero^{F, \Omega}(U):= {\Gamma(p^{-1}(U), \mathcal{O}_{Z})[[t]]\over \Im\big(Q_{F}: \Gamma(p^{-1}(U), \mc T_{Z/S})[[t]]\rightarrow \Gamma(p^{-1}(U), \mathcal{O}_{Z})[[t]]\big)}, \mbox{ for Stein } U\subset S.$$

\begin{prop}\label{cohomology-field-relative}
There are canonical isomorphisms of sheaves of $\mc O_{S}$-modules
$$
 \iota_t:  R^\bullet p_*\big({\PV_c(Z/S)[[t]], Q_{F}}\big)\overset{\iso}{\rightarrow}   R^\bullet p_*(\PV(Z/S)[[t]], Q_{F})\iso\Hzero^{F, \Omega}.
$$
The inverse of the composition $\iota_t$ of the above isomorphism  can be represented by $T_\rho^t$.
\end{prop}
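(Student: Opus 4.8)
The plan is to sheafify, over each open Stein $U\subset S$, the two arguments already carried out fiberwise in Section \ref{polyvector}, namely Corollary \ref{cohomology-field-cpt-supp} for the first isomorphism and Proposition \ref{cohomology-field} for the second, and then pass to the associated direct-image sheaves $R^\bullet p_*$. The essential structural point is that the homotopy operators $T_\rho^t$, $R_\rho^t$ and the contraction $V_F$ are globally defined on $Z$ out of the single cut-off function $\rho$ and the volume form $\Omega_{Z/S}$; hence they restrict to every $\PV(p^{-1}(U)/U)((t))$ compatibly with the presheaf restriction maps, which is exactly what allows the fiberwise quasi-isomorphisms to glue into morphisms of sheaves.

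For the first isomorphism I would invoke the relation $[Q_F, R_\rho^t]=1-T_\rho^t$ stated just above the proposition. Since $T_\rho^t$ carries $\PV(Z/S)[[t]]$ into $\PV_c(Z/S)[[t]]$, the induced operator $T_\rho^t$ vanishes on the quotient complex $\PV(Z/S)[[t]]/\PV_c(Z/S)[[t]]$, so there the relation reads $[Q_F, R_\rho^t]=1$. Thus $R_\rho^t$ is a contracting homotopy and the quotient is acyclic; restricting to each $p^{-1}(U)$ shows that the embedding $\bracket{\PV_c(p^{-1}(U)/U)[[t]], Q_F}\into \bracket{\PV(p^{-1}(U)/U)[[t]], Q_F}$ is a quasi-isomorphism. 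Sheafifying yields the first isomorphism $\iota_t$, and the identity $[Q_F, R_\rho^t]=1-T_\rho^t$ exhibits $T_\rho^t$ as a cochain representative of its inverse, exactly as in the absolute case.

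For the second isomorphism I would rerun, over each Stein $U\subset S$, the inductive argument proving Proposition \ref{cohomology-field}, with Lemma \ref{cohomology-polyvectorfield} replaced throughout by its relative sheafified form, Proposition \ref{prop-PVZoverS}. Concretely, given a $Q_F$-closed series $\sum_m a_m t^m$ in degree zero, the coefficient-wise relation $\dbar_F a_m+\pa_\Omega a_{m-1}=0$ together with Proposition \ref{prop-PVZoverS} lets one peel off, order by order in $t$, a holomorphic representative in $\Gamma(p^{-1}(U),\OO_Z)$ modulo $Q_F$-exact terms; this gives surjectivity of $\alpha\mapsto[\alpha]$, and the analogous descending induction shows that the kernel is exactly $Q_F\bracket{\Gamma(p^{-1}(U),\mc T_{Z/S})[[t]]}$. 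Sheafifying over $S$ then produces $R^\bullet p_*(\PV(Z/S)[[t]], Q_F)\iso \Hzero^{F,\Omega}$, and the composite with the first step is the asserted $\iota_t$.

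The main obstacle is bookkeeping rather than new analytic input: one must check that the fiberwise cohomologies genuinely assemble into a sheaf whose sections over Stein $U$ compute the cohomology of $p^{-1}(U)$, so that the presheaf is already separated and the de Rham lemma of \cite{Saito-deRham} (packaged in Proposition \ref{prop-PVZoverS}) applies uniformly; and that the globally defined $T_\rho^t, R_\rho^t$ actually land in the $p$-proper-support subspace. This last point is precisely where condition (b), that $p|_{\overline{U_2}}$ be proper, enters. Granting these, the composition $\iota_t$ and its explicit representative $T_\rho^t$ follow immediately.
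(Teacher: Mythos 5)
Your proposal is correct and takes essentially the same approach as the paper: the paper's proof simply declares the argument ``completely similar'' to Proposition \ref{cohomology-field} and Corollary \ref{cohomology-field-cpt-supp}, and what you have written is precisely that sheafified argument, with Lemma \ref{cohomology-polyvectorfield} correctly replaced by its relative form (Proposition \ref{prop-PVZoverS}) and the relation $[Q_F, R_\rho^t]=1-T_\rho^t$ supplying the quasi-isomorphism and the representative $T_\rho^t$ of $\iota_t^{-1}$.
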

\begin{proof}
  The proof is completely similar  to the proofs of Proposition \ref{cohomology-field} and Corollary \ref{cohomology-field-cpt-supp}.
\end{proof}
\begin{prop-defn}\label{prop-def-hodgefil-family}
For any $k\in \mathbb{Z}$, we define $\mathcal{H}^{F, \Omega}_{(-k)}:=t^k \mathcal{H}_{(0)}^{F, \Omega}$. The family version of the filtered complex isomorphism  $\hat\Gamma_{\Omega_X}^-$ gives a  canonical isomorphism of sheaves of \(\OO_S[[t]]\)-modules:
\[
   \hat\Gamma_\Omega^-: \mathcal{H}^{F, \Omega}_{(-k)} \overset{\cong}{\to} R^\bullet p_*(t^k\Omega_{Z/S}^*[[t]], d_F^-)=:\mathcal{H}_F^{(-k)}
\]
defined by $t^k[\alpha]\mapsto [t^k \alpha\vdash \Omega_{Z/S}]$. The (semi-infinite) Hodge filtration  of $\mathcal{H}^{F, \Omega}_{(0)}$ is defined by  $\{\mathcal{H}^{F, \Omega}_{(-k)}\}_{k\in \mathbb{Z}_{\geq0}}$.
\end{prop-defn}
\begin{proof}
   The argument  is completely similar to the proof of Proposition-Definition \ref{prop-def-hodgefil}.
\end{proof}

\subsection{Variation of semi-infinite   Hodge structures}\label{sec-VSHS}
The general structures of higher residues and primitive forms developed in \cite{Saito-primitive, Saito-residue} were geometrically reformulated by Barannikov as the notion of \emph{variation of semi-infinite  Hodge structures} \cite{Barannikov-thesis, Barannikov-period}. We will adopt this geometric notion and present the corresponding constructions in our case.

\subsubsection{Gauss-Manin connection} The sheaf $\mc H^{F, \Omega}$ inherits a flat Gauss-Manin connection  \cite{Brieskorn-GM, Saito-primitive}
$$
    \nabla^{\Omega}: \mc H^{F, \Omega}\to \Omega_{S}^1\otimes \mc H^{F, \Omega}
$$
from its identification with differential forms via ${\Omega_{Z/S}}$. To describe this, we consider
the isomorphism of cochain complexes in Proposition-Definition \ref{isom-complex-sheaf}
$$\hat \Gamma_{\Omega}^{+}: \bracket{\PV(Z/S)((t)), Q_{F}}\rightarrow \bracket{\mc A(Z/S)((t)), d_{F}^{+}}\quad\mbox{with}\quad d_{F}^+:=d+{dF\over t} \wedge,
$$
which induces an isomorphism of sheaves of $\OO_S$-modules, still denoted as
$$
   \hat \Gamma_\Omega^+:  \mc H^{F, \Omega}\iso R^\bullet p_*\bracket{\PV(Z/S)((t)), Q_{F}}\overset{\cong}{\to} R^\bullet p_*\bracket{\mc A(Z/S)((t)), d_{F}^+}.
$$

\begin{lem-defn}
The sheaf $R^\bullet p_*\bracket{\mc A(Z/S)((t)), d_{F}^+}$ naturally carries the Gauss-Manin connection
{\upshape $$
   \nabla^{\scriptsize\mbox{GM}}: R^\bullet p_*\bracket{\mc A(Z/S)((t)), d_{F}^+}\to \Omega_S^1\otimes R^\bullet p_*\bracket{\mc A(Z/S)((t)), d_{F}^+}.
$$
}
\end{lem-defn}
\begin{proof} This is a twisted version of the usual differential geometric construction.

Let $V\in \mc T_S$ be a holomorphic vector field on $S$, and we choose an arbitrary smooth lifting $\tilde V$ to a vector field of type $(1,0)$ on $Z$  such that
$$
  p_*(\tilde V)=V.
$$
Let $[\alpha]\in R^\bullet p_*\bracket{\mc A(Z/S)((t)), d_{F}^+}$ be a section represented by $\alpha\in \mc A(Z/S)((t))$. Then the Gauss-Manin connection is defined by
\begin{align}\label{eqn-GMconn}
   \nabla^{\scriptsize\mbox{GM}}_V[\alpha]=\bbracket{ \mc L_{\tilde V}\alpha+{\pa_{\tilde V}(F)\over t}\alpha }
\end{align}
where $\mc L_{\tilde V}$ is the Lie derivative with respect to $\tilde V$. It is routine to check that $\nabla^{\scriptsize\mbox{GM}}$ is well-defined and gives a flat connection.
\end{proof}

\begin{defn} We define the Gauss-Manin connection $\nabla^{\Omega}$ on $\mc H^{F, \Omega}$ associated with $\Omega_{Z/S}$ by
{\upshape $$
   \nabla^{\Omega}:= \big(\hat \Gamma_{\Omega}^+\big)^{-1}\circ \bracket{\nabla^{\scriptsize\mbox{GM}}}\circ\hat\Gamma_\Omega^+.
$$}
\end{defn}

The next properties follow directly from the definition of the  connection  $\nabla^{\Omega}$:
 \begin{lem}[Transversality]\label{Transversality}
 $
   \nabla^{\Omega}|_{\Hzero^{F, \Omega}}: \Hzero^{F, \Omega}\to \Omega^1_{S}\otimes t^{-1}\Hzero^{F, \Omega}=\Omega^1_{S}\otimes  \mc H^{F, \Omega}_{(1)}$.
\end{lem}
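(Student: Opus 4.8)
The plan is to compute $\nabla^{\Omega}$ directly on holomorphic representatives, transporting everything to the differential-form side through $\hat\Gamma_\Omega^+$, where the connection is explicitly given by \eqref{eqn-GMconn}. By Proposition \ref{cohomology-field-relative}, a local section of $\Hzero^{F,\Omega}$ over a Stein open $U$ is represented by a holomorphic function $g\in\Gamma(p^{-1}(U),\OO_Z)[[t]]$, so the first step is to record, using Proposition-Definition \ref{isom-complex-sheaf} with $g\in\PV^{0,0}(Z/S)$, that $\hat\Gamma_\Omega^+(g)=t^{-1}(g\vdash\Omega_{Z/S})=t^{-1}g\,\Omega_{Z/S}$. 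Likewise, for any holomorphic $a$ one has $t^{-1}a\,\Omega_{Z/S}=\hat\Gamma_\Omega^+(a)$ and $t^{-2}a\,\Omega_{Z/S}=\hat\Gamma_\Omega^+(t^{-1}a)$; these identities are what let me read the final answer back into the Hodge filtration.

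The crucial second step is the choice of lift. Since $\nabla^{\scriptsize\mbox{GM}}$ is independent of the lift, and since $Z$ is realized as an open subdomain of $X\times S$, I would take for $V=\sum_a b_a(\mathbf u)\pa_{u_a}$ the holomorphic, purely base-directional lift $\tilde V=\sum_a b_a(\mathbf u)\pa_{u_a}$ on $Z$. This choice does two things: $\tilde V$ is holomorphic, so the Lie derivative of holomorphic data stays holomorphic; and $\tilde V$ has no fiber component, so $\mc L_{\tilde V}(dz_i)=d\,\iota_{\tilde V}dz_i=0$. Writing $\Omega_{Z/S}=\lambda^{-1}dz_1\wedge\cdots\wedge dz_n$ and computing $\mc L_{\tilde V}$ on this absolute representative gives $\mc L_{\tilde V}(g\,\Omega_{Z/S})=h_1\,\Omega_{Z/S}$ with $h_1=\pa_{\tilde V}g-g\,\lambda^{-1}\pa_{\tilde V}\lambda$ again holomorphic; in particular no $du$ or $d\bar z,d\bar u$ legs are produced, so the passage back to relative forms is trivial.

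With these two steps in place the conclusion is immediate. Applying \eqref{eqn-GMconn} to $\hat\Gamma_\Omega^+(g)=t^{-1}g\,\Omega_{Z/S}$ gives
\[
  \nabla^{\scriptsize\mbox{GM}}_V\bigl[t^{-1}g\,\Omega_{Z/S}\bigr]
  = t^{-1}h_1\,\Omega_{Z/S}+t^{-2}(\pa_{\tilde V}F)\,g\,\Omega_{Z/S}
  = \hat\Gamma_\Omega^+\bigl(h_1+t^{-1}(\pa_{\tilde V}F)g\bigr),
\]
where $\pa_{\tilde V}F$ is holomorphic because both $\tilde V$ and $F$ are. Hence $\nabla^{\Omega}_V[g]=\bigl[\,h_1+t^{-1}(\pa_{\tilde V}F)g\,\bigr]$, and since $h_1\in\Gamma(\OO_Z)[[t]]$ represents a class in $\Hzero^{F,\Omega}$ while $t^{-1}(\pa_{\tilde V}F)g\in t^{-1}\Gamma(\OO_Z)[[t]]$ lies in $t^{-1}\Hzero^{F,\Omega}$, the whole class lies in $t^{-1}\Hzero^{F,\Omega}=\mc H^{F,\Omega}_{(1)}$. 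Letting $V$ range over $\mc T_S$ supplies the $\Omega^1_S$ factor and yields the asserted transversality.

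As for where the genuine care lies: this is a form of Griffiths transversality, so no deep input is needed once the representatives are fixed. The only two points to verify are (i) that a holomorphic base-directional lift $\tilde V$ exists on the relevant Stein opens, which follows from realizing $Z$ as an open subdomain of $X\times S$ so that $\pa_{u_a}$ is a genuine holomorphic vector field on $Z$ with $p_*\pa_{u_a}=\pa_{u_a}$, and (ii) that $\mc L_{\tilde V}$ in \eqref{eqn-GMconn}, evaluated on the holomorphic absolute lift $\lambda^{-1}g\,\Omega$, stays inside the $\Omega_{Z/S}$-component. Both are secured by the choice in the second step, which is why I would emphasize that choice rather than any manipulation of cohomology classes.
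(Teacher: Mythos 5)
Your proof is correct and follows essentially the same route the paper takes: the paper asserts transversality "directly from the definition" of $\nabla^{\Omega}$, and your computation is exactly that assertion spelled out, using the paper's own special base-directional lift \eqref{eqn-special-lifting} and in effect re-deriving formula \eqref{eqn_simplifiedGMconn} of Proposition \ref{corwelldefine} (in the mild generality where $\lambda$ may also depend on $\mathbf{u}$). The key observations — holomorphy of the lift, $\mc L_{\tilde V}(dz_i)=0$, and the single power of $t^{-1}$ coming from the $\pa_{\tilde V}F/t$ term — are precisely what the paper's definition makes "direct," so nothing further is needed.
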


\begin{lem}[Flatness] $\nabla^{\Omega}\circ \nabla^{\Omega}=0$.
\end{lem}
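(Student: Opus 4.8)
The plan is to reduce the flatness of $\nabla^{\Omega}$ to that of the Gauss-Manin connection $\nabla^{\mathrm{GM}}$ on the differential-form side, and then to verify the latter by a direct curvature computation. Since $\hat\Gamma_\Omega^+$ is an isomorphism of sheaves of $\OO_S$-modules and, by definition, $\nabla^{\Omega}=(\hat\Gamma_\Omega^+)^{-1}\circ\nabla^{\mathrm{GM}}\circ\hat\Gamma_\Omega^+$, the curvature of $\nabla^{\Omega}$ is the conjugate of the curvature of $\nabla^{\mathrm{GM}}$:
\[
   \nabla^{\Omega}\circ\nabla^{\Omega}=(\hat\Gamma_\Omega^+)^{-1}\circ\big(\nabla^{\mathrm{GM}}\circ\nabla^{\mathrm{GM}}\big)\circ\hat\Gamma_\Omega^+.
\]
Hence it suffices to show $\nabla^{\mathrm{GM}}\circ\nabla^{\mathrm{GM}}=0$ on $R^\bullet p_*\bracket{\mc A(Z/S)((t)),d_F^+}$.

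For this I would work locally on $S$, fix holomorphic vector fields $V,W\in\mc T_{S}$, and choose smooth $(1,0)$-liftings $\tilde V,\tilde W$ with $p_*\tilde V=V$ and $p_*\tilde W=W$. Writing $\nabla^{\mathrm{GM}}_V$ as the operator $\mc L_{\tilde V}+t^{-1}\pa_{\tilde V}(F)$ acting on $\mc A(Z/S)((t))$, the curvature on a class $[\alpha]$ is measured by $[\nabla^{\mathrm{GM}}_V,\nabla^{\mathrm{GM}}_W]-\nabla^{\mathrm{GM}}_{[V,W]}$. Expanding the commutator and using the three elementary identities $[\mc L_{\tilde V},\mc L_{\tilde W}]=\mc L_{[\tilde V,\tilde W]}$, $[\mc L_{\tilde V},g]=\pa_{\tilde V}(g)$ for a function $g$, and the commutativity of multiplication operators, all the cross terms collapse to
\[
   [\nabla^{\mathrm{GM}}_V,\nabla^{\mathrm{GM}}_W]=\mc L_{[\tilde V,\tilde W]}+t^{-1}\pa_{[\tilde V,\tilde W]}(F).
\]
Since $p_*[\tilde V,\tilde W]=[V,W]$, the field $[\tilde V,\tilde W]$ is itself an admissible lifting of $[V,W]$, so the right-hand side is exactly $\nabla^{\mathrm{GM}}_{[V,W]}$ computed with this lifting, and the curvature therefore vanishes. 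Conjugating back by $\hat\Gamma_\Omega^+$ then gives $\nabla^{\Omega}\circ\nabla^{\Omega}=0$.

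The only delicate point — and what makes the well-definedness asserted in the preceding Lemma-Definition do the real work — is that a lifting $\tilde V$ of $V$ need not preserve relative forms, so $\mc L_{\tilde V}\alpha$ is only a relative form modulo $d_F^+$-exact terms, and the value of $\nabla^{\mathrm{GM}}_V[\alpha]$ must be independent of the choice of lifting. Granting these facts, the identity $\nabla^{\mathrm{GM}}_{[V,W]}=\mc L_{[\tilde V,\tilde W]}+t^{-1}\pa_{[\tilde V,\tilde W]}(F)$ at the level of cohomology is legitimate precisely because the connection does not depend on which lift we use; this independence is the main obstacle, but it is exactly what was established when $\nabla^{\mathrm{GM}}$ was constructed, so here it may be invoked. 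The flatness computation above is then purely formal in the descendant variable $t$, as it involves only commutators of Lie derivatives with multiplication by the functions $\pa_{\tilde V}(F),\pa_{\tilde W}(F)$.
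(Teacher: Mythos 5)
Your proposal is correct and follows essentially the same route as the paper: the paper offers no detailed argument, stating that flatness "follows directly from the definition" $\nabla^{\Omega}=(\hat\Gamma_\Omega^+)^{-1}\circ\nabla^{\mathrm{GM}}\circ\hat\Gamma_\Omega^+$ together with the flatness of $\nabla^{\mathrm{GM}}$, which the paper declares "routine to check" in its Lemma-Definition. Your conjugation step plus the commutator computation $[\nabla^{\mathrm{GM}}_V,\nabla^{\mathrm{GM}}_W]=\mc L_{[\tilde V,\tilde W]}+t^{-1}\pa_{[\tilde V,\tilde W]}(F)=\nabla^{\mathrm{GM}}_{[V,W]}$ (using that $[\tilde V,\tilde W]$ is again an admissible $(1,0)$-lifting, by integrability of the complex structure, and that the connection is independent of the lifting) is precisely that routine check, spelled out.
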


Now we describe the Gauss-Manin connection $\nabla^{\Omega}$ in    coordinates  $(\mathbf{z}, \mathbf{u})$   of $\mathbb{C}^n\times \mathbb{C}^\mu$. Since $p: Z\rightarrow  S$ is the restriction of the natural projection $\mathbb{C}^n\times \mathbb{C}^\mu\to \mathbb{C}^\mu$, a special lifting of a local section $V=\sum\limits_{j=1}^\mu g_j(\mathbf{u}){\pa\over \pa u_j}$ of $\mc T_{S}$ is given by
 \begin{equation}\label{eqn-special-lifting}\hat V:=\sum_{j=1}^\mu (p^*g_j){\pa\over \pa u_j},
 \end{equation}
where ${\pa\over \pa u_j}$ is   defined using the projection $\pi_X: Z\to X$.

\begin{prop}\label{corwelldefine} If $\Omega_{Z/S}={1\over \lambda(\mathbf{z})}dz_1\wedge\cdots\wedge dz_n\in \pi_X^*(\Gamma(X,\Omega_X^n))$, where $\lambda$ is a nonwhere vanishing holomorphic function on $Z$ depending only on the fiber direction, then
   \begin{align}\label{eqn_simplifiedGMconn}
  \nabla^\Omega_V \bbracket{s}=\bbracket{\pa_{\hat V}s+{\pa_{\hat V}F\over t}s},\end{align}
 where $[s]\in \mc H^{F, \Omega}$ is represented by $s\in\Gamma\bracket{Z, \OO_Z}((t))$.
\end{prop}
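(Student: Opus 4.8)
The plan is to unwind the definition $\nabla^\Omega = (\hat\Gamma_\Omega^+)^{-1}\circ \nabla^{\scriptsize\mbox{GM}}\circ \hat\Gamma_\Omega^+$ directly on the representative $s$. Since the Gauss-Manin connection is independent of the chosen lifting of $V$, I would first replace the arbitrary lifting $\tilde V$ in \eqref{eqn-GMconn} by the special lifting $\hat V$ of \eqref{eqn-special-lifting}. By Proposition-Definition \ref{isom-complex-sheaf}, the function $s$, being a class in $\PV^{0,0}(Z/S)((t))$, is carried to $\hat\Gamma_\Omega^+(s) = t^{-1}(s\vdash \Omega_{Z/S}) = t^{-1}s\,\Omega_{Z/S}$. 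Applying \eqref{eqn-GMconn} then reduces everything to computing $\mc L_{\hat V}(t^{-1}s\,\Omega_{Z/S})$.

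The key step, and the one I expect to carry the real content, is the identity $\mc L_{\hat V}\Omega_{Z/S}=0$. This is precisely where the hypothesis enters: by assumption $\Omega_{Z/S} = \lambda(\mathbf z)^{-1}\,dz_1\wedge\cdots\wedge dz_n$ is pulled back from $X$ via $\pi_X$, whereas $\hat V = \sum_j (p^*g_j)\,\pa/\pa u_j$ points purely along the base directions of the product $Z\hookrightarrow X\times S$. I would verify the vanishing by noting $\hat V(\lambda^{-1})=0$ (as $\lambda$ depends only on $\mathbf z$) together with $\mc L_{\hat V}\,dz_k=0$ (Cartan's formula applied to the contraction $\iota_{\hat V}\,dz_k=0$, since $\hat V$ has no fiber component), and then invoking the Leibniz rule. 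Granting this, the Lie derivative collapses to $\mc L_{\hat V}(s\,\Omega_{Z/S}) = (\pa_{\hat V}s)\,\Omega_{Z/S}$, and since $t$ is a formal parameter I obtain
$$
  \nabla^{\scriptsize\mbox{GM}}_V\bbracket{t^{-1}s\,\Omega_{Z/S}} = t^{-1}\bbracket{\bracket{\pa_{\hat V}s + \tfrac{\pa_{\hat V}F}{t}\,s}\Omega_{Z/S}}.
$$

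Finally I would apply $(\hat\Gamma_\Omega^+)^{-1}$, which sends $t^{-1}g\,\Omega_{Z/S}$ back to the function $g$ for any $g\in\Gamma(Z,\OO_Z)((t))$; taking $g = \pa_{\hat V}s + t^{-1}(\pa_{\hat V}F)s$ yields the asserted formula \eqref{eqn_simplifiedGMconn}. The main obstacle is the careful justification of $\mc L_{\hat V}\Omega_{Z/S}=0$ inside the relative de Rham complex: it relies on both the hypothesis that the volume form depends only on the fiber directions and on the compatibility of the \emph{special} lifting with the product structure $Z\hookrightarrow X\times S$. For a generic lifting $\tilde V$ neither $\tilde V(\lambda^{-1})$ nor $\mc L_{\tilde V}\,dz_k$ would vanish, and one would recover \eqref{eqn_simplifiedGMconn} only up to a $d_F^+$-exact term — which is exactly why it is essential to invoke the lifting-independence of $\nabla^{\scriptsize\mbox{GM}}$ at the very start.
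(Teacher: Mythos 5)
Your proof is correct, and it is precisely the computation the paper leaves implicit: Proposition \ref{corwelldefine} is stated without proof, as a routine consequence of the definition $\nabla^{\Omega}= \big(\hat \Gamma_{\Omega}^+\big)^{-1}\circ \nabla^{\scriptsize\mbox{GM}}\circ\hat\Gamma_\Omega^+$, formula \eqref{eqn-GMconn}, and the special lifting \eqref{eqn-special-lifting}. Your identification of the key point --- that $\mc L_{\hat V}\Omega_{Z/S}=0$ because the volume form is pulled back from $X$ while $\hat V$ points purely along the base directions of $Z\hookrightarrow X\times S$ --- is exactly what makes the formula hold at the level of representatives (a function times $\Omega_{Z/S}$) rather than merely up to $d_F^+$-exact terms, and the rest is bookkeeping with the $t$-shifts in $\hat\Gamma_\Omega^+$, which you also handle correctly.
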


As we will see in section \ref{subsec-intrinsicity}, the choice of $\Omega_{Z/S}$ is not essential. Hence, in this paper we will always choose the volume form $\Omega_{Z/S}$ to be independent of the deformation parameters. In such   case, the calculation of the connection $\nabla^{\Omega}$ is simplified as in the above proposition.

\subsubsection{Extended Gauss-Manin connection}
We would like to think about $t$ as a coordinate on the formal punctured disk $\hat \Delta^*=\Spec \C((t))$, and $\mc H^{F, \Omega}$ is naturally a locally free sheaf on $S\times \hat \Delta^*$. Then we can extend the flat connection $\nabla^{\Omega}$ to $S\times \hat \Delta^*$ as follows. %on $S$ to $S\times \hat \Delta^*$. Namely,
 For $s \in \PV^{i,*}(Z/S)((t))$, we  define
\begin{align}\label{extendGM}
       \nabla^{\Omega}_{t\pa_t}s=t\nabla^{\Omega}_{\pa_t}s:=\bracket{t{\pa\over \pa t}+i-{F\over t}}s.
\end{align}

Clearly, $\nabla^{\Omega}_{t\pa_t}$ preserves $\PV_c(Z/S)((t))$. The next lemma follows from direct calculations
\begin{lem}\label{extend-GMconn-flat}      $[\nabla^{\Omega}_{t\pa_t}, Q_{F}]=0$ holds as operators  on $\PV(Z/S)((t))$, hence
  $\nabla^{\Omega}_{t\pa_t}$ descends to an operator on $\mc H^{F, \Omega}$.   Furthermore for any  $V\in \Gamma(S, \mc T_{S})$, we have
 $[\nabla^{\Omega}_{t\pa_t}, \nabla^{\Omega}_V]=0$.
 \end{lem}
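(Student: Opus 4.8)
The plan is to treat both statements as direct commutator computations in $\PV(Z/S)((t))$, organizing the first by how each building block shifts the two available gradings. Write $\nabla^{\Omega}_{t\pa_t}=t\pa_t+N-{F/t}$, where $N$ acts on $\PV^{i,*}(Z/S)((t))$ as multiplication by the polyvector degree $i$, and decompose $Q_{F}=\dbar+\fbracket{F,\mbox{-}}+t\pa_\Omega$. I would first record the bookkeeping rules: $\dbar$ preserves the $i$-grading while $\fbracket{F,\mbox{-}}$ and $\pa_\Omega$ lower it by one, so $[N,\dbar]=0$, $[N,\fbracket{F,\mbox{-}}]=-\fbracket{F,\mbox{-}}$ and $[N,t\pa_\Omega]=-t\pa_\Omega$; and since $t\pa_\Omega$ raises the $t$-degree by one while $\dbar$ and $\fbracket{F,\mbox{-}}$ are $t$-independent, $[t\pa_t,t\pa_\Omega]=t\pa_\Omega$ and $[t\pa_t,\dbar]=[t\pa_t,\fbracket{F,\mbox{-}}]=0$.

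The remaining, and most delicate, commutators are those of multiplication by $F/t$. Here I would use three inputs: $\dbar F=0$ gives $[{F/t},\dbar]=0$; the Schouten bracket of two functions vanishes, so $\fbracket{F,g\alpha}=g\fbracket{F,\alpha}$ for a function $g$, giving $[{F/t},\fbracket{F,\mbox{-}}]=0$; and the defining relation \eqref{bracked} together with $\pa_\Omega g=0$ for any function $g$ yields the key BV-type identity $[\pa_\Omega,g]=\fbracket{g,\mbox{-}}$, whence $[t\pa_\Omega,{F/t}]=\fbracket{F,\mbox{-}}$, so that the commutator of $-{F/t}$ with $t\pa_\Omega$ equals $+\fbracket{F,\mbox{-}}$. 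Assembling, the two $t\pa_\Omega$-contributions $+t\pa_\Omega$ (from $[t\pa_t,t\pa_\Omega]$) and $-t\pa_\Omega$ (from $[N,t\pa_\Omega]$) cancel, while the two $\fbracket{F,\mbox{-}}$-contributions $-\fbracket{F,\mbox{-}}$ (from $[N,\fbracket{F,\mbox{-}}]$) and $+\fbracket{F,\mbox{-}}$ cancel; hence $[\nabla^{\Omega}_{t\pa_t},Q_{F}]=0$. The main obstacle is purely bookkeeping: keeping the sign conventions of the graded commutator \eqref{eqn-gradedcomm} and the two degree shifts consistent so that the leftover $\fbracket{F,\mbox{-}}$ and $t\pa_\Omega$ terms cancel exactly. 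Since $t\pa_t$, $N$ and multiplication by $F/t$ each preserve properness of supports, $\nabla^{\Omega}_{t\pa_t}$ preserves $\PV_c(Z/S)((t))$, and the vanishing of the commutator shows it sends $Q_{F}$-closed (resp. exact) elements to $Q_{F}$-closed (resp. exact) ones, so it descends to $\mc H^{F, \Omega}$.

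For the flatness $[\nabla^{\Omega}_{t\pa_t},\nabla^{\Omega}_{V}]=0$, I would work with function representatives, on which $N=0$ and $\nabla^{\Omega}_{t\pa_t}$ therefore acts as $t\pa_t-{F/t}$, and use the simplified Gauss--Manin formula of Proposition \ref{corwelldefine}, $\nabla^{\Omega}_{V}[s]=[\pa_{\hat V}s+t^{-1}(\pa_{\hat V}F)s]$. Writing $W:=\pa_{\hat V}$, a $t$-independent derivation on $\Gamma(Z,\OO_Z)$, and noting that $F$ and $W(F)$ are $t$-independent, I would expand $(t\pa_t-F/t)(W+t^{-1}W(F))s$ and $(W+t^{-1}W(F))(t\pa_t-F/t)s$ and compare; the only cross-terms come from $t\pa_t$ hitting the explicit $t^{-1}$ and from $W$ hitting $F/t$ via the Leibniz rule $W(Fs)=W(F)s+FWs$, and a short computation shows the two orderings coincide. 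Hence the bracket vanishes on representatives and therefore on $\mc H^{F, \Omega}$. Independence of the auxiliary choice of $\Omega_{Z/S}$ (section \ref{subsec-intrinsicity}) makes the restriction to the normalized volume form of Proposition \ref{corwelldefine} harmless.
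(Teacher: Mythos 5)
Your proof is correct and is precisely the ``direct calculation'' that the paper asserts but does not write out: the nine-term expansion of $[\nabla^{\Omega}_{t\pa_t},Q_{F}]$ into commutators of $t\pa_t$, the polyvector-degree operator $N$, and multiplication by $F/t$ against $\dbar$, $\fbracket{F,\mbox{-}}$, $t\pa_\Omega$, with the two cancellations you identify, together with the check of $[\nabla^{\Omega}_{t\pa_t},\nabla^{\Omega}_{V}]$ on function representatives, is exactly what is needed. Your one structural choice---restricting to the fiberwise volume form so that Proposition \ref{corwelldefine} applies and transferring the result to general $\Omega_{Z/S}$ via Proposition \ref{prop-funcpro-HigherRes}---is sound and non-circular, since that proposition is proved independently of this lemma; alternatively, the same computation handles general $\Omega_{Z/S}$ directly, because the extra term appearing in $\nabla^{\Omega}_{V}$ is multiplication by a $t$-independent function and drops out of the commutator.
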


\noindent It follows that $\nabla^{\Omega}$ is extended to a well-defined flat connection  on $S\times \hat \Delta^*$ for the sheaf $\mc H^{F, \Omega}$ viewed as $\OO_{S\times \hat \Delta^*}$-module, which we will call the \textit{extended Gauss-Manin connection}.  The originally defined connection $\nabla^\Omega$ on $S$ will be referred to as the \textit{non-extended} connection.

 %, which is called the \textit{extended   Gauss-Manin connection}.

%%%It is obvious that  all the aforementioned  operators acting on $\PV(Z/S)((t))$ (resp. $\mc A(Z/S)((t))$)  preserve $\PV_c(Z/S)((t))$ (resp. $\mc A_c(Z/S)((t))$). Hence, the same construction of $\nabla^\Omega$ and all the propositions in this subsection remain valid for $R^\bullet p_*(\PV_c(Z/S)((t)), Q_{F})$.

%%%%%%%%%%%%%%%%%%%%%%central fiber%%%%%%%%%%%%%%%
\iffalse
In addition, we can restrict  the operator $\nabla^{\Omega}_{t\pa_t}$ to $\PV(X)((t))$ at the central fiber:
    $$\nabla^{\Omega}_{t\pa_t}s=t\nabla^{\Omega}_{\pa_t}s:=\bracket{t{\pa\over \pa t}+i-{W\over t}}s$$ for  $s\in \PV^{i, *}(X)((t))$.
Such restriction    also gives rise to a map on   $\mc H^{f}$:
% for any $s\in \PV^{i, *}(X)((t))$, the definition  $\nabla^{\Omega}_{t\pa_t}s=t\nabla^{\Omega}_{\pa_t}s:=\big(t{\pa\over \pa t}+i-{F\over t}\big)s$ makes sense, and $[\nabla^{\Omega}_{t\pa_t}, Q_{W}]=0$ holds on  $\PV(X)((t))$. Hence,
  \begin{equation}\label{def-extendGM-onfiber}
    \nabla^{\Omega}_{t\pa_t}: \mc H^{f}\rightarrow \mc H^{f};\quad [s]\mapsto \nabla^{\Omega}_{t\pa_t} [s]:=[\nabla^{\Omega}_{t\pa_t} s].%:=[\big(t{\pa\over \pa t}+i-{ W\over t}\big)s].
 \end{equation}
\fi
%%%%%%%%%%%%%%%%%%%%%%%%%%%%%%%%%%%%%%%%%%%

 \subsubsection{Higher residues} We define a cochain map $\Tr$ of complexes of sheaves on $S$,
 $$
    \Tr:   (\PV_c(p^{-1}(U)/U)((t)), Q_{F}) \to (\mc A^{0,*}(U)((t)), \dbar_S),
 $$
by
  $$
  \Tr(\nu(t)\alpha):=\nu(t)\int_{p^{-1}(U)/U}(\alpha\vdash  \Omega_{Z/S} ) \wedge \Omega_{Z/S}, \quad \nu(t)\in \C((t)), \alpha \in \PV_c(p^{-1}(U)/U)
  $$
  where $\dbar_S$ denotes the usual $\dbar$ operator on differential forms on $S$, $\int_{p^{-1}(U)/U}$ denotes the  fiberwise integration, and $U\subset S$ is a Stein open   subset. It gives rise  to a map of   sheaves of cohomologies
  \begin{align}
     \Tr:  R^\bullet p_*(\PV_c(Z/S)((t)))\rightarrow \mc O_{S}((t)),
  \end{align}
 as the family version of the trace map in formula \eqref{higher-trace}.

Similarly, we define a pairing
 $$
   \mc K^{F, c}_{\Omega}\bracket{\mbox{-},\mbox{-}}
    : \PV_c(p^{-1}(U)/U)((t)) \otimes \PV_c(p^{-1}(U)/U)((t)) \to \mc A^{0, *}(U)((t)),
$$
by
$$
\mc K_{\Omega}^{F, c}\bracket{\nu_1(t)\alpha_1, \nu_2(t)\alpha_2}:=\nu_1(t)\nu_2(-t)\Tr\bracket{{\alpha_1\alpha_2}}
$$
where $\nu_1, \nu_2\in \C((t))$ and $\alpha_1, \alpha_2 \in \PV_c(p^{-1}(U)/U)$.

\begin{lem}
 The pairing $\mc K_{\Omega}^{F, c}\bracket{\mbox{-},\mbox{-}}$ on $\PV_c(p^{-1}(U)/U)$ gives rise to that on cohomology:
 $$
 \mc K_{\Omega}^{F, c}\bracket{\mbox{-},\mbox{-}}: R^\bullet p_*(\PV_c(Z/S)((t))) \otimes R^\bullet p_*(\PV_c(Z/S)((t))) \to \OO_{S}((t)).$$
\end{lem}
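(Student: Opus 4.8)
The plan is to show that the cochain-level pairing $\mc K_{\Omega}^{F, c}$ descends to cohomology, which amounts to verifying that it annihilates the image of the coboundary operator $Q_F$ in each slot, and that the result lands in closed (hence cohomologically meaningful) sections of $\mc A^{0,*}(U)((t))$. The essential input is the graded skew-symmetry of $Q_F$ with respect to the trace pairing, exactly as in the central-fiber computation \eqref{Trproperties} leading to the pairing \eqref{higher-residue-pairing-fiberversion}, now carried out fiberwise and with the $\dbar_S$-differential on the base kept track of.

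First I would record the relative analogue of the identities \eqref{Trproperties}. For $\alpha_1 \in \PV_c^{i,j}(Z/S)$ and $\alpha_2 \in \PV_c(Z/S)$ one has, by fiberwise Stokes (the support condition makes $p|_{\mathrm{supp}}$ proper, so the fiberwise integral converges and boundary terms vanish),
\begin{align*}
   \Tr\big(\dbar \alpha_1 \cdot \alpha_2\big) &= -(-1)^{|\alpha_1|}\Tr\big(\alpha_1 \cdot \dbar \alpha_2\big) + \dbar_S\big(\text{something}\big),\\
   \Tr\big(\pa_\Omega \alpha_1 \cdot \alpha_2\big) &= (-1)^{|\alpha_1|}\Tr\big(\alpha_1 \cdot \pa_\Omega \alpha_2\big),
\end{align*}
where the extra $\dbar_S$-exact term in the first line arises because fiberwise integration of a $\dbar$-exact relative form produces a $\dbar_S$-exact form on the base rather than zero. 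The bracket term $\{F, \mbox{-}\}$ contributes no boundary term since $F$ is holomorphic and the pairing $\Tr(\{F,\alpha_1\}\alpha_2)=-(-1)^{|\alpha_1|}\Tr(\alpha_1\{F,\alpha_2\})$ by the same integration-by-parts argument used for $\Tr(\{f,\alpha\})=0$ in the central-fiber case. Assembling these, and using that the sign convention $\nu_2(t)\mapsto \nu_2(-t)$ in the second slot exactly cancels the sign coming from $t\pa_\Omega$, I would conclude
$$
   \mc K_{\Omega}^{F, c}\big(Q_F(\nu_1(t)\alpha_1), \nu_2(t)\alpha_2\big) \pm \mc K_{\Omega}^{F, c}\big(\nu_1(t)\alpha_1, Q_F(\nu_2(t)\alpha_2)\big) \in \dbar_S\big(\mc A^{0,*}(U)((t))\big).
$$

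This last containment is precisely what guarantees descent to cohomology: the class of $\mc K_{\Omega}^{F, c}$ in $H^*(\mc A^{0,*}(U)((t)), \dbar_S) = \OO_S(U)((t))$ is unchanged when either argument is modified by a $Q_F$-coboundary, and moreover $\mc K_{\Omega}^{F,c}$ of two $Q_F$-cocycles is itself $\dbar_S$-closed, hence represents a well-defined holomorphic section on $U$. Sheafifying over Stein opens $U\subset S$, exactly as in Proposition \ref{cohomology-field-relative}, yields the asserted map on $R^\bullet p_*(\PV_c(Z/S)((t)))$. I expect the main obstacle to be the careful bookkeeping of the $\dbar_S$-exact boundary correction in the relative Stokes formula: unlike the absolute case where $\Tr(\dbar\alpha)=0$ on the nose, here fiberwise integration only kills the fiber directions, so one must verify that the residual base-direction terms are genuinely $\dbar_S$-exact (and not merely $\dbar_S$-closed) and that they do not obstruct the descent. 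Once this relative Stokes identity is pinned down, the rest is the formal skew-symmetry computation, which is entirely parallel to the central-fiber argument already given.
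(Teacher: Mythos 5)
Your proposal is correct and follows essentially the same route as the paper: there, your "relative Stokes identity" is built into the definition of $\Tr$ as a cochain map to $(\mc A^{0,*}(U)((t)),\dbar_S)$, giving the exact identity $\dbar_S\,\mc K_\Omega^{F,c}(\alpha,\beta)=\mc K_\Omega^{F,c}(Q_F\alpha,\beta)+\mc K_\Omega^{F,c}(\alpha,Q_F\beta)$, after which one takes degree-$0$ $Q_F$-closed representatives (possible since $R^\bullet p_*$ is concentrated in degree $0$ by Proposition \ref{cohomology-field-relative}) and concludes that the pairing is a $\dbar_S$-closed smooth function, hence lies in $\OO_S((t))$. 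The only sharpening your write-up needs is to record the correction term as exactly $\dbar_S\Tr(\alpha_1\alpha_2)$ rather than an unspecified element of $\dbar_S\big(\mc A^{0,*}(U)((t))\big)$ --- the containment alone would not yield $\dbar_S$-closedness when both arguments are cocycles --- and to invoke the degree-$0$ reduction explicitly, so that "$\dbar_S$-closed" indeed means "holomorphic function" rather than merely a Dolbeault cohomology class.
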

\begin{proof} The relative version of formula \eqref{Trproperties} says that $\mc K^{F, c}_{\Omega}$ is a cochain map. Let $\alpha, \beta\in \PV_c(Z/S)((t))$ be $Q_F$-closed, representing sections of $R^\bullet p_*(\PV_c(Z/S)((t)))$.  We can assume that both $\alpha, \beta$ have degree $0$, hence $\mc K_{\Omega}^{F, c}(\alpha, \beta)\in \cinfty(S)((t))$. Moreover,
$$
  \dbar_S \mc K_{\Omega}^{F, c}(\alpha, \beta)=\mc K_{\Omega}^{F, c}(Q_F\alpha, \beta)+\mc K_{\Omega}^{F, c}(\alpha, Q_F\beta)=0,
$$
i.e., $\mc K_{\Omega}^{F, c}(\alpha, \beta)\in \OO_S((t))$.
\end{proof}

This allows us to give the complex differential geometric construction of the higher residue pairing \cite{Saito-residue}.

\begin{defn}\label{propdefn-higherResPair}
We define the \textbf{higher residue pairing}
$$
\mc K^{F}_{\Omega}\bracket{\mbox{-},\mbox{-}}%= \bracket{\mbox{-},\mbox{-}}_{F, \E_c}^{\Omega}\circ (T_\rho^t\otimes T_\rho^t)
 : \mc H^{F, \Omega} \otimes \mc H^{F, \Omega} \to \OO_{S}((t))
 $$
     by $ \mc K^{F}_{\Omega}\bracket{[s], [s']}:=\mc K^{F, c}_{\Omega}\bracket{\iota_t^{-1}([s]), \iota_t^{-1}([s'])}
      %=\Tr\Big(\big(T_\rho^t(s)\big)(t) \big(T_\rho^t(s')\big)(-t)\Big)
      $ for sections $[s], [s']$ of $\mc H^{F, \Omega}$ (see Proposition \ref{cohomology-field-relative} for the isomorphism  $\iota_t$).
\end{defn}

Every local section  $\iota_t^{-1}[s]$ of $R^\bullet p_*(\PV_c(Z/S)((t)))$  has a representative $T_\rho^t(s)$. Hence we have the following explicit formula
$$
 \mc K^{F}_{\Omega} \bracket{[s], [s']}=\Tr \bracket{T_\rho^t(s) \overline{T^{t}_\rho(s')}}
$$
where the ${}^-$ operator is defined by
$$
  {}^- : \PV(Z/S)((t))\to \PV(Z/S)((t)), \quad \overline{\nu(t)\alpha}:= \nu(-t)\alpha\quad \mbox{ for } \alpha\in \PV(Z/S).
$$

\begin{rmk}
Since both $T_\rho^t(s)$ and $T_\rho^t(s')$ have compactly support along the fiber direction,
$$
 \mc K^{F}_{\Omega}\bracket{[s], [s']}=\Tr \bracket{T_\rho^t(s) \overline{s'}}=\Tr \bracket{s\ \overline{T^{t}_\rho(s')}}.
$$
\end{rmk}
\begin{defn}  The \textit{higher residue map} associated to $(Z, S, p, F)$ and $\Omega$ is defined by the composition
  {\upshape $$\widehat{\mbox{Res}}^{F}:=\Tr\circ \iota_t^{-1}:  \mc H^{F, \Omega} \rightarrow %% R^\bullet p_*(\Ezero_c(Z/S), Q_{F})\overset{\Tr}{\longrightarrow}
     \OO_{S}((t)).$$
}
\end{defn}
As before, if we restrict $ \mc K^{F}_{\Omega}$ to $\mc H_{(0)}^{F, \Omega}$, it takes values in $\OO_S[[t]]$. That is,
 $$
   \mc K^{F}_{\Omega}\bracket{\mbox{-},\mbox{-}}: \Hzero^{F, \Omega} \otimes \Hzero^{F, \Omega} \to \OO_{S}[[t]].
$$
 The triple $\{\Hzero^{F, \Omega}, \mc K^{F}_{\Omega}\bracket{\mbox{-},\mbox{-}}, \nabla^{\Omega} \}$  satisfy the following properties.
\begin{prop}\label{lem-higherpairing-properties} Let   $s_1, s_2$ be local sections of $\Hzero^{F, \Omega}$.
\begin{enumerate}
\item $\mc K^{F}_{\Omega}\bracket{s_1, s_2} =%(-1)^{|\mu_1||\mu_2|}
\overline{\mc K^{F}_{\Omega}\bracket{s_2, s_1}}$.
\item $\mc K^{F}_{\Omega}({\nu(t)s_1, s_2}) =\mc K^{F}_{\Omega}({s_1,\nu(-t)s_2})=\nu(t)\mc K^{F}_{\Omega}({s_1, s_2})$\,\, for any $\nu(t)\in \OO_{S}[[t]]$.
\item  $\pa_V \mc K^{F}_{\Omega}({s_1, s_2})=\mc K^{F}_{\Omega}({\nabla_V^{\Omega}s_1, s_2})+\mc K^{F}_{\Omega}({s_1,\nabla_V^{\Omega}s_2})$ \,\,  for any local section $V$ of $\mc T_{S}$.
\item $ \bracket{t\pa_t+n}\mc K^{F}_{\Omega}\bracket{s_1, s_2}=\mc K^{F}_{\Omega}(\nabla^{\Omega}_{t\pa_t}s_1, s_2)+\mc K^{F}_{\Omega}(s_1, \nabla^{\Omega}_{t\pa_t}s_2).$
\item The higher residue pairing induces a   pairing on
$$     \Hzero^{F, \Omega}/t \Hzero^{F, \Omega}\otimes_{\OO_{S}} \Hzero^{F, \Omega}/t \Hzero^{F, \Omega}\to \OO_{S},$$
which coincides with the classical   residue pairing
$p_*\OO_{\mc C(F)} \otimes_{\OO_{S}} p_*\OO_{\mc C(F)}\to \OO_{S}.$ In particular, the induced pairing is non-degenerate.
\end{enumerate}
\end{prop}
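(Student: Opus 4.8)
The plan is to establish all five items at the cochain level from the explicit formula $\mc K^F_\Omega([s],[s'])=\Tr\bracket{T^t_\rho(s)\,\overline{T^t_\rho(s')}}$, using three structural facts. First, $\Tr$ is $\OO_S$-linear and kills $Q_F$-exact classes, so the pairing is well defined on cohomology and any convenient representative may be used. Second, by the bidegree count in its definition $\Tr(\alpha)=\int_{Z/S}(\alpha\vdash\Omega_{Z/S})\wedge\Omega_{Z/S}$, the trace is supported on $\PV^{n,n}(Z/S)$; since $T^t_\rho$ has cohomological degree zero, the only components of $T^t_\rho(s)$ and $T^t_\rho(s')$ that can pair nontrivially lie in complementary bidegrees of total degree zero, so all Koszul signs in the product become trivial. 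Third, $T^t_\rho$ is $\C((t))$-linear and the involution obeys $\overline{\nu(t)\alpha}=\nu(-t)\alpha$.

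These reduce (1) and (2) to bookkeeping. For (2), since $\pa_\Omega$ differentiates only along $Z$, the operator $T^t_\rho$ commutes with multiplication by $\nu(t)\in\OO_S[[t]]$, and $\OO_S$-linearity of $\Tr$ lets $\nu(t)$ factor straight out of the first slot; the second equality then follows from $\overline{\nu(-t)\alpha}=\nu(t)\alpha$. For (1), writing $A=T^t_\rho(s)=\sum_k a_kt^k$ and $B=T^t_\rho(s')=\sum_l b_lt^l$, the definition gives $\mc K^F_\Omega(s,s')=\sum_{k,l}(-1)^l\Tr(a_kb_l)\,t^{k+l}$. The nonvanishing of $\Tr(a_kb_l)$ forces $a_k,b_l$ into complementary total-degree-zero bidegrees, whence $\Tr(a_kb_l)=\Tr(b_la_k)$; comparing with $\overline{\mc K^F_\Omega(s',s)}$ and tracking the sign flip produced by the bar yields the Hermitian symmetry.

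For (3) and (4) I would run a cochain-level Leibniz argument with $\nabla^\Omega_V$ represented by the Lie-derivative operator $\mc L_{\hat V}+\tfrac{\pa_{\hat V}F}{t}$ (cf. \eqref{eqn-GMconn} and Proposition \ref{corwelldefine}) and $\nabla^\Omega_{t\pa_t}$ by $t\pa_t+i-\tfrac{F}{t}$ on $\PV^{i,*}(Z/S)((t))$ as in \eqref{extendGM}, both of which descend to $\mc H^{F,\Omega}$ by Lemma \ref{extend-GMconn-flat}. The crucial cancellation in both cases is that the multiplication piece ($\tfrac{\pa_{\hat V}F}{t}$, resp. $\tfrac{F}{t}$) drops out between the two slots: multiplication by a function commutes past the product while the bar reverses the sign of its $t^{-1}$, so the two contributions are equal and opposite. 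In (4) the surviving terms split into a degree term, which produces the constant $n$ precisely because the bidegree constraint forces $i+i'=n$, and a $t\pa_t$ term, which, being a derivation commuting with $\Tr$, reassembles into $t\pa_t\,\Tr(A\bar B)$. In (3) the surviving content reduces to the identity $\pa_V\Tr(\gamma)=\Tr(\mc L_{\hat V}\gamma)$; choosing $\Omega_{Z/S}$ independent of the base parameters so that $\mc L_{\hat V}\Omega_{Z/S}=0$, this is the statement that the holomorphic base derivative commutes with fiberwise integration along a lift of $V$. This compatibility of fiber integration with $\mc L_{\hat V}$, a Cartan/Stokes computation whose boundary terms vanish by fiberwise properness of the supports, is the step I expect to be the main obstacle; the remaining manipulations are formal.

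Finally (5) follows by passing modulo $t$. At $t=0$ the operator $T^t_\rho$ specializes to the $t$-free operator $T_\rho$ of the previous section and the bar acts trivially, so from $\mc K^F_\Omega(s_1,s_2)=\Tr\bracket{T^t_\rho(s_1)\,\overline{s_2}}$ and the fact that $\iota$ is an isomorphism, giving $[T_\rho(s_1)s_2]=\iota^{-1}[s_1s_2]$, one obtains $\mc K^F_\Omega(s_1,s_2)\equiv\Tr\bracket{\iota^{-1}[s_1s_2]}\pmod t$. By the sheafified form of Proposition \ref{compatible-residue} this equals $C_n\Res(s_1s_2)$, i.e. the classical relative residue pairing on $p_*\OO_{\mc C(F)}$ up to the nonzero constant $C_n$. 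Its nondegeneracy is local Grothendieck duality, so the induced pairing on $\Hzero^{F,\Omega}/t\,\Hzero^{F,\Omega}$ is nondegenerate, completing the proof.
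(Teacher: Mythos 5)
Your proposal is correct and takes essentially the same route as the paper: the paper disposes of (1)--(4) as direct calculations from the definition of $\mc K^F_\Omega$ via $T^t_\rho$-representatives, and your cochain-level computations (the $t\mapsto -t$ involution cancelling the $F/t$-terms between the two slots, the bidegree constraint producing the constant $n$, and differentiation under the fiberwise integral for the base direction) are precisely those calculations made explicit. For (5), both arguments reduce modulo $t$ and invoke Proposition \ref{compatible-residue} to identify the induced pairing with the classical residue pairing, whose non-degeneracy is classical.
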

\begin{proof}
The first four statements follow  from direct calculations and the definition of the pairing $\mc K^{F}_{\Omega}(\mbox{-}, \mbox{-})$.

 The fiber of $\Hzero^{F, \Omega}/t \Hzero^{F, \Omega}$ at   $u\in S$ is given by $H^*(\PV_c(Z_{u}), \dbar_{F|_{Z_u}})$ $=\Jac(F|_{Z_u})$, where $Z_u:=p^{-1}(u)$. By Proposition \ref{compatible-residue}, the pairing at $u$ coincides with the   residue pairing $
    \Jac(F|_{Z_u})\otimes \Jac(F|_{Z_u})\to \C,$
which is known to be non-degenerate  \cite{Hartshorne-residue}. Hence (5) follows.
\end{proof}

%%\begin{rmk}
  %%As shown in {\color{red}{(add a reference)}}, the pairing $(\mbox{-}, \mbox{-})$ satisfying all the properties in the above lemma is unique up to a constant.
%%\end{rmk}

   The triple $\{\Hzero^{F, \Omega},\mc K^{F}_{\Omega}\bracket{\mbox{-}, \mbox{-}}, \nabla^{\Omega} \}$ satisfying properties of Proposition \ref{lem-higherpairing-properties} and the Transversality (Lemma \ref{Transversality}) %satisfying the above properties
   is called a \emph{variation of  semi-infinite Hodge structure}, a notion due to Barannikov \cite{Barannikov-period}.

%%%{\color{red}{\begin{rmk}Need to double check if the IMAGE of all the high residue map/pairing are well-defined.\end{rmk}}}

\subsection{Primitive forms}
With the triple $\{\Hzero^{F, \Omega}, \mc K^{F}_{\Omega}\bracket{\mbox{-}, \mbox{-}}, \nabla^{\Omega} \}$, we can define the notion of primitive forms \cite{Saito-primitive}. In terms of variation of semi-infinite   Hodge structures, the primitive forms correspond geometrically to the semi-infinite period maps \cite{Barannikov-period}.

\begin{defn}\label{define-primitive-form} A   section $\zeta\in \Gamma(S, \Hzero^{F, \Omega})$ is called a \emph{primitive form} if it satisfies the following conditions:
\begin{enumerate}
\item[(P1)]{\upshape (Primitivity)}\quad The section  $\zeta$ induces an $\OO_{S}$-module isomorphism
$$
    t\nabla^{{\Omega}}\zeta:\mc T_{S}\to \Hzero^{F, \Omega}/t \Hzero^{F, \Omega}=p_*\OO_{\mc C(F)}; \quad V\mapsto t\nabla^{\Omega}_V \zeta.
$$
\item[(P2)]{\upshape (Orthogonality)}\quad
For any local sections $V_1, V_2$ of   $\mc T_S$,
\begin{align}
\label{primitive-eq21}   \mc K_\Omega^{F}\big({\nabla^{\Omega}_{V_1} \zeta, \nabla^{\Omega}_{V_2} \zeta}\big)&\in t^{-2}\OO_{S}.
\end{align}
\item[(P3)]{\upshape (Holonomicity)}\quad
For any local sections $V_1, V_2, V_3$ of   $\mc T_S$,
\begin{align}
\label{primitive-eq22}   \mc K_\Omega^{F}\big({\nabla^{\Omega}_{V_1} \nabla^{\Omega}_{V_2} \zeta, \nabla^{\Omega}_{V_3} \zeta}\big)&\in t^{-3}\OO_{S}\oplus t^{-2}\OO_{S};\\
\label{primitive-eq23}  \mc K_\Omega^{F} \big({\nabla^{\Omega}_{t\pa_t} \nabla^{\Omega}_{V_1} \zeta, \nabla^{\Omega}_{V_2} \zeta}\big)&\in t^{-3}\OO_{S}\oplus t^{-2}\OO_{S}.
\end{align}
\item[(P4)] {\upshape (Homogeneity)}\quad Recall   the Euler vector field $E$ is defined in \eqref{def-Euler-field}. There is a constant $r\in \C$ such that
\begin{align*}%\label{primitive-homogenety}
    \bracket{\nabla^{\Omega}_{t\pa_t}+\nabla^{\Omega}_{E}}\zeta=r\zeta.
\end{align*}
\end{enumerate}
\end{defn}

\begin{rmk}
   The hypothesis \eqref{primitive-eq23} follows from the combination  of {\upshape (P2)}, \eqref{primitive-eq22} and {\upshape (P4)}.
\end{rmk}
%%The geometric meaning of the properties of primitive forms will be explained in details in the next section when we discuss the constructions.
 %%In modern language, a primitive form associates to the universal deformation space $S$ the structure of Frobenius manifold.
The properties of a primitive form  give rise to a set of flat coordinates and an associated Frobenius manifold structure on the universal unfolding $S$, which was originally called by the third author the \emph{flat structure}  \cite{Saito-primitive}. This is exposed in detail via the modern point of view  in \cite{Saito-Takahashi}. An exposition for Calabi-Yau manifolds parallel with our current setting is given in \cite{Si-Frobenius}   . As we will see in the next section, primitive forms can be constructed by using good opposite filtrations $\mc L$ of $\mc H^{f, \Omega}$. This leads to a concrete way to compute the flat coordinates and the potential function of the associated Frobenius manifold structure     \cite{LLS}.

\subsection{Intrinsic   properties}\label{subsec-intrinsicity} So far we have fixed a relative holomorphic volume form $\Omega:=\Omega_{Z/S}$ (see formula \eqref{relativevolumeform}) in order to define a primitive form. In this subsection, we will discuss  properties of higher residue pairings and primitive forms associated with different choices  $ {1\over \lambda}\Omega$,   where $\lambda\in H^0(Z, \OO_Z^*)$ is a nowhere vanishing holomorphic function on $Z$. We will use the subscript ${1\over \lambda}\Omega$ to indicate such a choice. For instance, we will denote  $Q_{F, {{1\over \lambda}\Omega}}$ for the differential and $\mc H^{F, {1\over \lambda}\Omega}$  to specify the dependence on the choice. We will denote by
 $[x]_{{1\over \lambda}\Omega}$ a local section of $\mc H^{F, {1\over \lambda}\Omega}$, represented by
 a $\PV(Z/S)$-valued Laurent series $x$ in $t$. In this subsection, we will also identify our primitive forms with the primitive forms in the original approach via differential forms.

Recall that  we have   isomorphisms of cochain complexes
   $$\hat \Gamma_{{1\over \lambda}\Omega}^{\pm}: \bracket{\PV(Z/S)((t)), Q_{F, {1\over \lambda}\Omega}}\overset{\cong}{\longrightarrow}\bracket{\mc A(Z/S)((t)), d_{ F}^{\pm}}.$$
%%Here we use $\hat \Gamma_{{1\over \lambda}\Omega}$ to relate a complex whose coboundary operator depends on ${1\over \lambda}\Omega$ with another complex whose  coboundary operator $d_F$  does not depend on ${1\over \lambda}\Omega$.
Here we will use $\hat \Gamma_{{1\over \lambda}\Omega}^+$, while we remark that the other isomorphism $\hat \Gamma_{{1\over \lambda}\Omega}^-$ also works.

Let us still  denote
by $\hat \Gamma^+_{{1\over \lambda}\Omega}$   the composition
          $$\hat \Gamma^+_{{1\over \lambda}\Omega}: \mc H^{F,  {1\over \lambda}\Omega}\overset{\cong}{\longrightarrow} R^\bullet p_*\bracket{\PV(Z/S)((t)), Q_{F, {1\over \lambda}\Omega}}\overset{\cong}{\longrightarrow}R^\bullet p_*\bracket{\mc A(Z/S)((t)), d_{ F }^+}$$ of isomorphisms of sheaves of $\OO_S$-modules.

\begin{lem}
  For any $x\in \PV(Z/S)((t))$ and $\lambda\in H^0(Z, \OO_Z^*)$, we have
   $$(1)\,\hat\Gamma_{{1\over \lambda}\Omega}^+(\lambda x)=\hat\Gamma_\Omega^+(x);\quad(2)\,Q_{F, {1\over \lambda}\Omega}(\lambda x)=\lambda Q_{F, \Omega}(x) %;\quad(3)\, T_{\rho, {1\over \lambda}\Omega}^t (\lambda x)=\lambda T_{\rho, {1\over \lambda}\Omega}^t (x)
   . $$
\end{lem}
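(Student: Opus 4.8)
The plan is to prove (1) directly from the definitions, and then deduce (2) from (1) together with the compatibility of $\hat\Gamma^+$ with the twisted de Rham differential $d_F^+$ established in Proposition-Definition \ref{isom-complex-sheaf}. Both identities are additive in $x$ and $\C((t))$-linear, so it suffices to treat a homogeneous element $x=t^k\alpha$ with $\alpha\in\PV^{i,j}(Z/S)$, the general case following by linearity.

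For (1), the key observation is that multiplication by the holomorphic function $\lambda$ preserves the bidegree, so $\lambda\alpha\in\PV^{i,j}(Z/S)$ and the \emph{same} power-of-$t$ shift $t^{k+i-1}$ occurs in the definitions of $\hat\Gamma_\Omega^+$ and of $\hat\Gamma_{{1\over\lambda}\Omega}^+$. Since the contraction $\vdash$ is $C^\infty(Z)$-bilinear, we have $(\lambda\alpha)\vdash{1\over\lambda}\Omega=\lambda\cdot{1\over\lambda}(\alpha\vdash\Omega)=\alpha\vdash\Omega$, whence
$$
\hat\Gamma_{{1\over\lambda}\Omega}^+(\lambda x)=t^{k+i-1}\bigl((\lambda\alpha)\vdash\tfrac{1}{\lambda}\Omega\bigr)=t^{k+i-1}(\alpha\vdash\Omega)=\hat\Gamma_\Omega^+(x).
$$
This is exactly (1); the essential point is that the degree shift records only the polyvector degree $i$, which is insensitive to rescaling by a nowhere vanishing function.

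For (2), the difficulty is that $\pa_\Omega$, and hence $Q_{F,\Omega}$, genuinely depends on $\Omega$, so a head-on verification would require tracking a correction term of the form $\pa_{{1\over\lambda}\Omega}=\pa_\Omega-\{\log\lambda,\mbox{-}\}$. I would avoid this entirely by invoking the intertwining relations $\hat\Gamma_\Omega^+\circ Q_{F,\Omega}=d_F^+\circ\hat\Gamma_\Omega^+$ and $\hat\Gamma_{{1\over\lambda}\Omega}^+\circ Q_{F,{1\over\lambda}\Omega}=d_F^+\circ\hat\Gamma_{{1\over\lambda}\Omega}^+$ from Proposition-Definition \ref{isom-complex-sheaf}. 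Applying the second relation to $\lambda x$ and using part (1) gives
$$
\hat\Gamma_{{1\over\lambda}\Omega}^+\bigl(Q_{F,{1\over\lambda}\Omega}(\lambda x)\bigr)=d_F^+\bigl(\hat\Gamma_{{1\over\lambda}\Omega}^+(\lambda x)\bigr)=d_F^+\bigl(\hat\Gamma_\Omega^+(x)\bigr),
$$
while the first relation, followed by part (1) applied to the element $Q_{F,\Omega}(x)$, yields
$$
d_F^+\bigl(\hat\Gamma_\Omega^+(x)\bigr)=\hat\Gamma_\Omega^+\bigl(Q_{F,\Omega}(x)\bigr)=\hat\Gamma_{{1\over\lambda}\Omega}^+\bigl(\lambda\,Q_{F,\Omega}(x)\bigr).
$$
Comparing the two displays and using that $\hat\Gamma_{{1\over\lambda}\Omega}^+$ is an isomorphism (in particular injective) forces $Q_{F,{1\over\lambda}\Omega}(\lambda x)=\lambda\,Q_{F,\Omega}(x)$, which is (2).

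I expect the only genuine subtlety to be organizational: one must first establish (1) for arbitrary elements (not merely the homogeneous pieces) so that it can be fed into the computation for (2) with $x$ replaced by $Q_{F,\Omega}(x)$, and one must confirm that the $t$-degree bookkeeping in $\hat\Gamma^+$ matches on the nose under multiplication by $\lambda$. The potentially messy computation of the explicit correction term for $\pa_\Omega$ is circumvented completely by this indirect route through $d_F^+$.
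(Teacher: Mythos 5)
Your proof is correct, but for part (2) it takes a different route from the paper's. The paper's own proof is a one-liner: it observes directly from the definition $\pa_{{1\over\lambda}\Omega}(\,\cdot\,)=\Gamma_{{1\over\lambda}\Omega}^{-1}\circ\pa\circ\Gamma_{{1\over\lambda}\Omega}$ together with $\Gamma_{{1\over\lambda}\Omega}(\lambda x)=\Gamma_\Omega(x)$ that $\pa_{{1\over\lambda}\Omega}(\lambda x)=\lambda\,\pa_\Omega(x)$, and (2) follows since the remaining pieces $\dbar$ and $\{F,\mbox{-}\}$ of $Q_F$ commute with multiplication by the holomorphic function $\lambda$ (a fact the paper leaves implicit). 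You instead conjugate the \emph{whole} differential: using $Q_{F,\Omega'}=(\hat\Gamma_{\Omega'}^+)^{-1}\circ d_F^+\circ\hat\Gamma_{\Omega'}^+$ from Proposition-Definition \ref{isom-complex-sheaf} for both choices of volume form, plus part (1) and injectivity of $\hat\Gamma_{{1\over\lambda}\Omega}^+$. Both arguments are at heart the same conjugation-by-contraction idea; yours is applied at the level of $Q_F$ versus $d_F^+$, the paper's at the level of $\pa_\Omega$ versus $\pa$. What your route buys is that you never decompose $Q_F$, so the behavior of $\dbar$ and $\{F,\mbox{-}\}$ under multiplication by $\lambda$ is handled automatically rather than silently; what it costs is reliance on the heavier Proposition-Definition \ref{isom-complex-sheaf} applied to the volume form ${1\over\lambda}\Omega$ — this is legitimate and non-circular, since that proposition is proved by direct verification for an arbitrary nowhere vanishing relative volume form (and the paper explicitly recalls this instance just before the lemma), but it is worth stating that dependence explicitly, as the whole content of the lemma would otherwise appear to be an empty formality. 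Your side remark that a head-on computation would produce the correction term $\pa_{{1\over\lambda}\Omega}=\pa_\Omega-\{\log\lambda,\mbox{-}\}$ is accurate, but note that the paper's computation never needs this correction term either: applied to the rescaled element $\lambda x$ rather than to $x$ itself, the identity is exact with no correction.
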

\begin{proof}
   Statement (1) is obvious. (2) follows from the observation $\pa_{{1\over \lambda}\Omega}(\lambda x)=\lambda\pa_\Omega(x)$.
   %.  Note $\pa_{{1\over \lambda}\Omega}(\lambda x)=\lambda\pa_\Omega(x), V_{F}(\lambda x)=\lambda V_{F}(x)$, $\dbar (\lambda x)=\lambda \dbar x$ and $\{F, \lambda x\}=\lambda\{F, x\}$. Thus   statement (2), (3) follow.
\end{proof}
As a direct consequence, we have
\begin{cor} For every  $\lambda\in H^0(Z, \OO_Z^*)$,
 the map $\psi_\lambda: \mc H^{F,   \Omega} \to \mc H^{F,  {1\over \lambda}\Omega}$ by $[x]_{\Omega}\mapsto [\lambda x]_{{1\over \lambda}\Omega}$ is well defined, and defines an isomorphism of sheaves of $\OO_S$-modules. Furthermore, we have $\hat \Gamma^+_{{1\over \lambda}\Omega}\circ \psi_\lambda =\hat\Gamma_{\Omega}^+$.
\end{cor}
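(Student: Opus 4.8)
The plan is to obtain the Corollary as a purely formal consequence of the two identities in the preceding Lemma, by regarding $\psi_\lambda$ as the cochain-level operation ``multiplication by $\lambda$'' passed to cohomology. First I would establish well-definedness. Multiplication by $\lambda$ is a $\C((t))$-linear endomorphism of $\PV(Z/S)((t))$, and statement $(2)$ of the Lemma, $Q_{F, {1\over \lambda}\Omega}(\lambda x)=\lambda Q_{F, \Omega}(x)$, shows that it intertwines the two differentials up to the scalar $\lambda$. Hence if $x$ is $Q_{F, \Omega}$-closed then $\lambda x$ is $Q_{F, {1\over \lambda}\Omega}$-closed, and if $x=Q_{F, \Omega}(y)$ then $\lambda x=Q_{F, {1\over \lambda}\Omega}(\lambda y)$ is $Q_{F, {1\over \lambda}\Omega}$-exact. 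Sheafifying over Stein opens $U\subset S$ and invoking Proposition \ref{cohomology-field-relative}, multiplication by $\lambda$ therefore descends to a well-defined map $\psi_\lambda:\mc H^{F, \Omega}\to\mc H^{F, {1\over \lambda}\Omega}$, $[x]_\Omega\mapsto[\lambda x]_{{1\over \lambda}\Omega}$. Because $\lambda\in H^0(Z, \OO_Z^*)$ commutes with the $\OO_S$-action (through $p^*$) and with $t$, the map $\psi_\lambda$ is a morphism of sheaves of $\OO_S$-modules.

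Next I would construct the inverse. Since $\lambda^{-1}\in H^0(Z, \OO_Z^*)$ as well, the Lemma applies verbatim with $\lambda^{-1}$ in place of $\lambda$ and ${1\over \lambda}\Omega$ in place of $\Omega$; noting $\lambda\cdot{1\over \lambda}\Omega=\Omega$, this yields $\psi_{\lambda^{-1}}:\mc H^{F, {1\over \lambda}\Omega}\to\mc H^{F, \Omega}$, $[x]_{{1\over \lambda}\Omega}\mapsto[\lambda^{-1}x]_\Omega$. At the cochain level the two composites are multiplication by $\lambda^{-1}\lambda=1$ and $\lambda\lambda^{-1}=1$, so both are the identity; hence $\psi_\lambda$ is an isomorphism with inverse $\psi_{\lambda^{-1}}$.

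Finally, the compatibility $\hat\Gamma^+_{{1\over \lambda}\Omega}\circ\psi_\lambda=\hat\Gamma_\Omega^+$ follows immediately from statement $(1)$ of the Lemma: for a representing cocycle $x$,
$$
\hat\Gamma^+_{{1\over \lambda}\Omega}\circ\psi_\lambda\,[x]_\Omega=\hat\Gamma^+_{{1\over \lambda}\Omega}\,[\lambda x]_{{1\over \lambda}\Omega}=\bigl[\hat\Gamma^+_{{1\over \lambda}\Omega}(\lambda x)\bigr]=\bigl[\hat\Gamma_\Omega^+(x)\bigr]=\hat\Gamma_\Omega^+\,[x]_\Omega,
$$
the penultimate equality being exactly the representative-level identity $\hat\Gamma^+_{{1\over \lambda}\Omega}(\lambda x)=\hat\Gamma_\Omega^+(x)$. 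I expect no genuine obstacle here: the whole statement is formal once the Lemma is in hand. The only points needing care are the bookkeeping of volume forms — ensuring the target is ${1\over \lambda}\Omega$ rather than $\lambda\Omega$, and that the inverse carries the reciprocal scalar so that $\lambda\cdot{1\over \lambda}\Omega=\Omega$ — together with the (routine) justification, supplied by Proposition \ref{cohomology-field-relative}, that multiplication by $\lambda$ legitimately descends to the sheaf cohomology defining $\mc H^{F,\Omega}$ and $\mc H^{F,{1\over\lambda}\Omega}$.
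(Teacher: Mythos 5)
Your proposal is correct and follows exactly the route the paper intends: the paper states this corollary as ``a direct consequence'' of the preceding lemma, and your argument is precisely the formal unwinding of that claim, using identity $(2)$ for well-definedness (and the lemma applied to $\lambda^{-1}$ and ${1\over\lambda}\Omega$ for invertibility) and identity $(1)$ for the compatibility $\hat\Gamma^+_{{1\over\lambda}\Omega}\circ\psi_\lambda=\hat\Gamma^+_\Omega$. No gaps; your care with the bookkeeping $\lambda\cdot{1\over\lambda}\Omega=\Omega$ and the appeal to Proposition \ref{cohomology-field-relative} for descending to cohomology supply exactly the details the paper leaves implicit.
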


We refer to the next proposition as the \textit{intrinsic  properties} among the higher residue pairings and the associated Gauss-Manin connection.
\begin{prop}\label{prop-funcpro-HigherRes} Let   $\lambda\in H^0(Z, \OO_Z^*)$. For any local sections $[x]_{\Omega}, [y]_{\Omega}\in  \mc H^{F,   \Omega}$,
       $$  \mc K_{{1\over \lambda}\Omega}^{F} \bracket{[\lambda x]_{{1\over \lambda}\Omega}, [\lambda y]_{{1\over \lambda}\Omega}} =\mc K_\Omega^{F}\bracket{[x]_{\Omega}, [y]_{\Omega}}.$$
Furthermore, we have
   $$\nabla^{{1\over \lambda}\Omega}_V[\lambda x]_{{1\over \lambda}\Omega}=[\lambda \nabla^{\Omega}_Vx]_{{1\over \lambda}\Omega}\quad \mbox{and}\quad \nabla^{{1\over \lambda}\Omega}_{t\pa_t}[\lambda x]_{{1\over \lambda}\Omega}=[\lambda \nabla^{\Omega}_{t\pa_t}x]_{{1\over \lambda}\Omega},$$
   where $V$ is a  local section   of the holomorphic tangent sheaf $\mc T_{S}$.
\end{prop}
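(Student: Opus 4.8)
The plan is to establish the three assertions separately, deriving the residue-pairing identity from a single trace computation and the two connection identities from the functoriality of $\hat\Gamma_\Omega^+$ already recorded in the preceding Corollary. Throughout I will use $\tilde x:=T_\rho^t(x)$ and $\tilde y:=T_\rho^t(y)$ for the compactly supported $\Omega$-representatives furnished by Proposition \ref{cohomology-field-relative}.

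For the higher residue pairing I would exploit the fact that $\mc K^F_\Omega(\mbox{-},\mbox{-})$ descends to cohomology, so that $\mc K^F_{{1\over \lambda}\Omega}\bracket{[\lambda x]_{{1\over \lambda}\Omega},[\lambda y]_{{1\over \lambda}\Omega}}$ may be evaluated on \emph{any} compactly supported $Q_{F,{1\over \lambda}\Omega}$-closed representatives. The crux is to produce convenient ones: I claim that $\lambda\tilde x$ and $\lambda\tilde y$ are valid $\tfrac{1}{\lambda}\Omega$-representatives of $[\lambda x]_{{1\over \lambda}\Omega}$ and $[\lambda y]_{{1\over \lambda}\Omega}$. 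They are compactly supported along the fibers because $\lambda$ is a function; they are $Q_{F,{1\over \lambda}\Omega}$-closed because $Q_{F,{1\over \lambda}\Omega}(\lambda\tilde x)=\lambda Q_{F,\Omega}(\tilde x)=0$ by the preceding Lemma; and they carry the correct class, since $\psi_\lambda[\tilde x]_\Omega=[\lambda\tilde x]_{{1\over \lambda}\Omega}$ by the Corollary while $[\tilde x]_\Omega=[x]_\Omega$ by Lemma \ref{lemmaforquasiisoWithdescendant} (which gives $x-T_\rho^t(x)=Q_{F}R_\rho^t(x)$).

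The computation then reduces to one trace identity, which I expect to be the main obstacle, since the trace carries \emph{two} insertions of the volume form and one must match their quadratic $\lambda$-dependence against the quadratic scaling of the representatives. From $C^\infty$-linearity of contraction one has $\beta\vdash\tfrac{1}{\lambda}\Omega=\tfrac{1}{\lambda}(\beta\vdash\Omega)$, so each of the two volume-form insertions contributes a factor $\tfrac{1}{\lambda}$; hence $\Tr_{{1\over \lambda}\Omega}(\lambda^2\beta)=\Tr_\Omega(\beta)$ for any compactly supported $\beta$. Applying this with $\beta=\tilde x\tilde y$, and noting $(\lambda\tilde x)(\lambda\tilde y)=\lambda^2\tilde x\tilde y$, the two factors of $\lambda$ from the scaled representatives cancel exactly the two factors of $\tfrac{1}{\lambda}$ from the volume form, so that $\mc K^{F,c}_{{1\over \lambda}\Omega}(\lambda\tilde x,\lambda\tilde y)=\mc K^{F,c}_\Omega(\tilde x,\tilde y)=\mc K^F_\Omega\bracket{[x]_\Omega,[y]_\Omega}$, as claimed.

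For the connection identities I would use the intrinsic description $\nabla^\Omega=(\hat\Gamma_\Omega^+)^{-1}\circ\nabla^{\text{GM}}\circ\hat\Gamma_\Omega^+$, observing that the de Rham connection $\nabla^{\text{GM}}$ is manifestly independent of $\Omega$. Combining this with $\hat\Gamma^+_{{1\over \lambda}\Omega}\circ\psi_\lambda=\hat\Gamma_\Omega^+$ from the Corollary gives, for any $V\in\mc T_S$,
$$\nabla^{{1\over \lambda}\Omega}_V\psi_\lambda[x]_\Omega=(\hat\Gamma^+_{{1\over \lambda}\Omega})^{-1}\nabla^{\text{GM}}_V\hat\Gamma_\Omega^+[x]_\Omega=\psi_\lambda\nabla^\Omega_V[x]_\Omega,$$
which is precisely $\nabla^{{1\over \lambda}\Omega}_V[\lambda x]_{{1\over \lambda}\Omega}=[\lambda\nabla^\Omega_Vx]_{{1\over \lambda}\Omega}$. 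For the $t\pa_t$-direction the point is even more elementary: by \eqref{extendGM} the operator acts on $\PV^{i,*}(Z/S)((t))$ by the $\Omega$-free formula $\bracket{t\pa_t+i-F/t}$, hence $\nabla^{{1\over \lambda}\Omega}_{t\pa_t}$ and $\nabla^{\Omega}_{t\pa_t}$ coincide as operators on chains; since multiplication by the holomorphic, $t$-independent function $\lambda$ (of polyvector bidegree $(0,0)$) commutes with $t\pa_t$, with the degree operator $i$, and with multiplication by $F/t$, I obtain $\nabla^{{1\over \lambda}\Omega}_{t\pa_t}(\lambda x)=\lambda\nabla^\Omega_{t\pa_t}(x)$, and passing to cohomology (legitimate by Lemma \ref{extend-GMconn-flat}) yields $\nabla^{{1\over \lambda}\Omega}_{t\pa_t}[\lambda x]_{{1\over \lambda}\Omega}=[\lambda\nabla^\Omega_{t\pa_t}x]_{{1\over \lambda}\Omega}$.
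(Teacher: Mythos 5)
Your proof is correct and takes essentially the same route as the paper's: the pairing identity rests on the same cancellation of the two factors of $\lambda$ carried by the scaled representatives against the two factors of ${1\over\lambda}$ coming from the volume-form insertions in the trace, and the connection statements follow, as in the paper, from the fact that $\nabla^{\Omega}$ and $\nabla^{{1\over\lambda}\Omega}$ are both induced from the $\Omega$-independent de Rham data via $\hat\Gamma^+$ (with the $t\pa_t$-direction checked directly on the $\Omega$-free formula \eqref{extendGM}). The only minor variation is that where the paper establishes the operator identity $T^t_{\rho,{1\over\lambda}\Omega}(\lambda x)=\lambda T^t_{\rho,\Omega}(x)$ and substitutes it into the explicit formula for the pairing, you instead verify that $\lambda T^t_{\rho,\Omega}(x)$ is a valid compactly supported $Q_{F,{1\over\lambda}\Omega}$-closed representative of $[\lambda x]_{{1\over\lambda}\Omega}$ and appeal to the well-definedness of $\mc K^{F,c}_{\Omega}$ on cohomology---the same mechanism, packaged cohomologically rather than at the operator level.
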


\begin{proof} Recall that we have defined the operator $V_{F}$ and $T_\rho^t$, and $\iota_t^{-1}$ can be realized by $T_\rho^t$.
    Note
      $$
      Q_{{1\over \lambda}\Omega}(\lambda x)=\lambda Q_{\Omega}(x), \quad
      V_{F}(\lambda x)=\lambda V_{F}(x),  \quad \{F, \lambda x\}=\lambda\{F, x\}.
      $$
      It follows that  $T_{\rho, {1\over \lambda}\Omega}^t (\lambda x)=\lambda T_{\rho, \Omega}^t (x)$. Hence,
      \begin{align*}
           \mc K_{{1\over \lambda}\Omega}^{F}\bracket{[\lambda x]_{{1\over \lambda}\Omega}, [\lambda y]_{{1\over \lambda}\Omega}}
           &=
       \int_{Z/S}  \bracket{\bracket{T_{\rho, {1\over \lambda}\Omega}^t(\lambda x)\wedge \overline{ T_{\rho, {1\over \lambda}\Omega}^{t} (\lambda y)}}\vdash {1\over \lambda}\Omega}\wedge {1\over \lambda}\Omega\\
       &=\int_{Z/S} \bracket{\bracket{T_{\rho, \Omega}^t(x)\wedge  \overline{T_{\rho,\Omega}^{t} (y)}}\vdash \Omega}\wedge \Omega\\
       &=\mc K_\Omega^{F}\bracket{[x]_{\Omega}, [y]_{\Omega}}.
      \end{align*}

The compatible relations on the Gauss-Manin connections $\nabla^{{1\over \lambda}\Omega}$ follow from that fact that they are all induced from that on differential forms via the isomorphism $\hat \Gamma^+_{{1\over \lambda}\Omega}$.
       \end{proof}
As a direct consequence of the above proposition, we have
 \begin{prop}\label{primitive-change}
    A   section $\zeta=[x]_{ \Omega}\in\Gamma(S,  \Hzero^{F, \Omega})$ is a primitive form with respect to the choice $\Omega$ if and only if the  section $[\lambda  x]_{{1\over \lambda}\Omega}\in\Hzero^{F, {1\over \lambda}\Omega}$    is a primitive form  with respect to the choice ${1\over \lambda}\Omega$.
 \end{prop}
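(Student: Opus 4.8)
The plan is to show that the isomorphism $\psi_\lambda\colon \mc H^{F,\Omega}\to \mc H^{F,{1\over\lambda}\Omega}$, $[x]_\Omega\mapsto[\lambda x]_{{1\over\lambda}\Omega}$, is an isomorphism of the entire package of data entering the definition of a primitive form, so that properties (P1)--(P4) hold for $\zeta=[x]_\Omega$ if and only if they hold for $\psi_\lambda(\zeta)=[\lambda x]_{{1\over\lambda}\Omega}$. Since Proposition \ref{prop-funcpro-HigherRes} already records that $\psi_\lambda$ preserves the higher residue pairing $\mc K^F$ and intertwines both the Gauss-Manin connection $\nabla$ and its extension $\nabla_{t\pa_t}$, the statement will follow once I transport each of the four conditions across $\psi_\lambda$.

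First I would check that $\psi_\lambda$ restricts to an isomorphism $\Hzero^{F,\Omega}\to\Hzero^{F,{1\over\lambda}\Omega}$ compatible with the Hodge filtration. Because $\lambda\in H^0(Z,\OO_Z^*)$ is holomorphic and $t$-independent, multiplication by $\lambda$ preserves $\Gamma(p^{-1}(U),\OO_Z)[[t]]$ and commutes with multiplication by $t$; hence $\psi_\lambda$ maps $\Hzero^{F,\Omega}$ isomorphically onto $\Hzero^{F,{1\over\lambda}\Omega}$, carries $\mc H_{(-k)}^{F,\Omega}=t^k\Hzero^{F,\Omega}$ to $\mc H_{(-k)}^{F,{1\over\lambda}\Omega}$, and therefore descends to an isomorphism $\bar\psi_\lambda$ on the quotient $\Hzero/t\Hzero\cong p_*\OO_{\mc C(F)}$.

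With this in hand the four conditions transport routinely. For (P1), the intertwining $t\nabla^{{1\over\lambda}\Omega}_V\psi_\lambda(\zeta)=\psi_\lambda\big(t\nabla^\Omega_V\zeta\big)$ together with the isomorphism $\bar\psi_\lambda$ shows that $V\mapsto t\nabla^{{1\over\lambda}\Omega}_V\psi_\lambda(\zeta)\bmod t$ is an isomorphism exactly when $V\mapsto t\nabla^\Omega_V\zeta\bmod t$ is. For (P2) and (P3), I would substitute $\nabla^{{1\over\lambda}\Omega}_{V_i}\psi_\lambda(\zeta)=\psi_\lambda(\nabla^\Omega_{V_i}\zeta)$ (and likewise for $\nabla_{t\pa_t}$) into the residue pairings and use $\mc K^F_{{1\over\lambda}\Omega}(\psi_\lambda(\mbox{-}),\psi_\lambda(\mbox{-}))=\mc K^F_\Omega(\mbox{-},\mbox{-})$; the required membership in $t^{-2}\OO_S$, respectively $t^{-3}\OO_S\oplus t^{-2}\OO_S$, is then literally the same condition for both choices of volume form. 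For (P4), I would note that the Euler vector field $E=\mathrm{KS}^{-1}([F])$ is defined purely from the frame $(Z,S,p,F)$ and is independent of the volume form; applying $\psi_\lambda$ to $(\nabla^\Omega_{t\pa_t}+\nabla^\Omega_E)\zeta=r\zeta$ and using that $\psi_\lambda$ intertwines both connections yields the same homogeneity relation with the same constant $r$.

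The computation is essentially formal, so I do not expect a genuine analytic obstacle; the only points requiring a little care are the two bookkeeping facts used implicitly in (P1)--(P4): that $\psi_\lambda$ preserves the zero-part together with its filtration (so that the quotient statement in (P1) is even meaningful), and that the Euler field entering (P4) does not change when $\Omega$ is rescaled. Both are immediate from the holomorphy and $t$-independence of $\lambda$ and from the definition of $E$, so the proof should be short and reduce entirely to invoking Proposition \ref{prop-funcpro-HigherRes}.
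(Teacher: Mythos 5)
Your proposal is correct and follows the same route as the paper: the paper states Proposition \ref{primitive-change} as a direct consequence of the intrinsic properties in Proposition \ref{prop-funcpro-HigherRes} (preservation of $\mc K^F$ and intertwining of $\nabla$ and $\nabla_{t\pa_t}$ under $\psi_\lambda$), which is exactly the argument you spell out. The bookkeeping details you flag---that $\psi_\lambda$ preserves $\Hzero^{F,\cdot}$ and its filtration, and that the Euler field $E=\mathrm{KS}^{-1}([F])$ is independent of the volume form---are precisely the points the paper leaves implicit, and your verification of them is sound.
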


In order to identify the primitive forms in our setting with the original ones, we let   $t=\delta_w^{-1}$ following the notation convention in  \cite{Saito-Takahashi}, and recall some facts therein.
The natural embedding of $(\Omega_{Z/S}^*((t)), d_F^-)\hookrightarrow   (\mathcal{A}(Z/S)((t)), d_F^-)$
is a quasi-isomorphism  (by Lemma \ref{cohomology-polyvectorfield} together with $\hat\Gamma_{\Omega_X}^-$).
This gives a canonical isomorphism
   \[\mathcal{H}_F:=  R^\bullet p_*(\Omega_{Z/S}^*((t)), d_F^-)\overset{\cong}{\to}  R^\bullet p_* (\mathcal{A}(Z/S)((t)), d_F^-).\]
The de Rham cohomology group $\mathcal{H}_F$  is equipped a filtration $\{\mathcal{H}_F^{(-k)}\}_{k\in \mathbb{Z}}$ given in
 Definition \ref{prop-def-hodgefil-family}. There is   also  the Gauss-Manin connection $\nabla^{\scriptsize\mbox{GM}}: \mathcal{H}_F\to\mathcal{H}_F$ defined the same formula as in \eqref{eqn-GMconn}, which is extended along $\delta_W$ by
  \[\nabla^{\scriptsize\mbox{GM}}_{\delta_w\partial_{\delta_w}}[\alpha]:= [(\delta_w\partial_{\delta_w}+\delta_w F)\alpha].\]

\begin{thm}\label{rmk-identifyprimitiveforms}
{\upshape (1) }The extended Gauss-Manin connections $\nabla^\Omega$ and {\upshape $\nabla^{\scriptsize\mbox{GM}}$} are compatible in the  sense:
  {\upshape \[\hat\Gamma_\Omega^- \circ\nabla^\Omega_{t\partial_t}=-\nabla^{\scriptsize\mbox{GM}}_{\delta_w\partial_{\delta_w}}\circ \hat\Gamma_\Omega^-, \quad \hat\Gamma_\Omega^- \circ\nabla^\Omega_V=\nabla^{\scriptsize\mbox{GM}}_V\circ \hat\Gamma_\Omega^-,\mbox{\itshape { for any local holomorphic vector filed }} V.\]}
\noindent{\upshape (2) } The next pairing  $\mathcal{K}_F$ coincides with the third author's   higher resider pairing on $\mathcal{H}_F$ in \cite{Saito-residue}.
\begin{align*}
    \mathcal{K}_F:& \mathcal{H}_F\otimes\mathcal{H}_F\to \OO_S((t))\\
    \mathcal{K}_F([\alpha_1], [\alpha_2]):&= t^n\mathcal{K}^F_\Omega\big((\hat\Gamma_\Omega^-)^{-1}([\alpha_1]),(\hat\Gamma_\Omega^-)^{-1}([\alpha_2])\big)
\end{align*}

\noindent{\upshape (3) }
 A section $[x]\in\Gamma(S,  \Hzero^{F, \Omega})$ is a primitive form with respect to $\{\Hzero^{F, \Omega}, \mathcal{K}^F_\Omega(\mbox{-},\mbox{-}), \nabla^\Omega\}$
  if and only if  the section  $[x \Omega]\in \Gamma(S, \mathcal{H}_F^{(0)})$ is a primitive form with respect to {\upshape $\{\mathcal{H}^{(0)}_{F}, \mathcal{K}_F(\mbox{-},\mbox{-}), \nabla^{\scriptsize\mbox{GM}}\}$}.
\end{thm}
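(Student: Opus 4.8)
The plan is to treat the three parts in the order (1), (2), (3), since part (3) is largely a formal consequence of (1) and (2) together with the identification of Hodge filtrations in Proposition-Definition \ref{prop-def-hodgefil-family}. Throughout, the organizing device I would use is to represent every class in $\mc H^{F,\Omega}$ by a holomorphic section of $\Gamma(Z,\OO_Z)((t))$, as permitted by Proposition \ref{cohomology-field-relative}; such a representative $x$ lies in bidegree $(0,0)$, so that $\hat\Gamma_\Omega^-(t^k x)=t^k\,x\Omega_{Z/S}$, i.e. $\hat\Gamma_\Omega^-([x])=[x\Omega_{Z/S}]$. This is what keeps the bookkeeping manageable: at the cochain level $\hat\Gamma_\Omega^-$ and $\hat\Gamma_\Omega^+$ differ by the bidegree-dependent factor $t^{\,j-i+1}$ on $\PV^{i,j}$, but on the canonical holomorphic representatives this ambiguity collapses.

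For (1), I would first rewrite the connection in the substituted variable. Setting $t=\delta_w^{-1}$ gives $\delta_w\pa_{\delta_w}=-t\pa_t$ and $\delta_w F=F/t$, so the stated $\nabla^{\mathrm{GM}}_{\delta_w\pa_{\delta_w}}[\alpha]=[(\delta_w\pa_{\delta_w}+\delta_w F)\alpha]$ becomes $[(-t\pa_t+F/t)\alpha]$; the minus sign in the claimed identity is exactly this substitution. On a holomorphic representative $x$ (bidegree $(0,0)$), formula \eqref{extendGM} reads $\nabla^\Omega_{t\pa_t}x=(t\pa_t-F/t)x$, and since $\hat\Gamma_\Omega^-$ is here just multiplication by the $t$- and $F$-independent $\Omega_{Z/S}$, applying it commutes with $(t\pa_t-F/t)$; comparison with $-\nabla^{\mathrm{GM}}_{\delta_w\pa_{\delta_w}}[x\Omega_{Z/S}]=[(t\pa_t-F/t)(x\Omega_{Z/S})]$ gives the first identity. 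For the $V$-direction I would use the simplified formula \eqref{eqn_simplifiedGMconn} (available since we always choose $\Omega_{Z/S}$ independent of the deformation parameters), so that $\nabla^\Omega_V[x]=[\pa_{\hat V}x+\tfrac{\pa_{\hat V}F}{t}x]$, and compare with $\nabla^{\mathrm{GM}}_V[x\Omega_{Z/S}]=[\mc L_{\hat V}(x\Omega_{Z/S})+\tfrac{\pa_{\hat V}F}{t}x\Omega_{Z/S}]$ coming from \eqref{eqn-GMconn}; the point to check is that $\mc L_{\hat V}\Omega_{Z/S}=0$ in the relative sense, which holds because $\hat V$ differentiates only the $\mathbf u$-directions while $\Omega_{Z/S}$ depends only on $\mathbf z$. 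I expect the main obstacle in (1) to be precisely this degree bookkeeping: one must argue that the cochain-level discrepancy between $\hat\Gamma_\Omega^\pm$ (a degree operator) genuinely disappears on cohomology, which is legitimate because $\nabla^\Omega_{t\pa_t}$ and $\nabla^\Omega_V$ both send holomorphic representatives to holomorphic representatives, closed by the same computation together with Lemma \ref{extend-GMconn-flat}.

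For (2), I would prove the coincidence by appealing to the axiomatic characterization of the higher residue pairing in \cite{Saito-residue}: a pairing on $\mc H_F$ is Saito's higher residue pairing once one checks $t$-sesquilinearity and symmetry, flatness with respect to $\nabla^{\mathrm{GM}}$ along $S$ and along $\delta_w\pa_{\delta_w}$, and that its leading coefficient is the Grothendieck residue pairing. Each of these I would transfer from Proposition \ref{lem-higherpairing-properties}: properties (1)--(2) give sesquilinearity and symmetry of $\mc K_F$; property (3) together with part (1) of this theorem gives $\nabla^{\mathrm{GM}}_V$-compatibility; property (5) identifies the leading term with the residue pairing (via Proposition \ref{compatible-residue}); and property (4), after the $t^n$ twist built into the definition of $\mc K_F$, supplies the $\delta_w\pa_{\delta_w}$-compatibility. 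The role of the factor $t^n$ is exactly to convert the $\bracket{t\pa_t+n}$ in Proposition \ref{lem-higherpairing-properties}(4) into the bare $t\pa_t$ demanded by Saito's normalization, since $t\pa_t\circ t^n=t^n\circ(t\pa_t+n)$, and to shift the leading coefficient to Saito's convention. The main obstacle here is matching the normalizations and signs of \cite{Saito-residue} exactly, after which the uniqueness in that characterization closes the argument.

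Finally, for (3) I would unwind Definition \ref{define-primitive-form}. Conditions (P1)--(P4) are stated entirely in terms of the semi-infinite Hodge filtration, the connection $\nabla^\Omega$ (including the $t\pa_t$-direction and the Euler field $E$), and the pairing $\mc K^F_\Omega$; under $\hat\Gamma_\Omega^-$ the filtration is matched by Proposition-Definition \ref{prop-def-hodgefil-family}, the connection by part (1), and the pairing up to $t^n$ by part (2). Since multiplication by $t^n$ is a bijection on $\OO_S((t))$ and the sign in the $t\pa_t$-direction merely reflects $\delta_w=t^{-1}$, each of (P1)--(P4) for $\zeta=[x]$ becomes equivalent to the corresponding condition for $[x\Omega_{Z/S}]$ in Saito's framework, giving the asserted equivalence. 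The only care needed is to confirm that the power-of-$t$ thresholds in (P2)--(P3) correspond, after the $t^n$ shift, to those in the original definition of primitive forms in \cite{Saito-primitive,Saito-Takahashi}.
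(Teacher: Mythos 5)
Your proposal is correct and follows essentially the same route as the paper's (very terse) proof: part (1) by direct computation on holomorphic representatives $x\in\Gamma(Z,\OO_Z)((t))$ with the sign coming from $t\partial_t=-\delta_w\partial_{\delta_w}$, part (2) by verifying the axioms of the higher residue pairing via Proposition \ref{lem-higherpairing-properties} and invoking the uniqueness of that characterization (the paper cites Theorem 5.1 of \cite{Saito-Takahashi} rather than \cite{Saito-residue} directly, with the $t^n$ twist playing exactly the normalizing role you describe), and part (3) by unwinding Definition \ref{define-primitive-form} against the identifications from (1), (2) and Proposition-Definition \ref{prop-def-hodgefil-family}. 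Your write-up simply makes explicit the bookkeeping that the paper compresses into ``direct calculations'' and ``directly checking.''
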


\begin{proof}
  (1) The relation $t=\delta_w^{-1}$ implies   $t\partial_t=-\delta_w\partial_{\delta_w}$.  The first identify follows from direct calculations.
  The second identity also follows from the definitions, by observing that each section $[x]$ of $\mathcal{H}^{F, \Omega}$ is represented  by an element $x$ of
    $\OO_Z((t))$.

  (2) It follows directly from Proposition \ref{lem-higherpairing-properties} that the pairing $\mathcal{K}_F(\mbox{-}, \mbox{-})$ satisfies all the properties of Theorem 5.1 of \cite{Saito-Takahashi}. Therefore it coincides with the third author's  higher residue pairing on $\mathcal{H}_F$ by the uniqueness as stated in the same theorem.

  (3) The statement also  follows by  directly checking according to  the definitions.
\end{proof}

Primitive forms are invariant under the stably equivalence. Precisely, for any $m\in \mathbb{Z}_{>0}$, we define a holomorphic function $f_m$ on $X_m:=X\times \C^m$, by $f_m:=f+\sum_{i=1}^m z_{n+i}^2$. Let $\pi_m$ denote the natural projection from  $Z_m:= Z\times \C^m$ to $Z$.
  Then we obtain a frame  $(Z_m, S, p_m, F_m) $ associated to $(X_m, f_m)$, where      $p_m:=p\circ \pi_m$ is a Stein map from  $Z_m$ to $S$, and
    $F_m:=F+\sum_{i=1}^m z_{n+i}^2$ is the universal unfolding of $f_m$.
We let $\Omega_{Z_m/S}:= \Omega_{Z/S}\wedge d{z_{n+1}}\wedge \cdots\wedge d{z_{n+m}}$. Because of the coincidence of primitive forms in our setting with the original sense by using differential forms in Theorem \ref{rmk-identifyprimitiveforms}, we have
\begin{prop}[see section 6 of \cite{Saito-lecutures}]\label{prop-stableequiv}
  A section $\zeta\in \Gamma(S, \Hzero^{F, \Omega})$ is a primitive form with respect to the choice $\Omega_{Z/S}$ if and only if
  $\pi_m^*(\zeta)$ is a primitive form in   $\Gamma(S, \Hzero^{F_m})$ with respect to the choice $\Omega_{Z_m/S}$.

Furthermore if $m$ is even, then there is an isomorphism of $\OO_S$-modules,
 $$\rho_m: \mc H^{F, \Omega}_{(\ell)}\overset{\cong}{\longrightarrow}\mc H_{(\ell)}^{F_m},\,\, \forall \ell\in \mathbb{Z}_{\leq 0},$$
which is equivariant with respect to the extended Gauss-Manin connection.
\end{prop}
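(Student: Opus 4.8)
The plan is to prove the two assertions by different means: the stable-equivalence statement (for arbitrary $m$) I would reduce to the classical result in the de Rham model via Theorem \ref{rmk-identifyprimitiveforms}, while for even $m$ I would construct the comparison map $\rho_m$ explicitly as a Gaussian-normalized suspension and verify its properties by direct computation.

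For the first assertion, write $\zeta=[x]_\Omega$ with $x\in\Gamma(\OO_Z)((t))$; its classical avatar under Theorem \ref{rmk-identifyprimitiveforms}(3) is $[x\,\Omega_{Z/S}]$. Since $\Omega_{Z_m/S}=\Omega_{Z/S}\wedge dz_{n+1}\wedge\cdots\wedge dz_{n+m}$ and $\pi_m^*x$ is pulled back (constant along the new variables), the classical avatar of $\pi_m^*\zeta$ is $[(\pi_m^*x)\,\Omega_{Z_m/S}]=[(x\,\Omega_{Z/S})\wedge dz_{n+1}\wedge\cdots\wedge dz_{n+m}]$, which is precisely the de Rham suspension of $[x\,\Omega_{Z/S}]$. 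The stable equivalence of primitive forms in the differential-form setting is classical (see section 6 of \cite{Saito-lecutures}): a class is a primitive form if and only if its suspension is. Applying Theorem \ref{rmk-identifyprimitiveforms}(3) to both $(X,f)$ and $(X_m,f_m)$ and chaining the resulting equivalences yields the first claim for every $m\in\Z_{>0}$.

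For the second assertion I would work with the pullback $\pi_m^*\colon\PV(Z/S)((t))\to\PV(Z_m/S)((t))$. It is a cochain map intertwining $Q_F$ and $Q_{F_m}$: the only potentially new term $\{\sum_i z_{n+i}^2,\pi_m^*\alpha\}$ vanishes, because under $\vdash\Omega_{Z_m/S}$ it becomes $\pa(\sum_i z_{n+i}^2)\wedge\big((\alpha\vdash\Omega_{Z/S})\wedge dz_{n+1}\wedge\cdots\wedge dz_{n+m}\big)=0$ (each $dz_{n+i}$ already occurs in the wedge factor), while $\dbar$ and $\pa_\Omega$ commute with $\pi_m^*$. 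Since $\partial_{z_{n+i}}F_m=2z_{n+i}$, one has $\Jac(F_m|_{p_m^{-1}(u)})\cong\Jac(F|_{p^{-1}(u)})$ fiberwise, and $\pi_m^*$ realizes this identification on $\mc H^{F_m}_{(0)}/t\cong\Jac$; by $t$-adic completeness $\pi_m^*$ is then an isomorphism of the filtered $\OO_S[[t]]$-modules $\mc H^{F,\Omega}_{(-k)}\to\mc H^{F_m}_{(-k)}$. For vector fields $V$ on $S$, Proposition \ref{corwelldefine} gives $\nabla^\Omega_V[s]=[\pa_{\hat V}s+t^{-1}(\pa_{\hat V}F)s]$, and because $\pa_{\hat V}F_m=\pa_{\hat V}F$ (the quadratic term being independent of $S$) we obtain at once $\nabla^{\Omega_m}_V\circ\pi_m^*=\pi_m^*\circ\nabla^\Omega_V$.

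The subtle direction is the extended one. Using \eqref{extendGM} on function representatives and the reduction $z_{n+i}^2\equiv -t/2$ modulo $\im Q_{F_m}$ (which follows, up to sign, from $Q_{F_m}(z_{n+i}\,\partial_{z_{n+i}})=2z_{n+i}^2+t$), I find $\nabla^{\Omega_m}_{t\pa_t}\circ\pi_m^*=\pi_m^*\circ(\nabla^\Omega_{t\pa_t}+\tfrac{m}{2})$. Hence the normalized map $\rho_m:=t^{-m/2}\pi_m^*$ is genuinely equivariant for the full extended Gauss-Manin connection, and this is exactly where the hypothesis that $m$ is even enters: only then is $t^{-m/2}$ a single-valued element of $\C((t))$, so that $\rho_m$ is a well-defined morphism of $\OO_S$-modules. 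I expect the main obstacle to lie precisely in this bookkeeping — isolating the $\tfrac{m}{2}$ spectral shift cleanly, reconciling it with the Hodge-filtration indexing (the shift is the Tate-type twist by half the number of added variables, integral exactly when $m$ is even), and matching it against the homogeneity constant $r$ of (P4) — rather than in any single computation.
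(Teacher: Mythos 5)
Your treatment of the first assertion coincides with the paper's: the paper itself gives no argument beyond passing to the differential-form picture via Theorem \ref{rmk-identifyprimitiveforms} and citing section 6 of \cite{Saito-lecutures}, and your identification of $\pi_m^*$ with the de Rham suspension under $\hat\Gamma^-_\Omega$ is correct, so that part stands.

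The gap is in the second assertion, and it is not the harmless bookkeeping you defer at the end. Your individual computations are all correct: $\pi_m^*$ intertwines $Q_F$ with $Q_{F_m}$, it is an isomorphism of filtered $\OO_S[[t]]$-modules $\mc H^{F,\Omega}_{(\ell)}\to \mc H^{F_m}_{(\ell)}$, it commutes with $\nabla^\Omega_V$ for $V$ tangent to $S$, and the Gaussian reduction $z_{n+i}^2\equiv -t/2$ modulo $\im Q_{F_m}$ gives the shift $\nabla_{t\pa_t}\circ\pi_m^*=\pi_m^*\circ(\nabla^\Omega_{t\pa_t}+{m\over 2})$. But the map you finally propose, $\rho_m:=t^{-m/2}\pi_m^*$, does not prove the displayed statement: multiplication by $t^{-m/2}$ moves the lattice, so $\rho_m$ carries $\mc H^{F,\Omega}_{(\ell)}$ isomorphically onto $t^{-m/2}\mc H^{F_m}_{(\ell)}=\mc H^{F_m}_{(\ell+m/2)}$, not onto $\mc H^{F_m}_{(\ell)}$. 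You have therefore exhibited one map ($\pi_m^*$) with the filtration property and another ($t^{-m/2}\pi_m^*$) with the equivariance property, whereas the proposition asserts both properties for a single $\rho_m$.

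Moreover, no choice of map can satisfy both requirements literally, so the reconciliation cannot be postponed. Restrict to the central fiber with $f$ weighted homogeneous: by the computation in the proof of Proposition \ref{prop-ofthm-primitiveform}, $\nabla^\Omega_{t\pa_t}$ acts on $\Hzero^{f,\Omega}$ with eigenvalues $\sum_i q_i+\deg\phi_j+k$, $k\geq 0$, the minimum $\sum_i q_i$ being attained by $[1]$; the same computation for $f_m$ (whose new variables have weight $1/2$) gives minimum eigenvalue $\sum_i q_i+{m\over 2}$ on $\Hzero^{f_m}$. A $\C$-linear map intertwining the two operators preserves eigenvalues, hence must annihilate $[1]$ and cannot be an injection into $\Hzero^{f_m}$; an $\OO_S$-linear equivariant isomorphism over $S$ would restrict to such a map at the origin. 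Consequently the second assertion can only hold with either the lattice index or the $t\pa_t$-equivariance understood up to the $m/2$-twist — which is exactly what the remark following the proposition records on the de Rham side, where the comparison map is $\big(\nabla^{\scriptsize\mbox{GM}}_{\pa_t}\big)^{m/2}dz_{n+1}\wedge\cdots\wedge dz_{n+m}$ with the source degree shifted to $\ell-{m\over 2}$. To complete the proof you must formulate and establish that corrected statement (for instance: $t^{-m/2}\pi_m^*$ is an extended-equivariant isomorphism $\mc H^{F,\Omega}_{(\ell)}\to\mc H^{F_m}_{(\ell+m/2)}$, equivalently $\pi_m^*$ is filtration-preserving and flat for $\nabla^\Omega$ twisted by ${m\over 2}{dt\over t}$), and explain how the indexing conventions of the two sides are matched, rather than assert that $t^{-m/2}\pi_m^*$ realizes the isomorphism as displayed.
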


\begin{rmk}
   In the original approach to primitive forms by differential forms, there is a degree shifting on the left hand side of the isomorphism $\rho_m$.
   More precisely, the   degree on the left should be shifted to $\ell-{m\over 2}$, and the isomorphism is given by {\upshape $\big(\nabla^{\scriptsize\mbox{GM}}_{\pa_t}\big)^{m\over 2} dz_{n+1}\wedge\cdots\wedge dz_{n+m}$} with the     Gauss-Manin connection {\upshape $\nabla^{\scriptsize\mbox{GM}}_{\pa_t}$} for differential forms.

\end{rmk}

\section{Perturbative approach of primitive forms}\label{sectgeneralconstruction}
In this section, we will give an explicit description of  primitive forms in the germ of the universal unfolding of $(X, f)$. The main idea is based on the third author's good sections \cite{Saito-unfolding} and  Barannikov's formula \cite{Barannikov-period} of semi-infinite period maps. See also \cite{Si-Frobenius} on its connection with Kodaira-Spencer gauge theory. The  existence of a primitive form as a germ was already proved in  \cite{Mo.Saito-existence} and \cite{Douai-Sabbah-I} in the corresponding situations.

We will first construct primitive forms in the formal neighborhood of the reference point in the universal unfolding. The use of formal setting not only simplifies the presentation of the primitive forms, but also gives an explicit formula (Theorem \ref{thm-primitive-local}) which is closely related to the oscillatory integral. Via this new approach, we find an explicit algorithm to compute the Taylor series expansion of the primitive forms up to arbitrary finite order. This is applied in the next section to recover known examples of primitive forms and new results for exceptional unimodular singularities. The analytic nature of the formal primitive forms follows by comparing the formal construction with the original analytic construction \cite{Saito-primitive}.

 Let $(Z, S, p, F)$ be a frame (see Definition \ref{def-fourtuple}) associated to  $(X, f)$. In addition, a  projection $\pi_X: Z\to X$ has been chosen, so that we obtain an embedding  $\pi_X\times p: Z {\hookrightarrow} X\times S$ (where $Z$ is made smaller if necessary). We will choose the relative holomorphic volume form
 $$
 \Omega_{Z/S}=\pi_X^*\Omega_X
  $$ %%such that $L_V\Omega_{Z/S}=0$ for any local section $V$ of $\mc T_{S}$. Here we have treated $L_V$ as the Lie derivative along the canonical lifting of $V$ as before. That is , in local coordinate $(\mathbf{z}, \mathbf{u})$, $\Omega_{Z/S}$ is represented by
 where $\Omega_X$ is a fixed holomorphic volume form on $X$. By Proposition  \ref{corwelldefine}, the (non-extended) Gauss-Manin connection  $\nabla^{\Omega}$ can be simplified as equation \eqref{eqn_simplifiedGMconn}, namely
 $$ \nabla^\Omega_V \bbracket{s}=\bbracket{\pa_{  V}s+{\pa_{  V}F\over t}s},$$
   where we have abused the notation $V$ with its lifting $\hat V$   (see  \eqref{eqn-special-lifting}) on the right hand side.

We will let
   $$F_0: =\pi_X^*f$$
 denote the trivial unfolding.
    %%We also simply denote the higher residue pairing  $(\mbox{-}, \mbox{-})_{F}:=(\mbox{-}, \mbox{-})_{F}^\Omega$, and denote by $\nabla^{\Omega}:=\nabla^\Omega$    the associated Gauss-Manin connection.

\subsection{Formal geometry}
\subsubsection{Formal set-up}
 We fix some notions to be used in our formal set-up first. Let $\mathfrak{m}$ denote the maximal ideal of the local ring $\OO_{S, 0}$ of the reference point $0\in S$.
Let $R_N:=\OO_{S, 0}/\mathfrak{m}^N$ where $N$ is a positive integer.
We consider the inverse system
   $$\mc R:=\{R_N~|~ N\in \mathbb{Z}_{>0}\},$$
 where, for $N\geq N'$, the morphism   $\varphi_{N, N'}: R_{N} \to  R_{N'}$  is given by the natural projection.
 Clearly, the associated  system   $\{\mbox{Spec}(R_N)\}_N$ of $\mc R$ form a basis of  the formal neighborhood of the reference point $0\in S$.
 Here on after we simply denote the subscript $N\in \mathbb{Z}_{>0}$ as $N$ without confusion.
\begin{defn} By a $\mc R$-module $\mc F$, we mean a set $\{\mc F_N\}_N$, each $\mc F_N$ being a $R_N$-module, together with morphisms
$\Phi_{N+1,N}:\mc F_{N+1}\to \mc F_{N}$  of $R_{N+1}$-modules  (by viewing $\mc F_{N}$ as a $R_{N+1}$-module via $\varphi_{N+1, N}$) that induce isomorphisms of $R_{N}$-modules: $\mc  F_{N+1}\otimes_{R_{N+1}}R_{N} \overset{\cong}{\to} \mc F_{N}.$
 Direct product for $\mc R$-modules is defined in the similar fashion.

Let  $\Gamma(\mc R, \mc F)$ denote the space of   sections   of $\mc F$ on $\mc R$, where by a section $s$
  we mean a set $\{s_N\}_N$ with each $s_N$ an element in $\mc F_N$, satisfying the compatibility condition:
{\upshape $$
s_{N}= \Phi_{N+1, N}(s_{N+1}), \quad \forall\, N.
$$   }
\end{defn}

The space $\Gamma(\mc R, \mc F)$ is naturally an $\hat {\OO}_{S,0}$-module, where  $\hat {\OO}_{S,0}$ denotes  the formal completion of $\OO_{S,0}$ at the reference point $0$ with respect to the maximal ideal $\mathfrak{m}$. In particular, it is also an $\OO_{S,0}$-module.

\begin{defn} Let $\mc E$ be an  $\OO_S$-module  on $S$. We define the induced   $\mc R$--module    $\check{\mc E}:=\{\check{\mc E}_N\}_N$ by setting
    $$\check {\mc E}_N:= \mc E\otimes_{\OO_{S, 0}} R_N,
$$
via the natural projection $\varphi_{R_N}: \OO_{S, 0}\to R_N$.
\end{defn}

\begin{lem-defn}\label{lemdef-connection}
Let $\mc E$ be an $\OO_S$-module with a connection $\nabla: \mc E\to \Omega_S^1\otimes \mc E$, then there is an induced map
$$
     \check \nabla: \Gamma(\mc R, \check {\mc E})\to \Omega_S^1\otimes_{{\OO}_{S}} \Gamma(\mc R, \check {\mc E})
$$
satisfying the Leibniz rule:
$$
 \check \nabla(a s)= da\otimes s+ a\otimes \check \nabla s, \quad \mbox{for any } a\in \OO_S \,\,\mbox{ and }\,\, s\in \Gamma(\mc R, \check {\mc E}).
$$
%%%Furthermore, if $\E$ is flat with respect to $\nabla$, then we have
%%   {\upshape $$\mbox{Ker}(\check \nabla)=\Gamma(\mc R, \widecheck {\mbox{Ker}\nabla} ).$$}
Whenever there is no confusion, we will simply denote   $\check \nabla$ as $\nabla$ by abuse of notations.
\end{lem-defn}
\begin{proof} Let $s=\{s_N\}_N$ be a section in  $\Gamma(\mc R, \check{\mc E})$,  and $V$ be a local section of $\mc T_S$.
   Let $\tilde s_{N+1}\in \mc E$ be a lifting of $s_{N+1}$, and we define
$$
   (\check\nabla_Vs)_N:=\varphi_{R_N}(\nabla_V \tilde s_{N+1}).
$$
It is easy to check that $\check\nabla_V s$ does not depend on the choice of the lifting, and defines an element in $\Gamma(\mc R, \check{\mc E})$. The Leibniz rule follows easily.
\end{proof}

\subsubsection{Formal primitive forms}Recall that given the frame  $(Z, S, p, F)$, we have the free $\OO_S[[t]]$-module $\Hzero^{F, \Omega}$ and the free $\OO_S((t))$-module $\mc H^{F,\Omega}$   with the  extended   Gauss-Manin connection $\nabla^{\Omega}$. As $\OO_S$-modules, they induce the corresponding $\mc R$-modules $\check{\mc H}_{(0)}^{F,\Omega}$ and $\check{\mc H}^{F, \Omega}$ respectively.
 The space   $\Gamma(\mc R, \check {\mc H}_{(0)}^{F,\Omega})$ (resp. $\Gamma(\mc R, \check{\mc H}^{F, \Omega})$) inherits the  structure of $\C[[t]]$-module (resp. $\C((t))$-module). By Lemma \ref{lemdef-connection}, $\Gamma(\mc R, \check{\mc H}^{F,\Omega})$ is equipped with the induced non-extended Gauss-Manin connection, which is easily seen to be extended along the $t\pa_t$ direction in the similar way. Such extended flat connection on $\Gamma(\mc R, \check{\mc H}^{F, \Omega})$ is again denoted as $\nabla^\Omega$ without confusion.

\begin{rmk}\label{rmk-limit}
Given a section $\{s_N\}_N$ of $\check \E$, we can take the inverse limit $\hat s:=\varprojlim_{N} s_N$ in  $\varprojlim_{N} \E/\mathfrak{m}^N\E$.
 When $\mc E$ is given by the $\OO_S$-module $\Hzero^{F, \Omega}$, the inverse limit $\hat s$ is still a formal power series in the descendent variable $t$.
  However, when $\mc E=\mc H^{F,\Omega}$, the   power of $t$ in the  limit $\hat s$  may go to $-\infty$.
 %% Indeed, the formal higher residue pairing extents to  $\hat{\mc K}_{F}^{\Omega}: \Gamma(\mc R, \check{\mc H}^{F, \Omega})\otimes _{\hat \OO_{S,0}} \Gamma(\mc R, \check{\mc H}^{F, \Omega})\to \hat \OO_{S,0}[[t, t^{-1}]].$
 \end{rmk}

The higher residue pairing defines a natural pairing for each $N$, by tensoring with $R_N$,
  \begin{align*}%\label{def-checkKF}
    \hat{\mc K}^{F, N}_{\Omega}:  \big({\mc H}^{F, \Omega}_{(0)}\otimes_{\OO_S} R_N\big) \otimes_{R_N}\big({\mc H}^{F, \Omega}_{(0)}\otimes_{\OO_S} R_N\big)\to R_N[[t]]
  \end{align*}

\begin{lem-defn} There is an induced formal higher residue pairing
$$
  \hat{\mc K}^{F}_{\Omega}: \Gamma(\mc R, \check{\mc H}^{F, \Omega}_{(0)})\otimes _{\hat \OO_{S,0}} \Gamma(\mc R, \check{\mc H}^{F, \Omega}_{(0)})\to \hat \OO_{S,0}[[t]]
$$
satisfying similar properties as in Proposition \ref{lem-higherpairing-properties}.
\end{lem-defn}
\begin{proof} Let  $s=\{s_N\}_N, s'=\{s'_N\}_N\in \Gamma(\mc R, \check{\mc H}^{F, \Omega}_{(0)})$.
\iffalse
  Let
$\tilde s_N$ and $\tilde s'_N$ be liftings in $\Gamma(S, \mc H^{F}_{(0)})$   with  $\tilde s_N=\varphi_{R_N}(\tilde s)$ and   $\tilde s_N'=\varphi_{R_N}(\tilde s_{N}')$.  Then
$$\hat{\mc K}_{F, N}^{\Omega}(s_N, s_N'):=\varphi_{R_N} \bracket{\mc K^{F}^{\Omega}(\tilde s_N, \tilde s_N')}\in R_{N}[[t]]
$$
does not depend on   choices of the liftings.
\fi
The pairing $\hat{\mc K}^{F}_{\Omega}(s, s')$ is defined by
$$
\hat{\mc K}^{F}_{\Omega}(s, s'):=\varprojlim_{N} \hat{\mc K}^{F, N}_{\Omega}(s_N, s'_N).
$$
\end{proof}

Now we  formulate primitive forms in the formal neighborhood as well.

\begin{defn}\label{formal-primitive-form} A   section $\zeta\in \Gamma(\mc R, \check {\mc H}_{(0)}^{F,\Omega})$ is called a \emph{formal primitive form} if it satisfies the following conditions:
\begin{enumerate}
\item[(P1)${}^{\vee{}}$]The section  $\zeta$ induces an $ {\hat \OO_{S,0}}$-module isomorphism
$$
    t\nabla^{{\Omega}}\zeta:\hat \OO_{S,0}\otimes_{\OO_{S}}\mc T_{S}\to \Gamma(\mc R, \check {\mc H}_{(0)}^{F,\Omega})/t \Gamma(\mc R, \check {\mc H}_{(0)}^{F,\Omega}); \quad V\mapsto t\nabla^{\Omega}_V \zeta.
$$
\item[(P2)${}^{\vee{}}$]
For any local holomorphic vector fields  $V_1, V_2$ on $S$,
\begin{align*}
   \hat {\mc K}_\Omega^{F}\big({\nabla^{\Omega}_{V_1} \zeta, \nabla^{\Omega}_{V_2} \zeta}\big)&\in t^{-2}\hat \OO_{S,0}.
\end{align*}
\item[(P3)${}^{\vee{}}$]
For any local holomorphic vector fields  $V_1, V_2, V_3$ on $S$,
\begin{align*}
 \hat{\mc K}_\Omega^{F}\big({\nabla^{\Omega}_{V_1} \nabla^{\Omega}_{V_2} \zeta, \nabla^{\Omega}_{V_3} \zeta}\big)&\in t^{-3}\hat\OO_{S,0}\oplus t^{-2}\hat \OO_{S,0};\\
 \hat{\mc K}_\Omega^{F} \big({\nabla^{\Omega}_{t\pa_t} \nabla^{\Omega}_{V_1} \zeta, \nabla^{\Omega}_{V_2} \zeta}\big)&\in t^{-3}\hat\OO_{S,0}\oplus t^{-2}\hat \OO_{S,0}.
\end{align*}
\item[(P4)${}^{\vee{}}$] Recall     the Euler vector field is given in \eqref{def-Euler-field}. There is a constant $r\in \C$ such that
\begin{align*}%\label{primitive-homogenety}
    \bracket{\nabla^{\Omega}_{t\pa_t}+\nabla^{\Omega}_{E}}\zeta=r\zeta.
\end{align*}
\end{enumerate}
\end{defn}

There is a natural map by restricting to the formal neighborhood
$$
  \fbracket{\mbox{primitive forms in the germ of $0$ around $S$}}\to \fbracket{\mbox{formal primitive forms around}\ 0}
$$
We will see later that this map is in fact a bijection. This reduces the construction of primitive forms to the formal neighborhood.

\subsubsection{Formal splitting}
\begin{lem-defn}\label{lem-flatextension}
The map
 \begin{align} \label{expHW}
    e^{(f-F)/t}: \mc H^{f,\Omega}\to \Gamma(\mc R, \check{\mc H}^{F, \Omega}),
 \end{align}
given by
 \begin{align} \label{expHW-def} [s]\mapsto  e^{(f-F)/t}[s]:= \{[\check s_N]\}_N\quad\mbox{with}\quad \check s_N:=\varphi_{R_N}(e^{(F_0-F)/t}\pi_X^*(s)),
 \end{align}
is well-defined, independent of choices of the representatives $s$. Furthermore, $e^{(f-F)/t}[s]$ is flat with respect to
 the non-extended Gauss-Manin connection $\nabla^\Omega$, which will be called the formal flat extension of $[s]$.

The above map  $e^{(f-F)/t}$ induces an isomorphism of $R_N$-modules
 \begin{align} \label{expHWR}
   e^{(f-F)/t}|_N: \mc H^{f,\Omega}\otimes_{\C} R_N  { \to }  {\mc H}^{F, \Omega}\otimes_{\OO_S}R_N, \quad\mbox{defined by}\,\,\, [s]\otimes r\mapsto [r\check s_N],
\end{align}
whose inverse is
\begin{align} \label{expHWRinverse}
   e^{(F-f)/t}|_N: \mc H^{F,\Omega}\otimes_{\OO_S} R_N  { \to }  {\mc H}^{f, \Omega}\otimes_{\C}R_N, \quad\mbox{defined by}\,\,\, [b]\otimes 1\mapsto [\varphi_{R_N}(e^{(F-F_0)/t}b)].
\end{align}
We will skip the subscript ``$|_{N}$" whenever there is no confusion.
\end{lem-defn}

\begin{proof} %We first show that $s$ is  well-defined.
Let
  $s=\alpha+Q_{f}(\beta)$ for some  $\alpha\in \Gamma(X, \OO_{X})((t))$ and $\beta\in \PV(X)((t))$. The map in \eqref{expHW} is in fact the composition of the following maps:
     $$\mc H^{f,\Omega}\to \mc H^{f,\Omega}\otimes_\C \OO_S\overset{\cong}{\to} \mc H^{F_0,\Omega}\overset{e^{(F_0-F)/t}}{\to} \Gamma(\mc R, \check{\mc H}^{F, \Omega}),$$
  where the first map is given by $[s]\mapsto [s]\otimes 1$, the second map is an isomorphism of $\OO_S$-modules defined by $[s]\otimes g\mapsto [g\pi_X^*(s)]$, and the third map is defined by using \eqref{expHW-def}.
  By direct calculations, we have
 $$
 \varphi_{R_N}\big({e^{(F_0-F)/t}\pi_X^*\bracket{Q_{f}(\beta)}}\big) =\varphi_{R_N}\big(e^{(F_0-F)/t} Q_{F_0}(\pi_X^*(\beta)) \big)=\varphi_{R_N}\big({Q_{F}\big({e^{(F_0-F)/t}\pi_X^*\beta}}\big)\big).
 $$
Since $F_0-F\in \OO_X\otimes \mathfrak{m}$,   the series expansion of  $\varphi_{R_N}(e^{(F_0-F)/t})= \sum_{k=0}^N{1\over k!}{(F_0-F)^k\over t^k}\in R_N[t, t^{-1}]$,  whose  pole in $t$ is of finite order.
Therefore $[\check s_N]$   is a well-defined element in $ {\mc H}^{F, \Omega}\otimes_{\OO_S}R_N$, independent of choices of representatives $s$.
It is easy to see that $[\check s]:= \{[\check s_N]\}_N$ satisfies the compatibility condition, hence lies in   $\Gamma(\mc R, \check{\mc H}^{F, \Omega})$.
Moreover,
$$
  \nabla^\Omega_V [\check s_N]=\nabla^{\Omega}_V[\varphi_{R_N}\big(\big({e^{(F_0-F)/t}\pi_X^*(s)}\big)\big)]=[\varphi_{R_N}\big(\big({\pa_V+{\pa_VF\over t}}\big)\big({e^{(F_0-F)/t}}\pi_X^*(s)\big)\big)]=0.
$$
Hence, the extension $[\check s]$ of $[s]$ is flat.

 %%Note that  $\varphi_{R_N}(e^{(F_0-F)/t})\in R[t, t^{-1}]$ is invertible. Clearly,
The map in \eqref{expHWR} is  given by the composition:  $\mc H^{f,\Omega}\otimes_\C R_N\overset{\cong}{\to}\mc H^{F_0,\Omega}\otimes_{\OO_S}R_N %\overset{e^{(F_0-F)/t}}
{\to} \mc H^{F,\Omega}\otimes_{\OO_S}R_N$, where the first map is the canonical isomorphism of $R_N$-modules. The second map is defined
by sending   $[a]\otimes r$  to $[r\varphi_{R_N}(e^{(F_0-F)/t} a)]$, which is also an isomorphism of $R_N$-modules since its inverse is constructed by multiplying $e^{(F-F_0)/t}$ (whose details are left to the readers).
\end{proof}

\begin{rmk}
    Geometrically, the map $e^{(f-F)/t}$  can be considered as    the local trivialization of $\check{\mc H}^{F, \Omega}$ with respect to the Gauss-Manin connection.
\end{rmk}

Let    $\mathcal L\subset \mc H^{f,\Omega}$ be an  opposite filtration (see Defintion \ref{opposite filtration}). Recall that
$$
B:=t\mc L \cap \Hzero^{f, \Omega}\iso  \Hzero^{f, \Omega}/t \Hzero^{f, \Omega}
$$
is isomorphic to $\Jac(f)$, and we have
$$
    \mc H^{f,\Omega}= \Hzero^{f, \Omega}\oplus \mathcal L
$$
with
    $$ \Hzero^{f, \Omega}=B[[t]], \quad  \mc L=t^{-1} B[t^{-1}]\quad\mbox{ and }\quad \mc H^{f,\Omega}=B((t)).$$

\begin{lem-defn}\label{lem-def-LR} Let $\mc L\subset \mc H^{f,\Omega}$ be an opposite filtration. We define an associated   $\mc R$-module $\mc L_{\mc R}$     by
$$
      \mc L_{\mc R}:=\{\mc L_{\mc R}(R_N)\}_N\quad\mbox{with}\quad \mc L_{\mc R}(R_N):= e^{(f-F)/t}(\mc L\otimes_{\C} R_N),
$$
 using \eqref{expHWR}.
 The splitting $\mc H^{f,\Omega}=\Hzero^{f, \Omega}\oplus \mc L$
induces a splitting  of $\check{\mc H}^{F, \Omega}(R_N)$,
{\upshape
   \begin{align*}
   \mbox{(a)}\quad \check {\mc H}^{F, \Omega}(R_N)=\check{\mc H}_{(0)}^{F,\Omega}(R_N)\oplus \mc L_{\mc R}(R_N)
\end{align*}
} for each $R_N$, where
  $$ \check{\mc H}^{F, \Omega}(R_N):=\mc H^{F,\Omega}\otimes_{\OO_S}R_N\quad \mbox{and}\quad \check{\mc H}_{(0)}^{F,\Omega}(R_N):=\mc H^{F,\Omega}_{(0)}\otimes_{\OO_S}R_N.$$
The $\mc R$-submodule $\mc L_{\mc R}$ of $\check{\mc H}^{F, \Omega}$ further  satisfies the following:
  {\upshape $$\mbox{(b) }\,\, t^{-1}  \mc L_{\mc R}(R_N)\subset  \mc L_{\mc R}(R_N); \qquad \mbox{(c) }\,\, \hat{\mc K}^{F, N}_\Omega(\mc L_{\mc R}(R_N),  \mc L_{\mc R}(R_N))\subset t^{-2}R_N[t^{-1}].$$}
  $\mc L_{\mc R}$ will be called the \emph{formal opposite filtration} associated to the opposite filtration $\mc L$.

 The splitting {\upshape (a)} gives rise to a decomposition of $\mc R$-modules
 $$\check{\mc H}^{F, \Omega}=\check{\mc H}_{(0)}^{F,\Omega}\oplus \mc L_\mc R.
 $$
Consequently, every section $s$ in $\Gamma(\mc R, \check{\mc H}^{F, \Omega})$ admits a unique splitting:
  $$s=s_+\oplus s_-$$
  with $s_+\in \Gamma(\mc R, \check{\mc H}_{(0)}^{F,\Omega})$ and $s_-\in \Gamma(\mc R, \mc L_{\mc R})$.
\end{lem-defn}
\begin{proof}
To show (a), let $\{\phi_j\}_j$, $\phi_j\in B$, represent a basis of $\Jac(f)$. Let $U\subset S$ be an open neighborhood of the reference point $0\in S$, such that $\{\pi_X^*\phi_j\}_j$   remain  representatives of a  basis of  $p_*\OO_{\mc C(F)}$. Let $B_{F}:=\mbox{Span}_{\OO_S|_U}\{\pi_X^*\phi_j\}_j\subset \mc H^{F,\Omega}|_U$, which is isomorphic to $B\otimes_\C \OO_S|_U$.  It follows that there are natural identifications
$$
   \check{\mc H}_{(0)}^{F,\Omega}(R_N)=B_{F}[[t]] \otimes_{\OO_S} R_N, \quad \check{\mc H}^{F, \Omega}(R_N)=  B_{F}((t)) \otimes_{\OO_S} R_N
$$
and
 $
e^{(f-F)/t}(\mc L\otimes_{\C}  R_N)=  e^{(f-F)/t} (t^{-1}B[t^{-1}] \otimes_{\C} R_N)$. We only need to show that  the composition  $\pi_-$ in the diagram
$$
\xymatrix{
   e^{(f-F)/t}(\mc L\otimes_{\C} R_N)\ar[dr]_{\pi_-} \ar[r] & \check{\mc H}^{F, \Omega}(R_N)\ar[d] \\
   & t^{-1}B_{F}[t^{-1}]\otimes_{\OO_S}R_N
}
$$
is an isomorphism. Here we have to transform elements of the form $e^{(f-F)/t} b, b\in B$ into elements of the form $B_{F}((t))\otimes_{\C}R_N$ up to $Q_{F}$-exact term. Clearly, $\pi_-$ is $R_N[t^{-1}]$-linear, and it is an isomorphism modulo the maximal ideal of $R_N$. It follows that $\pi_-$ is an isomorphism. Thus (a) holds.

Property (b) follows by construction.

To show (c),  we only need to prove for the pairing on $\varphi_{R_N}(e^{(f-F)/t}(\mc L\otimes_{\C} 1))$ since $\hat{\mc K}^{F, N}_\Omega$ is $R_N$-linear. Let $s, s'\in \mc L\subset \mc H^{f,\Omega}$, and $\check s=\{\check s_N\}, \check s'=\{\check s_N'\}$ be the formal flat extensions of $s, s'$ respectively. Let $T$ be the $\OO_S$-module ${\OO_S((t))}$ with the natural flat connection. Then
 $$
     \{\hat{\mc K}^{F, N}_\Omega(\check s_N, \check s_N')\}_N\in \Gamma(\mc R, \check{T})
 $$
Let $V$ be a holomorphic vector field on $S$.  $\pa_V$ is defined on $\Gamma(\mc R, \check{T})$ by Lemma \ref{lemdef-connection}, and we will denote by
$$
\{\pa_V  \hat{\mc K}^{F, N}_\Omega(\check s, \check s')\}_N:=\pa_V  \{\hat{\mc K}^{F, N}_\Omega(\check s_N, \check s_N')\}_N
$$
 Since $\check s, \check s'$ are flat, we have
 $$
   \pa_V \hat{\mc K}^{F, N}_\Omega(\check s, \check s')= \hat{\mc K}^{F,N}_\Omega(\nabla^\Omega_V\check s, \check s')+ \hat{\mc K}^{F, N}_\Omega(\check s, \nabla^\Omega_V\check s')=0
 $$
for any $V$ and $N$.
Hence,   $\hat{\mc K}^{F, N}_\Omega(\check s_N, \check s'_N)\in \C((t))\subset R_N((t))$ for all $N$. This implies
$\hat{\mc K}^{F, N}_\Omega(\check s_N, \check s'_N) =\hat{\mc K}_{F, 1}^\Omega(\check s_1, \check s'_1)\in t^{-2} \C[t^{-1}]$   under  the natural identification of $\C\subset R_{N}$ with $\C= R_{1}$.       Thus (c) holds.

The splittings (a) satisfy the  compatibility condition, so that they give rise to a splitting of $\check{\mc H}^{F, \Omega}$.

 Given a section $s=\{s_N\}_N$ in $\Gamma(\mc R, \check {\mc H}^{F, \Omega})$, we have a splitting $s_N=s_+(R_N)\oplus s_-(R_N)$ in $\check{\mc H}_{(0)}^{F,\Omega}(R_N)\oplus \mc L_{\mc R}(R_N)$ for every $s_N\in \check{\mc H}^{F, \Omega}(R_N)$. Clearly, both $s_+:=\{s_+(R_N)\}_N$ and $s_-:=\{s_-(R_N)\}_N$ satisfy the compatibility condition, and hence we obtain a splitting of sections $s=s_+\oplus s_-$.
\end{proof}

\begin{rmk}
The $R_N$-components of the formal flat extensions $e^{(f-F)/t}\mc L$ generate a $R_N$-submodule of $\check{\mc H}^{F, \Omega}(R_N)$ which coincides with $\mc L_{\mc R}(R_N)$. Hence,  we do not distinguish them and simply denote both by
  $e^{(f-F)/t}\mc L\otimes_{\C} R_N$.
\end{rmk}

\begin{lem}\label{lem-GMpreserve-L} The non-extended Gauss-Manin connection {\upshape $ \nabla^{\Omega}$} preserves ${\mc L}_{\mc R}$, i.e.
{\upshape $$
\nabla^{\Omega}: \Gamma(\mc R, \mc L_{\mc R})\to \Omega_S^1\otimes_{\OO_S}   \Gamma(\mc R, \mc L_{\mc R}).
$$
}
\end{lem}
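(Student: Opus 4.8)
The plan is to exploit the fact, established in Lemma/Definition~\ref{lem-flatextension}, that $\mc L_{\mc R}$ is spanned by \emph{flat} sections, and then to differentiate only the scalar coefficients. Concretely, I would fix a $\C$-basis $\{t^{-k}\phi_j\}_{k\geq 1,\,1\leq j\leq \mu}$ of $\mc L=t^{-1}B[t^{-1}]$ and set $\ell_{j,k}:=e^{(f-F)/t}(t^{-k}\phi_j)\in \Gamma(\mc R,\mc L_{\mc R})$. Since $e^{(f-F)/t}|_N$ is an isomorphism of $R_N$-modules by Lemma/Definition~\ref{lem-flatextension}, the sections $\ell_{j,k}$ form a free frame of the $\mc R$-module $\mc L_{\mc R}$: every $s\in\Gamma(\mc R,\mc L_{\mc R})$ admits a unique expansion $s=\sum_{j,k}b_{j,k}\,\ell_{j,k}$ with $b_{j,k}\in \hat\OO_{S,0}$, where at each level $N$ only finitely many of the reductions $\varphi_{R_N}(b_{j,k})$ are nonzero (this finiteness being exactly the statement that each $s_N$ is a finite $R_N$-combination).

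The conceptual core is then immediate. Each generator is flat for the non-extended connection, $\nabla^{\Omega}\ell_{j,k}=0$, by Lemma/Definition~\ref{lem-flatextension}, so the Leibniz rule of Lemma/Definition~\ref{lemdef-connection} gives, for any local section $V$ of $\mc T_S$,
\[
  \nabla^{\Omega}_V s=\sum_{j,k}\bigl((\pa_V b_{j,k})\,\ell_{j,k}+b_{j,k}\,\nabla^{\Omega}_V\ell_{j,k}\bigr)=\sum_{j,k}(\pa_V b_{j,k})\,\ell_{j,k}.
\]
The right-hand side is again an $\hat\OO_{S,0}$-combination of the flat generators $\ell_{j,k}$, hence a section of $\mc L_{\mc R}$; as $V$ is arbitrary this yields $\nabla^{\Omega}\bigl(\Gamma(\mc R,\mc L_{\mc R})\bigr)\subset \Omega_S^1\otimes_{\OO_S}\Gamma(\mc R,\mc L_{\mc R})$.

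The one point requiring care — and the only real obstacle — is that $\pa_V$ does not preserve $\mathfrak{m}^N$, so the displayed manipulation must be carried out through the level-wise definition of $\check\nabla=\nabla^{\Omega}$ in Lemma/Definition~\ref{lemdef-connection} rather than naively over $R_N$. I would therefore lift $s_{N+1}$ to $\mc H^{F,\Omega}$ by lifting the coefficients to $\OO_{S,0}$ and representing $\ell_{j,k}|_{N+1}$ by the $\mathfrak{m}^{N+1}$-truncation of $e^{(F_0-F)/t}\pi_X^*(t^{-k}\phi_j)$ (a genuine element of $\mc H^{F,\Omega}$, of finite pole order in $t$), apply the simplified connection formula~\eqref{eqn_simplifiedGMconn}, and reduce modulo $\mathfrak{m}^N$. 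Here $F_0=\pi_X^*f$ satisfies $\pa_V F_0=0$, hence $\pa_V(F_0-F)=-\pa_V F$, which is precisely the cancellation used in the flatness computation of Lemma/Definition~\ref{lem-flatextension} and shows that the derivatives of the frame contribute nothing. The truncation errors lie in $\mathfrak{m}^{N+1}$ and, after one application of $\pa_V$ (which lands them in $\mathfrak{m}^{N}$) and reduction modulo $\mathfrak{m}^N$, vanish. What survives is exactly $\sum_{j,k}\varphi_{R_N}(\pa_V \tilde b_{j,k})\,\ell_{j,k}|_N\in e^{(f-F)/t}(\mc L\otimes_\C R_N)=\mc L_{\mc R}(R_N)$, completing the verification that $\nabla^{\Omega}$ preserves $\mc L_{\mc R}$.
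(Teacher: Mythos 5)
Your proof is correct and takes essentially the same route as the paper's: the paper likewise writes each $s_{N+1}$ as a finite combination $\sum_i \varphi_{R_{N+1}}(e^{(f-F)/t}a_i)\otimes_\C b_i$ of flat extensions of elements $a_i\in\mc L$ with coefficients $b_i\in R_{N+1}$, then invokes the flatness from Lemma \ref{lem-flatextension} together with the level-wise Leibniz rule of Lemma \ref{lemdef-connection} (differentiating liftings $\tilde b_i\in\OO_S$ of the coefficients) so that only the terms $\varphi_{R_N}(\pa_V\tilde b_i)$ survive, landing back in $\mc L_{\mc R}(R_N)$. Your fixing of a $\C$-basis $\{t^{-k}\phi_j\}$ of $\mc L$ with global coefficients in $\hat\OO_{S,0}$, and your explicit handling of the truncation errors, are only a repackaging of that same argument.
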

\begin{proof}
 Let $s=\{s_N\}_N$ be a section in  $\Gamma(\mc R, \mc L_{\mc R})$,  and $V$ be a local section of $\mc T_S$.
 Each $s_N$ is an element in $e^{(f-F)/t}\mc L\otimes_\C R_N$.
Therefore we can write
 $s_{N+1}=\sum_i \varphi_{R_{N+1}}(e^{(f-F)/t})a_i\otimes_\C b_i$, where $a_i\in \mc L$ and $b_i\in R_{N+1}$ for all $i$.
   Since $\varphi_{R_N}(e^{(f-F)/t}a_i)$  is a flat extension of $a_i$ by Lemma \ref{lem-flatextension}, it follows that
$$ (\nabla^\Omega_V s)(R_N)= %%%\sum_i\Big( \nabla^\Omega(\varphi_{\tilde R}(e^{(F_0-F)/t})s_i)\otimes_\C r_i+\varphi_{\tilde R}(e^{(F_0-F)/t})s_i\otimes_\C \varphi_{\tilde R}(\pa_V \tilde r_i)
\sum_i \varphi_{R_N}(e^{(f-F)/t}a_i)\otimes_\C \varphi_{R_N}(\pa_V \tilde b_i)\in \mc L_{\mc R}(R_N),$$
where $\tilde b_i\in \OO_S$ is a lifting of $b_i$ for each $i$.
 \end{proof}
\subsection{Construction of formal primitive form}\label{construction_formal}

\begin{defn}\label{def-good-oppofil} An  opposite filtration  $\mc L$ of $\mc H^{f,\Omega}$ is called \emph{good} if $\mc L$ is preserved by {\upshape $\nabla^{\Omega}_{t\pa_t}=t\pa_t-{f\over t}$}.
\end{defn}

Here $\nabla^{\Omega}_{t\pa_t}$ is the restriction of the extended Gauss-Manin connection to the central fiber, which is a well-defined operator on $\mc H^{f,\Omega}$.
%% The condition says that $\mc L$ is preserved by a version of rescaling.

\begin{defn}\label{def-primitive-element}
An element $\zeta_0\in \mc H^{f,\Omega}_{(0)}$ is called \emph{primitive} with respect to a good opposite filtration $\mc L$ if
\begin{enumerate}
\item{\upshape (primitivity) } the projection of $\zeta_0$ generates $\Jac(f)=\mc H^{f,\Omega}_{(0)}/t \mc H^{f,\Omega}_{(0)}$ as $\Jac(f)$-module;
\item{\upshape (homogeneity) } ${\nabla_{t\pa_t}^{\Omega}\zeta_0}-r\zeta_0\in \mc L$ for some constant $r\in \C$.
\end{enumerate}
\end{defn}
\noindent We will describe all good opposite filtrations and primitive elements for weighted homogeneous polynomials in next section.

\begin{rmk}\label{rmk-goodsection}
  The notion of \emph{good sections} in \cite{Saito-primitive}  can be extended to a notion of    \emph{formal good section} $v: \Jac(f)\to \Hzero^{f, \Omega}$.  The formal good section $v$ coincides with the above  notion of a  good opposite filtration $\mc L$,
 under  the  identification   $v\mapsto \mc L:= t^{-1} v(\Jac(f))[t^{-1}]$ (recall Lemma \ref{opposite filtration}). The notion of a primitive element was also
 described in  \cite{Saito-primitive} without introducing the terminology, where it is generally expected    that there exists a unique  primitive  element with respect to a given good opposite filtration, up to a nonzero complex number.
\end{rmk}

Given an element $\zeta_0$ in $\Hzero^{f, \Omega}$, we obtain a flat section
     $e^{(f-F)/t}\zeta_0$ in $\Gamma(\mc R, {\check{\mc H}^{F, \Omega}})$ by Lemma \ref{lem-flatextension}.
From this we obtain a unique splitting:
\begin{align}\label{splitting-sections}
e^{(f-F)/t}\zeta_0=\zeta_++ \zeta_-
\end{align}
with $\zeta_+\in \Gamma(\mc R, \check{\mc H}_{(0)}^{F,\Omega})$ and $\zeta_-\in \Gamma(\mc R,  {\mc L}_{\mc R})$ by Lemma \ref{lem-def-LR} .

\begin{defn}
A pair $(\mc L, \zeta_0)$ is called good if $\mc L$ is a good opposite filtration and $\zeta_0$ is a primitive element with respect to $\mc L$.
\end{defn}
\begin{thm}\label{thm-primitive-local}

There is a bijection
$$\vartheta: \{\mbox{good pairs } (\mathcal{L}, \zeta_0)\}\longrightarrow \{\mbox{ formal primitive forms}\,\},$$
 defined by the splitting \eqref{splitting-sections}, i.e., given by
  $$(\mathcal{L}, \zeta_0)\mapsto \vartheta(\mathcal{L}, \zeta_0):=\zeta_+\,.$$
\end{thm}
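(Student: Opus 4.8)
The plan is to prove the statement in three stages: that $\zeta_+=\vartheta(\mc L,\zeta_0)$ is a formal primitive form (well-definedness of $\vartheta$), then injectivity, then surjectivity. Throughout I write $\xi:=e^{(f-F)/t}\zeta_0$, which by Lemma \ref{lem-flatextension} is flat for the non-extended $\nabla^\Omega$, and let $\pi_+,\pi_-$ be the projections of the splitting $\check{\mc H}^{F, \Omega}=\check{\mc H}_{(0)}^{F,\Omega}\oplus\mc L_{\mc R}$ from Lemma \ref{lem-def-LR}, so that $\zeta_+=\pi_+\xi$ and $\zeta_-=\pi_-\xi$. The pivotal consequence of flatness, $\nabla^\Omega_V\zeta_+=-\nabla^\Omega_V\zeta_-$, combined with the fact that $\nabla^\Omega$ preserves $\mc L_{\mc R}$ (Lemma \ref{lem-GMpreserve-L}), shows that every $\nabla^\Omega_V\zeta_+$ lies in $\Gamma(\mc R,\mc L_{\mc R})$, while transversality (Lemma \ref{Transversality}) places it in $t^{-1}\check{\mc H}_{(0)}^{F,\Omega}$; this double membership is the engine behind (P1)$^\vee$--(P3)$^\vee$.

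For (P1)$^\vee$ I would extract the leading term: writing $\zeta_+=\sum_{k\ge0}\zeta_+^{(k)}t^k$ and using $\nabla^\Omega_V[s]=[\pa_{\hat V}s+t^{-1}(\pa_{\hat V}F)s]$ (Proposition \ref{corwelldefine}), the class $t\nabla^\Omega_V\zeta_+\bmod t$ equals $[(\pa_{\hat V}F)\zeta_+^{(0)}]$ in $p_*\OO_{\mc C(F)}$. At the reference point $\zeta_+^{(0)}$ reduces to the $\Jac(f)$-generator underlying the primitive element $\zeta_0$, and since the Kodaira--Spencer isomorphism identifies $\{[\pa_{u_j}F]\}$ with a basis of $\Jac(f)$, multiplication by the unit $\zeta_+^{(0)}$ makes $V\mapsto t\nabla^\Omega_V\zeta_+\bmod t$ bijective modulo $\mathfrak m$; the formal Nakayama lemma then yields (P1)$^\vee$. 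Properties (P2)$^\vee$ and the first line of (P3)$^\vee$ reduce to bookkeeping of $t$-powers: one has $\nabla^\Omega_V\zeta_+\in\mc L_{\mc R}\cap t^{-1}\check{\mc H}_{(0)}^{F,\Omega}$ and $\nabla^\Omega_{V_1}\nabla^\Omega_{V_2}\zeta_+\in\mc L_{\mc R}\cap t^{-2}\check{\mc H}_{(0)}^{F,\Omega}$; the pairing of two elements of $\mc L_{\mc R}$ lands in $t^{-2}R_N[t^{-1}]$ by Lemma \ref{lem-def-LR}(c), whereas the Hodge-degree count forces it into $t^{-2}\hat\OO_{S,0}[[t]]$ resp.\ $t^{-3}\hat\OO_{S,0}[[t]]$, and intersecting the two ranges produces exactly the asserted $t^{-2}\hat\OO_{S,0}$ and $t^{-3}\hat\OO_{S,0}\oplus t^{-2}\hat\OO_{S,0}$.

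The crux of well-definedness is (P4)$^\vee$, where the goodness of $\mc L$ enters. A direct computation shows that $e^{(f-F)/t}$ intertwines the central-fiber operator $t\pa_t-f/t$ with $\nabla^\Omega_{t\pa_t}$, so $\nabla^\Omega_{t\pa_t}\xi=e^{(f-F)/t}(\nabla^\Omega_{t\pa_t}\zeta_0)$, while $\nabla^\Omega_E\xi=0$ because $\hat E$ differentiates only in the $S$-directions. Using the homogeneity $\nabla^\Omega_{t\pa_t}\zeta_0-r\zeta_0\in\mc L$ of the primitive element gives $(\nabla^\Omega_{t\pa_t}+\nabla^\Omega_E)\xi=r\xi+m$ with $m\in\Gamma(\mc R,\mc L_{\mc R})$; since goodness makes $\nabla^\Omega_{t\pa_t}$ preserve $\mc L_{\mc R}$ (through the same intertwining) and Lemma \ref{lem-GMpreserve-L} makes $\nabla^\Omega_E$ preserve it, projecting yields $(\nabla^\Omega_{t\pa_t}+\nabla^\Omega_E)\zeta_+=r\zeta_++m'$ with $m'\in\mc L_{\mc R}$. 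The decisive observation is that the full operator $\nabla^\Omega_{t\pa_t}+\nabla^\Omega_E$ actually preserves $\check{\mc H}_{(0)}^{F,\Omega}$: writing it as $t\pa_t+\pa_{\hat E}+t^{-1}(\pa_{\hat E}F-F)$ and invoking $E=\mathrm{KS}^{-1}([F])$, one has $\pa_{\hat E}F-F\in(\pa_{z_1}F,\dots,\pa_{z_n}F)$, and since $(\pa_{z_i}F)s=\pm Q_F(s\pa_i)\mp t\,\pa_\Omega(s\pa_i)$ represents a class in $t\,\check{\mc H}_{(0)}^{F,\Omega}$, the apparent pole $t^{-1}(\pa_{\hat E}F-F)\zeta_+$ is regular. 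Hence $m'\in\check{\mc H}_{(0)}^{F,\Omega}\cap\mc L_{\mc R}=0$, proving (P4)$^\vee$; the second line of (P3)$^\vee$ then follows either from the analogue of the Remark after Definition \ref{define-primitive-form} or directly, since $\nabla^\Omega_{t\pa_t}\nabla^\Omega_{V}\zeta_+\in\mc L_{\mc R}\cap t^{-2}\check{\mc H}_{(0)}^{F,\Omega}$.

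Finally, bijectivity. For injectivity, the reference-point value recovers $\zeta_0=\zeta|_{R_1}\in\mc H^{f,\Omega}_{(0)}$ (at $R_1$ the twist $e^{(f-F)/t}$ is the identity), while the first derivatives $\{\nabla^\Omega_V\zeta\}_V$ lie in $\mc L_{\mc R}$ and, by (P1)$^\vee$, their classes span the top graded piece, so together with $t^{-1}$-multiplication they generate $\mc L_{\mc R}$; as $\mc L_{\mc R}$ is spanned by flat sections (Lemma \ref{lem-flatextension}), applying the flat trivialization $e^{(F-f)/t}$ returns the constant subspace $\mc L\subset\mc H^{f,\Omega}$. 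Thus both $\mc L$ and $\zeta_0$ are determined by $\zeta$, giving injectivity. For surjectivity I would reverse this: given a formal primitive form $\zeta$, set $\zeta_0:=\zeta|_{R_1}$, let $\mc L_{\mc R}$ be the $\hat\OO_{S,0}[t^{-1}]$-span of $\{\nabla^\Omega_V\zeta\}_V$, and let $\mc L$ be its image under the flat trivialization. The principal obstacle---and the delicate part of the whole theorem---is to verify that this $\mc L$ is a genuine \emph{good opposite filtration}: that it is a constant (reference-fibre) subspace, i.e.\ the generated $\mc R$-module is flat and descends, which I would deduce from flatness of $\xi$; that $\mc H^{f,\Omega}=\Hzero^{f,\Omega}\oplus\mc L$ and $t^{-1}\mc L\subset\mc L$, from (P1)$^\vee$ and $t$-stability; that $\mc L$ is isotropic, which is exactly the content of (P2)$^\vee$ and (P3)$^\vee$ through the relation $\omega(\cdot,\cdot)=\Res_{t=0}\hat{\mc K}^F_\Omega(\cdot,\cdot)\,dt$; and that $\mc L$ is good, i.e.\ preserved by $t\pa_t-f/t$, which is (P4)$^\vee$ transported through the intertwining. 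Once $\mc L$ is secured, $\zeta_0$ is primitive by (P1)$^\vee$ and (P4)$^\vee$, and uniqueness of the splitting gives $\vartheta(\mc L,\zeta_0)=\pi_+(e^{(f-F)/t}\zeta_0)=\zeta$. Showing that the span of derivatives is neither too small nor too large---exactly complementary to $\check{\mc H}_{(0)}^{F,\Omega}$---is the step I expect to cost the most, and I would control it by reducing modulo $\mathfrak m$ and invoking the formal Nakayama lemma as in (P1)$^\vee$.
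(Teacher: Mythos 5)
Your Stage 1 and your injectivity argument track the paper's own proof closely: well-definedness of $\vartheta$ is proved there (Lemma \ref{lem-formpritive}) by exactly your mechanism --- flatness of $e^{(f-F)/t}\zeta_0$, the double membership $\nabla^\Omega_V\zeta_+\in \mc L_{\mc R}\cap t^{-1}\check{\mc H}^{F,\Omega}_{(0)}$, the $t$-power bookkeeping against Lemma \ref{lem-def-LR}, and the fact that $\nabla^\Omega_{t\pa_t}+\nabla^\Omega_E$ preserves both summands via the Euler-field identity $F-\pa_EF\in(\pa_{z_1}F,\dots,\pa_{z_n}F)$ --- and your recovery of $(\mc L,\zeta_0)$ from $\zeta$ is the paper's inverse map $\psi$ (Lemma \ref{lem-detgoodpair}).

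The genuine gap is in your surjectivity. Having set $\mc L_{\mc R}$ to be the $\hat\OO_{S,0}[t^{-1}]$-span of $\{\nabla^\Omega_V\zeta\}_V$, everything you still need --- that $\mc L:=\mc L_{\mc R}(\C)$ is a \emph{good} opposite filtration, and that the splitting of $e^{(f-F)/t}\zeta_0$ with respect to it returns $\zeta$ --- hinges on a single statement: $\mc L_{\mc R}$ is \emph{constant}, i.e.\ it coincides with the flat extension $e^{(f-F)/t}(\mc L\otimes_\C R_N)$ of its central fiber (Claim A inside Lemma \ref{lem-detgoodpair}). Neither of the tools you name can deliver this. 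Appealing to ``flatness of $\xi$'' is circular: that $\xi=e^{(f-F)/t}\zeta_0$ splits as $\zeta$ plus a section of $\Gamma(\mc R,\mc L_{\mc R})$ is precisely what is being proven, so at this stage you know nothing about how $\xi$ sits relative to your derivative-span module. And reduction modulo $\mathfrak m$ plus Nakayama only controls generation over $\hat\OO_{S,0}$ at each finite order; it is blind to the $t$-adic structure, which is exactly where the danger lies: a $\nabla^\Omega$-flat section of $\check{\mc H}^{F,\Omega}$ whose initial value lies in $\mc L_{\mc R}(\C)$ could a priori acquire, order by order in $\mathfrak m$, unbounded negative (or positive) powers of $t$, and then it would fail to lie in $t^{-1}\mathcal{B}[t^{-1}]=\mc L_{\mc R}$; in that case neither the transport of (P4)${}^{\vee}$ down to $\mc L$ nor the final identity $\vartheta(\mc L,\zeta_0)=\zeta$ would follow.

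The paper closes this gap by a quantitative use of holonomicity: writing $\nabla^\Omega_i\nabla^\Omega_j\zeta=\beta^k_{ij}\nabla^\Omega_k\zeta$, property (P3)${}^{\vee}$ forces $\beta^k_{ij}\in\hat\OO_{S,0}\oplus t^{-1}\hat\OO_{S,0}$, and the flatness equation is then solved recursively in the frame $\{\nabla^\Omega_i\zeta\}$ with coefficients that remain \emph{polynomial in $t^{-1}$} at every order (Claim B of Lemma \ref{lem-detgoodpair}). This is what guarantees that flat sections with initial value in $\mc L_{\mc R}(\C)$ stay inside $\mc L_{\mc R}$; comparing the resulting two splittings of $\check{\mc H}^{F,\Omega}$ then yields $e^{(f-F)/t}(\mc L\otimes_\C R_N)=\mc L_{\mc R}(R_N)$, after which your remaining verifications (opposite-ness from (P2)${}^{\vee}$, goodness, primitivity of $\zeta_0$, and $\vartheta(\mc L,\zeta_0)=\zeta$ by uniqueness of the splitting, as in Lemma \ref{lem-uniqueness}) go through essentially as you outline. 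Without this recursion, or an equivalent control on $t^{-1}$-polynomiality of flat sections, the surjectivity half of the bijection is not established.
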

We will divide the proof of the theorem  into the following three lemmas.

\begin{lem}\label{lem-formpritive}
 The map $\vartheta$ is well-defined. That is,  if $(\mc L, \zeta_0)$ is  good, then  $\zeta_+$ is a formal primitive form.
\end{lem}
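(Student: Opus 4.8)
The plan is to verify the four defining conditions (P1)$^\vee$ through (P4)$^\vee$ directly for $\zeta_+$, exploiting the flatness of $e^{(f-F)/t}\zeta_0$ and the structural properties of the splitting $\check{\mc H}^{F,\Omega}=\check{\mc H}_{(0)}^{F,\Omega}\oplus\mc L_{\mc R}$ established in Lemma \ref{lem-def-LR}. The guiding principle is that $\zeta_+=e^{(f-F)/t}\zeta_0-\zeta_-$, so that derivatives of $\zeta_+$ can be computed using $\nabla^\Omega\bigl(e^{(f-F)/t}\zeta_0\bigr)=0$ and the fact that $\nabla^\Omega$ preserves $\mc L_{\mc R}$ (Lemma \ref{lem-GMpreserve-L}). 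The key technical device is to trade analysis of $\zeta_+$ for analysis along the $\mc L_{\mc R}$-direction, where the pairing is controlled by property (c) of Lemma \ref{lem-def-LR}.

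First I would establish the Primitivity condition (P1)$^\vee$. For a vector field $V$, I compute $t\nabla^\Omega_V\zeta_+ = -t\nabla^\Omega_V\zeta_-$ modulo $t\,\Gamma(\mc R,\check{\mc H}_{(0)}^{F,\Omega})$, using $\nabla^\Omega_V\bigl(e^{(f-F)/t}\zeta_0\bigr)=0$. The point is that $\nabla^\Omega_V$ lowers the order in $t$ by at most one (Transversality, Lemma \ref{Transversality}), so $t\nabla^\Omega_V\zeta_-\in\mc L_{\mc R}$, and I must show the induced map $V\mapsto t\nabla^\Omega_V\zeta_+ \bmod t$ is an isomorphism onto $p_*\OO_{\mc C(F)}$. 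Modulo the maximal ideal $\mathfrak m$ this reduces to the central-fiber statement: the primitivity of $\zeta_0$ (Definition \ref{def-primitive-element}(1)) says its projection generates $\Jac(f)$, and the Kodaira-Spencer isomorphism identifies $\mc T_S$ with $p_*\OO_{\mc C(F)}$. Since both sides are free of the same rank and the map is an isomorphism modulo $\mathfrak m$, Nakayama's lemma upgrades it to an isomorphism of $\hat\OO_{S,0}$-modules.

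Next I would verify the pairing conditions (P2)$^\vee$ and (P3)$^\vee$. Here the essential move is to replace the $\check{\mc H}_{(0)}$-projections by the full flat sections: because $\hat{\mc K}^F_\Omega$ pairs $\check{\mc H}_{(0)}^{F,\Omega}$ with $t\mc L_{\mc R}$ trivially at the relevant orders, one argues that the orthogonality and holonomicity for $\zeta_+$ follow from the corresponding bounds for the flat extension together with property (c), which confines $\hat{\mc K}^F_\Omega(\mc L_{\mc R},\mc L_{\mc R})$ to $t^{-2}R_N[t^{-1}]$. Concretely, writing $\nabla^\Omega_{V_i}\zeta_+$ in terms of $\nabla^\Omega_{V_i}\zeta_-$ (since the flat part is annihilated by $\nabla^\Omega$), each covariant derivative lands in $\mc L_{\mc R}$ up to a shift by one power of $t$, and the flatness-plus-$\C((t))$-valued argument from the proof of Lemma \ref{lem-def-LR}(c) forces the pairings to be $t$-independent constants at each stage, giving exactly the required pole orders $t^{-2}$ and $t^{-3}\oplus t^{-2}$. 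The remark after Definition \ref{define-primitive-form} allows me to deduce the second holonomicity equation \eqref{primitive-eq23} from (P2)$^\vee$, the first holonomicity bound, and (P4)$^\vee$, so I need not treat it separately.

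Finally, for Homogeneity (P4)$^\vee$ I would use that the good opposite filtration $\mc L$ is preserved by $\nabla^\Omega_{t\pa_t}=t\pa_t-f/t$ (Definition \ref{def-good-oppofil}) and that $\zeta_0$ is homogeneous: $\nabla^\Omega_{t\pa_t}\zeta_0-r\zeta_0\in\mc L$. The extended connection satisfies $[\nabla^\Omega_{t\pa_t},\nabla^\Omega_V]=0$ (Lemma \ref{extend-GMconn-flat}), and one checks that $\nabla^\Omega_{t\pa_t}+\nabla^\Omega_E$ applied to the flat extension $e^{(f-F)/t}\zeta_0$ equals the flat extension of $(\nabla^\Omega_{t\pa_t})\zeta_0$ on the central fiber, because $E$ is the Kodaira-Spencer preimage of $[F]$ and the mixed term reproduces the $F$-dependence. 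Projecting to $\check{\mc H}_{(0)}^{F,\Omega}$ and using that the operator preserves the splitting then yields $\bigl(\nabla^\Omega_{t\pa_t}+\nabla^\Omega_E\bigr)\zeta_+=r\zeta_+$. I expect the main obstacle to be the bookkeeping in (P3)$^\vee$: controlling the interaction between the two-fold covariant derivatives and the pole orders requires carefully tracking how successive applications of $\nabla^\Omega_V$ shift the $t$-grading while staying inside $\mc L_{\mc R}$, and ensuring that the flatness argument genuinely forces the pairing values into $\C((t))$ at the appropriate truncation level $R_N$.
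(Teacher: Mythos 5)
Your proposal is correct and follows essentially the same route as the paper's proof: flatness of $e^{(f-F)/t}\zeta_0$ to trade derivatives of $\zeta_+$ for derivatives of $\zeta_-$, the intersection $\check{\mc H}_{(0)}^{F,\Omega}\cap t\mc L_{\mc R}$ together with property (c) of Lemma \ref{lem-def-LR} to force the pairings into $R[[t]]\cap R[t^{-1}]=R$, Nakayama for (P1)$^\vee$, and uniqueness of the splitting for (P4)$^\vee$. The only deviation is your handling of the second holonomicity condition \eqref{primitive-eq23}, which you deduce from (P2)$^\vee$, the first holonomicity bound and (P4)$^\vee$ via the remark after Definition \ref{define-primitive-form} (legitimate, since $[\nabla^{\Omega}_{t\pa_t},\nabla^{\Omega}_V]=0$ carries over to the formal setting), whereas the paper proves it directly by observing that $\nabla^{\Omega}_{t\pa_t}$ preserves $\mc L_{\mc R}$ because $\mc L$ is good --- a fact it needs anyway for (P4)$^\vee$.
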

\begin{proof}
 We check the four  conditions for defining a formal  primitive form in Definition \ref{formal-primitive-form}.
 For simplicity, we will refer to the corresponding formulas in Definition \ref{define-primitive-form}, and simply use the notations $\nabla^\Omega$ and $\mc K^{F}_\Omega$ instead of $\check \nabla^\Omega$ and $\hat{\mc K}^{F}_{\Omega}$, respectively.  Let $R\in \mc R$.

(P1)${}^\vee$ We need to show that
$$
  t\nabla^{\Omega} \zeta_{+}(R): \mc T_{S}\otimes_{\OO_{S}} R \to \check{\mc H}_{(0)}^{F,\Omega}( R)/ t\check{\mc H}_{(0)}^{F,\Omega}(R)
$$
is an isomorphism of free $R$-modules.
For any local section  $V$ of $\mc T_{S}$, by definition we have
  $[t\nabla^{\Omega}_V \zeta_{+}(R)]=[ \varphi_R((\pa_VF)\tilde \zeta_+(R)) ]\in \check{\mc H}_{(0)}^{F,\Omega}(R)/t \check{\mc H}_{(0)}^{F,\Omega}(R)$, where $\tilde \zeta_+(R)\in \Hzero^{F, \Omega}$ is a lifting of $\zeta_+(R)$.
That is, we have
  $$ t\nabla^{\Omega} \zeta_{+}(R): \mc T_{S}\otimes_{\OO_{S}} R \to p_*\OO_{\mc C(F)}\tilde\zeta_+(R)\otimes_{\OO_S}R \subset \check{\mc H}_{(0)}^{F,\Omega}( R)/ t\check{\mc H}_{(0)}^{F,\Omega}(R)\cong p_*\OO_{\mc C(F)}\otimes_{\OO_S}R.$$
Note that when $R= \OO_{S, 0}/\mathfrak{m}\cong \C$, $\zeta_{+}({\C})=\zeta_0$ and the inclusion $\subset$ is in fact an isomorphism due to the primitivity  of $\zeta_0$ (see Definition \ref{def-primitive-element}). In this case the  arrow also becomes an isomorphism, since
  the Kodaira-Spencer map \eqref{KSmap} is an isomorphism.
That is, $t\nabla^{\Omega} \zeta_{+}(\C)$ is an isomorphism. Hence,  $t\nabla^{\Omega} \zeta_{+}(R)$ is an isomorphism of $R$-modules
 by Nakayama Lemma.

 (P2)${}^\vee$  Let $V_1, V_2$ be local sections of $\mc T_{S}$.
 Since $e^{(f-F)/t}\zeta_0$ is flat, we have
$$
t\nabla^{\Omega}_{V_1} \zeta_{+}(R)=-t \nabla^{\Omega}_{V_1} \zeta_{-}(R).
$$
Since $t\nabla^{\Omega}_{V_1} \zeta_{+}(R)\in \check{\mc H}_{(0)}^{F,\Omega}(R)$, and $t\nabla^{\Omega}_{V_1}\zeta_{-}(R)\in t \mc L_{\mc R}(R)$ by Lemma \ref{lem-GMpreserve-L}, we find
$$
t\nabla^{\Omega}_{V_1} \zeta_{+}(R)\in  \check{\mc H}_{(0)}^{F,\Omega}(R)\cap t \mc L_{\mc R}(R).
$$
 By Lemma \ref{lem-def-LR}, we have
 $$
  \mc K_\Omega^{F}\bracket{\mbox{-},\mbox{-}}: \mc L_{\mc R}(R)\otimes \mc L_{\mc R}(R)\to t^{-2} R[t^{-1}].$$
Hence,
$$
\mc K_\Omega^{F}\bracket{\mbox{-},\mbox{-}}:  \big({\check{\mc H}_{(0)}^{F,\Omega}(R)\cap t \mc L_{\mc R}(R)}\big) \otimes  \big({\check{\mc H}_{(0)}^{F,\Omega}(R)\cap t \mc L_{\mc R}(R)}\big) \to R[[t]]\cap R[t^{-1}]=R,$$
where the value does not depend on $t$. In particular,  the property in (P2)${}^\vee$ holds:
$$
\mc K_\Omega^{F}\big({\nabla^{\Omega}_{V_1}\zeta_{+}(R), \nabla^{\Omega}_{V_2} \zeta_{+}(R)}\big) \in t^{-2}R.
$$

(P3)${}^\vee$  Let $V_1, V_2$ be local sections of $\mc T_{S}$.
The identity $\nabla^{\Omega}_{V_1}\nabla^{\Omega}_{V_2} \zeta_{+}(R)=- \nabla^{\Omega}_{V_1}\nabla^{\Omega}_{V_2}\zeta_{-}(R)$
implies
$$
\nabla^{\Omega}_{V_1}\nabla^{\Omega}_{V_2}\zeta_{+}(R)\in t^{-2}\check{\mc H}_{(0)}^{F,\Omega}(R)\cap \mc L_{\mc R}(R)=\big({ t^{-2}\check{\mc H}_{(0)}^{F,\Omega}(R)\cap t^{-1}\mc L_{\mc R}(R)}\big)\oplus  \big({t^{-1}\check{\mc H}_{(0)}^{F,\Omega}(R)\cap \mc L_{\mc R}(R)}\big).
$$
This implies the first property in (P3)${}^\vee$:
$$
\mc K_\Omega^{F}\big(\nabla^{\Omega}_{V_1}\nabla^{\Omega}_{V_2}\zeta_{+}(R), \nabla^{\Omega}_{V_3}\zeta_{+}(R)\big) \in t^{-3}R\oplus t^{-2}R.
$$

For any $[s]\in \mc H^{f,\Omega}$,  it is easy to see that
\begin{align}\label{formula-in-mainthm}
   \nabla^{\Omega}_{t\pa_t}[\varphi_R(e^{(F_0-F)/t}\pi_X^*(s))]=[\varphi_R(e^{(F_0-F)/t} \pi_X^*\big(({t\pa_t-{f\over t}})s\big))],
\end{align}
the right hand side of which equals $[\varphi_R(e^{(F_0-F)/t} \pi_X^*(\nabla^{\Omega}_{t\pa_t}s))]$ by definition.
In particular if  $[s]\in \mc L\subset \mc H^{f, \Omega}$, then $\nabla^{\Omega}_{t\pa_t}[s] \in \mc L$ (since $\mc L$ is a good opposite filtration). Consequently,
$$
\nabla^{\Omega}_{t\pa_t} [\varphi_R(e^{(F_0-F)/t}\pi_X^*(s))]\in \mc L_{\mc R}(R).
$$
That is,  $\nabla^{\Omega}_{t\pa_t}$ preserves $\mc L_{\mc R}(R)$. On the other hand $\nabla^{\Omega}_{t\pa_t}: \check{\mc H}_{(0)}^{F,\Omega}(R)\to t^{-1}\check{\mc H}_{(0)}^{F,\Omega}(R)$, it follows that
    $$
  \nabla^{\Omega}_{t\pa_t} \nabla^{\Omega}_{V_1} \zeta_{+}(R)=-\nabla^{\Omega}_{t\pa_t} \nabla^{\Omega}_{V_1} \zeta_{-}(R)\in t^{-2}\check{\mc H}_{(0)}^{F,\Omega}(R)\cap \mc L_{\mc R}(R).
  $$
By the same arguments  as above, the second property in (P3)${}^\vee$  holds.

(P4)${}^\vee$ First, let us observe that the operator $\nabla^{\Omega}_{t\pa_t}+\nabla^{\Omega}_E$ preserves $\mc H^{F,\Omega}_{(0)}$. Indeed,  by the definition of the Euler vector field $E$, there exist $g_i\in \Gamma(Z, \OO_{Z}), i=1, \cdots, n,$ such that
  $
F-\pa_EF=\sum_{i}g_i\pa_{z^i}F$, where we remind that a projection $\pi_X: Z\to X$ is fixed.
 Hence, as operators on $\mc H^{F,\Omega}$, we have
\begin{align*}
   \big({\nabla^{\Omega}_{t\pa_t}+\nabla^{\Omega}_{E}}\big)[s]&=[\big(t\pa_t-{F\over t}+\pa_E+{\pa_EF\over t}\big)(s)]\\
            &= [\big(t\pa_t+\pa_E-t^{-1}\sum_i g_i\pa_{z^i}F\big)(s)]=
                 [\big(t\pa_t+\pa_E+\sum_{i}\lambda {\pa \over \pa z_i}{g_i\over \lambda}+\sum_i g_i{\pa \over \pa z_i}\big)(s)],
\end{align*}
where $[s]$ denotes a local section of $\mc H^{F, \Omega}$, and  which clearly preserves $\mc H^{F,\Omega}_{(0)}$. Therefore,
 $$
 \big({\nabla^{\Omega}_{t\pa_t}+\nabla^{\Omega}_{E}}\big)\check{\mc H}^{F, \Omega}_{(0)}(R)\subset \check{\mc H}^{F, \Omega}_{(0)}(R).
 $$
 Similarly
 $$
 (\nabla^{\Omega}_{t\pa_t}+\nabla^{\Omega}_E)\mc L_{\mc R}(R)\subset \mc L_{\mc R}(R)
 $$
 by \eqref{formula-in-mainthm} and  Lemma \ref{lem-GMpreserve-L}. We find the decomposition
  $$\big((\nabla^{\Omega}_{t\pa_t}+\nabla^\Omega_E)e^{(f-F)/t}\zeta_0\big)(R)
   =\bracket{\nabla^{\Omega}_{t\pa_t}+\nabla^{\Omega}_E}\zeta_+(R)+\bracket{\nabla^{\Omega}_{t\pa_t}+\nabla^{\Omega}_E}\zeta_-(R)\in \check{\mc H}^{F, \Omega}_{(0)}(R)\oplus \mc L_{\mc R}(R).$$

On the other hand,
 $$
 \big(\nabla_{t\pa_t}^\Omega e^{(f-F)/t}\zeta_0\big)(R)-r\big(e^{(f-F)/t}\zeta_0\big)(R)\in \mc L_{\mc R}(R)
 $$
which follows from \eqref{formula-in-mainthm} and the homogeneity of $\zeta_0$. It leads to another decomposition
 $$\big((\nabla^{\Omega}_{t\pa_t}+\nabla^\Omega_E)e^{(f-F)/t}\zeta_0\big)(R)= \big(\nabla_{t\pa_t}^\Omega e^{(f-F)/t}\zeta_0\big)(R)\in r(e^{(f-F)/t}\zeta_0)(R)+\mc L_{\mc R}(R)=r \zeta_+(R)+\mc L_{\mc R}(R).$$
By the uniqueness of the splitting of $\big((\nabla^{\Omega}_{t\pa_t}+\nabla^\Omega_E)e^{(f-F)/t}\zeta_0\big)(R)$, we find
 $$
 \bracket{\nabla^{\Omega}_{t\pa_t}+\nabla^{\Omega}_E}\zeta_+(R)=r\zeta_+(R).
$$
In addition, it is easy to see the all (P2)${}^\vee$, (P3)${}^\vee$ and (P4)${}^\vee$ above satisfy the compatibility condition with respect to the inverse system $\mc R$. Hence, $\zeta_+$ is a formal primitive form.

\end{proof}
\begin{lem-defn}\label{lem-detgoodpair}
   There is a map
$\psi: \{\mbox{ formal primitive forms}\,\}  \longrightarrow  \{\mbox{good pairs } (\mathcal{L}, \zeta_0)\}.$
\end{lem-defn}
\begin{proof}
Let $\zeta_+$ be a formal primitive form, and $\mathcal{B}(R_N)$ be the image of the map
$$
   t\nabla^{\Omega}\zeta_+(R_N): \mc T_{S}\otimes_{\OO_S} R_N\to \check{\mc H}_{(0)}^{F,\Omega}(R_N), \quad V\to t\nabla^{\Omega}_V\zeta_+(R_N),
$$
which is  a $R_N$-submodule of $\check{\mc H}_{(0)}^{F,\Omega}(R_N)$.  Then we have
$$
\check{\mc H}^{F, \Omega}(R_N)=\mathcal{B}(R_N)((t)), \quad \check{\mc H}_{(0)}^{F,\Omega}(R_N)=\mathcal{B}(R_N)[[t]]
$$
Define $\mc L_{\mc R}(R_N):=t^{-1}\mathcal{B}(R_N)[t^{-1}]\subset \check{\mc H}^{F, \Omega}(R_N)$.

 \noindent \textbf{Claim A}: $\{\mc L_{\mc R}(R_N)\}_N$ defines a sub $\mc R$-module of $\check{\mc H}^{F, \Omega}$ preserved by the Gauss-Manin connection, which is identical to the flat extension of $\mathcal L_{\mathcal{R}}(\C)$.

Then   $\mc L:=\mc L_{\mc R}( {\C})$ defines a good opposite filtration of $\mc H^{f, \Omega}$ (by using   (P2)${}^\vee$, (P3)${}^\vee$), and we obtain  a primitive  element $\zeta_0:=\zeta_+( {\C})$ with respect to such $\mc L$ (by using   (P1)${}^\vee$, (P4)${}^\vee$). The map $\psi$ is defined by $\psi(\zeta_+):=(\mc L, \zeta_0)$.

Now we prove Claim A. This is essentially reversing the argument in Lemma \ref{lem-formpritive}, with extra care on the polynomial dependence on $t^{-1}$. Frist, we have

\noindent \textbf{Claim B}: {\itshape Let $a\in \mathcal L_{\mathcal{R}}(\C)$. Then there exists a unique section $\alpha\in \Gamma( {\mathcal{R}}, \mathcal L_{\mathcal{R}})$ which is flat with respect to the non-extended Gauss-Manin connection $\nabla^\Omega$ and satisfies the initial condition $\alpha({\C})=a$.}

In fact, by the holonomicity of formal primitive forms, we have
\begin{align}\label{eqn-betaijk}
    \nabla^\Omega_i\nabla^\Omega_j \zeta_+=\beta_{ij}^k \nabla^\Omega_k \zeta_+
\end{align}
where $\nabla^\Omega_i:=\nabla^\Omega_{\pa_{u_i}}$, and
$$
   \beta_{ij}^k\in \hat {\mathcal O}_{S,0}\oplus t^{-1} \hat{\mathcal O}_{S, 0}.
$$
 Let us decompose $\beta_{ij}^k$ into
$$
  \beta_{ij}^k:=\sum_{m\geq 0} \beta_{ij,(m)}^k
$$
where $\beta_{ij,(m)}^k$ is the homogeneous component of degree $m$ in the coordinates $\{u_i\}$ of $S$.
The initial condition $\alpha({R_1})=a$ determines uniquely a set  $\{\alpha^i_{(0)}\}\subset \mathbb{C}$ such that
$$\alpha|_{R_1}= \sum_i \alpha^i_{(0)} \nabla_i^\Omega\zeta_+(R_1).$$
We define recursively by
$$
   \alpha^i_{(N)}:=-{1\over N}\sum_{m+l=N-1}\sum_{k,j} \beta_{jk,{(m)}}^i \alpha^k_{(l)}u_j,\quad\forall N\in \mathbb{Z}_{>0}
$$
and $$\alpha^i(R_N):=\sum_{j=0}^{N-1}\alpha^i_{(j)},\quad \alpha(R_N):=\sum_i \alpha^i(R_N)\nabla_i^\Omega\zeta_+(R_N),\quad\forall N\in \mathbb{Z}_{>1}.$$
 {{ All  $\beta_{jk,{(m)}}^i$ and all $\alpha_{(0)}^{i}$ are in $R_N[t^{-1}]$}},    so is   $\alpha^i(R_N)$ for all $i$ and $N$. Hence, $\alpha(R_N)\in t^{-1}\mathcal{B}(R_N)[t^{-1}]=\mathcal{L}_{\mathcal{R}}(R_N)$.
It is easy to check that $\alpha$ satisfies the compatibility condition, so that $\alpha\in \Gamma(\mathcal{R}, \mathcal{L}_{\mathcal{R}})$.
Moreover,  $\alpha:=\{\alpha(R_N)\}$ satisfies
\begin{align}\label{eqn-flat}
   \pa_{u_j} \alpha^i(R_N)+\sum_k \beta_{jk}^i(R_N) \alpha^k(R_N)=0
\end{align}
for all $j$ and $N$. That is, $ \nabla^\Omega_j \alpha(R_N)=0$ for all $j$ and $N$, due to \eqref{eqn-betaijk}.
Hence, $\alpha$ is flat with respect to $\nabla^\Omega$, and $\alpha(R_1)=a$. The uniqueness is obvious. This gives Claim B.

In Lemma \ref{lem-flatextension},  we have shown that for any $a\in \mathcal L_{\mathcal{R}}(\C)$,
$\alpha:=\{e^{(f-F)/t}|_{R_N}a\}$ is a flat section in $\Gamma(\mathcal{R}, \check{\mathcal{H}}^{F, \Omega})$ (with respect to the non-extended $\nabla^\Omega$) with initial condition $\alpha(R_1)=a$. Hence,   $\alpha\in \Gamma( {\mathcal{R}}, \mathcal L_{\mathcal{R}})$  by Claim B.
This implies that
$$
e^{(f-F)/t}|_{R_N}\mathcal L_{\mathcal{R}}(\C)\otimes_{\mathbb C}R_N\subset  \mathcal L_{\mathcal{R}}(R_N)
$$
On the other hand, both $ \mathcal L_{\mathcal{R}}(R_N)$ and $e^{(f-F)/t}|_{R_N}\mathcal L(\C)\otimes_{\mathbb C}R_N$ splits $\check{\mathcal H}_{(0)}^{F, \Omega}(R_N)$ in $\check{\mathcal H}^{F, \Omega}(R_N)$. Hence, they must be equal. This proves Claim A.

\end{proof}

\begin{lem}\label{lem-uniqueness}
The compositions  $\psi\circ \vartheta$  and $\vartheta\circ \psi$ are both identity maps.
\end{lem}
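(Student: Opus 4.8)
The plan is to check the two composites separately, in each case following how the pair $(\mc L,\zeta_0)$ and the section $\zeta_+$ determine one another. Recall that $\vartheta(\mc L,\zeta_0)=\zeta_+$ is the $\check{\mc H}^{F,\Omega}_{(0)}$-component in the splitting $e^{(f-F)/t}\zeta_0=\zeta_++\zeta_-$ of Lemma \ref{lem-def-LR}, while $\psi(\zeta_+)=(\mc L,\zeta_0)$ reconstructs $\zeta_0:=\zeta_+(\C)$ and $\mc L:=\mc L_{\mc R}(\C)$, where $\mc L_{\mc R}(R_N)=t^{-1}\mathcal B(R_N)[t^{-1}]$ and $\mathcal B(R_N)=\im\bigl(t\nabla^\Omega\zeta_+(R_N)\bigr)$. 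Throughout I would freely use that $\nabla^\Omega$ preserves $\mc L_{\mc R}$ (Lemma \ref{lem-GMpreserve-L}), Transversality (Lemma \ref{Transversality}), and the simplified connection formula of Proposition \ref{corwelldefine}.

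For $\psi\circ\vartheta=\mathrm{id}$ I would first evaluate at the central fibre $R_1=\C$: there $F\equiv f$ and $F_0\equiv f$ modulo $\mathfrak{m}$, so $e^{(f-F)/t}$ reduces to the identity, and since $\zeta_0\in\Hzero^{f,\Omega}$ already lies in the $+$-summand of $\mc H^{f,\Omega}=\Hzero^{f,\Omega}\oplus\mc L$, the splitting yields $\zeta_-(\C)=0$ and hence $\zeta_+(\C)=\zeta_0$, recovering $\zeta_0$. To recover $\mc L$ I would prove $\mathcal B(R_N)=B_{\mc L}(R_N):=t\mc L_{\mc R}(R_N)\cap\check{\mc H}^{F,\Omega}_{(0)}(R_N)$. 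The inclusion $\subseteq$ comes from flatness of $e^{(f-F)/t}\zeta_0$, which gives $t\nabla^\Omega_V\zeta_+=-t\nabla^\Omega_V\zeta_-$; the left side lies in $\check{\mc H}^{F,\Omega}_{(0)}$ by Transversality and the right side in $t\mc L_{\mc R}$ since $\nabla^\Omega$ preserves $\mc L_{\mc R}$. For equality, both modules project into $\check{\mc H}^{F,\Omega}_{(0)}/t\check{\mc H}^{F,\Omega}_{(0)}$: the projection of $B_{\mc L}$ is injective because $\check{\mc H}^{F,\Omega}_{(0)}\cap\mc L_{\mc R}=0$, while the projection of $\mathcal B$ is surjective by Primitivity (P1)$^\vee$; an inclusion of submodules that is surjective onto a quotient on which the larger one injects is an equality. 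Then $\mc L_{\mc R}=t^{-1}B_{\mc L}[t^{-1}]$ (the opposite-filtration structure of Lemma \ref{lem-def-LR}) is recovered, and with it $\mc L=\mc L_{\mc R}(\C)$.

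For $\vartheta\circ\psi=\mathrm{id}$ the key point is that, for $(\mc L,\zeta_0)=\psi(\zeta_+)$ and using the \emph{same} $\mc L_{\mc R}$, the difference $\eta:=e^{(f-F)/t}\zeta_0-\zeta_+$ lies in $\Gamma(\mc R,\mc L_{\mc R})$; by uniqueness of the splitting this forces $\vartheta(\mc L,\zeta_0)=\zeta_+$. Two facts are immediate: $\eta(\C)=0$ as above, and $\nabla^\Omega_V\eta=-\nabla^\Omega_V\zeta_+=-t^{-1}\bigl(t\nabla^\Omega_V\zeta_+\bigr)\in t^{-1}\mathcal B\subset\mc L_{\mc R}$ for every $V$. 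I would then integrate these two conditions by induction on the $\mathfrak{m}$-adic order, writing $\eta=\eta_++\eta_-$ with $\eta_+\in\Gamma(\mc R,\check{\mc H}^{F,\Omega}_{(0)})$ and aiming at $\eta_+=0$. Assuming $\eta_+$ vanishes to order $N$, its leading term $\bar\eta_+\in(\mathfrak{m}^{N}/\mathfrak{m}^{N+1})\otimes\mc H^{f,\Omega}_{(0)}$ must be killed: since $\nabla^\Omega$ preserves $\mc L_{\mc R}$ we get $\nabla^\Omega_{\pa_{u_j}}\eta_+\in\mc L_{\mc R}$, and by Proposition \ref{corwelldefine} the leading $\mathfrak{m}^{N-1}$-part of $\nabla^\Omega_{\pa_{u_j}}\eta_+$ is exactly $\pa_{u_j}\bar\eta_+$, because the order-preserving term $t^{-1}(\pa_{u_j}F)\eta_+$ contributes only at order $\ge N$. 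This leading part lies simultaneously in $\mc H^{f,\Omega}_{(0)}=B[[t]]$ (nonnegative powers of $t$) and, being in $\mc L_{\mc R}$, in $\mc L=t^{-1}B[t^{-1}]$ (strictly negative powers of $t$); as these intersect in $0$, we obtain $\pa_{u_j}\bar\eta_+=0$ for all $j$, and the Euler relation $N\bar\eta_+=\sum_j u_j\,\pa_{u_j}\bar\eta_+$ gives $\bar\eta_+=0$. Hence $\eta_+=0$ and $\eta\in\Gamma(\mc R,\mc L_{\mc R})$.

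I expect the integration step in the second composite to be the main obstacle. The difficulty is that the homogeneous $\mathfrak{m}$-adic decomposition does \emph{not} respect $\mc L_{\mc R}$, since $e^{(f-F)/t}$ mixes orders; the argument must therefore be arranged so that the only structural inputs used at each order are the separation of $t$-degrees between $\mc H^{f,\Omega}_{(0)}$ and $\mc L$, together with the facts that $\nabla^\Omega$ preserves $\mc L_{\mc R}$ and that its derivative part lowers $\mathfrak{m}$-order by exactly one while its $t^{-1}(\pa_V F)$ part preserves it. Once $\eta_+=0$ is established, the remaining verification of compatibility with the inverse system $\mc R$ is routine, and the two displayed equalities $\psi\circ\vartheta=\mathrm{id}$, $\vartheta\circ\psi=\mathrm{id}$ complete the bijection of Theorem \ref{thm-primitive-local}.
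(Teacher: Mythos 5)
Your verification of $\psi\circ\vartheta=\mathrm{id}$ is correct, and in fact more detailed than the paper, which dismisses this composite as straightforward: the identification $\mathcal B(R_N)=t\mc L_{\mc R}(R_N)\cap\check{\mc H}^{F,\Omega}_{(0)}(R_N)$ via the inclusion (flatness of $e^{(f-F)/t}\zeta_0$ plus Transversality plus Lemma \ref{lem-GMpreserve-L}) combined with injectivity/surjectivity onto $\check{\mc H}^{F,\Omega}_{(0)}/t\check{\mc H}^{F,\Omega}_{(0)}$ is sound, and it recovers $(\mc L,\zeta_0)$.

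The gap is in $\vartheta\circ\psi=\mathrm{id}$, exactly at the step you yourself flag as ``the main obstacle'' without closing it. In the inductive step you claim that the degree-$(N-1)$ leading term $\pa_{u_j}\bar\eta_+$ of $\nabla^\Omega_{\pa_{u_j}}\eta_+\in\mc L_{\mc R}$ lies in $\mc L=t^{-1}B[t^{-1}]$, justified only by the words ``being in $\mc L_{\mc R}$''. But membership in $\mc L_{\mc R}$ says nothing about an individual $\mathfrak{m}$-adic graded piece: an element of $\mc L_{\mc R}(R_N)$, expanded in the basis trivialization $B_F((t))\otimes R_N$, generically has components with \emph{nonnegative} powers of $t$ in every positive $\mathfrak{m}$-degree (multiplying by $F-F_0$ and reducing modulo $Q_F$-exact terms produces such terms). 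What your induction actually needs is the graded statement that the leading term of any element of $\mc L_{\mc R}(R_N)\cap\mathfrak{m}^k\check{\mc H}^{F,\Omega}(R_N)$ lies in $\mc L\otimes_\C(\mathfrak{m}^k/\mathfrak{m}^{k+1})$; this is true, but only because $\mc L_{\mc R}(R_N)=e^{(f-F)/t}(\mc L\otimes_\C R_N)$ (Claim A in the proof of Lemma \ref{lem-detgoodpair}) and $e^{(f-F)/t}$ is the identity modulo $\mathfrak{m}$ — and you never prove it; the ``structural inputs'' listed in your closing paragraph do not suffice to give it. Moreover, once you grant Claim A — which you already use implicitly, both in taking ``the same $\mc L_{\mc R}$'' and in applying Lemma \ref{lem-GMpreserve-L} to the $\mc L_{\mc R}$ built from $\mathcal B$ — your whole order-by-order argument becomes unnecessary, and this is precisely the paper's route: conjugating by $e^{(F-f)/t}$ puts everything in the flat trivialization, where $\pa_{u_i}\bigl((e^{(F-f)/t}\zeta_+)(R_N)\bigr)=e^{(F-f)/t}\nabla^\Omega_{i}\zeta_+(R_N)\in\mc L\otimes_\C R_N$ is a \emph{constant} subspace, so reading off Taylor coefficients immediately gives $(e^{(F-f)/t}\zeta_+)(R_N)\in\zeta_0+\mc L\otimes_\C R_N$; uniqueness of the splitting \eqref{splitting-sections} then yields $\zeta_+=\vartheta(\mc L,\zeta_0)$ in one line, with no induction on the $\mathfrak{m}$-adic order.
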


\begin{proof}
It is straight-forward to check the first composition.
Now we let $\zeta_+$ be a formal primitive form. Then $e^{(F-f)/t}\zeta_+$
  is a well-defined element of the inverse system $\{\mathcal H^{f, \Omega}\otimes_{\mathbb C}R_N
\}_N$.
By Lemma \ref{lem-detgoodpair}, we obtain a good pair  $\psi(\zeta_+)=(\mc L, \zeta_0)$ with   $(e^{(F-f)/t}\zeta_+)(\C)=\zeta_0$.
 Moreover,
$$\pa_{u_i}(e^{(F-f)/t}\zeta_+)(R_N)=e^{(F-f)/t}\nabla^\Omega_{i}\zeta_+ (R_N)\in e^{(F-f)/t}t^{-1}\mathcal{B}_{\mathcal{R}}(R_N)\subset
  \mathcal L\otimes_{\mathbb{C}}R_N.$$
   Therefore, we have
$$
  (e^{(F-f)/t}\zeta_+)(R_N)\in \zeta_0+\mathcal L\otimes_{\mathbb C}R_N, \mbox{ equivalently, }    \zeta_+(R_N)\in e^{(f-F)/t}|_N(\zeta_0)+e^{(f-F)/t}|_{N}(\mathcal L\otimes_{\mathbb C}R_N),
$$
for any $N$.
Hence, $\zeta_+$ is obtained from the splitting \eqref{splitting-sections}. That is, $\zeta_+=\vartheta(\mc L, \zeta_0)$.
\end{proof}

\bigskip
\begin{proof}[Proof of Theorem \ref{thm-primitive-local}] The statement is a direct consequence of Lemmas \ref{lem-formpritive}, \ref{lem-detgoodpair} and \ref{lem-uniqueness}.
\end{proof}
\begin{cor}\label{rmk-primitive}
Let $\mc L$ be a good opposite filtration and $\zeta_0$ be a primitive element with respect to $\mc L$.
Then there is a unique section   $\eta$ in $\Gamma(\mc R, \check{\mc H}^{F, \Omega}_{(0)})$ satisfying
$$e^{(F-f)/t}\eta(R)\in \zeta_0\otimes_\C 1+ \mc L\otimes_{\C} R$$
for all $R\in \mc R$. Furthermore,  such   $\eta$ is precisely given by the formal primitive form
$\zeta_+$ defined in \eqref{splitting-sections}.
\end{cor}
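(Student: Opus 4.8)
The plan is to deduce both existence and uniqueness directly from the bijectivity of the flat-extension maps $e^{(f-F)/t}|_N$ (Lemma \ref{lem-flatextension}) together with the uniqueness of the splitting $\check{\mc H}^{F,\Omega}(R)=\check{\mc H}_{(0)}^{F,\Omega}(R)\oplus \mc L_{\mc R}(R)$ from Lemma \ref{lem-def-LR}. The one computational input I will use repeatedly is that, by the very definition of $\mc L_{\mc R}$, the inverse map $e^{(F-f)/t}|_N$ carries $\mc L_{\mc R}(R_N)$ isomorphically onto $\mc L\otimes_\C R_N$; equivalently $e^{(f-F)/t}|_N(\mc L\otimes_\C R_N)=\mc L_{\mc R}(R_N)$. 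In this way the whole statement becomes a bookkeeping translation of the splitting \eqref{splitting-sections} through the isomorphism $e^{(F-f)/t}$.

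For existence I will verify that $\zeta_+$ itself satisfies the stated membership. Starting from the splitting $e^{(f-F)/t}\zeta_0=\zeta_++\zeta_-$ with $\zeta_-\in \Gamma(\mc R, \mc L_{\mc R})$, I rearrange to $\zeta_+=e^{(f-F)/t}\zeta_0-\zeta_-$ and apply $e^{(F-f)/t}$ at each level $R_N$. Since $e^{(F-f)/t}$ inverts $e^{(f-F)/t}$, the first term returns $\zeta_0\otimes 1$, while $e^{(F-f)/t}\zeta_-(R_N)\in \mc L\otimes_\C R_N$ by the displayed property of $\mc L_{\mc R}$. Hence $e^{(F-f)/t}\zeta_+(R_N)\in \zeta_0\otimes 1+\mc L\otimes_\C R_N$, which is exactly the required condition, and compatibility across the system $\mc R$ is inherited from that of $\zeta_+$ and $\zeta_-$.

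For uniqueness I take any $\eta\in \Gamma(\mc R, \check{\mc H}_{(0)}^{F,\Omega})$ obeying $e^{(F-f)/t}\eta(R_N)=\zeta_0\otimes 1+\ell_N$ with $\ell_N\in \mc L\otimes_\C R_N$. Applying $e^{(f-F)/t}$ yields $\eta(R_N)=(e^{(f-F)/t}\zeta_0)(R_N)+e^{(f-F)/t}\ell_N$, where the second summand lies in $\mc L_{\mc R}(R_N)$. Since also $\eta(R_N)\in \check{\mc H}_{(0)}^{F,\Omega}(R_N)$, this exhibits $(e^{(f-F)/t}\zeta_0)(R_N)$ as a sum of an element of $\check{\mc H}_{(0)}^{F,\Omega}(R_N)$ and an element of $\mc L_{\mc R}(R_N)$. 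Comparing with the splitting $e^{(f-F)/t}\zeta_0=\zeta_++\zeta_-$ and invoking the uniqueness of the direct-sum decomposition in Lemma \ref{lem-def-LR}(a) forces $\eta(R_N)=\zeta_+(R_N)$ for every $N$, so $\eta=\zeta_+$. I do not anticipate a genuine obstacle here; the only points requiring care are keeping track of the finite negative powers of $t$ (so that everything stays in $\mc L=t^{-1}B[t^{-1}]$ rather than leaking into $\Hzero^{f,\Omega}$) and checking that the constructed families are compatible over $\mc R$, both of which follow verbatim from the cited lemmas.
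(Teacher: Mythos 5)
Your proof is correct and takes essentially the same approach as the paper: both existence and uniqueness are read off from the direct-sum splitting $\check{\mc H}^{F,\Omega}(R)=\check{\mc H}_{(0)}^{F,\Omega}(R)\oplus \mc L_{\mc R}(R)$ transported through the isomorphism $e^{(f-F)/t}$ of Lemma \ref{lem-flatextension}, using that $\mc L_{\mc R}(R)=e^{(f-F)/t}(\mc L\otimes_{\C}R)$ by definition. The only cosmetic difference is that the paper phrases the uniqueness as uniqueness of the splitting of $\zeta_0\otimes_{\C}1$ on the central-fiber side $\mc H^{f,\Omega}\otimes_{\C}R$, whereas you decompose $e^{(f-F)/t}\zeta_0$ on the unfolding side; these are the same argument viewed through the same isomorphism.
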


\iffalse
\begin{proof}
  For any  $\eta$ in $\Gamma(\mc R, \check{\mc H}^{F, \Omega}_{(0)})$, it follows from  Lemma \ref{lem-flatextension} that  $e^{(F-f)/t}\eta(R)$ is an element in $\mc H^{f}\otimes_{\C}R$. In particular, $\zeta_+$ satisfies the required properties. Moreover, it is unique one,
 following from the uniqueness of the splitting of $\zeta_0\otimes_\C 1$ in $\mc H^{f}\otimes_\C R=(\Hzero^{F, \Omega}\otimes R)\oplus (\mc L\otimes_\C R)$.
\end{proof}
\fi

The above corollary  provides a convenient way to compute the primitive form in practice. In particular, this leads to our algorithm in Section \ref{algorithm} and is used in Section \ref{examples} to compute primitive forms for explicit examples.

\subsection{Discussion on the analyticity}\label{subsec-analyticity} We have constructed the formal primitive form in the formal neighborhood of the universal unfolding with respect to the choice of a good opposite filtration together with a primitive  element. In this subsection, we will discuss when  the  formal primitive form as constructed in Theorem \ref{thm-primitive-local} extends to certain analytic neighborhood.   We will use  the comparison between the formal construction and the original analytic construction by the third author, as described below.

Consider the natural map
$$
     p_*\Omega^n_{Z/S}/ dF\wedge d \bracket{p_*\Omega^{n-2}_{Z/S}}\to \mc H^{F,\Omega}_{(0)}, \quad [\xi] \to [{\xi\over \Omega_{Z/S}}].
$$
 There is a canonical filtration on the left hand side defined by the power of the operator $\nabla_{\delta}^{-1}$ in \cite{Saito-primitive}, which coincides with the filtration  $\mc H^{F,\Omega}_{(-k)}$ on the right hand side under the above map. Actually, there are also the Gauss-Manin connection and the higher residue pairing defined on the left hand side, with which the above map is equivariant. Such map becomes an isomorphism if we take the formal completion of the left hand side with respect to the above filtration \cite{Saito-residue}.

The space $p_*\Omega^n_{Z/S}/ dF\wedge d \bracket{p_*\Omega^{n-2}_{Z/S}}$ is sometimes called the \emph{Brieskorn lattice}, which has a purely analytic nature. The original primitive form   is formulated as an analytic object inside this space satisfying the same properties (as in Definition \ref{define-primitive-form}), where there is the  notion of $\emph{good sections}$ and the primitive  element was similarly described.
An (analytic) good section $v$  gives a (formal) good opposite filtrations $\mc L$  as constructed in  Remark \ref{rmk-goodsection}.   It was shown in \cite{Saito-primitive}  the analyticity of the primitive form via solving a version of Riemann-Hilbert-Birkhoff problem for $f$ with being an isolated singularity. The explicit construction we provide in the present paper is the formulation of this correspondence in the formal setting. In particular, we have the maps
$$
\xymatrix{
  \fbracket{\mbox{analytic primitive forms}} \ar[r]\ar[d] & \fbracket{\mbox{formal primitive forms}}\ar[d]\\
  \fbracket{\mbox{good sections}} \ar[r] & \fbracket{\mbox{good opposite filtrations}}
}
$$
Here the top row is defined by pullback to the formal neighborhood.  The second vertical map  is fibered by the primitive  elements.  As indicated by M. Saito  \cite{Mo.Saito-uniqueness}, the first vertical map might not be surjective for general $f$, and the notion of very good sections will be needed. As Hertling pointed to us, the bottom  row is not surjective in general. Nevertheless, if $f$ is a   weighted homogeneous polynomial with an isolated singularity, then the  first  vertical map  is also  surjective,  fibered by the primitive  elements. Moreover, the bottom row becomes a bijection, and the following holds.

\begin{thm}\label{thm-analytic} Let $f$ be a weighted homogeneous polynomial with an isolated critical point, and $(\mc L, \zeta_0)$ be a good pair.
Then the inverse limit of the formal primitive form  $\zeta_+$ defined in \eqref{splitting-sections} is the Taylor series expansion of an analytic primitive form.
\end{thm}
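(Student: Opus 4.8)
The plan is to bridge the formal construction of Theorem \ref{thm-primitive-local} with the third author's analytic construction \cite{Saito-primitive}, exploiting the extra $\C^*$-symmetry that a weighted homogeneous $f$ provides. First I would recall from section \ref{subsec-analyticity} that the analytic object living in the Brieskorn lattice $p_*\Omega^n_{Z/S}/ dF\wedge d(p_*\Omega^{n-2}_{Z/S})$ maps equivariantly -- for both the Gauss-Manin connection and the higher residue pairing -- into $\mc H^{F,\Omega}_{(0)}$, and that this map becomes an isomorphism after formal completion along the Hodge filtration. Thus it suffices to produce an analytic primitive form inside the Brieskorn lattice whose formal pullback realizes the good pair $(\mc L, \zeta_0)$, and then invoke uniqueness.

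The decisive step is to show that, for weighted homogeneous $f$, the good opposite filtration $\mc L$ descends from an honest \emph{analytic good section} $v$ in the sense of \cite{Saito-primitive}, with $v(\Jac(f))=B\subset \Hzero^{f,\Omega}$ (recall Remark \ref{rmk-goodsection}). Because $\mc L$ is good, it is preserved by $\nabla^{\Omega}_{t\pa_t}=t\pa_t-f/t$ (Definition \ref{def-good-oppofil}); in the weighted homogeneous case this operator encodes the grading on $\Jac(f)$ coming from the $\C^*$-action, so $\mc L$, and hence $B$, is a \emph{homogeneous} subspace. I would then argue that a homogeneous splitting is automatically spanned by weighted-homogeneous polynomial representatives of a basis $\{\phi_i\}$ of $\Jac(f)$, so that $B$ lies in the algebraic, a fortiori analytic, Brieskorn lattice rather than merely in its formal completion. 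This is precisely the surjectivity of the bottom horizontal arrow of the diagram in section \ref{subsec-analyticity}, which fails in general but holds here thanks to the grading.

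With an analytic good section $v$ in hand, I would apply the third author's solution of the Riemann-Hilbert-Birkhoff problem for isolated singularities \cite{Saito-primitive}, together with M. Saito's existence result \cite{Mo.Saito-existence}, to the pair $(v,\zeta_0)$, producing an analytic primitive form $\zeta^{\mathrm{an}}$ convergent on a genuine neighborhood of the reference point. Restricting $\zeta^{\mathrm{an}}$ to the formal neighborhood yields a formal primitive form, and by the equivariance of the comparison map recalled above, its associated good pair under the map $\psi$ of Lemma \ref{lem-detgoodpair} is exactly $(\mc L,\zeta_0)$: the opposite filtration it induces is the flat extension of $\mc L$, and its value at the reference point is $\zeta_0$.

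Finally I would close the argument by uniqueness. By the bijection $\vartheta$ of Theorem \ref{thm-primitive-local} together with Lemma \ref{lem-uniqueness}, there is exactly one formal primitive form attached to the good pair $(\mc L,\zeta_0)$, namely the $\zeta_+$ of the splitting \eqref{splitting-sections}. Since the formal expansion of $\zeta^{\mathrm{an}}$ is another formal primitive form attached to the same $(\mc L,\zeta_0)$, the two must coincide, so that $\varprojlim_N \zeta_+$ is the Taylor series of $\zeta^{\mathrm{an}}$. The main obstacle I expect is the second step -- establishing that a good, hence homogeneous, opposite filtration genuinely lifts to an analytic good section. This is where weighted homogeneity is indispensable: one must convert the abstract $\C^*$-invariance into concrete polynomial representatives and verify that the Riemann-Hilbert-Birkhoff data admit an analytic, not merely formal, solution.
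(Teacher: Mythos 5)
Your proposal is correct and follows essentially the same route as the paper: the paper's justification of Theorem \ref{thm-analytic} is exactly the discussion in section \ref{subsec-analyticity} — the equivariant comparison map to the Brieskorn lattice, the bijection (valid for weighted homogeneous $f$) between good opposite filtrations and analytic good sections, the analytic primitive form supplied by \cite{Saito-primitive} via the Riemann-Hilbert-Birkhoff problem, and the identification with $\zeta_+$ through the uniqueness statement of Theorem \ref{thm-primitive-local}. Your key step, that goodness forces homogeneity and hence polynomial (in $z$ and $t$) representatives spanning $B$, is precisely what the paper establishes in Propositions \ref{prop-ofthm-primitiveform} and \ref{prop-Y-bij}.
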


The reason we use the formal setting is that the explicit formula $e^{(f-F)/t}$ allows us to compute the Taylor series expansion of the primitive form up to arbitrary finite order. In the next section we will study the moduli space of primitive forms for all
 weighted homogeneous polynomials, and provide an explicit computation algorithm for the primitive forms.

  \section{Primitive forms for weighted homogeneous polynomials}

  In this section, we will present applications of Theorem \ref{thm-primitive-local} when $f$ is a weighted homogeneous polynomial.
  We will describe the moduli space of all primitive forms. In addition, we will   provide a concrete algorithm to compute the Taylor series expansions of primitive forms up to an arbitrary order.

 \subsection{Weighted-homogeneity}
A polynomial   $f: (\C^n, \mathbf{0})\to (\C, 0)$ is called  a
 \textit{ weighted homogeneous  polynomial} (of  total degree $1$) with weights $\bracket{q_1,\cdots, q_n}$, where each $q_i$ is a rational number with $0<q_i\leq {1\over 2}$,  if     $f(\lambda^{q_1}z_1,\cdots, \lambda^{q_n}z_n)=\lambda  f(z_1,\cdots, z_n)$ holds for all $\lambda\in \mathbb{R}_{>0}$.
  The rational numbers $q_1, \cdots, q_n$ are called \textit{ weight  degrees}. It defines the weight degree for polynomials by assigning $
    \deg z_i=q_i
$. In particular, $
  \deg f=1.
$

\begin{rmk}
 Since $\sum_{i=1}^nq_iz_i\pa_{z_i}f=f$,  $f$ belongs to
   its Jacobian ideal $(\pa_{z_1}f, \cdots, \pa_{z_n} f)$. In fact, more is true:   a holomorphic function-germ  $g: (\C^n, \mathbf{0})\to (\C, 0)$, with $\mathbf{0}$ being an isolated critical point,  is analytically   equivalent to a weighted homogeneous polynomial  if and only if
     $g$ belongs to its Jacobian ideal    \cite{Saito-quasihomogeneous}.
\end{rmk}

  Throughout this section, we will fix one such $f$. Let
\begin{align}\label{invariant}
  s(f):=\sum\limits_{i=1}^n\bracket{1-2q_i},
\end{align}
which was introduced in \cite{Saito-simplyElliptic} to classify  weighted homogenous polynomials $f$ satisfying $0<s(f)\leq 1$. The invariant $s(f)$ sometimes borrows the name \emph{central charge} since it coincides with the notion in the superconformal field theory associated to   the Landau-Ginzburg  model defined by $f$.

\begin{eg}[Calabi-Yau hypersurface] Every weighted homogenous polynomial $f(z_1, \cdots, z_n)$ with $\sum_{i=1}^nq_i=1$ defines a Calabi-Yau hypersurface $f=0$ in the weighted projective space $\mathbb{P}^{q_1,\cdots,q_n}$, whose dimension is equal to $s(f)=n-2$.
\end{eg}

  Let $\{\phi_1, \cdots, \phi_\mu\}$ be a set of weighted homogeneous polynomials that represent a   basis of $\Jac(f)$ with $\deg \phi_1\leq \deg\phi_2\leq\cdots \leq \deg \phi_\mu$.
  As a well-known fact, we have  $0= \deg \phi_1<\deg \phi_i<\deg \phi_\mu=s(f)$ {for any} $1<i<\mu$, and
$$
   \label{eqn-degIJ}     \deg \phi_i+\deg\phi_j=s(f) \quad\mbox{whenever}\quad i+j=\mu+1.
$$

  Take $Z\subset X\times S\subset \C^n\times \C^\mu$ with $S\subset \C^\mu$ a (small)  Stein domain containing the origin, such that the holomorphic function $F: Z\to \C$, defined by
  \begin{align}\label{universeal-unfolding-weight-homo}
    F(\mathbf{z}, \mathbf{u})=f(\mathbf{z})+\sum_{j=1}^\mu u_j\phi_j(\mathbf{z}),
  \end{align}
 is a universal unfolding. Then $(Z, S, p, F)$ is a frame (see Definition \ref{def-fourtuple}).
 We will extend the weight degree to deformation parameters
 \begin{align}
 \deg u_j :=1-\deg \phi_j
 \end{align}
 for each $j$. $F$ becomes a weight-homogeneous polynomial in $(\mathbf{z},\mathbf{u})$,
    $$
    \bracket{{\sum_{i=1}^n q_i z_i\pa_{z_i}+\sum_{j=1}^\mu (\deg u_j) u_j \pa_{u_j}}}F=F.
$$
 As a consequence, the Euler vector field (defined by \eqref{def-Euler-field}) is given by
\begin{align}\label{euler-field}
   E=\sum_{j=1}^\mu (\deg u_{j}) u_{j} \pa_{u_j}.
\end{align}
We will discuss   primitive forms for weighted homogeneous polynomials $f$ with respect to $(Z, S, p, F)$ and the choice
  $$\Omega_{Z/S}:=dz_1\wedge\cdots \wedge dz_n.$$

Now we briefly review a partial classification  of weighted homogeneous polynomials   (of total degree $1$).
 Two function-germs $g_1:(\C^n, \mathbf{0})\to (\C, \mathbf{0})$ and $g_2:(\C^m, \mathbf{0})\to (\C, \mathbf{0})$ are called \textit{stably equivalent}
if $g_1(z_1, \cdots, z_n)+z_{n+1}^2+\cdots+z_k^2$ and $g_2(\tilde z_1, \cdots, \tilde z_m)+\tilde z_{m+1}^2+\cdots +\tilde z_k^2$ define  the same function-germ
$(\C^k, \mathbf{0})\to (\C, \mathbf{0})$ for some $k\geq \max\{m, n\}$ up to a local analytic coordinate transformation. By Proposition \ref{prop-stableequiv}, it is sufficient to deal with $f$ up to stable equivalence. The classification  of $f$, which  was started  by V.I. Arnold,   is still open in general. However, the case of $s(f)\leq 1$
 was already completely classified. For $s(f)<1$, there are in total  5 lists of $f$, which  are called  \textit{{simple singularities}}  \cite{Arnold-Goryunov-Lyashko-Vasilev} or \textit{$ADE$-singularities};
for $s(f)=1$, there are in total  3 lists, which are called \textit{simple elliptic singularities} in \cite{Saito-simplyElliptic}. Among those weighted homogeneous polynomials satisfying $1<s(f)<2$,  there are 14 polynomials (in three variables) named \textit{exceptional unimodular singularities}   of type $E_{12}, E_{13}, E_{14}, Z_{11}, Z_{12}, Z_{13}, W_{12}, W_{13}$, $Q_{10}, Q_{11}, Q_{12}, S_{11}, S_{12}$ and $U_{12}$ respectively \cite{Arnold-Goryunov-Lyashko-Vasilev}. The degrees of the weighted homogeneous basis for these 14 polynomials are given by
\begin{equation} \label{eqn-degPhiExpo}
   \deg \phi_i={m_i-a-b-c\over h}+1
\end{equation}
 with %a complete list of
   $a, b, c, h$ and the \textit{exponent} $m_i$  read off directly  from Table 3 of \cite{Saito-exceptional}.

\iffalse
\begin{prop}[\cite{Saito-simplyElliptic}]
Let $W$ be a weighted homogeneous polynomial. If $\hat C_W<1$, then $W$ is stably equivalent to one of the following list:

%\begin{eg}[ADE-singularity] $\mbox{}$%%The weighted-homogeneous superpotential with central charge $<1$ is classified by the famous ADE-singularities.
 \begin{itemize}
\item $A_m$-singularity: % ($n\geq 1$):
$
    {}\quad\! W(z)=z^{m+1},  \quad\quad\,\,\,\,\,\, q_z={1\over m+1}, \qquad\qquad \quad \! \hat C_W={m-1\over m+1}.
$
\item $D_m$-singularity:% ($n\geq 4$):
${}\,W(x, y)=x^{m-1}+x y^2,   q_x={1\over m-1}, q_y={m-2\over 2m-2},   \hat C_W={m-2\over m-1}.
$
\item $E_6$-singularity:
$
  {}\, W(x,y)=x^3+y^4, \quad\,\,\,\,\, q_x={1\over 3},\, q_y={1\over 4}, \quad\quad\,\,\, \hat C_W={5\over 6}.
$
\item $E_7$-singularity:
$
  {}\, W(x,y)=x^3+xy^3, \quad\,\, q_x={1\over 3},\,  q_y={2\over 9},  \quad\quad\,\,\,\hat C_W={8\over 9}.
$
\item $E_8$-singularity:
$
 {}\,  W(x,y)=x^3+y^5, \quad\,\,\,\,\, q_x={1\over 3}\,, q_y={1\over 5},   \quad\quad\,\,\,\hat C_W={14\over 15}.
$
\end{itemize}
If $\hat C_W=1$, then  $W$ is stably equivalent to one of the following list:
 \begin{itemize}
  \item $E_6^{(1,1)}: x^3+y^3+z^3+axyz,\qquad q_x=q_y=q_z={1\over 3}$.
  \item $E_7^{(1,1)}: x^4+y^4+z^2+axyz,\qquad q_x=q_y={1\over 4}, q_z={1\over 3}$.
  \item $E_8^{(1,1)}: x^6+y^3+z^2+axyz,\qquad q_x={1\over 6}, q_y={1\over 3}, q_z={1\over 2}$.
 \end{itemize}
%\end{eg}
\end{prop}

\fi

\subsection{A grading operator} We can adapt the weight degrees to polyvector fields.

  \begin{defn}  Define an    { operator} $\mathbb{E}_f: \PV(X)((t))\to \PV(X)((t))$,    by
$$
    \mathbb{E}_f:=t{\pa_t}+\sum_{i=1}^n q_i \big({z_i\pa_{z_i}-\bar z_i\pa_{\bar z_i}}\big) +\sum_{i=1}^n \big({(1-q_i)\pa_{i}\wedge {\pa\over \pa \pa_{i}}-q_i d\bar z_i{\pa\over \pa d\bar z_i}}\big),
$$
where the operator ${\pa\over \pa d\bar z_i}$ is defined similarly to ${\pa\over \pa \pa_i}$.
\end{defn}
\noindent Clearly, $\mathbb{E}_f$ perserves $\PV_c(X)((t))$.

We may formally treat $t$ (resp. $\bar z_i, \pa_i, d\bar z_i$) as of weight degree $1$ (resp. $-q_i$, $1-q_i$, $-q_i$).  Then the differential $Q_{f}$ is an operator of weight degree $0$ on
$\PV(X)((t))$. The  next lemma becomes a trivial consequence of this observation.
\begin{lem}\label{lemmaHodgeweightcomm}
The  operator $\mathbb{E}_f$ commutes with the coboundary operator $Q_{f}$ (recall \eqref{Qf-fiber}).
\end{lem}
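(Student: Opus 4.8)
The plan is to recognize $\mathbb{E}_f$ as the total \emph{weight operator} attached to the weight assignment indicated just above the lemma, and then to deduce the vanishing of the commutator from the elementary (anti)commutation relations. Regard the operators acting on $\PV(X)((t))$ as generated over the constants by the ``creation'' generators---multiplication by $z_i$, $\bar z_i$, $t$, and wedging by $\pa_i$, $d\bar z_i$---together with their conjugate ``annihilation'' derivations $\pa_{z_i}$, $\pa_{\bar z_i}$, $\pa_t$, ${\pa\over \pa\pa_i}$, ${\pa\over \pa d\bar z_i}$. Assigning to each creation generator the weight $q_i$, $-q_i$, $1$, $1-q_i$, $-q_i$ respectively, and to each annihilation generator the opposite weight, one sees directly from its definition that $\mathbb{E}_f$ is exactly the sum of the corresponding weighted number operators $q_i z_i\pa_{z_i}$, $-q_i\bar z_i\pa_{\bar z_i}$, $t\pa_t$, $(1-q_i)\pa_i\wedge{\pa\over \pa\pa_i}$, $-q_i\,d\bar z_i{\pa\over \pa d\bar z_i}$.

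First I would record that, for each elementary generator $g$ with assigned weight $w(g)$, one has $[\mathbb{E}_f, g]=w(g)\,g$. This is immediate from the canonical relations $[z_i\pa_{z_i},z_j]=\delta_{ij}z_j$, $\{{\pa\over\pa\pa_i},\pa_j\wedge\}=\delta_{ij}$, $\{{\pa\over\pa d\bar z_i}, d\bar z_j\wedge\}=\delta_{ij}$ and their analogues: the only summand of $\mathbb{E}_f$ that fails to commute with a given $g$ is its own number operator, which returns $g$ with coefficient $w(g)$. The signs in the two Grassmann number operators $\pa_i\wedge{\pa\over \pa\pa_i}$ and $d\bar z_i{\pa\over\pa d\bar z_i}$ must be tracked, but only to confirm that they count the number of $\pa_i$- and $d\bar z_i$-factors with the stated weights.

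Next I would use that $A\mapsto[\mathbb{E}_f,A]$ is an (even) derivation of the operator algebra, so that for any composite $P=g_1\cdots g_k$ of elementary generators one has $[\mathbb{E}_f,P]=\bracket{\sum_j w(g_j)}P$; thus $P$ commutes with $\mathbb{E}_f$ exactly when its total weight vanishes. It then remains to expand the three summands of $Q_{f}=\dbar+\fbracket{f,\mbox{-}}+t\pa_\Omega$ and verify that every monomial operator occurring has total weight $0$. Indeed $\dbar=\sum_i(d\bar z_i\wedge)\,\pa_{\bar z_i}$ contributes $-q_i+q_i=0$; in $\fbracket{f,\mbox{-}}=\sum_r(\pa_{z_r}f)\,{\pa\over\pa\pa_r}$ the factor $\pa_{z_r}f$ is weight-homogeneous of degree $1-q_r$ (since $f$ has degree $1$), so the $r$-th term has weight $(1-q_r)-(1-q_r)=0$; and for the weighted-homogeneous choice $\Omega_X=dz_1\wedge\cdots\wedge dz_n$ one has $\pa_\Omega=\sum_r\pa_{z_r}{\pa\over\pa\pa_r}$, whence $t\pa_\Omega$ contributes $1-q_r-(1-q_r)=0$ per term. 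Therefore every term of $Q_{f}$ has total weight $0$, and $[\mathbb{E}_f,Q_{f}]=0$ follows.

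Since everything reduces to bookkeeping once the weight assignment is fixed, I do not expect a serious obstacle. The only points requiring care are the weight-homogeneity of $\pa_{z_r}f$, which rests on the weighted-homogeneity of $f$, and the sign conventions in the fermionic number operators; I would check both against the coordinate expressions for $\dbar$, $\pa_\Omega$ and $\fbracket{\mbox{-},\mbox{-}}$ recorded in Section \ref{polyvector}.
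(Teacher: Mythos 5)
Your proof is correct and is essentially the paper's own argument: the paper disposes of the lemma by the one-line observation that, under the weight assignment $\deg t=1$, $\deg z_i=q_i$, $\deg\bar z_i=-q_i$, $\deg \pa_i=1-q_i$, $\deg d\bar z_i=-q_i$, the operator $Q_f$ is weight-homogeneous of degree $0$ while $\mathbb{E}_f$ is the associated grading operator. Your write-up simply makes this explicit (identifying $\mathbb{E}_f$ as a sum of weighted number operators, using the derivation property of $[\mathbb{E}_f,\cdot\,]$, and checking that each of $\dbar$, $\fbracket{f,\mbox{-}}$, $t\pa_\Omega$ has total weight $0$, where the last computation uses $\Omega_X=dz_1\wedge\cdots\wedge dz_n$ as fixed in this section), so nothing further is needed.
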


It follows that     $\mathbb{E}_f$ is well defined on   the cohomology $\mc H^{f,\Omega}$, and it induces  a $\mathbb{Q}$-grading on $\Hzero^{f, \Omega}$ and $\mc H^{f,\Omega}$.
  Classes  in $\mc H^{f,\Omega}$ are represented by elements in $\Gamma(X, \OO_X)[[t]]$, for which
  $$
  \mathbb{E}_f(ht^k)=
      \big({t{\pa_t}+\sum_i q_iz_i{\pa_{z_i}}}\big)(ht^k),\quad\mbox{where }\,h\in \Gamma(X, \OO_X).
      $$
The  $\Q$-grading is just    the weight degree of the representatives. %, by extending $t$ with weight degree  $1$.
In the following discussions, we will always assume the extended weight degree assignments
$$
  \deg z_i=q_i,\quad \deg u_j=1-\deg\phi_j, \quad \deg t=1, \quad 1\leq i\leq n, 1\leq j\leq \mu.
$$

Recall that the higher residue $\widehat{\mbox{Res}}_f$ is defined by $\eqref{higher-residue-fiberversion}$.
\begin{prop}\label{degree-of-highresidue}
 The higher residue map {\upshape $\widehat{\mbox{Res}}_f: \Hzero^{f, \Omega}\to \C[[t]]$} is homogeneous  of  weight degree $-s(f)$:  if $\deg \alpha=m$, then
  {\upshape $\deg (\widehat{\mbox{Res}}_f(\alpha))=m-s(f)$}.
\end{prop}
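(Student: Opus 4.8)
The plan is to exploit the grading operator $\mathbb{E}_f$ and the fact that both $\iota_t$ and the trace map interact with it in a controlled way. First I would record that $\iota_t^{-1}$ is $\mathbb{E}_f$-equivariant: the inclusion $\iota\colon \PV_c(X)((t))\into \PV(X)((t))$ manifestly commutes with $\mathbb{E}_f$, and by Lemma \ref{lemmaHodgeweightcomm} it commutes with $Q_{f}$; since $\iota$ is a quasi-isomorphism (Corollary \ref{cohomology-field-cpt-supp}), the induced isomorphism $\iota_t$ on cohomology intertwines the actions of $\mathbb{E}_f$ on $H^*(\PV_c(X)[[t]],Q_{f})$ and on $\Hzero^{f,\Omega}$. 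Hence if $[\alpha]\in\Hzero^{f,\Omega}$ is homogeneous with $\mathbb{E}_f[\alpha]=m[\alpha]$, then $\iota_t^{-1}[\alpha]$ is a class of $\mathbb{E}_f$-weight $m$; choosing any cochain representative $\beta\in\PV_c(X)[[t]]$ we obtain $\mathbb{E}_f\beta-m\beta=Q_{f}\gamma$ for some $\gamma$.

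The heart of the argument is the following intertwining identity for the trace:
$$\Tr(\mathbb{E}_f\beta)=\bigl(t\pa_t+s(f)\bigr)\Tr(\beta),\qquad \beta\in\PV_c(X)[[t]].$$
Since $\mathbb{E}_f$ preserves the bidegree $(i,j)$ and $\Tr$ vanishes off $\PV_c^{n,n}(X)$, it suffices to check this on $\beta=g\,d\bar z_{[n]}\otimes\pa_{[n]}$ with $g\in C_c^\infty(X)$. On such an element the two ``number'' terms of $\mathbb{E}_f$ contribute the scalar $\sum_i(1-q_i)-\sum_i q_i=\sum_i(1-2q_i)=s(f)$, while $t\pa_t$ supplies the grading operator on $\C[[t]]$. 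The remaining contribution comes from $\mathcal D g:=\sum_i q_i\bigl(z_i\pa_{z_i}-\bar z_i\pa_{\bar z_i}\bigr)g$, and here I would use that with $\Omega_X=dz_1\wedge\cdots\wedge dz_n$ one has $\Tr(\beta)=\pm\int_X g\,d\bar z_{[n]}\wedge dz_{[n]}$; integration by parts gives $\int_X z_i\pa_{z_i}g\,dV=\int_X\bar z_i\pa_{\bar z_i}g\,dV=-\int_X g\,dV$ for compactly supported $g$, whence $\int_X\mathcal D g\,dV=0$. This kills the $\mathcal D g$ term and establishes the identity.

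Next I would note that $\Tr\circ Q_{f}=0$: indeed $\Tr(\dbar(\cdot))=0$ and $\Tr(\{f,\cdot\})=0$ are recorded in Section \ref{polyvector-descendant}, while $\Tr(\pa_\Omega(\cdot))=0$ follows for bidegree reasons (the image of $\pa_\Omega$ never meets $\PV_c^{n,n}(X)$), or from \eqref{Trproperties} with second argument $1$. Applying $\Tr$ to $\mathbb{E}_f\beta-m\beta=Q_{f}\gamma$ and using these two facts together with the intertwining identity yields
$$\bigl(t\pa_t+s(f)\bigr)\widehat{\mbox{Res}}_f[\alpha]=\Tr(\mathbb{E}_f\beta)=m\,\Tr(\beta)=m\,\widehat{\mbox{Res}}_f[\alpha],$$
so that $t\pa_t\,\widehat{\mbox{Res}}_f[\alpha]=(m-s(f))\,\widehat{\mbox{Res}}_f[\alpha]$, which is precisely the assertion that $\widehat{\mbox{Res}}_f(\alpha)$ is weight-homogeneous of degree $m-s(f)$ (indeed a constant multiple of $t^{\,m-s(f)}$, consistent with Example \ref{example-An}).

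The main obstacle is the mismatch between the concrete realization $\iota_t^{-1}=T_\rho^t$, which is built from the \emph{non-homogeneous} cut-off $\rho$ and the operator $V_f$, and the homogeneity we want to extract. One cannot argue that $T_\rho^t[\alpha]$ is literally an $\mathbb{E}_f$-eigenvector. The resolution, as above, is to work at the level of cohomology classes, where equivariance of $\iota_t$ is automatic, and to absorb the failure of the chosen representative to be an eigenvector into a $Q_{f}$-exact term that is annihilated by $\Tr$. The only genuine computation is then the trace intertwining identity, whose key inputs are the elementary integration-by-parts vanishing $\int_X\mathcal D g\,dV=0$ and the bookkeeping of the scalar $s(f)=\sum_i(1-2q_i)$.
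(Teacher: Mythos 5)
Your proof is correct, and it in fact supplies more than the paper does: the paper's own ``proof'' of Proposition \ref{degree-of-highresidue} is the single sentence that the claim ``follows from a direct calculation on integration by part,'' left to the reader. Your argument fills in those details, organized so that the only analytic input is the integration-by-parts identity $\int_X z_i\pa_{z_i}g\,dV=\int_X \bar z_i\pa_{\bar z_i}g\,dV=-\int_X g\,dV$ for compactly supported $g$. The two structural moves you make --- (i) since $\mathbb{E}_f$ preserves $\PV_c(X)[[t]]$ and commutes with $Q_{f}$ (Lemma \ref{lemmaHodgeweightcomm}), the quasi-isomorphism of Corollary \ref{cohomology-field-cpt-supp} makes $\iota_t^{-1}$ equivariant, so the defect $\mathbb{E}_f\beta-m\beta$ of any compactly supported representative $\beta$ is $Q_{f}$-exact and hence annihilated by $\Tr$; and (ii) the intertwining identity $\Tr(\mathbb{E}_f\beta)=(t\pa_t+s(f))\Tr(\beta)$, proved by noting that $\mathbb{E}_f$ preserves bidegree, that $\Tr$ sees only $\PV_c^{n,n}(X)$, that the number operators there contribute $\sum_i(1-q_i)-\sum_i q_i=s(f)$, and that the first-order term integrates to zero --- are exactly what is needed to make the homogeneity argument rigorous in the presence of the non-homogeneous cut-off $\rho$ hidden in the realization $\iota_t^{-1}=T_\rho^t$, which, as you correctly observe, blocks any naive eigenvector argument on representatives. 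A literal ``direct calculation'' in the paper's style (compare Example \ref{example-An} and the proof of Proposition \ref{compatible-residue}) would instead expand $\Tr(T_\rho^t(\alpha))$ and track weights through the explicit integral, exploiting independence of the class of the choice of $\rho$ to rescale the cut-off under the $\C^*$-action; your cohomological packaging avoids that bookkeeping entirely, at the price of nothing. One tiny remark: your fallback justification of $\Tr\circ t\pa_\Omega=0$ via \eqref{Trproperties} with second argument $1$ is not literally licensed, since that pairing is stated for compactly supported arguments, but your primary bidegree argument for this vanishing is airtight, so the proof stands as written.
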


\begin{proof} This follows from a direct calculation on integration by part and we leave it to the reader.
\end{proof}

The next corollary is  a direct consequence of the above proposition

 \begin{cor}\label{degree-of-higherresiduePairing-fiberversion}
    The higher residue pairing   $\mc K_\Omega^{f}\bracket{\mbox{-}, \mbox{-}}:  \Hzero^{f, \Omega}\otimes \Hzero^{f, \Omega}\to \C[[t]]$ is of weight degree $-s(f).$
  \end{cor}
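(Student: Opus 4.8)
The plan is to read the assertion as a statement about gradings and then reduce it, by sesquilinearity, to $\mathbb{E}_f$-homogeneous classes. The grading on $\C[[t]]$ is by powers of $t$ with $\deg t=1$, so the content of the corollary is that for $\mathbb{E}_f$-homogeneous sections $s_1,s_2$ of weight degrees $m_1,m_2$ one has $\deg\mc K^f_\Omega(s_1,s_2)=m_1+m_2-s(f)$. Since $\Hzero^{f,\Omega}$ is generated over $\C[[t]]$ by weighted homogeneous representatives of $\Jac(f)$, and since $\mc K^f_\Omega$ is $\C[[t]]$-sesquilinear (property (2) of Proposition \ref{lem-higherpairing-properties}, fiber version) while $t^k$ is homogeneous of degree $k$, I would first observe that it suffices to prove the degree formula for classes $[h],[g]$ represented by weighted homogeneous polynomials; expanding general sections and summing then recovers the full statement.

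For such $[h],[g]$ the goal is the graded Leibniz identity
\begin{align*}
  t\pa_t\,\mc K^f_\Omega([h],[g])=\mc K^f_\Omega(\mathbb{E}_f[h],[g])+\mc K^f_\Omega([h],\mathbb{E}_f[g])-s(f)\,\mc K^f_\Omega([h],[g]).
\end{align*}
This is the bilinear generalization of Proposition \ref{degree-of-highresidue}: setting $[g]=[1]$ and using $\widehat{\mbox{Res}}_f=\mc K^f_\Omega(\mbox{-},[1])$ together with $\mathbb{E}_f[1]=0$ reproduces the Proposition exactly, which is the precise sense in which the corollary is a direct consequence of it. Granting the identity, the corollary is immediate: with $\mathbb{E}_f[h]=m_1[h]$ and $\mathbb{E}_f[g]=m_2[g]$ it gives $t\pa_t\,\mc K^f_\Omega([h],[g])=(m_1+m_2-s(f))\mc K^f_\Omega([h],[g])$, so $\mc K^f_\Omega([h],[g])$ is a $t\pa_t$-eigenvector of eigenvalue $m_1+m_2-s(f)$, i.e. homogeneous of that degree.

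To prove the Leibniz identity I would combine two inputs. First, the fiberwise analogue of property (4) in Proposition \ref{lem-higherpairing-properties}, namely $(t\pa_t+n)\mc K^f_\Omega(s_1,s_2)=\mc K^f_\Omega(\nabla^\Omega_{t\pa_t}s_1,s_2)+\mc K^f_\Omega(s_1,\nabla^\Omega_{t\pa_t}s_2)$ with $\nabla^\Omega_{t\pa_t}=t\pa_t-f/t$ the restriction of the extended Gauss--Manin connection to the central fibre. Second, the key computation that on $\mc H^{f,\Omega}$ this operator is the grading operator up to a scalar, $\nabla^\Omega_{t\pa_t}=\mathbb{E}_f+\sum_{i=1}^n q_i$. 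The latter I would verify on a class $[h\,t^k]$ with $h$ weighted homogeneous of degree $d$ by setting $W=\sum_i q_i z_i h\,\pa_i\in\PV^{1,0}(X)$ and computing $Q_f(W)=f h+t\big(\sum_i q_i+d\big)h$, which yields $[f h]=-t\big(\sum_i q_i+d\big)[h]$ and hence $\nabla^\Omega_{t\pa_t}[h\,t^k]=(\mathbb{E}_f+\sum_i q_i)[h\,t^k]$. Substituting into the fiberwise property (4) and using $s(f)=n-2\sum_i q_i$ produces the displayed identity.

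The main obstacle I anticipate is analytic bookkeeping rather than anything conceptual: both the fiberwise property (4) and the identification $\nabla^\Omega_{t\pa_t}=\mathbb{E}_f+\sum_i q_i$ must be justified at the cochain level, where the representative $T^t_\rho$ used to compute $\mc K^f_\Omega$ involves the non-homogeneous cut-off $\rho$, so the integrand is not literally scale invariant. As in the proof of Proposition \ref{degree-of-highresidue}, the resolution is that the $\rho$-dependence collapses onto a boundary/residue term whose weight degree is controlled by the single integration $\int_X(\,\cdot\,)\wedge\Omega_X$; this integration contributes the fixed shift $-s(f)$ irrespective of how many polyvector factors are fed into $\Tr$, which is exactly why a single copy of $-s(f)$ appears rather than $-2s(f)$. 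Once the Leibniz identity is established, however, the remaining argument is purely algebraic and independent of these choices.
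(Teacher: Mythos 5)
Your proof is correct, but it takes a genuinely different route from the paper's. The paper deduces the corollary directly from Proposition \ref{degree-of-highresidue}, whose own proof is a direct integration-by-parts/scaling count for the trace map: since the pairing $\mc K^f_\Omega$ is built from that same single trace $\Tr$ applied to a product of compactly supported representatives, the one shift by $-s(f)$ carried by $\Tr$ applies verbatim, the two arguments contributing their degrees additively through the wedge product. You instead bypass any integral computation: you combine the $t\pa_t$-Leibniz rule for the higher residue pairing (the fiberwise analogue of Proposition \ref{lem-higherpairing-properties}(4)) with the identification $\nabla^{\Omega}_{t\pa_t}=\mathbb{E}_f+\sum_i q_i$ on $\mc H^{f,\Omega}$ — an identity the paper itself establishes later, inside the proof of Proposition \ref{prop-ofthm-primitiveform}, and which you verify correctly via the $Q_f$-exact term $Q_f\bigl(\sum_i q_i z_i h\,\pa_i\bigr)=fh+t\bigl(\sum_i q_i+d\bigr)h$ — so that the constant $-s(f)$ drops out algebraically as $2\sum_i q_i-n$. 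This makes the corollary (and, by setting the second argument to $[1]$, Proposition \ref{degree-of-highresidue} itself) a formal consequence of the variation-of-semi-infinite-Hodge-structure axioms plus the Euler identity $f=\sum_i q_i z_i\pa_{z_i}f$, which is conceptually clean and reverses the paper's logical order; the paper's route, by contrast, requires no input from the $t\pa_t$-direction of the connection but rests on the integral computation left to the reader. Two points to tighten. First, you need the \emph{fiberwise} version of property (4), which the paper states only for the family pairing $\mc K^F_\Omega$; this is legitimate but deserves a line of justification, e.g.\ restrict the family identity at $u=0$, using that the trace is fiberwise integration, that $F|_{u=0}=f$, and that every class in $\Hzero^{f,\Omega}$ extends via $\pi_X^*$, so both the pairing and $\nabla^{\Omega}_{t\pa_t}$ restrict to their fiber counterparts. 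Second, your closing worry about cut-off functions and a spurious $-2s(f)$ is misplaced within your own argument: once property (4) is invoked (its cochain-level verification is the paper's burden in Proposition \ref{lem-higherpairing-properties}, not yours), the single copy of $-s(f)$ is forced by the algebra, and no further analysis of $T^t_\rho$ or $\rho$ is needed.
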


The higher residue map $\widehat{\mbox{Res}}^f$ induces a map  on  the associated graded vector space of   the Hodge filtration, the restriction of which to
  $\Hzero^{f, \Omega}/ \mc H^{f,\Omega}_{(-1)}=\Jac(f)$ coincides with   the usual residue map by  Proposition \ref{compatible-residue}. As a direct consequence of Proposition \ref{degree-of-highresidue}, we reprove the next well-known fact.
 \begin{cor}
  The residue  $\Res: \Jac(f) \to \C$
  is only non-zero at the weighted homogeneous component of   (top) weight degree $s(f)$ in $\Jac(f)$.
 \end{cor}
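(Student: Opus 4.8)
The plan is to realize the classical residue as the leading ($t^0$) coefficient of the higher residue map $\widehat{\mbox{Res}}^f$ and then read off its weight degree directly from Proposition \ref{degree-of-highresidue}. First I would recall from Proposition \ref{compatible-residue} that, under the canonical isomorphism $\Hzero^{f, \Omega}/t\Hzero^{f, \Omega}\iso \Jac(f)$, the $t^0$-coefficient $\widehat{\mbox{Res}}_{(0)}$ of $\widehat{\mbox{Res}}^f=\Tr\circ \iota_t^{-1}$ coincides with $C_n\Res$, where $C_n\neq 0$ depends only on $n=\dim X$. Thus, to prove that $\Res$ is supported on the top-degree piece of $\Jac(f)$, it suffices to control the $t^0$-coefficient of $\widehat{\mbox{Res}}^f$ on weighted homogeneous representatives.

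Next I would decompose $\Jac(f)=\bigoplus_m \Jac(f)_m$ into its weighted homogeneous pieces and fix a weighted homogeneous polynomial $g$ of weight degree $m$ representing a class $[g]\in\Hzero^{f, \Omega}$. Since $g$ is independent of $t$, the induced $\mathbb{E}_f$-grading on $\mc H^{f,\Omega}$ assigns $[g]$ the degree $m$ (the $\Q$-grading is just the weight degree of the representative, as observed after Lemma \ref{lemmaHodgeweightcomm}). By Proposition \ref{degree-of-highresidue} the series $\widehat{\mbox{Res}}^f([g])\in\C[[t]]$ is then homogeneous of weight degree $m-s(f)$, where $\deg t=1$. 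But a homogeneous element of $\C[[t]]$ of degree $d$ is a single monomial $c\,t^{d}$, and it vanishes identically unless $d\in\Z_{\geq 0}$; hence $\widehat{\mbox{Res}}^f([g])=c\,t^{m-s(f)}$ with $c\in\C$.

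Finally, comparing $t^0$-coefficients gives $\widehat{\mbox{Res}}_{(0)}([g])=C_n\Res(g)$ equal to the constant term of $c\,t^{m-s(f)}$, which is nonzero only when $m-s(f)=0$. Since $C_n\neq 0$, this forces $\Res(g)=0$ whenever $m\neq s(f)$; as the top weight degree occurring in $\Jac(f)$ is $\deg\phi_\mu=s(f)$, the residue is supported exactly on the degree-$s(f)$ component, which is the assertion. This is essentially a bookkeeping of gradings, so I do not expect a serious obstacle; the only points requiring care are (i) confirming through Proposition \ref{compatible-residue} that the leading coefficient of $\widehat{\mbox{Res}}^f$ is precisely the classical residue up to the nonzero scalar $C_n$, and (ii) keeping the convention $\deg t=1$ consistent, so that homogeneity of $\widehat{\mbox{Res}}^f([g])$ genuinely pins down a single power of $t$ and thereby isolates the $t^0$-coefficient.
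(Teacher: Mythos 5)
Your proof is correct and follows essentially the same route as the paper: the paper likewise combines Proposition \ref{compatible-residue} (the ordinary residue is the leading, i.e.\ $t^0$, part of $\widehat{\mbox{Res}}^f$ on $\Jac(f)=\Hzero^{f,\Omega}/\mc H^{f,\Omega}_{(-1)}$) with the homogeneity of weight degree $-s(f)$ from Proposition \ref{degree-of-highresidue}, so that a nonzero constant term forces $m=s(f)$. Your write-up merely makes explicit the bookkeeping (homogeneous elements of $\C[[t]]$ are monomials $c\,t^{m-s(f)}$) that the paper leaves implicit.
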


 The grading operator $\mathbb{E}_f$ on $\PV(X)((t))$ is  naturally extended to the universal unfolding
   $$\mathbb{E}_{F}:=\mathbb{E}_f+\sum_{j=1}^\mu (\deg u_{j}) \big(u_{j} \pa_{u_j}-\bar u_j\pa_{\bar u_j}-d\bar u_j{\pa\over \pa d\bar u_j} \big),$$
 which acts on $\PV(Z/S)((t))$ and  commutes  with $Q_{F}$. Hence, $\mathbb{E}_{F}$ acts   on $\Hzero^{F, \Omega}$, and induces a $\mathbb{Q}$-grading on it.
 As a family version of Proposition \ref{degree-of-highresidue}, we have
 \begin{prop}\label{degree-of-highresidue-family}
 The higher residue map {\upshape $\widehat{\mbox{Res}}^{F}: \Hzero^{F, \Omega}\to \OO_S[[t]]$} is of weight degree $-s(f)$.
\end{prop}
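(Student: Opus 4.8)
The plan is to reduce the family statement (Proposition \ref{degree-of-highresidue-family}) to the fiberwise statement (Proposition \ref{degree-of-highresidue}) by exploiting the fact that the family grading operator $\mathbb{E}_F$ is assembled from $\mathbb{E}_f$ together with the additional deformation-parameter terms $\sum_{j}(\deg u_j)(u_j\pa_{u_j}-\bar u_j\pa_{\bar u_j}-d\bar u_j\tfrac{\pa}{\pa d\bar u_j})$. First I would verify that $\mathbb{E}_F$ commutes with $Q_F$, which is already asserted in the text just before the proposition; the point is that, exactly as in Lemma \ref{lemmaHodgeweightcomm}, one formally assigns weight degrees to $t$, $\bar z_i$, $\pa_i$, $d\bar z_i$ \emph{and} now to $u_j$, $\bar u_j$, $d\bar u_j$ so that $\dbar_F + t\pa_\Omega$ has pure weight degree $0$. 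Since $F$ is weight-homogeneous of degree $1$ in $(\mathbf z,\mathbf u)$ by \eqref{universeal-unfolding-weight-homo}, the operator $\{F,\mbox{-}\}$ is homogeneous, and so $\mathbb{E}_F$ descends to a well-defined grading on $\Hzero^{F,\Omega}$ and on $\OO_S[[t]]$ (where $\deg u_j = 1-\deg\phi_j$ and $\deg t = 1$).

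The main work is to show that the trace map is strictly equivariant: I want to establish that, for the trace/residue map $\widehat{\Res}^F = \Tr\circ\iota_t^{-1}$, one has the intertwining relation
$$
\widehat{\Res}^F\big(\mathbb{E}_F\,\alpha\big) = \big(\mathbb{E}_{S}+s(f)\big)\,\widehat{\Res}^F(\alpha),
$$
where $\mathbb{E}_S := t\pa_t + \sum_j(\deg u_j)u_j\pa_{u_j}$ is the induced grading operator on $\OO_S[[t]]$. Granting this, if $\alpha$ is homogeneous of weight degree $m$, i.e. $\mathbb{E}_F\alpha = m\alpha$, then $\mathbb{E}_S\widehat{\Res}^F(\alpha) = (m-s(f))\widehat{\Res}^F(\alpha)$, which is precisely the assertion that $\widehat{\Res}^F$ lowers weight degree by $s(f)$. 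To prove the intertwining relation I would realize $\iota_t^{-1}$ by $T_\rho^t$ (Proposition \ref{cohomology-field-relative}) and argue that the cut-off operators $T_\rho^t$, $V_F$ and the projection $\iota_t^{-1}$ are all compatible with the grading up to the relevant degree shifts, reducing everything to computing the degree of $\Tr$ on a compactly supported representative. The trace is $\Tr(\alpha)=\int_{Z/S}(\alpha\vdash\Omega_{Z/S})\wedge\Omega_{Z/S}$ with $\Omega_{Z/S}=dz_1\wedge\cdots\wedge dz_n$, so the degree count is the family analogue of the integration-by-parts computation already invoked in the proof of Proposition \ref{degree-of-highresidue}: the factor $s(f)=\sum_i(1-2q_i)$ arises from the weight degrees of the $2n$ holomorphic volume-form directions $dz_1\cdots dz_n$ appearing twice against the $2n$ antiholomorphic integration directions.

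Alternatively, and perhaps more cleanly, I would deduce the family result directly from the fiber result by base change. The family trace is defined so that specializing $\mathbf u$ to a point $u\in S$ recovers the fiberwise trace for $F|_{Z_u}$; since the deformation parameters $u_j$ carry weight degree $1-\deg\phi_j$ and $\mathbb{E}_F$ restricted to the purely $\mathbf u$-part is $\sum_j(\deg u_j)u_j\pa_{u_j}$ (matching $\mathbb{E}_S$ on the base), homogeneity of $\widehat{\Res}^F$ as an $\OO_S[[t]]$-valued map is equivalent to the collection of fiberwise homogeneity statements together with the correct transformation of the parameters. Concretely, writing a homogeneous section in terms of the weight-homogeneous frame $\{\pi_X^*\phi_i\}$ and tracking how $u$-monomials contribute their degrees, the family statement follows from Proposition \ref{degree-of-highresidue} applied fiberwise. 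The main obstacle I anticipate is bookkeeping the degree shift coming from the antiholomorphic and contraction terms $-\bar u_j\pa_{\bar u_j}-d\bar u_j\tfrac{\pa}{\pa d\bar u_j}$ in $\mathbb{E}_F$: these must be shown to integrate to zero (they are total $\dbar_S$-derivatives on the base after fiber integration) so that only the holomorphic weights $\deg u_j$ survive in the final degree count, exactly as the $\bar z_i$ and $d\bar z_i$ terms drop out in the fiber computation.
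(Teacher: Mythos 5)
Your first argument is, in substance, the paper's proof: the paper disposes of this proposition in one line, citing the homogeneity of $\Omega_{Z/S}=dz_1\wedge\cdots\wedge dz_n$ and ``a similar calculation on integration by part'' as in Proposition \ref{degree-of-highresidue}, and your intertwining relation $\widehat{\Res}^{F}(\mathbb{E}_F\alpha)=(\mathbb{E}_S+s(f))\,\widehat{\Res}^{F}(\alpha)$ is exactly the precise form of that calculation. Two corrections to how you propose to run it, though. First, you cannot argue that $T_\rho^t$ (and hence $\iota_t^{-1}$ at the cochain level) is ``compatible with the grading'': the cut-off $\rho$ is an arbitrary bump function around the relative critical set and is not weighted homogeneous, so $T_\rho^t$ does not commute with $\mathbb{E}_F$ in any graded sense. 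The correct organization is to prove the intertwining for $\Tr$ directly on compactly supported polyvector fields, where no cut-off appears: for the relevant component $b\, d\bar z_1\wedge\cdots\wedge d\bar z_n\otimes(\pa_1\wedge\cdots\wedge\pa_n)$ of $\beta\in \PV_c(Z/S)((t))$, the zeroth-order part of $\mathbb{E}_F$ acting on this frame contributes $\sum_i(1-q_i)-\sum_i q_i=s(f)$, while the derivative terms $q_i\bracket{z_i\pa_{z_i}-\bar z_i\pa_{\bar z_i}}b$ integrate to zero along the fibers by parts (compact fiberwise support); this gives $\Tr(\mathbb{E}_F\beta)=(\mathbb{E}_S+s(f))\Tr(\beta)$ on the nose, where $\mathbb{E}_S$ is the corresponding base operator. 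One then only needs that the canonical isomorphism $\iota_t$ of Proposition \ref{cohomology-field-relative} is grading-equivariant on cohomology, which is automatic because the inclusion $\PV_c(Z/S)\into \PV(Z/S)$ commutes with $\mathbb{E}_F$; a homogeneous class therefore has a homogeneous image under $\iota_t^{-1}$ even though its representative $T_\rho^t(\cdot)$ is not a homogeneous element. Second, the base terms $-\bar u_j\pa_{\bar u_j}-d\bar u_j\tfrac{\pa}{\pa d\bar u_j}$ do not ``integrate to zero as total $\dbar_S$-derivatives''; they drop out because, on cohomology, $\Tr$ takes values in $\OO_S((t))$, i.e.\ the output is holomorphic in $\mathbf{u}$ with no $d\bar u$-components, so these operators annihilate it.

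Your alternative route has a genuine gap and should be discarded: Proposition \ref{degree-of-highresidue} cannot be ``applied fiberwise,'' because for $u\neq 0$ the fiber potential $F|_{Z_u}$ is not a weighted homogeneous polynomial --- the deformation destroys the grading on each individual fiber --- so there is no fiberwise homogeneity statement to quote. The grading in the family statement is a joint grading in $(\mathbf{z},\mathbf{u},t)$, which exists only because $F$ itself is homogeneous of total degree one in all variables; any honest reduction to the central fiber amounts to expanding in $\mathbf{u}$-monomials and redoing the family computation, which is not simpler than the direct integration by parts above.
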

\begin{proof}
This follows from the homogeneity of the choice of the holomorphic form $\Omega_{Z/S}=dz_1\wedge\cdots \wedge dz_n$ and a similar calculation on integration by part.
\end{proof}

  \begin{cor}\label{degree-of-higherresiduePairing}
    The higher residue pairing   $\mc K_\Omega^{F}\bracket{\mbox{-}, \mbox{-}}:  \Hzero^{F, \Omega}\otimes \Hzero^{F, \Omega}\to \OO_{S}[[t]]$ is of weight degree $-s(f).$
  \end{cor}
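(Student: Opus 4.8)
The plan is to deduce the weight degree of the pairing directly from two facts that are already available: the compatibility of $\mc K^F_\Omega$ with the Gauss--Manin operators in Proposition \ref{lem-higherpairing-properties}, and the identification of $\nabla^\Omega_{t\pa_t}+\nabla^\Omega_E$ with the grading operator $\mathbb{E}_F$ on $\Hzero^{F,\Omega}$. Recall that saying $\mc K^F_\Omega$ has weight degree $-s(f)$ means precisely that, for $\mathbb{E}_F$-homogeneous sections $s_1,s_2\in\Hzero^{F,\Omega}$ of weight degrees $m_1,m_2$, the element $\mc K^F_\Omega(s_1,s_2)\in\OO_S[[t]]$ is homogeneous of weight degree $m_1+m_2-s(f)$, where $\OO_S[[t]]$ carries the grading with $\deg t=1$ and $\deg u_j=1-\deg\phi_j$. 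By $\C$-bilinearity of $\mc K^F_\Omega$ it suffices to treat homogeneous $s_1,s_2$, and I will verify homogeneity of the image by computing the action of the degree operator $\mathrm{Gr}:=t\pa_t+\pa_E$ on $\OO_S[[t]]$, where $E=\sum_j(\deg u_j)u_j\pa_{u_j}$ is the Euler field of \eqref{euler-field}.

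The first step is to record the key operator identity on $\Hzero^{F,\Omega}$. Exactly the computation already carried out in the verification of (P4)$^{\vee}$ in the proof of Lemma \ref{lem-formpritive}, using the weighted-homogeneity relation $F-\pa_EF=\sum_i q_iz_i\pa_{z_i}F$ together with the choice $\Omega_{Z/S}=dz_1\wedge\cdots\wedge dz_n$ (so that $\lambda\equiv 1$), shows that on classes represented by functions
\begin{align*}
   \nabla^\Omega_{t\pa_t}+\nabla^\Omega_E
   = t\pa_t+\sum_{i=1}^n q_iz_i\pa_{z_i}+\sum_{j=1}^\mu(\deg u_j)u_j\pa_{u_j}+\sum_{i=1}^n q_i
   = \mathbb{E}_F+\sum_{i=1}^n q_i .
\end{align*}
In particular $\nabla^\Omega_{t\pa_t}+\nabla^\Omega_E$ acts on $s_i$ as the scalar $m_i+\sum_l q_l$.

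The second step is to apply Proposition \ref{lem-higherpairing-properties}: adding part (3) with $V=E$ to part (4) gives
\begin{align*}
   (t\pa_t+\pa_E+n)\,\mc K^F_\Omega(s_1,s_2)
   =\mc K^F_\Omega\big((\nabla^\Omega_{t\pa_t}+\nabla^\Omega_E)s_1,s_2\big)
    +\mc K^F_\Omega\big(s_1,(\nabla^\Omega_{t\pa_t}+\nabla^\Omega_E)s_2\big).
\end{align*}
Substituting the operator identity, the right-hand side equals $\big(m_1+m_2+2\sum_l q_l\big)\mc K^F_\Omega(s_1,s_2)$, while the left-hand side is $(\mathrm{Gr}+n)\mc K^F_\Omega(s_1,s_2)$. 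Cancelling $n$ and using $2\sum_l q_l=n-s(f)$ from the definition \eqref{invariant} of $s(f)$, I obtain $\mathrm{Gr}\,\mc K^F_\Omega(s_1,s_2)=(m_1+m_2-s(f))\mc K^F_\Omega(s_1,s_2)$. Since $\mathrm{Gr}$ acts semisimply on $\OO_S[[t]]$ with eigenvalue equal to the weight degree, this is exactly the statement that $\mc K^F_\Omega(s_1,s_2)$ is homogeneous of weight degree $m_1+m_2-s(f)$. As a consistency check, pairing against the degree-$0$ class $[1]$ recovers $\widehat{\mbox{Res}}^F$ and reproduces Proposition \ref{degree-of-highresidue-family}.

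The only delicate point will be the operator identity of the second paragraph: one must track the contribution of $\pa_\Omega$ correctly (the additive constant $\sum_i q_i$ arises as $\sum_i\pa_{z_i}(q_iz_i)$) and confirm that the $t^{-1}$-pole term $-\big(\sum_i q_iz_i\pa_{z_i}F\big)/t$ is absorbed, modulo $Q_F$-exact terms, into a function-level operator that preserves $\Hzero^{F,\Omega}$. This is precisely what was already established in the (P4)$^{\vee}$ step of Lemma \ref{lem-formpritive}, so no new analysis is needed and the remaining manipulations are purely formal. Finally, the fiber version Corollary \ref{degree-of-higherresiduePairing-fiberversion} follows by the identical argument specialized to $S$ a point and $E=0$.
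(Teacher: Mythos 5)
Your proof is correct, but it takes a genuinely different route from the paper's. The paper treats this corollary as an immediate consequence of Proposition \ref{degree-of-highresidue-family}: the higher residue map $\widehat{\mbox{Res}}^{F}$ is homogeneous of weight degree $-s(f)$, proved by a direct integration-by-parts computation with the homogeneous volume form $\Omega_{Z/S}=dz_1\wedge\cdots\wedge dz_n$; since $\mc K^F_\Omega(s_1,s_2)=\Tr\bracket{T^t_\rho(s_1)\,\overline{T^t_\rho(s_2)}}$ is built from the same trace map, the pairing inherits the same weight shift. You never touch the integral: you combine the connection-compatibility properties (3) and (4) of Proposition \ref{lem-higherpairing-properties} with the operator identity $\nabla^\Omega_{t\pa_t}+\nabla^\Omega_E=\mathbb{E}_F+\sum_i q_i$ on classes represented by functions, and then read off the eigenvalue of the grading operator $t\pa_t+\pa_E$ acting on $\mc K^F_\Omega(s_1,s_2)$. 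Your bookkeeping checks out: the identity you invoke is exactly the computation the paper itself performs (later, in Proposition \ref{euler-field-homog}, and earlier in general form in the (P4)$^{\vee}$ step of Lemma \ref{lem-formpritive}, where your specialization $g_i=q_iz_i$, $\lambda\equiv 1$ is the correct one); the relation $2\sum_i q_i=n-s(f)$ from \eqref{invariant} cancels the $n$ appearing in property (4); and an eigenfunction of $t\pa_t+\pa_E$ in $\OO_S[[t]]$ is precisely a weighted homogeneous element. There is also no circularity, since Proposition \ref{lem-higherpairing-properties} and Lemma \ref{lem-formpritive} are established before Section 5 and independently of this corollary. What your route buys is that it is purely formal: it derives the weight of the pairing from the axioms of the variation of semi-infinite Hodge structure plus the Euler identity, so it would apply verbatim to any VSHS with such an identity, and pairing with $[1]$ even re-proves Proposition \ref{degree-of-highresidue-family}. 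What the paper's route buys is independence from the grading formalism: the trace-map computation establishes the homogeneity input at the cochain level, which is the natural place to see it in this complex differential geometric setup, and is the statement actually needed elsewhere (e.g.\ its fiber version feeds into Proposition \ref{propspaceY}).
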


  \begin{prop}\label{euler-field-homog}
     A  section $[\eta]\in \Gamma(S, \Hzero^{F, \Omega})$ satisfies property {\upshape \mbox{(P4)}} of Definition \ref{define-primitive-form} if and only if $[\eta]$ is weighted homogeneous with respect to the $\mathbb{Q}$-grading induced by $\mathbb{E}_{F}$.
  \end{prop}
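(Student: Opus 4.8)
The plan is to reduce property (P4) to an eigenvalue equation for the grading operator $\mathbb{E}_F$. Concretely, I will prove the operator identity
\begin{align*}
  \nabla^\Omega_{t\pa_t}+\nabla^\Omega_E=\mathbb{E}_F+c\cdot\mathrm{id}
\end{align*}
on $\Hzero^{F, \Omega}$, where $c=\sum_{i=1}^n q_i$ is a constant. Granting this, (P4) for $[\eta]$, i.e. the existence of $r\in\C$ with $(\nabla^\Omega_{t\pa_t}+\nabla^\Omega_E)[\eta]=r[\eta]$, is equivalent to $\mathbb{E}_F[\eta]=(r-c)[\eta]$, that is, to $[\eta]$ being an eigenvector of $\mathbb{E}_F$. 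Since the $\mathbb{Q}$-grading on $\Hzero^{F, \Omega}$ is by definition the one induced by $\mathbb{E}_F$, being such an eigenvector is exactly being weighted homogeneous, and the proposition follows.

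To establish the identity I would compute both operators on a representative $s\in\Gamma(p^{-1}(U),\OO_Z)[[t]]$ of a class in $\Hzero^{F, \Omega}$ (recall every class is so represented by Proposition \ref{cohomology-field-relative}). Using the simplified Gauss-Manin connection of Proposition \ref{corwelldefine} for the choice $\Omega_{Z/S}=dz_1\wedge\cdots\wedge dz_n$ together with the extension \eqref{extendGM} (with $i=0$ since $s$ is a function), one gets $(\nabla^\Omega_{t\pa_t}+\nabla^\Omega_E)s=\big(t\pa_t+\pa_{\hat E}-\tfrac{F}{t}+\tfrac{\pa_{\hat E}F}{t}\big)s$, whereas $\mathbb{E}_F s=\big(t\pa_t+\sum_i q_i z_i\pa_{z_i}+\pa_{\hat E}\big)s$, because the $\bar z$-, $d\bar z$-, $\bar u$-, $d\bar u$- and $\pa_i$-terms of $\mathbb{E}_F$ all annihilate a holomorphic function (here $\hat E=\sum_j(\deg u_j)u_j\pa_{u_j}$). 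The difference is therefore
\begin{align*}
  \mathbb{E}_F s-(\nabla^\Omega_{t\pa_t}+\nabla^\Omega_E)s=\sum_{i=1}^n q_i z_i\pa_{z_i}s+\frac{F-\pa_{\hat E}F}{t}\,s.
\end{align*}
The weighted-homogeneity of $F$, namely $\sum_i q_i z_i\pa_{z_i}F+\sum_j(\deg u_j)u_j\pa_{u_j}F=F$, lets me rewrite $F-\pa_{\hat E}F=\sum_i q_i z_i\pa_{z_i}F$, so the difference becomes $D:=\sum_i q_i z_i\pa_{z_i}s+\tfrac1t\big(\sum_i q_i z_i\pa_{z_i}F\big)s$.

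The heart of the argument is then to show that $D$ is $Q_F$-exact up to the constant term $c\,s$. I would introduce the relative polyvector field $\xi:=\tfrac1t\, s\sum_{i=1}^n q_i z_i\pa_i\in\PV^{1,0}(Z/S)((t))$. Using the contraction identity $\{F,\sum_i q_i z_i\pa_i\}=\sum_i q_i z_i\pa_{z_i}F$ for the bracket of $F$ with a vector field, together with the coordinate formula for $\pa_\Omega$ with $\lambda\equiv1$ (which produces the extra term $\sum_i q_i\,s$ when it differentiates the Euler field), a direct computation gives $Q_F\xi=\dbar_F\xi+t\pa_\Omega\xi=D+\big(\sum_i q_i\big)s$. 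Hence $[D]=-c\,[s]$ in $\Hzero^{F, \Omega}$, which is precisely the claimed identity $(\nabla^\Omega_{t\pa_t}+\nabla^\Omega_E)[s]=(\mathbb{E}_F+c)[s]$. I expect this $Q_F$-exactness step, and in particular the bookkeeping of the additive constant $c=\sum_i q_i$ coming from $\pa_\Omega$ applied to $\sum_i q_i z_i\pa_i$, to be the only delicate point; the rest is the formal reduction to an eigenvalue statement for $\mathbb{E}_F$. Finally I would remark that the whole computation is performed on arbitrary representatives and descends to cohomology (as $\mathbb{E}_F$, $\nabla^\Omega_{t\pa_t}$ and $\nabla^\Omega_E$ all commute with, respectively descend through, $Q_F$), so it is independent of the choices involved.
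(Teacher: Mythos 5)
Your proposal is correct and follows essentially the same route as the paper: both reduce (P4) to the eigenvalue equation $\mathbb{E}_F[\eta]=(r-\sum_i q_i)[\eta]$ by computing $(\nabla^{\Omega}_{t\pa_t}+\nabla^{\Omega}_{E})[\eta]=[\mathbb{E}_F\eta+(\sum_i q_i)\eta]$, using the weighted homogeneity of $F$ and the relation $-\tfrac{1}{t}\bigl(\sum_i q_i z_i\pa_{z_i}F\bigr)\eta\equiv \sum_i q_i z_i\pa_{z_i}\eta+\bigl(\sum_i q_i\bigr)\eta \bmod \Im(Q_F)$. The only (welcome) difference is that you exhibit the explicit primitive $\xi=\tfrac{1}{t}\,\eta\sum_i q_i z_i\pa_i$ realizing this congruence, which the paper leaves implicit.
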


  \begin{proof}
 Write $\eta={\sum\limits_{j=0}^\infty a_jt^j}, a_j\in \Gamma(Z, \OO_Z)$. Modulo the coboundary operator $Q_F$,   we have
$$
-\bracket{\sum_iq_iz_i\pa_{z_i}F}\eta\equiv t\sum_iq_iz_i\pa_{z_i}\eta+t\bracket{\sum_iq_i}\eta \mod \Im(Q_F).
$$
   %% , all of which are of course $Q_{F}$-closed.
 Hence,
 \begin{align*}
& \big({\nabla^{\Omega}_{t\pa_t}+\nabla^{\Omega}_{E}}\big)[\eta]=
          [\big({t{\pa\over\pa t}-{F\over t}+\pa_E+{\pa_EF\over t}}\big)\eta]\\
          =&[\big({t{\pa\over\pa t}+\pa_E-{1\over t}\big({\sum_{i=1}^n q_iz_i\pa_{z_i}F}}\big)\big)\eta]
          =  [ \big({t{\pa\over\pa t}+\pa_E+\sum_i q_iz_i\pa_{z_i}+\sum_iq_i}\big)\eta]\\
                  =& [\sum_{q}  \big({t{\pa\over\pa t}+\pa_E+\sum_i q_iz_i\pa_{z_i}}\big)\eta+\big({\sum_iq_i}\big)\eta]
                  =[\mathbb{E}_{F}\eta+\big({\sum_iq_i}\big)\eta].
 \end{align*}
%(where each $\eta_q$ is a finite sum of       elements of weight-degree $q$ of the form $b_kt^k\in t^k\OO_{S}$
        Hence, property (P4) holds if and only if
 $
  \mathbb{E}_{F} [\eta]= (r-\sum_i q_i)[\eta],
$
i.e., $[\eta]$ is weighted homogeneous of degree $r-\sum_i q_i$.
      \end{proof}

\subsection{Moduli space of primitive forms}\label{moduli_space}
  As we have shown in the previous section,
a    primitive form in  $\Gamma(S, \mc H^{F,\Omega}_{(0)})$ is equivalent to a good opposite filtration $\mc L$ of $\mc H^{f,\Omega}$ together with a primitive element  with respect to $\mc L$.
\begin{prop}\label{prop-ofthm-primitiveform}
 Let     $\mc L\subset \mc H^{f, \Omega}$ be an  opposite filtration. The following are equivalent:
   $$(1) \,\, \mc L \mbox{ is good}; \quad (2)\,\,  \mathbb{E}_{f}  \mbox{ preserves } \mc L;\quad(3)\,\,  \mathbb{E}_{f}  \mbox{ preserves } \Hzero^{f, \Omega}\cap t\mc L.$$
 Furthermore, $1\in \Hzero^{f, \Omega}$ is the unique primitive element with respect to a given good opposite filtration, up to a scalar multiple by nonzero complex numbers.

\end{prop}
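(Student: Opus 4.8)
The plan is to reduce the notion of a good opposite filtration to stability under the grading operator $\mathbb{E}_f$, and then to read off the primitive element from the weight grading. The crux is the observation that on $\mc H^{f,\Omega}$ the two operators $\nabla^{\Omega}_{t\pa_t}$ and $\mathbb{E}_f$ differ only by a scalar. Indeed, restricting the computation in the proof of Proposition \ref{euler-field-homog} to the central fibre (where the Euler field $E$ vanishes at $0\in S$ and $\mathbb{E}_F$ restricts to $\mathbb{E}_f$), the weighted-homogeneity identity $f=\sum_{i} q_i z_i\pa_{z_i}f$ yields
\[
   \nabla^{\Omega}_{t\pa_t}=\mathbb{E}_f+\sum_{i=1}^n q_i
\]
as operators on $\mc H^{f,\Omega}$. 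Equivalently, one checks directly that $Q_f\bigl(h\sum_i q_i z_i\pa_i\bigr)=hf+t\bigl(\sum_i q_i z_i\pa_{z_i}h+(\sum_i q_i)h\bigr)$ for a holomorphic function $h$ (with $\Omega_X=dz_1\wedge\cdots\wedge dz_n$), so that $[fh/t]=-\mathbb{E}_f[h]-(\sum_i q_i)[h]$ in $\mc H^{f,\Omega}$ and the displayed relation follows. Since $\mathbb{E}_f$ and $\nabla^{\Omega}_{t\pa_t}$ thus differ by multiplication by the constant $\sum_i q_i$, a subspace is preserved by one exactly when preserved by the other; this gives $(1)\Leftrightarrow(2)$ at once.

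For $(2)\Leftrightarrow(3)$ I would exploit the $t$-weight. Since $\deg t=1$, the $t\pa_t$-part of $\mathbb{E}_f$ gives $[\mathbb{E}_f,t^m]=m\,t^m$, so $\mathbb{E}_f$ preserves $\Hzero^{f,\Omega}$ and satisfies $\mathbb{E}_f(t^{m}x)=t^{m}(\mathbb{E}_f+m)x$. Recall from Lemma/Definition \ref{opposite filtration} that $B=\Hzero^{f,\Omega}\cap t\mc L$ recovers $\mc L$ via $\mc L=t^{-1}B[t^{-1}]$ and $\Hzero^{f,\Omega}=B[[t]]$. If $\mathbb{E}_f$ preserves $\mc L$, then by the commutation relation it preserves $t\mc L$, and it always preserves $\Hzero^{f,\Omega}$, hence it preserves their intersection $B$, giving $(2)\Rightarrow(3)$. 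Conversely, if $\mathbb{E}_f B\subseteq B$, then for $b\in B$ one computes $\mathbb{E}_f(t^{-k}b)=t^{-k}(\mathbb{E}_f b-k\,b)\in t^{-k}B$, so $\mathbb{E}_f$ preserves $\mc L=\bigoplus_{k\ge1}t^{-k}B$, giving $(3)\Rightarrow(2)$.

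For the primitive element I would first record the grading: choosing weighted-homogeneous representatives, $\{[\phi_i]\}$ form a $\C[[t]]$-basis of $\Hzero^{f,\Omega}$ with $\mathbb{E}_f[\phi_i]=(\deg\phi_i)[\phi_i]$ (by the Euler identity for $\phi_i$), so the degree-$d$ homogeneous part of $\Hzero^{f,\Omega}$ is spanned by those $[\phi_i]t^k$ with $\deg\phi_i+k=d$; since $0=\deg\phi_1<\deg\phi_i$ for $i>1$, the degree-$0$ part is exactly $\C\cdot 1$. Now $1$ is primitive: its image in $\Jac(f)$ is the unit, and $\mathbb{E}_f(1)=0$ forces, via the key relation and $\Hzero^{f,\Omega}\cap\mc L=0$, the homogeneity constant $r=\sum_i q_i$, with $\nabla^{\Omega}_{t\pa_t}(1)-r\cdot 1=0\in\mc L$. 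For uniqueness, let $\zeta_0$ be any primitive element. The homogeneity condition $\nabla^{\Omega}_{t\pa_t}\zeta_0-r\zeta_0\in\mc L$ together with $\nabla^{\Omega}_{t\pa_t}=\mathbb{E}_f+\sum_i q_i$ gives $\mathbb{E}_f\zeta_0-(r-\sum_i q_i)\zeta_0\in\Hzero^{f,\Omega}\cap\mc L=0$, the left-hand side lying in $\Hzero^{f,\Omega}$ because $\mathbb{E}_f$ preserves it; hence $\zeta_0$ is $\mathbb{E}_f$-homogeneous of some degree $r'$. Primitivity demands that the projection of $\zeta_0$ to $\Jac(f)$ be a unit, i.e.\ have nonzero degree-$0$ component; as $\zeta_0$ is homogeneous of degree $r'$ this is possible only if $r'=0$, and then $\zeta_0$ lies in the degree-$0$ part $\C\cdot 1$ and is a nonzero multiple of $1$.

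The only genuinely non-formal input is the key relation $\nabla^{\Omega}_{t\pa_t}=\mathbb{E}_f+\sum_i q_i$, which rests on the Euler identity and the cohomological computation already used for Proposition \ref{euler-field-homog}; I expect this to be the one point requiring care. Once it is in hand, all three equivalences and the classification of primitive elements are elementary consequences of the $\mathbb{E}_f$-grading and the splitting $\mc H^{f,\Omega}=\Hzero^{f,\Omega}\oplus\mc L$.
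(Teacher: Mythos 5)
Your proof is correct and follows essentially the same route as the paper's: both rest on the identity $\nabla^{\Omega}_{t\pa_t}=\mathbb{E}_f+\sum_i q_i$ on $\mc H^{f,\Omega}$ (obtained from the Euler identity $f=\sum_i q_i z_i\pa_{z_i}f$ modulo $Q_f$-exact terms), from which the three equivalences follow via $\mc L=t^{-1}B[t^{-1}]$ and $\mathbb{E}_f\Hzero^{f,\Omega}\subset\Hzero^{f,\Omega}$, and the primitive element is pinned down by homogeneity of degree zero. You simply spell out the details (the $[\mathbb{E}_f,t^m]=mt^m$ bookkeeping and the degree-zero-part argument) that the paper's proof states tersely.
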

\begin{proof}
   Denote $r:=\sum_i q_i$.
   For any $[\eta]\in  \Hzero^{f, \Omega}$, we have
   $$
   \nabla^{\Omega}_{t\pa_t} [\eta]= [\big({t\pa_t-{f\over t}}\big)\eta]=[\big({t\pa_t-\sum_i{q_iz_i\pa_{z_i} f\over t}}\big)\eta]=[\big({{{r+t\pa_t+\sum_i q_i z_i\pa_{z_i}}}}\big)\eta]=\bracket{r+\mathbb{E}_{f}}[\eta].
   $$
   Hence, $\mc L$ is good if and only if
     $\mathbb{E}_{f}\mc L\subset \mc L$, which holds if and only if   $\mathbb{E}_{f}$ preserves $B:=\Hzero^{f, \Omega}\cap t\mc L$ \,\, (since $\mc L=t^{-1}B[t^{-1}]$ and  $\mathbb{E}_{f}\Hzero^{f, \Omega}\subset \Hzero^{f, \Omega}$).

    Since  $\zeta_0\in \Hzero^{f, \Omega}$ is primitive  only if $\zeta_0$ is a weighted homogenous element (of degree zero) such that $\Jac(f) \zeta_0=\Jac(f)$,   the latter statement follows.
\end{proof}

We now construct all good opposite filtrations for any given weighted homogeneous singularity. The method is a generalization of the classical construction of the flat coordinates for the case of finite reflection group quotient \cite{Saito-Yano-Sekiguchi}, by including the parameter $t$.
Let us fix one set $\{\phi_1, \cdots, \phi_\mu\}\subset \C[\mathbf{z}]$  of   weighted homogeneous polynomials  such that
  \begin{enumerate}
  \item $\{\phi_1, \cdots, \phi_\mu\}$    represent  a basis   of $\Jac(f)$.

  \item     $\deg \phi_1< \deg\phi_2\leq\cdots \leq \deg \phi_{\mu-1}< \deg \phi_\mu$, and  $\phi_1=1$. %, and $\deg \phi_\mu=s(f)$.
  \item   The residue pairing $(\phi_i, \phi_j)$ is zero unless $i+j=\mu+1$, which implies that each $(\phi_i, \phi_{\mu+1-i})$ is nonzero due to the non-degeneracy of the residue pairing.
\end{enumerate}
Let $$ r(i, j):=\deg \phi_i-\deg \phi_j.$$
Whenever referring to an element    $\mathbf{c}=\big(c_{ij}\big)_{1\leq i, j\leq \mu}\in \C^{\mu^2}$ below, we always require
  $c_{ij}=0$ unless $r(i, j) \in \mathbb{Z}_{>0}$. For such $\mathbf{c}$, we set
$$B(\mathbf{c}) :=\mbox{Span}_{\C}\{\Phi_1(\mathbf{c}), \cdots, \Phi_\mu(\mathbf{c})\}\subset \Hzero^{f, \Omega}  $$
 with $$ \qquad\qquad  \Phi_i=\Phi_i(\mathbf{c}):=\phi_i+\sum_{j=1}^{i-1} c_{ij}\phi_jt^{r(i, j)}. $$
Let $$Y:=\{\mathbf{c}\in \C^{\mu^2}~|~ \mc K_\Omega^{f}\big(B(\mathbf{c}), B(\mathbf{c})\big)\subset \mathbb{C},  c_{ij}=0  \mbox{ if } r(i, j)\notin \mathbb{Z}_{>0}\}\subset \mathbb{C}^{\mu^2}.$$

\begin{prop}\label{prop-Y-bij}
  There is a bijection  map   $$Y\overset{\cong}{\longrightarrow} \{\mbox{good opposite filtrations of } \mc H^{f, \Omega}\}  $$
defined by    $\mathbf{c}\in Y\mapsto \mc L(\mathbf{c}):=t^{-1}B(\mathbf{c})[t^{-1}]$.
\end{prop}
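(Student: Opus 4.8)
The plan is to show that $\mathbf{c}\mapsto\mc L(\mathbf{c})$ is well-defined with image inside the good opposite filtrations, and then to establish injectivity and surjectivity separately. The organizing principle is the grading operator $\mathbb{E}_f$ together with Proposition \ref{prop-ofthm-primitiveform}: goodness of an opposite filtration is equivalent to $\mathbb{E}_f$ preserving $B:=\Hzero^{f,\Omega}\cap t\mc L$, and by Lemma \ref{opposite filtration} an opposite filtration is completely encoded by such a graded piece $B$ via $\mc L=t^{-1}B[t^{-1}]$, $\Hzero^{f,\Omega}=B[[t]]$, $\mc H^{f,\Omega}=B((t))$.

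First I would record the homogeneity of the generators. Since $\deg\bigl(t^{r(i,j)}\phi_j\bigr)=r(i,j)+\deg\phi_j=\deg\phi_i$, each $\Phi_i(\mathbf{c})$ is an $\mathbb{E}_f$-eigenvector of degree $\deg\phi_i$, so $B(\mathbf{c})$ is $\mathbb{E}_f$-stable. Because the $\Phi_i(\mathbf{c})$ reduce modulo $t$ to the basis $\{\phi_i\}$ of $\Jac(f)$, a completeness (Nakayama-type) argument shows $\{\Phi_i(\mathbf{c})\}$ is a $\C[[t]]$-basis of $\Hzero^{f,\Omega}$; hence $\Hzero^{f,\Omega}=B(\mathbf{c})[[t]]$ and $\mc H^{f,\Omega}=B(\mathbf{c})((t))$. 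This yields conditions $(1)$ and $(2)$ of Lemma \ref{opposite filtration} for $\mc L(\mathbf{c})=t^{-1}B(\mathbf{c})[t^{-1}]$, and identifies $\Hzero^{f,\Omega}\cap t\mc L(\mathbf{c})=B(\mathbf{c})$. Since the defining constraint of $Y$ is exactly condition $(3)''$, the filtration $\mc L(\mathbf{c})$ is opposite, and its $\mathbb{E}_f$-stability makes it good by Proposition \ref{prop-ofthm-primitiveform}. For injectivity I would observe that $\mc L(\mathbf{c})$ determines $B(\mathbf{c})=\Hzero^{f,\Omega}\cap t\mc L(\mathbf{c})$, and that the reduction $\pi_0\colon\Hzero^{f,\Omega}\to\Hzero^{f,\Omega}/t\Hzero^{f,\Omega}\cong\Jac(f)$ restricts to an isomorphism $B(\mathbf{c})\overset{\cong}{\longrightarrow}\Jac(f)$ with $\Phi_i(\mathbf{c})\mapsto\phi_i$; thus each $\Phi_i(\mathbf{c})$, and with it each coefficient $c_{ij}$, is recovered as the unique element of $B(\mathbf{c})$ whose $t^0$-part is $\phi_i$, so $\mathbf{c}$ is determined by $\mc L(\mathbf{c})$.

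Surjectivity is the step I expect to demand the most care. Given a good opposite filtration $\mc L$, I would set $B=\Hzero^{f,\Omega}\cap t\mc L$, which is $\mathbb{E}_f$-stable by Proposition \ref{prop-ofthm-primitiveform} and maps isomorphically onto $\Jac(f)$ under the graded map $\pi_0$ by Lemma \ref{opposite filtration}. Define $\Phi_i:=(\pi_0|_B)^{-1}(\phi_i)$, a homogeneous element of degree $\deg\phi_i$. Expanding $\Phi_i=\sum_j g_{ij}(t)\phi_j$ in the $\C[[t]]$-basis $\{\phi_j\}$ and matching $\mathbb{E}_f$-degrees forces $g_{ij}(t)=c_{ij}t^{r(i,j)}$ with $c_{ij}=0$ unless $r(i,j)\in\mathbb{Z}_{\geq 0}$; the normalization $\pi_0(\Phi_i)=\phi_i$ removes the $t^0$ cross-terms, and the ordering $\deg\phi_1\le\cdots\le\deg\phi_\mu$ (which forces $r(i,j)>0\Rightarrow j<i$) then pins down $\Phi_i=\Phi_i(\mathbf{c})$ in the prescribed shape. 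Consequently $B=B(\mathbf{c})$, so $\mc L=\mc L(\mathbf{c})$, and condition $(3)''$ for $\mc L$ gives $\mathbf{c}\in Y$. The delicate point throughout is the interplay between possibly repeated degrees $\deg\phi_i$ and the requirement that off-diagonal terms carry strictly positive integral powers of $t$; handling these ties in degree is the main obstacle, resolved precisely by the degree ordering and the exactness of the $t^0$-normalization.
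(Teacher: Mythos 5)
Your proof is correct and follows essentially the same route as the paper's: you invoke Lemma \ref{opposite filtration} to encode an opposite filtration by $B=\Hzero^{f,\Omega}\cap t\mc L$, Proposition \ref{prop-ofthm-primitiveform} to translate goodness into $\mathbb{E}_f$-stability of $B$, and weighted-homogeneity plus degree matching to pin each generator to the form $\Phi_i(\mathbf{c})$. In fact your write-up supplies details the paper's terse proof leaves implicit (the Nakayama basis argument, the $t^0$-normalization in injectivity, and the degree bookkeeping behind ``$B=B(\mathbf{c})$ for some $\mathbf{c}\in Y$'' in surjectivity), with the only step stated without justification being that $(\pi_0|_B)^{-1}(\phi_i)$ is homogeneous, which requires the standard observation that a finite-dimensional $\mathbb{E}_f$-stable subspace decomposes into $\mathbb{E}_f$-eigenspaces.
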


\begin{proof}
For any $\mathbf{c}\in Y$, we have $\mc K_\Omega^{f}(B(\mathbf{c}), B(\mathbf{c})) \subset \C$.
Moreover, it follows by construction  that
 $\Hzero^{f, \Omega}=B(\mathbf{c})[[t]]$, $\mc H^{f,\Omega}=B(\mathbf{c})((t))= \Hzero^{f, \Omega}\oplus\mc L(\mathbf{c})$ and $t^{-1}\mc L(\mathbf{c}) \subset \mc L(\mathbf{c})$.
 Hence, $\mc L(c)$ is an   opposite filtration (recall Definition \ref{opposite filtration}). Clearly, $B(\mathbf{c})$ is preserved by $\mathbb{E}_f$, since $\{\Phi_j\}$ are weighted homogeneous with respect to $\mathbb{E}_f$. Therefore $\mc L(\mathbf{c})$ is good by
 Proposition \ref{prop-ofthm-primitiveform}, and the given map is  defined. The injectivity  of such map follows from the linear independence of $\{\phi_1, \cdots, \phi_\mu\}$ in $\mc H^{f, \Omega}$.

 Now  we assume  $\mc L$ to be a   good opposite filtration of $\mc H^{f,\Omega}$ and let $B:=\Hzero^{f, \Omega}\cap t\mc L$.  Then we have $\mc K^f_{\Omega}(B, B)\subset \C$ by definition, and $\mathbb{E}_f$ preserves $B$     by Proposition \ref{prop-ofthm-primitiveform}.
   It follows that  $B=B(\mathbf{c})$ for some $\mathbf{c}\in Y$. That is, the given map is surjective.
 \end{proof}

Let $\mathbb{Z}_{>0}^{\scriptsize\mbox{odd}}$ be the set of positive odd integers,
  and set $$D:=\sharp\{(i, j)~|~r(i, j)\in \mathbb{Z}_{>0}, i+j<\mu+1 \}+\sharp\{(i, j)~|~r(i, j)\in \mathbb{Z}_{>0}^{\scriptsize\mbox{odd}}, i+j=\mu+1 \} .$$
\begin{prop}\label{propspaceY}
  $Y$ is an     algebraic subvariety of $\C^{\mu^2}$ defined by quadratic equations, which is biregular   to  $\C^D$.
  In particular if $D=0$, then $Y=\{\mathbf{0}\}$ consists of the origin of $\C^{\mu^2}$ only.
\end{prop}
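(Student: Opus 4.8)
The plan is to make everything explicit by computing the pairings $\mc K^f_\Omega(\Phi_i,\Phi_k)$. Since each $\Phi_i$ is weighted homogeneous of degree $\deg\phi_i$ and $\mc K^f_\Omega$ has weight degree $-s(f)$ (Corollary \ref{degree-of-higherresiduePairing-fiberversion}), every value $\mc K^f_\Omega(\Phi_i,\Phi_k)$ is weighted homogeneous of degree $e_{ik}:=\deg\phi_i+\deg\phi_k-s(f)$; as it also lies in $\C((t))$ with $\deg t=1$, it is forced to be a single monomial $P_{ik}(\mathbf c)\,t^{e_{ik}}$. Expanding by the $\C[[t]]$-sesquilinearity of the higher residue pairing (the analogue for $\mc K^f_\Omega$ of property (2) in Proposition \ref{lem-higherpairing-properties}) shows $P_{ik}(\mathbf c)$ is a polynomial of degree $\le 2$ in the entries of $\mathbf c$, with coefficients assembled from the constants $\kappa_{ab}$ determined by $\mc K^f_\Omega(\phi_a,\phi_b)=\kappa_{ab}\,t^{e_{ab}}$. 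Hence $Y$ is precisely the common zero locus of the quadrics $\{P_{ik}=0 : e_{ik}\in\mathbb Z_{>0}\}$, which proves the first assertion that $Y$ is cut out by quadratic equations.

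The decisive structural observation is the duality $e_{ik}=r(i,\mu+1-k)$, so that the constraint indices $(i,k)$ with $e_{ik}\in\mathbb Z_{>0}$ are in bijection, via $k\mapsto \mu+1-k$, with the variable index set $I:=\{(a,b):r(a,b)\in\mathbb Z_{>0}\}$. Under this dictionary the Hermitian symmetry $\mc K^f_\Omega(s_1,s_2)=\overline{\mc K^f_\Omega(s_2,s_1)}$ yields $P_{ik}=(-1)^{e_{ik}}P_{ki}$, i.e. an involution $\sigma(a,b)=(\mu+1-b,\mu+1-a)$ on $I$ pairing the equation $P_{ik}=0$ with $P_{ki}=0$. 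Using that the residue pairing $(\phi_a,\phi_{\mu+1-a})$ is nondegenerate (condition (3)), I would isolate the two linear terms of top $t$-degree in $P_{ik}$, carried by the $\sigma$-conjugate variables $c_{i,\mu+1-k}$ and $c_{k,\mu+1-i}$, each with an invertible residue-pairing coefficient; all remaining linear terms involve variables of strictly smaller degree $r$, and the quadratic terms are products of two such lower-degree variables.

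I would then solve the system recursively in increasing weight degree $r$. At each degree the independent equations (one per $\sigma$-orbit of off-diagonal indices, together with the even-degree diagonal equations $P_{ii}=0$) express the variables $c_{ab}$ with $a+b>\mu+1$ and the even-degree diagonal variables ($a+b=\mu+1$, $r(a,b)$ even) as polynomials in the variables already treated. The odd-degree diagonal equations are vacuous: since $\sigma$ fixes $(i,\mu+1-i)$ one gets $P_{ii}=(-1)^{e_{ii}}P_{ii}=-P_{ii}$ when $e_{ii}$ is odd, so those variables remain free. The surviving free variables are thus exactly those with $a+b<\mu+1$ and those with $a+b=\mu+1$ and $r(a,b)$ odd, numbering $D$ (using $\sharp\{a+b<\mu+1\}=\sharp\{a+b>\mu+1\}$ via $\sigma$), while the rest are determined as polynomial functions of the free ones. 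This exhibits $Y$ as the graph of a polynomial map $\C^{D}\to\C^{\,\sharp I-D}$, hence biregular to $\C^{D}$.

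Finally, when $D=0$ there are no free variables, the recursion produces at most one point, and one checks this point is the origin by verifying that the constant terms of the (purely even-diagonal) constraint equations vanish in this regime, so that $\mathbf c=\mathbf 0$ is the solution; consistency is guaranteed by the bijection of Proposition \ref{prop-Y-bij} between $Y$ and the nonempty set of good opposite filtrations. The main obstacle is the triangular recursion of the third paragraph: one must order variables and equations by weight degree, correctly handle the degeneracies produced by repeated degrees $\deg\phi_i$ (several equal-degree variables entering a single equation linearly) together with the two leading $\sigma$-conjugate terms, and track the bookkeeping so that the free parameters match the combinatorial count $D$ exactly. Everything else is homogeneity together with the formal properties of $\mc K^f_\Omega$.
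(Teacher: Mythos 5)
Your first three paragraphs are, in substance, the paper's own proof: the homogeneity argument making each $\mc K^f_\Omega(\Phi_i,\Phi_k)$ a monomial $P_{ik}\,t^{e_{ik}}$ with $P_{ik}$ quadratic in $\mathbf c$, the dictionary $e_{ik}=r(i,\mu+1-k)$, the two leading linear terms with nonzero residue-pairing coefficients, and the triangular elimination exhibiting $Y$ as a graph over $\C^D$ are exactly the steps in the paper, the only cosmetic difference being that you order the recursion by the weight degree $r$ where the paper orders by the ``step'' $i-j$ (both work). Moreover, the ``main obstacle'' you flag at the end is not actually an obstacle: your own use of condition (3) — any coefficient $a_{kl}$ of weight degree zero with $k+l\neq\mu+1$ vanishes — already guarantees that in each constraint every non-leading linear term carries a variable of strictly smaller weight degree, so the equal-degree degeneracies you worry about cannot occur and the count of free parameters closes at exactly $D$.

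The genuine gap is in your last paragraph, the case $D=0$. The fallback ``consistency is guaranteed by the bijection of Proposition \ref{prop-Y-bij} between $Y$ and the nonempty set of good opposite filtrations'' is circular within this paper: nonemptiness of the set of good opposite filtrations is only obtained \emph{through} $Y$ (Proposition \ref{prop-Y-bij} is a bijection, not an independent existence theorem), and even granted nonemptiness, knowing that $Y$ is a single point does not tell you that this point is $\mathbf 0$. What must be shown — and what you explicitly defer (``one checks \dots by verifying that the constant terms \dots vanish'') — is exactly the nontrivial content: when $D=0$ the only admissible coordinates are the even-diagonal ones $c_{i,\mu+1-i}$ (any other variable pair has a $\sigma$-conjugate counted in $D$), and since all other coordinates vanish, their equations collapse to
\begin{equation*}
2\,a_{i,\mu+1-i}\,c_{i,\mu+1-i}+\kappa_{ii}=0,\qquad \mc K^f_\Omega(\phi_i,\phi_i)=\kappa_{ii}\,t^{e_{ii}},
\end{equation*}
so $Y=\{\mathbf 0\}$ is \emph{equivalent} to the vanishing of these constant terms $\kappa_{ii}$ — a statement about the higher residue pairing of the weighted homogeneous singularity that neither the Hermitian symmetry nor the quadric structure supplies, and that your consistency appeal cannot replace. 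The paper disposes of this by its step-induction: once all off-diagonal coordinates are identically zero, it argues the surviving equations express each even-diagonal $c_{ij}$ through terms in lower-step coordinates, all zero, hence $c_{ij}=0$; making your version complete would require carrying out precisely the vanishing verification you postponed (or showing that even-diagonal pairs cannot coexist with $D=0$), rather than invoking Proposition \ref{prop-Y-bij}.
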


\begin{proof}
   For $1\leq i, j\leq \mu$, we let $a_{ij}(t):=\mc K_\Omega^{f}(\phi_i, \phi_j)$. We also let  $d_{ij}(t):= c_{ij}t^{r(i, j)}$ if $r(i, j)\in \mathbb{Z}_{>0}$, or $1$ if $i=j$, or $0$ otherwise.
%%%    $$  a_{ij}(t):=(\phi_i, \phi_j)_W\qquad\mbox{and}\qquad d_{ij}(t):=\begin{cases}
%%%       c_{ij}t^{r(i, j)},&\mbox{if } r(i, j)\in \mathbb{Z}_{>0},\\
%%%       1, &\mbox{if } i=j,\\
%%%       0, &\mbox{otherwise}.
%%%    \end{cases}$$
It follows from Corollary \ref{degree-of-higherresiduePairing-fiberversion} that every $a_{ij}(t)$ is a monomial in $\C[t]$ of degree
   $\deg \phi_i+\deg \phi_j-s(f)$. Our assumption on $\{\phi_i\}$ implies that $a_{ij}(t)=0$ if $i+j<\mu+1$,   and $a_{ij}(t)$ is a nonzero constant if $i+j=\mu+1$. Since $\Phi_i=\sum\limits_{k=1}^id_{ik}(t)\phi_k$, the pairing
$$
\mc K_\Omega^{f}(\Phi_i, \Phi_j)=\begin{cases} 0, & \text{if}\ i+j<\mu+1\\=a_{ij}\in \mathbb{C}^*, & \text{if}\ i+j=\mu+1\end{cases}.
$$ By the skew-symmetric property of the higher residue pairing,
   $\mathbf{c}\in Y$ if and only if all the monomials
     $\mc K_\Omega^{f}(\Phi_i, \Phi_{j'})$ are constant functions for those entries $(i, j')$
     with $i\leq j'$ and $i+j'>\mu+1$.   Write $j'=\mu+1-j$.
     The condition is equivalent to that    $\mc K_\Omega^{f}(\Phi_i, \Phi_{\mu+1-j})$  is   a constant   for all $i, j$ with  $i-j>0$ and $i\leq \mu+1-j$. Namely the  monomial
    $K_{ij}:= \mc K_\Omega^{f}(\Phi_i, \Phi_{\mu+1-j})$  in $t$ of degree $r(i, j)$ satisfies
  $$K_{ij}=d_{i, j}(t)a_{j, \mu+1-j}+ a_{i, \mu+1-i}d_{\mu+1-j, \mu+1-i}(-t)+\!\!\!\!\!\!\!\sum\limits_{j<k\leq i\atop \mu+1-i<l\leq \mu+1-j}\!\!\!\!\!\!\!\!d_{ik}(t)a_{kl}(t)d_{\mu+1-j, l}(-t)\in \C.
     $$
  Let us refer to $k-l$ as the step of a coordinate $c_{kl}$.
   Then   $K_{ij}$ contains precisely two (possibly the same)  coordinates  $c_{ij}$, $c_{\mu+1-j, \mu+1-i}$ of step $i-j$,    and other coordinates of   step strictly less than    $i-j$. We   analyze the constraints  $K_{ij}\in \C$ according to $r(i, j)=r(\mu+1-j, \mu+1-i)$  as follows.

    Assume  $r(i, j)\notin \mathbb{Z}$ or $r(i, j)=0$. Then $K_{ij}\in \C$ is a trivial condition.

   Assume  $r(i, j)\in \mathbb{Z}_{>0}$ and $i<\mu+1-j$, then the parameter $c_{\mu+1-j, \mu+1-i}$ (which is of step $i-j$ with $(\mu+1-j)> \mu+1-(\mu+1-i)$) is uniquely determined by the free parameter $c_{ij}\in \C$ and those $c_{k',l'}$  with smaller steps.

    Assume $r(i, j)\in \mathbb{Z}_{>0}$ and $i=\mu+1-j$. Then $\mc K_\Omega^{f}(\Phi_i, \Phi_{\mu+1-j}) =\mc K_\Omega^{f}(\Phi_i, \Phi_{i})$ is an even function in $t$ by the skew-symmetric property of the higher residue pairing. Thus
    %%%Then  $a_{j, \mu+1-j}=a_{\mu+1-j, j}=a_{i, j}=a_{i, \mu+1-i}\in \C^*$.
    if $r(i, j)$ is odd, %%then $d_{i, j}(t)a_{j, \mu+1-j}(t)+ a_{i, \mu+1-i}(t)d_{\mu+1-j, \mu+1-i}(-t)$ is zero polynomial. Since
    the pairing   must vanish. In this case,   LHS $\in\C$ is a trivial condition with a free parameter $c_{ij}$ of the largest step.
      If $r(i, j)$ is even, then $c_{ij}$ is uniquely determined by the summation in LHS, which involves
   coordinates  of step strictly less than $i-j$ only.

   In summary, there are precisely $D$ free parameters. In particular if $D=0$, then  $c_{ij}=0$ unless $r(i, j)$ is a positive even integer and $i=\mu+1-j$.
   However, such $c_{ij}$ is equal to a summation of quadratic terms in coordinates of step strictly less than $i-j$, which are equal to zero. Hence, we  have $c_{ij}=0$ for all $i, j$. That is, $\mathbf{c}=\mathbf{0}$ if $D=0$.
    Hence, the statement follows.
\end{proof}

 We say two primitive forms $\zeta_1$, $\zeta_2$ of  $\mc H^{F,\Omega}_{(0)}$ are \textit{equivalent}, denoted as  $\zeta_1\sim \zeta_2$,   if  there exists $\lambda \in \C^*$ and open neighborhood $U\subset S$ of $0$ such that $\zeta_1|_U=\lambda\zeta_2|_U\in \Gamma(U, \mc H^{F,\Omega})$.
 Let
    $$\mc M:=\{\mbox{primtive forms of }  \mc H^{F,\Omega}_{(0)} \}/\sim\,\,.$$

\begin{thm}\label{thm-space-of-primitiveforms}
  There is a bijection map $Y\overset{\cong}{\to} \mc M$, inducing a  bijection map
 $  \mc M\overset{\cong}{\rightarrow}\mathbb{C}^D.$

\end{thm}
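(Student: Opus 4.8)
The plan is to string together the bijections already in place. By Proposition \ref{prop-Y-bij} the assignment $\mathbf{c}\mapsto \mc L(\mathbf{c})$ identifies $Y$ with the set of good opposite filtrations of $\mc H^{f,\Omega}$, and by Proposition \ref{propspaceY} we have $Y\cong \mathbb{C}^D$; so it suffices to produce a bijection between good opposite filtrations and $\mc M$. First I would reduce to the formal setting: by Theorem \ref{thm-analytic}, since $f$ is weighted homogeneous with isolated critical point, every formal primitive form is the Taylor expansion of an analytic one, while Taylor expansion sends distinct analytic germs to distinct formal series and preserves scaling by $\lambda\in\C^*$. Hence restriction to the formal neighborhood induces a bijection $\mc M\cong \{\text{formal primitive forms}\}/\!\sim$, where $\sim$ is again scaling by a nonzero constant.

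Next I would define the map $Y\to\mc M$ by $\mathbf{c}\mapsto[\vartheta(\mc L(\mathbf{c}),1)]$, using that $\zeta_0=1$ is a primitive element for $\mc L(\mathbf{c})$ by Proposition \ref{prop-ofthm-primitiveform} and that $\vartheta$ of Theorem \ref{thm-primitive-local} produces a formal primitive form. For bijectivity I would exploit the inverse map $\psi$ together with the linearity of the construction. Since the splitting \eqref{splitting-sections} and the flat extension $e^{(f-F)/t}$ are $\C$-linear, one has $\vartheta(\mc L,c)=c\,\vartheta(\mc L,1)$ for $c\in\C^*$; and since $\psi$ recovers the opposite filtration as the image $\mc R$-module of $t\nabla^\Omega\zeta_+$, which is unchanged when $\zeta_+$ is rescaled by a unit, the filtration $\mc L$ is an invariant of the class $[\zeta_+]$ while the primitive element is recovered as $\zeta_+(\C)\in\C^*$. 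Thus if $\mathbf{c}_1,\mathbf{c}_2\in Y$ give equivalent primitive forms, applying $\psi$ forces $\mc L(\mathbf{c}_1)=\mc L(\mathbf{c}_2)$, whence $\mathbf{c}_1=\mathbf{c}_2$ by Proposition \ref{prop-Y-bij}; and given any formal primitive form $\zeta_+$, Proposition \ref{prop-ofthm-primitiveform} shows $\psi(\zeta_+)=(\mc L,c)$ with $c\in\C^*$, so $[\zeta_+]=[\vartheta(\mc L,1)]$ lies in the image. This yields the bijection $Y\cong\mc M$, and composing with $Y\cong\mathbb{C}^D$ gives $\mc M\cong\mathbb{C}^D$.

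The main obstacle I expect is not the formal bookkeeping but the two comparison statements that make it legitimate: first, the scaling-invariance of the opposite filtration recovered by $\psi$ (one must check, via Claim A in Lemma \ref{lem-detgoodpair}, that $\mc L_{\mc R}$ depends only on the $\mc R$-submodule spanned by $\{t\nabla^\Omega_V\zeta_+\}$, not on $\zeta_+$ itself up to a unit); and second, the passage between formal and analytic primitive forms, which rests entirely on Theorem \ref{thm-analytic} and the weighted-homogeneity of $f$. Once these are in hand, the remaining verifications—that the two equivalence relations correspond and that the maps are mutually inverse—are routine consequences of the bijectivity of $\vartheta$ and $\psi$ in Theorem \ref{thm-primitive-local} and the uniqueness of the primitive element in Proposition \ref{prop-ofthm-primitiveform}.
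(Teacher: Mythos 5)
Your proposal is correct and follows essentially the same route as the paper: the paper's own (very terse) proof combines exactly Theorem \ref{thm-primitive-local}, Theorem \ref{thm-analytic}, and Proposition \ref{prop-ofthm-primitiveform} to identify $\mc M$ with the set of good opposite filtrations, then invokes Propositions \ref{prop-Y-bij} and \ref{propspaceY}. The details you supply—the $\C^*$-equivariance $\vartheta(\mc L,c)=c\,\vartheta(\mc L,1)$, the scaling-invariance of the filtration recovered by $\psi$, and the injectivity of Taylor expansion on analytic germs—are precisely the verifications the paper leaves implicit.
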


 \begin{proof}
  By using Theorem  \ref{thm-primitive-local}, Theorem \ref{thm-analytic} and Proposition \ref{prop-ofthm-primitiveform}, we obtain a bijection
  between $\mc M$ and the set of   good opposite filtrations of $\mc H^{f,\Omega}$. The first statement follows from Proposition \ref{prop-Y-bij}, and the second statement follows from Proposition \ref{propspaceY}.
  \end{proof}

\begin{cor}\label{thm-primitive-exceptionalsing}When $D=0$, the representatives of any two weighted homogeneous bases of $\Jac(f)$ %, with highest degree $s(f)$,
  span the same vector subspace $B$ in $\mc H^{f, \Omega}$. This defines the  unique  good opposite filtration  of $\mc H^{f, \Omega}$ by
 {\upshape $$\mc L_{\scriptsize\mbox{can}}=t^{-1}B[t^{-1}].$$}

   If $f$ is given by one of the exceptional unimodular singularities,
  then there exists a unique primitive form $\zeta_+$, up to a scalar of nonzero complex numbers.
   Furthermore,   the Taylor expansion of $\zeta_+$ up to order $N-1$ is given by
   the projection   of  the element $[\varphi_{R_N}(e^{F_0-F\over t})]$ in  {\upshape $ {\mc H^{F,\Omega}}\otimes_{\OO_S}R_N=({\mc H}_{(0)}^{F,\Omega}\otimes_{\OO_S}R_N)\oplus \big(e^{f-F\over t}\mc L_{\scriptsize\mbox{can}}\otimes_{\C}R_N\big)$} to ${\mc H}_{(0)}^{F,\Omega}\otimes_{\OO_S}R_N$,
   where   $R_N=\OO_{S, 0}/\mathfrak{m}^N$,  and $\varphi_{R_N}: \OO_{S, 0}\to R_N$ is the natural projection.
\end{cor}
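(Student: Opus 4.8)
The plan is to reduce all four assertions to results already established, the only genuinely new input being an analysis of how a change of weighted homogeneous basis of $\Jac(f)$ affects the subspace $B$. First I would settle the uniqueness of the good opposite filtration. By Proposition \ref{propspaceY} the hypothesis $D=0$ forces $Y=\{\mathbf 0\}$, and Proposition \ref{prop-Y-bij} identifies $Y$ bijectively with the set of good opposite filtrations. Hence $\mc L(\mathbf 0)=t^{-1}B(\mathbf 0)[t^{-1}]$, with $B(\mathbf 0)=\mathrm{Span}_\C\{\phi_1,\dots,\phi_\mu\}$, is the unique good opposite filtration; this is the $\mc L_{\mathrm{can}}$ of the statement.

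Next I would prove the basis-independence of $B$. Let $\{\psi_i\}$ be another weighted homogeneous basis and write $\psi_i=\sum_j a_{ij}\phi_j+g$ with $g$ in the Jacobian ideal and weighted homogeneous of degree $\deg\psi_i$, so that $a_{ij}\neq0$ only when $\deg\phi_j=\deg\psi_i$. With the chosen $\Omega_{Z/S}=dz_1\wedge\cdots\wedge dz_n$ and \eqref{eqnWbracket}, any $g=\sum_k h_k\,\partial_{z_k}f=Q_f\big(\sum_k h_k\partial_k\big)-t\sum_k\partial_{z_k}h_k$, so in $\mc H^{f,\Omega}_{(0)}$ one gets $[\psi_i]=\sum_j a_{ij}[\phi_j]-t[\sum_k\partial_{z_k}h_k]$. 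Iterating the reduction of $\sum_k\partial_{z_k}h_k$ modulo the Jacobian ideal (each step raising the power of $t$ and lowering the polynomial degree, hence terminating) shows that $[\psi_i]$ equals $\sum_j a_{ij}[\phi_j]$ plus a combination of terms $\phi_l\,t^{r}$ with $r=r(j,l)\in\mathbb{Z}_{>0}$. The key point is that $D=0$ leaves no such resonances: by the mirror symmetry $r(i,j)=r(\mu+1-j,\mu+1-i)$ together with $i+j\mapsto 2(\mu+1)-(i+j)$, vanishing of the first summand of $D$ removes all pairs with $r(i,j)\in\mathbb{Z}_{>0}$ off the line $i+j=\mu+1$; on that line $\deg\phi_i-\deg\phi_j=r$ and $\deg\phi_i+\deg\phi_j=s(f)$ give $\deg\phi_j=\tfrac{s(f)-r}{2}$, which is negative for even $r\geq2$ since $s(f)<2$, while odd $r$ is excluded by the second summand of $D$. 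Thus all corrections vanish, $[\psi_i]=\sum_j a_{ij}[\phi_j]$, and $\mathrm{Span}_\C\{[\psi_i]\}=B(\mathbf 0)$.

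For the exceptional unimodular singularities I would verify directly from the degrees \eqref{eqn-degPhiExpo} and the bound $1<s(f)<2$ that the same resonance-free conclusion holds, whence $D=0$; Theorem \ref{thm-space-of-primitiveforms} then gives $\mc M\cong\C^{D}=\C^{0}$, i.e. a unique primitive form up to a nonzero scalar, and Proposition \ref{prop-ofthm-primitiveform} identifies the corresponding primitive element with $1$. Finally, for the explicit Taylor expansion I would specialize Theorem \ref{thm-primitive-local} and Corollary \ref{rmk-primitive} to the good pair $(\mc L_{\mathrm{can}},\zeta_0=1)$: the primitive form $\zeta_+$ is the $\check{\mc H}_{(0)}^{F,\Omega}$-component of $e^{(f-F)/t}\zeta_0$, and since $\pi_X^*(1)=1$ and $F_0=\pi_X^*f$, its reduction modulo $\mathfrak m^{N}$ is precisely the projection of $[\varphi_{R_N}(e^{(F_0-F)/t})]$ along $e^{(f-F)/t}\mc L_{\mathrm{can}}\otimes_\C R_N$, which is the claimed formula.

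The main obstacle is the second step: controlling the $t$-corrections produced by a change of weighted homogeneous basis and converting the combinatorial hypothesis $D=0$ into the statement that no positive-integer resonances occur (where the degree bound $s(f)<2$ enters to rule out even resonances on the line $i+j=\mu+1$). Once resonance-freeness is established, the uniqueness of $\mc L_{\mathrm{can}}$, the uniqueness of the primitive form, and the perturbative formula all follow immediately from the cited results.
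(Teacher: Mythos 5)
Your first and third paragraphs, together with the derivation of the perturbative formula, follow the paper's own route: $D=0$ gives $Y=\{\mathbf 0\}$ by Proposition \ref{propspaceY}, Proposition \ref{prop-Y-bij} then yields the unique good opposite filtration, the degree check via \eqref{eqn-degPhiExpo} gives $D=0$ for the exceptional unimodular singularities, and Theorem \ref{thm-space-of-primitiveforms}, Theorem \ref{thm-primitive-local} and Corollary \ref{rmk-primitive} (together with Theorem \ref{thm-analytic}, which you should also cite in order to pass between formal and analytic primitive forms) give the uniqueness and the Taylor expansion. The genuine gap is in your second paragraph, which is where the one new assertion of the corollary --- basis-independence of $B$ --- is proved. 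That assertion is stated for an \emph{arbitrary} weighted homogeneous $f$ with $D=0$, with no restriction on $s(f)$, but your argument invokes $s(f)<2$ to kill the even resonances on the line $i+j=\mu+1$: you write $\deg\phi_j=\tfrac{s(f)-r}{2}<0$ for even $r\geq 2$ ``since $s(f)<2$''. That bound is not a hypothesis. By its very definition, $D$ does not count pairs on the line $i+j=\mu+1$ with $r(i,j)$ a positive \emph{even} integer, so $D=0$ is perfectly compatible with $\deg\phi_i-\deg\phi_{\mu+1-i}\in 2\Z_{>0}$ (which forces $s(f)\geq 2$). In that situation your iteration leaves correction terms of the form $c\,\phi_{\mu+1-i}\,t^{2k}$ which you have no means to discard, so your proof of the first claim is incomplete. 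Whether such $f$ actually exist is beside the point: the proof must either handle them or show they cannot occur, and it does neither.

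The paper sidesteps the resonance computation entirely by an intrinsic uniqueness argument, and this is the missing idea. Being a good opposite filtration is a property of a subspace $\mc L\subset\mc H^{f,\Omega}$ (Lemma-Definition \ref{opposite filtration} and Definition \ref{def-good-oppofil}) that makes no reference to any basis, and $D$ depends only on the multiset of degrees, hence not on the basis either. So one applies Proposition \ref{prop-Y-bij} twice: with respect to $\{\phi_i\}$, the unique good opposite filtration is $t^{-1}B[t^{-1}]$; with respect to the second basis (after reordering and orthogonalizing it degreewise against the residue pairing, which changes neither its span nor anything else), the unique good opposite filtration is $t^{-1}B'[t^{-1}]$. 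Uniqueness of the filtration as a subspace forces $t^{-1}B[t^{-1}]=t^{-1}B'[t^{-1}]$, and then $B=\Hzero^{f,\Omega}\cap t\mc L_{\mathrm{can}}=B'$. This needs no bound on $s(f)$ and no control of $t$-corrections. Your computational approach is attractive where it applies --- it shows explicitly that no $t$-corrections arise under the extra assumption $s(f)<2$ --- but to prove the corollary as stated you should replace the second paragraph by (or supplement it with) the uniqueness argument above.
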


 \begin{proof}
     By Proposition \ref{prop-Y-bij},    there is a unique  good opposite filtration    $\mc L(\mathbf{0}) =\mathbf{0} =t^{-1}B(\mathbf{0})[t^{-1}]$ with respect to the choice $B=\mbox{Span}_\C\{\phi_1, \cdots, \phi_\mu\}$.
           Let $B'$ be spanned by $\{\phi_1', \cdots, \phi_\mu'\}\subset \Gamma(X, \OO_X)$, which represent another basis of $\Jac(f)$  with $\deg \phi_1'\leq \cdots \deg \phi_\mu'=s(f)$. It follows from Proposition \ref{prop-Y-bij} again that $\mc L'(\mathbf{0})=t^{-1}B'(\mathbf{0})[t^{-1}]$ is the unique good opposite filtration.
     Hence, $\mc L'(\mathbf{0})=\mc L(\mathbf{0})$ by the uniqueness, implying $B'=B$.

 Now we consider the case when   $f$ is given by one of the exceptional unimodular singularities.   All the degrees $\deg\phi_i$ can be read off directly from     Table 3 of \cite{Saito-exceptional} by using   equality \eqref{eqn-degPhiExpo}.
 It follows that  $\deg \phi_i+\deg \phi_j-s(f)$
  is not an integer unless it vanishes, by   direct calculations.
   Hence,     we have $D=0$, and therefore   $Y=\{\mathbf{0}\}$.
Then  the statement follows immediately from
  Theorem \ref{thm-space-of-primitiveforms}, Theorem \ref{thm-primitive-local} and Theorem \ref{thm-analytic}.
 \end{proof}

\subsection{The perturbative formula of a primitive form}\label{algorithm}
Let $\{\phi_1, \cdots, \phi_\mu\}\subset \C[\mathbf{z}]$ be a set of weighted homogeneous polynomials as in the previous subsection, which represent   a basis of $\Jac(f)$.
As we have seen,  a point  $\mathbf{c}\in Y$ corresponds to a good opposite filtration $\mc L(\mathbf{c})$ of $\mc H^{f, \Omega}$, and hence a unique primitive form $\zeta_+$ of $\mc H^{F,\Omega}_{(0)}$ with respect to $\mc L(\mathbf{c})$ together with the primitive element $1$.
 In this subsection, we provide an explicit algorithm to compute $\zeta_+$, %%associated to $\mc L(\mathbf{c})$ together with the primitive element $1$,
 up to an arbitrary finite order $N$.

%% \subsubsection{The truncated case}
 Let   $R=\mathbb{C}[\mathbf{u}]/\mathfrak{m}^{N+1}$.  Consider the natural projection $\pi_X: Z\subset X\times S \to X$.
 The map
    $$e^{F-f\over t}: \Hzero^{F, \Omega}\otimes_{\OO_S}R\longrightarrow \mc H^{f,\Omega}\otimes_{\C}R=B((t))\otimes_{\C} R$$
(which is the inverse of \eqref{expHWR} in Lemma \ref{lem-flatextension}) defines  a $\mu\times \mu$ matrix  $A(\mathbf{u}, t, t^{-1})$ valued in $R$, by
         $$e^{F-F_0\over t} \left(\begin{array}{c}\pi_X^*\Phi_1\\ \vdots \\ \pi_X^*\Phi_\mu\end{array}\right)= A(\mathbf{u}, t, t^{-1}) \left(\begin{array}{c}\Phi_1\\ \vdots \\ \Phi_\mu\end{array}\right).$$
Here we treat   $\Phi_i=\Phi_i(\mathbf{c})$ and $\pi^*_X\Phi_i$ as elements in $\mc H^{f,\Omega}\otimes_\C R$ and
           $ \mc H^{F,\Omega}\otimes_{\OO_S} R$, respectively.
   Let
   $$
   a:=\max \fbracket{[N(s(f)-1)+s(f)], [s(f)]}
   $$
where by $[r]$ we mean  the largest integer that is less than or equal to $r$.
 It follows from degree reason that
 $$A(\mathbf{u}, t, t^{-1})= \sum_{k=-\infty}^\infty t^k A^{(k)}(\mathbf{u})= \sum_{k=-N}^a t^k A^{(k)}(\mathbf{u})
 $$
where every $A^{(k)}$ ($k\neq 0$)  is a $\mu\times \mu $ matrix with entries in $\mathfrak{m}/\mathfrak{m}^{N+1}$; so is $A_{(0)}-\mbox{Id}$. As a matrix valued in $R$, $A^{(k)}$ is   zero  whenever $k>a$ or $k<-N$.

We consider the following $(a+1)\mu\times (a+1)\mu$ matrix
 {\upshape $$\Psi:=\left(\begin{array}{cccc}A_{(0)}-\mbox{Id}&A^{(1)}&\cdots &A^{(a)}\\
               A^{(-1)}&A_{(0)}-\mbox{Id}&\cdots &A^{(a-1)}\\
                \cdots    &\cdots &\cdots &\cdots \\
               A^{(-a)}&A^{(-a+1)}&\cdots &A_{(0)}-\mbox{Id}
         \end{array}\right)$$
         }
and the $1\times (a+1)\mu$ matrix
 {\upshape $$\mathbf{e}:=\left(1,0,\cdots,0\right).$$
         }

\begin{thm}\label{thm-algorithm}
The first $N+1$ terms of the Taylor series expansion of the primitive form $\zeta_+$  around the reference point $0$ is  given by
$$
\zeta_+(R)=\sum_{i=0}^{a} t^i\vec g^{(i)}\left(\begin{array}{c}\Phi_1\\ \vdots \\ \Phi_\mu\end{array}\right)
$$
where
        $$(\vec g_{(0)}, \vec g^{(1)}, \cdots, \vec g^{(a)})=\mathbf{e} \sum_{k=0}^N (-1)^k\Psi^k. $$

          \end{thm}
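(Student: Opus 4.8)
The plan is to reduce the assertion to a finite linear-algebra problem over the Artinian ring $R=\C[\mathbf u]/\mathfrak{m}^{N+1}$ and then solve it by inverting $\mathrm{Id}+\Psi$. The starting point is Corollary \ref{rmk-primitive}: since here the primitive element is $\zeta_0=1$, the formal primitive form $\zeta_+$ is the unique section of $\check{\mc H}^{F,\Omega}_{(0)}$ with
\[
   e^{(F-f)/t}\zeta_+(R)\in 1+\mc L(\mathbf c)\otimes_\C R .
\]
First I would fix, on a small neighborhood of $0\in S$, the $\OO_S[[t]]$-basis $\{\pi_X^*\Phi_i\}$ of $\mc H^{F,\Omega}_{(0)}$ (legitimate by Nakayama, since $\{\pi_X^*\phi_i\}$ is a frame of $p_*\OO_{\mc C(F)}$ and $\Phi_i=\phi_i+O(t)$), and write $\zeta_+(R)=\vec g(t)\,\vec\Phi^{\pi}$ with $\vec g(t)$ a row vector in $R[[t]]^\mu$ and $\vec\Phi^{\pi}=(\pi_X^*\Phi_1,\dots,\pi_X^*\Phi_\mu)^{T}$. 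Applying the $R((t))$-linear map $e^{(F-f)/t}$ of \eqref{expHWRinverse} and using the very definition of $A$ gives $e^{(F-f)/t}\zeta_+(R)=\vec g(t)\,A(\mathbf u,t,t^{-1})\,\vec\Phi$. Since $\Hzero^{f,\Omega}\otimes R=B(\mathbf c)[[t]]\otimes R$ is the non-negative part and $\mc L(\mathbf c)\otimes R=t^{-1}B(\mathbf c)[t^{-1}]\otimes R$ the strictly negative part of $B(\mathbf c)((t))\otimes R$, and $1=\Phi_1$, the displayed membership is exactly the requirement that the non-negative-in-$t$ part of $\vec g(t)A$ equal the length-$\mu$ row $\mathbf e_1=(1,0,\dots,0)$.

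The second step is the bookkeeping of weight degrees, which I expect to be the main obstacle. By Proposition \ref{euler-field-homog}, property (P4) forces $\zeta_+$ to be $\mathbb E_F$-homogeneous, and since $\zeta_0=1$ has degree $0$ the form $\zeta_+$ has degree $0$. Writing $\vec g(t)=\sum_i t^i\vec g^{(i)}$ and using $\deg t=1$, $\deg u_l=1-\deg\phi_l$, the entry $g^{(i)}_j$ must be a polynomial in $\mathbf u$ of weighted degree $-i-\deg\phi_j$; as it has ordinary degree $\le N$ modulo $\mathfrak{m}^{N+1}$ and the most negative weighted degree of such a monomial is $N(1-s(f))$, one obtains $i\le N(s(f)-1)\le a$, so $\vec g^{(i)}=0$ for $i>a$ (and of course for $i<0$). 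The identical count applied to $A_{ij}$, homogeneous of degree $r(i,j)\le s(f)$, gives $A^{(k)}=0$ for $k>a$, while the truncation $e^{(F-F_0)/t}=\sum_{k=0}^{N}\tfrac1{k!}t^{-k}\big(\sum_j u_j\phi_j\big)^k$ modulo $\mathfrak{m}^{N+1}$ gives $A^{(k)}=0$ for $k<-N$. This is exactly the range recorded just before the theorem, and it is what makes the problem finite; I would treat both regimes $s(f)>1$ and $s(f)\le 1$ in order to recover $a=\max\{[N(s(f)-1)+s(f)],[s(f)]\}$.

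With these bounds the condition $[\vec g(t)A]_+=\mathbf e_1$ unwinds, in the coefficient of $t^m$ for $m=0,\dots,a$, into
\[
   \sum_{p=0}^{a}\vec g^{(p)}A^{(m-p)}=\delta_{m0}\,\mathbf e_1 ,
\]
and, collecting $G:=(\vec g^{(0)},\dots,\vec g^{(a)})$, this is precisely the block identity $G(\mathrm{Id}+\Psi)=\mathbf e$: indeed the $(p,q)$ block of $\mathrm{Id}+\Psi$ is $A^{(q-p)}$ (the diagonal being $A^{(0)}=\mathrm{Id}+(A^{(0)}-\mathrm{Id})$), so the $q$-block of $G(\mathrm{Id}+\Psi)$ is $\sum_p\vec g^{(p)}A^{(q-p)}$, and $\mathbf e=(\mathbf e_1,0,\dots,0)$. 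I do not need to verify the equations for $m>a$ directly: Theorem \ref{thm-primitive-local} (with Theorem \ref{thm-analytic}) already guarantees that $\zeta_+$ exists, so its coefficient vector $G$ solves the subsystem $0\le m\le a$, which has at most one solution because its coefficient matrix is $\mathrm{Id}+\Psi$.

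Finally, every entry of $\Psi$ lies in $\mathfrak{m}/\mathfrak{m}^{N+1}$, hence $\Psi$ is nilpotent with $\Psi^{N+1}=0$ in $R$; therefore $\mathrm{Id}+\Psi$ is invertible and $(\mathrm{Id}+\Psi)^{-1}=\sum_{k=0}^{N}(-1)^k\Psi^k$, whence $G=\mathbf e\sum_{k=0}^{N}(-1)^k\Psi^k$. Re-assembling $\zeta_+(R)=\sum_{i=0}^{a}t^i\vec g^{(i)}\vec\Phi$ then yields the stated perturbative formula. The only genuinely delicate points, beyond routine verification, are the homogeneity degree estimates pinning down $a$ and the identification of $\mc L(\mathbf c)\otimes R$ with the negative-$t$ part; everything else is the invertibility of a nilpotent perturbation of the identity.
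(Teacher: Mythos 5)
Your proposal is correct and follows essentially the same route as the paper's own proof: expand $\zeta_+(R)$ in the basis $\{\pi_X^*\Phi_i\}$, use $\mathbb{E}_F$-homogeneity to truncate the $t$-expansion, translate the characterization of Corollary \ref{rmk-primitive} into the block system $G(\mathrm{Id}+\Psi)=\mathbf{e}$, and invert the nilpotent perturbation of the identity. Your one refinement --- discarding the coefficient equations of $t^m$ for $m>a$ by appealing to existence (Theorems \ref{thm-primitive-local} and \ref{thm-analytic}) together with uniqueness of solutions of the truncated system --- makes explicit a point the paper's proof leaves implicit, and is sound, provided your bound ``most negative weighted degree $=N(1-s(f))$'' is read as $\min\{0,\,N(1-s(f))\}$ so that both regimes $s(f)\le 1$ and $s(f)>1$ are covered, as you indicate.
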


\begin{proof}
  Let $\mc L =t^{-1}B(\mathbf{c})[t^{-1}]$. By  Corollary \ref{rmk-primitive},
the hypothesis
 $$\varphi_R(e^{F-f\over t}  \eta)\in 1+\mc L\otimes_\C R$$
admits a unique solution given by the
  primitive form  $\eta=\zeta_+(R)$ constructed with respect to  $\mc L$. Since $\{\pi^*_X\Phi_i\}_i$ are a basis  of $\mc H^{F,\Omega}\otimes_{\OO_S}R$, we can write
   $$\zeta_+(R)=\sum_{i=1}^\mu g_i\Phi_i, \mbox{ where } g_i=g_i(\mathbf{u}, t)\in \C[[t]]\otimes_\C R.$$
Since $\zeta_+(R)$ is of weight degree zero,
$$
(g_1, \cdots, g_\mu)=:\vec g=\sum_{k=0}^{a} \vec g^{(k)}t^k,
$$
where  each $\vec g^{(i)}$ is a $1\times \mu$ matrix with entries in $R$.
 Therefore,  we have
$$(g_1, g_2, \cdots, g_\mu)\sum_{k=-N}^at^k A^{(k)}=(1+h_1, h_2, \cdots, h_\mu)$$
for some %%Write $e^{F-F_0\over t} \eta_+(R)=(1+g_1)\Phi_1+\sum_{j=2}^\mu g_j \Phi_j$, where
 $h_j=h_j(\mathbf{u}, t)\in t^{-1}\C[t^{-1}]\otimes_\C R, j=1, 2, \cdots, \mu$. Equivalently,
we have the system of equations:

 {\upshape
 $$(\vec g_{(0)}, \vec g^{(1)}, \cdots, \vec g^{(a)})
           \left(\begin{array}{cccc}A_{(0)}&A^{(1)}&\cdots &A^{(a)}\\
               A^{(-1)}&A_{(0)}&\cdots &A^{(a-1)}\\
                \cdots    &\cdots &\cdots &\cdots \\
               A^{(-a)}&A^{(-a+1)}&\cdots &A_{(0)}
         \end{array}\right)= \mathbf{e}
   $$
         }
Since
 {\upshape
$$
        1+\Psi= \left(\begin{array}{cccc}A_{(0)}&A^{(1)}&\cdots &A^{(a)}\\
               A^{(-1)}&A_{(0)}&\cdots &A^{(a-1)}\\
                \cdots    &\cdots &\cdots &\cdots \\
               A^{(-a)}&A^{(-a+1)}&\cdots &A_{(0)}
         \end{array}\right)
   $$
}
and $\Psi$ is a matrix with entries in $\mathfrak{m}/\mathfrak{m}^{N+1}$, we have
$$
   (1+\Psi)^{-1}=\sum_{k=0}^N (-\Psi)^k.
$$
Thus the statement follows.
\end{proof}

\section{Examples}\label{examples} In this section, we provide some concrete examples of primitive forms. Whenever a good opposite filtration $\mc L$ is specified below, we always consider the unique primitive form associated to $\mc L$ together with the primitive element $1$.
\subsection{$ADE$-singularities}
  Let $f$ be a weighted homogeneous polynomial of type  $ADE$-singularity. That is,    $s(f)<1$. It is easy to see  that $D=0$, $Y=\{0\}$.
  As a consequence,  $\mc L_{\scriptsize\mbox{can}}$ is the only good opposite filtration that is given by $Y$, which consists elements of negative weight degrees.
   Here we only refer to the degree of the variables $z_i, t$. Therefore
$\deg \bracket{(F-F_0)/t}<0$.  It follows that
$$
  e^{(F-F_0)/t}\in 1+\mc L_{\scriptsize\mbox{can}}\otimes_{\C} R
$$
The next proposition \cite{Saito-primitive} follows from Corollary \ref{rmk-primitive}.
\begin{prop}
 The element $1\in \Gamma(S, \mc H^{F,\Omega}_{(0)})$ is the unique  primitive form up to a nonzero scalar.
\end{prop}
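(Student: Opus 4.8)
The plan is to show that the unique primitive form furnished by the moduli computation is in fact represented by the constant function $1$, by combining the uniqueness of the good opposite filtration with the explicit splitting description in Corollary \ref{rmk-primitive}. The statement thus has two halves: uniqueness of the primitive form, which is already packaged in the preceding results, and its concrete identification with $1$, which is the actual content to be verified here.

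First I would record that for an $ADE$-singularity every weight degree satisfies $0\leq \deg\phi_i\leq s(f)<1$, so that $r(i,j)=\deg\phi_i-\deg\phi_j\in(-1,1)$ can never be a positive integer. Hence $D=0$ and, by Propositions \ref{prop-Y-bij} and \ref{propspaceY}, $Y=\{\mathbf 0\}$; thus $\mc L_{\scriptsize\mbox{can}}=t^{-1}B[t^{-1}]$ is the only good opposite filtration of $\mc H^{f,\Omega}$. By Proposition \ref{prop-ofthm-primitiveform} the element $1$ is, up to a nonzero scalar, the unique primitive element with respect to $\mc L_{\scriptsize\mbox{can}}$. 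Theorem \ref{thm-primitive-local} then produces a unique formal primitive form up to scalar, namely $\zeta_+=\vartheta(\mc L_{\scriptsize\mbox{can}},1)$, and Theorem \ref{thm-analytic} guarantees it is the Taylor expansion of an analytic primitive form. It therefore remains only to identify $\zeta_+$ with the section $1\in\Gamma(S,\mc H^{F,\Omega}_{(0)})$.

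For the identification I would invoke Corollary \ref{rmk-primitive}, which characterizes $\zeta_+$ as the unique section $\eta\in\Gamma(\mc R,\check{\mc H}^{F,\Omega}_{(0)})$ with $e^{(F-f)/t}\eta(R)\in 1+\mc L_{\scriptsize\mbox{can}}\otimes_{\C}R$ for every $R\in\mc R$. By the definition of the inverse map $e^{(F-f)/t}$ in Lemma \ref{lem-flatextension}, the candidate $\eta=[1]$ gives $e^{(F-f)/t}[1](R)=[\varphi_R(e^{(F-F_0)/t})]$, so the claim reduces to the membership $e^{(F-F_0)/t}\in 1+\mc L_{\scriptsize\mbox{can}}\otimes_{\C}R$ already displayed above; the uniqueness in Corollary \ref{rmk-primitive} then forces $\zeta_+=[1]$, which is precisely the assertion.

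The one computational point, which is the crux of the argument, is that membership. I would expand $e^{(F-F_0)/t}=\sum_{k\geq 0}\frac{1}{k!}t^{-k}(F-F_0)^k$, note that modulo $\mathfrak m^{N+1}$ the sum is finite, and reduce each $(F-F_0)^k=\big(\sum_j u_j\phi_j\big)^k$ modulo the Jacobian ideal. Since $f$ is weighted homogeneous the Jacobian ideal is graded, so this reduction preserves the $\mathbf z$-degree and expresses each contribution as $u^{\alpha}t^{-k}\sum_i c_i\phi_i$ with $k=|\alpha|$; every term with $k\geq 1$ carries a strictly negative power of $t$ and hence lies in $t^{-1}B[t^{-1}]\otimes_{\C}R=\mc L_{\scriptsize\mbox{can}}\otimes_{\C}R$, while the $k=0$ term is exactly $1$. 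Equivalently, counting only the degrees of $\mathbf z$ and $t$, each nonconstant term has degree $\leq s(f)-1<0$, which is the hallmark of $\mc L_{\scriptsize\mbox{can}}$. This is where the hypothesis $s(f)<1$ enters decisively, and it is the only place the $ADE$ condition is used beyond ensuring $D=0$.
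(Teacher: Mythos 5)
Your proposal is correct and follows essentially the same route as the paper's proof: $s(f)<1$ forces $D=0$, hence $Y=\{\mathbf 0\}$ and $\mc L_{\mathrm{can}}$ is the unique good opposite filtration, and the negativity of the weight degree (in the variables $z_i$ and $t$) of $(F-F_0)/t$ gives $e^{(F-F_0)/t}\in 1+\mc L_{\mathrm{can}}\otimes_{\C} R$, so Corollary \ref{rmk-primitive} identifies the unique primitive form with $1$. The only caveat is that your first formulation of the crux step (reducing $(F-F_0)^k$ modulo the Jacobian ideal alone, yielding exactly the power $t^{-k}$) is not literally valid in $\mc H^{f,\Omega}_{(0)}$, since the $Q_f$-reduction also creates correction terms with higher powers of $t$; it is your second, degree-counting formulation --- which is precisely the paper's argument, resting on the fact that $Q_f$ preserves the $\mathbb{E}_f$-grading --- that actually closes this step.
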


\subsection{Simple elliptic singularity} Let $f={1\over 3}\bracket{z_1^3+z_2^3+z_3^3}$.
Then $f$ is a weighted homogeneous polynomial with  $s(f)=1$. It is called a simple elliptic singularity of type $E_6^{(1,1)}$.
  Take a (ordered) monomial basis by
 $$\{\phi_1, \cdots, \phi_8\}=\{1, z_1, z_2, z_3, z_1z_2, z_2z_3, z_3z_1, z_1z_2z_3\},$$
 and consider the universal unfolding
   $$F={1\over 3}z_1^3+{1\over 3}z_2^3+{1\over 3}z_3^3+\sigma \phi_8+\sum_{i=1}^7u_i\phi_i,$$
where $(u_1, \cdots, u_7, \sigma)$ are the local coordinates of $S$. We take $\Omega_{Z/S}=dz_1\wedge dz_2\wedge dz_3$.

  By definition,  $D=1$,  and hence $Y\cong \C$. There is only one free parameter in $\mathbf{c}=(c_{ij})$, say $c$. We have
$$\mc L({c}) = t^{-1}B(c)[t^{-1}]\quad\mbox{where}\quad B(c)=\mbox{Span}_{\C}\fbracket{\phi_1, \cdots, \phi_7, \phi_8+ c t}\subset\Hzero^{f, \Omega}.$$

Every  $u_j$ in $F$ belongs to the maximal ideal of $R$. By Corollary \ref{rmk-primitive},  the associated primitive form $\zeta_+(R)\in \mc H^{F,\Omega}(R)$ is determined by the relation
$$
  e^{(F-f)/t}\zeta_+(R)\in 1+\mc L(c)\otimes_{\C}R.
$$

  \begin{lem}
As elements in $\mc H^{f,\Omega}$, the class $e^{{\sigma \phi_8}/t}$ is well-defined ($\phi_8=z_1z_2z_3$), and it is  given by
  $$
    e^{{\sigma \phi_8}/t}=g(\sigma)+ t^{-1}h(\sigma) \phi_8,
$$
 where $g(\sigma)$ and $h(\sigma)$ are respectively given by $$g(\sigma)=1+\sum_{r=1}^\infty{(-1)^r\sigma^{3r} \prod_{j=1}^r(3j-2)^3\over (3r)!},
     \quad h(\sigma)= \sigma+\sum_{r=1}^\infty{(-1)^r\sigma^{3r+1} \prod_{j=1}^r(3j-1)^3\over (3r+1)!}.$$
Furthermore,    $g(\sigma), h(\sigma)$ are  the fundamental solutions to  the following Picard-Fuchs equation:
 \begin{equation}\label{eqn-PFeqn-Ellip}
   \big({(1+\sigma^3)\pa_\sigma^2 + 3\sigma^2 \pa_\sigma +\sigma}\big) \nu(\sigma)=0.
  \end{equation}
\end{lem}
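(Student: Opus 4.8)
The plan is to compute the class $[e^{\sigma\phi_8/t}]$ directly in $\mc H^{f,\Omega}$ by reducing each power $\phi_8^k=(z_1z_2z_3)^k$ modulo $\Im(Q_f)$, read off $g$ and $h$ from the result, and then verify the Picard--Fuchs equation at the level of Taylor coefficients. First I would record the $Q_f$-relations explicitly for $f=\tfrac13(z_1^3+z_2^3+z_3^3)$ with the chosen $\Omega_X=dz_1\wedge dz_2\wedge dz_3$ (so $\lambda\equiv1$). For a holomorphic vector field $V=\sum_i a_i\pa_i$, the coordinate formulas of Section \ref{polyvector} give $\{f,V\}=\sum_i a_i z_i^2$ and $\pa_\Omega V=\sum_i \pa_{z_i}a_i$, hence
\[
Q_f(V)=\sum_i a_i z_i^2 + t\sum_i \pa_{z_i}a_i .
\]
Taking $V=z_1^{k-2}z_2^{k}z_3^{k}\pa_1$ and the two index-permuted analogues, then chaining the three resulting congruences, yields the key reduction relation
\[
(z_1z_2z_3)^k \equiv -t^3(k-2)^3\,(z_1z_2z_3)^{k-3}\pmod{\Im(Q_f)},
\]
together with the boundary identity $(z_1z_2z_3)^2\equiv 0$, obtained from $V=z_2^2z_3^2\pa_1$, for which the $t$-term drops out.

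Next I would expand $e^{\sigma\phi_8/t}=\sum_{k\ge0}\tfrac{\sigma^k}{k!\,t^k}(z_1z_2z_3)^k$ and sort terms by $k\bmod 3$. Iterating the reduction along the class $k\equiv0$ collapses $(z_1z_2z_3)^{3r}$ to $(-1)^r t^{3r}\prod_{j=1}^r(3j-2)^3$, whose factor $t^{3r}$ cancels the $t^{-3r}$ from the exponential; along $k\equiv1$ one reaches $(-1)^r t^{3r}\prod_{j=1}^r(3j-1)^3\,\phi_8$, contributing to $t^{-1}\phi_8$; and along $k\equiv2$ every chain terminates at $(z_1z_2z_3)^2\equiv0$. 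Collecting these contributions produces exactly $g(\sigma)+t^{-1}h(\sigma)\phi_8$. This same computation settles well-definedness: although the naive series carries unbounded negative powers of $t$, the reduction shows that only $t^0$ and $t^{-1}$ survive, so the class genuinely lies in $\mc H^{f,\Omega}$; a Stirling estimate on $\prod(3j-2)$ and $\prod(3j-1)$ bounds the coefficients polynomially in $r$, giving radius of convergence $1$ for $g,h$, matching the singular locus $1+\sigma^3=0$ of the equation.

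For the Picard--Fuchs equation I would verify it coefficientwise. Writing $g=\sum_r g_r\sigma^{3r}$ and $h=\sum_r h_r\sigma^{3r+1}$, the explicit coefficients satisfy the two-term recursions $(3r)(3r-1)g_r=-(3r-2)^2 g_{r-1}$ and $(3r+1)(3r)h_r=-(3r-1)^2 h_{r-1}$. Substituting $g$ into $(1+\sigma^3)\pa_\sigma^2+3\sigma^2\pa_\sigma+\sigma$ and collecting the coefficient of $\sigma^{3m+1}$ reduces, after applying the recursion at index $m+1$, to the identity $-(3m+1)^2+[3m(3m-1)+9m+1]=0$; the coefficient of $\sigma^{3m+2}$ for $h$ reduces similarly to $-(3m+2)^2+[(3m+1)3m+3(3m+1)+1]=0$. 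Both hold identically, so $g$ and $h$ are solutions. Since $g(0)=1,\ g'(0)=0$ and $h(0)=0,\ h'(0)=1$, the Wronskian at $\sigma=0$ equals $1\ne0$, so they form a fundamental system. Geometrically this is the Picard--Fuchs equation of the elliptic-curve family $\{F|_\sigma=0\}$, and the factor $1+\sigma^3$ is the discriminant already visible in $\Jac(F)$, where $z_1^2z_2^2z_3^2(1+\sigma^3)=0$.

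The main obstacle is the reduction step, and this is precisely where a tempting shortcut fails. One is led to differentiate $e^{\sigma\phi_8/t}$ in $\sigma$ and use $\phi_8^2\equiv 0$ to close the system in one line, but multiplication by $\phi_8$ does \emph{not} descend to $\mc H^{f,\Omega}$: the identity $aQ_f(V)=Q_f(aV)-t\{a,V\}$ shows that $\Im(Q_f)$ is not an $\OO_X$-submodule, so manipulating already-reduced representatives by $\phi_8/t$ is invalid. All reductions must therefore be performed on honest representatives before passing to cohomology, and the genuine labor is the careful mod-$3$ bookkeeping together with the cancellation of $t$-powers. Once the reduction relation and the boundary vanishing are pinned down, the extraction of $g,h$ and the verification of the equation are routine.
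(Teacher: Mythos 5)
Your proposal is correct and follows essentially the same route as the paper's proof: you derive the identical reduction relations $Q_f(z_2^2z_3^2\pa_1)=\phi_8^2$ and $\phi_8^k\equiv -t^3(k-2)^3\phi_8^{k-3}$ via the same permuted vector fields, collect terms by residue mod $3$ to obtain $g(\sigma)+t^{-1}h(\sigma)\phi_8$, and then check the Picard--Fuchs equation (which the paper leaves as ``easy to check''). Your extra touches---the coefficientwise ODE verification, the Stirling estimate giving radius of convergence $1$ instead of invoking analyticity of solutions of the ODE, and the caution that multiplication by $\phi_8$ does not descend to $Q_f$-cohomology---are correct refinements of the same argument rather than a different method.
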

\begin{proof}
  Note $Q_{f}(z_2^2z_3^2\pa_1)=(\pa_{z_1}f)z_2^2z_3^2=z_1^2z_2^2z_3^2=\phi_8^2$. That is, $[\phi_8^2]=[0]$ in $\mc H^{f,\Omega}$. Similarly, for $k\geq 3$, it follows from direct calculations that
    $$
    [\phi_8^k]=[z_1^{k-2}z_2^kz_3^k\pa_{z_1}f]=[-t(k-2)z_1^{k-3}z_2^kz_3^k]=\cdots=[-t^3(k-2)^3\phi_8^{k-3}]
    $$
  Hence, in    $\mc H^{f,\Omega}$ we have
    $$
     e^{{\sigma \phi_8}/t} =\sum_{k=0}^\infty {\sigma^k\phi_8^k\over k!t^k}=\sum_{r=0}^\infty {\sigma^{3r}\phi_8^{3r}\over (3r)!t^{3r}}+\sum_{r=0}^\infty {\sigma^{3r+1}\phi_8^{3r+1}\over (3r+1)!t^{3r+1}} =
     g(\sigma)+t^{-1}h(\sigma)\phi_8 .
    $$

It is easy to check that $g, h$ satisfy the  differential equation \eqref{eqn-PFeqn-Ellip}. Since $g, h$ are linearly independent, they are the fundamental solutions. In particular, they are convergent.
  %%Since $A$ is the generator in $\Jac(W_0+\sigma A)$ of (top) degree 1,   for every $k$ we have an equality of the following form
%%$$   A^k = a_k t^k + b_k t^{k-1}A$$ as elements in $\mc H^{W_0+ \sigma A}$, where $a_k, b_k$ depend  only on $\sigma$.
\end{proof}
 As a consequence, in $\mc H^{f,\Omega}$, the class
    $$
    e^{(F-F_0)/t}-(g(\sigma)+ t^{-1}h(\sigma) \phi_8)=e^{\sigma \phi_8\over t}(e^{(F-F_0-\sigma \phi_8)/t}-1)
    $$
 consists of elements of negative degree (viewed as elements in $\mc H^{f,\Omega}$), and hence lies in $\mc L(c)\otimes_{\C} R$.
 Therefore
  $
    e^{(F-F_0)/t}-(g(\sigma)-c\cdot h(\sigma)) \in \mc L(c)\otimes_{\C} R.
  $
 Then for
 $
\zeta_{+}(R):={1\over g(\sigma)-c\cdot h(\sigma)}\in \Hzero^{F, \Omega}\otimes_{\OO_S} R,
 $
we have
$$
   e^{(F-f)/t} \zeta_+(R)\in 1+\mc L(c)\otimes_{\C} R.
$$
It follows that
$$
  \zeta_+(R)={1\over g(\sigma)-c\cdot  h(\sigma)}
$$ is a primitive form by Corollary  \ref{rmk-primitive}.

\bigskip

The above hypergeometric equation  \eqref{eqn-PFeqn-Ellip} is precisely the Picard-Fuchs equation for period integrals on elliptic curves. Consider the elliptic curve $E_\sigma\subset \mathbb P^2$
defined by the equation $f+\sigma z_1z_2z_3=0$, with holomorphic top form
$$
   \Omega_{E_\sigma}=\Res {z_1 dz_2dz_3-z_2 dz_1dz_3+z_3 dz_1dz_2\over f+\sigma z_1z_2z_3}
$$
where
$\Res: \Omega^2_{\mathbb P^2}(E_\sigma)\to \Omega^1_{E_\sigma}$  denotes the residue map. Let $\Gamma$ be a topological 1-cycle in $E_\sigma$, then the period integral $\int_\Gamma \Omega_{E_\sigma}$ satisfies the Picard-Fuchs equation \eqref{eqn-PFeqn-Ellip}
 In particular, an arbitrarily fixed basis $\{\Gamma_1, \Gamma_2\}$ of $H_1(E_\sigma, \mathbb Z)$ gives
  gives the fundamental solutions $\int_{\Gamma_1} \Omega_{E_\sigma}, \int_{\Gamma_2} \Omega_{E_\sigma}$ to the Picard-Fuchs equation. Hence, we reprove  the next proposition   \cite{Saito-primitive}.
\begin{prop}      $\zeta\in \Gamma(S, \Hzero^{F, \Omega})$   is a primitive form if and only if
$$
  \zeta= {1 \over a\int_{\Gamma_1} \Omega_{E_\sigma}+ b\int_{\Gamma_2} \Omega_{E_\sigma}}
$$
for some     $(a, b)\in\C^*\times \C$.
\end{prop}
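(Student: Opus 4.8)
The plan is to combine the explicit computation just carried out with the classical fact that the functions $g(\sigma)$ and $h(\sigma)$ are a fundamental system of solutions to the Picard--Fuchs equation \eqref{eqn-PFeqn-Ellip}, which in turn is the Picard--Fuchs equation governing period integrals of the holomorphic one-form $\Omega_{E_\sigma}$ over the elliptic curve $E_\sigma$. First I would record that, by the preceding computation, every primitive form of $\mc H^{F,\Omega}_{(0)}$ is equivalent to one of the form $\zeta_+(R) = 1/(g(\sigma) - c\, h(\sigma))$ for some $c \in \C$, corresponding to the choice of good opposite filtration $\mc L(c)$ parametrized by $Y \cong \C$ (here $D=1$). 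This already exhibits the full one-parameter family of primitive forms, so the task reduces to identifying the denominator $g(\sigma) - c\,h(\sigma)$ with a period integral $a\int_{\Gamma_1}\Omega_{E_\sigma} + b\int_{\Gamma_2}\Omega_{E_\sigma}$.

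The key step is the translation between the two parametrizations. Since $\{g,h\}$ and $\{\int_{\Gamma_1}\Omega_{E_\sigma}, \int_{\Gamma_2}\Omega_{E_\sigma}\}$ are both fundamental systems of solutions to the second-order linear ODE \eqref{eqn-PFeqn-Ellip}, they are related by an invertible $2\times 2$ constant matrix over $\C$. I would make this explicit by expanding the period integrals as power series in $\sigma$ near $\sigma = 0$ (where $E_\sigma$ degenerates), matching initial data $\nu(0)$ and $\nu'(0)$ against the known expansions $g(0)=1, g'(0)=0$ and $h(0)=0, h'(0)=1$. This pins down the change of basis and shows that any expression $g(\sigma) - c\,h(\sigma)$ can be rewritten as a linear combination $a\int_{\Gamma_1}\Omega_{E_\sigma} + b\int_{\Gamma_2}\Omega_{E_\sigma}$, and conversely. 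I then need to track the allowed range of coefficients: the primitivity condition forces the leading coefficient (the constant term, coming from $g(\sigma)$) to be nonzero, which under the change of basis corresponds exactly to requiring $(a,b) \in \C^* \times \C$ after normalizing; the remaining freedom in $c$ (and the overall scalar from the equivalence relation $\sim$) matches the freedom in $(a,b)$ up to simultaneous scaling.

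Concretely I would argue the two inclusions. Given a primitive form, the computation above produces $c$, hence a denominator $g - c\,h$, which via the fundamental-solution matrix equals $a\int_{\Gamma_1}\Omega_{E_\sigma} + b\int_{\Gamma_2}\Omega_{E_\sigma}$ with $a \neq 0$ forced by $g$ contributing a nonzero value at $\sigma=0$; since two primitive forms differing by a $\C^*$-scalar are equivalent, and any such period combination with $a \in \C^*$ arises from some $c$, this gives the desired bijection onto $(a,b) \in \C^* \times \C$ modulo overall scaling. Conversely, any $(a,b) \in \C^* \times \C$ yields a nonvanishing (near $\sigma=0$) solution of \eqref{eqn-PFeqn-Ellip}, hence a $c$ via the inverse change of basis, hence a good opposite filtration $\mc L(c)$ and the associated primitive form.

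The main obstacle I anticipate is bookkeeping around the degenerate fiber and the precise normalization: one must verify that the residue one-form $\Omega_{E_\sigma}$ is normalized so that its period integrals satisfy \emph{exactly} the equation \eqref{eqn-PFeqn-Ellip} (not a rescaled version), and that the convergence of $g,h$ established in the preceding lemma indeed matches the analytic periods rather than merely formal solutions. This is essentially the content of identifying the hypergeometric-type ODE \eqref{eqn-PFeqn-Ellip} with the Gauss--Manin/Picard--Fuchs system of the family $\{E_\sigma\}$; once that identification is in place, the statement follows from the uniqueness of solutions with prescribed initial data at $\sigma=0$ together with the already-established parametrization of primitive forms by $c \in \C$.
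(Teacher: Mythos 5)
Your proposal follows the same route as the paper's own argument: the computation preceding the proposition already shows that every primitive form is, up to a nonzero scalar, of the form $1/(g(\sigma)-c\,h(\sigma))$ with $c\in\C$ parametrizing the good opposite filtrations (via $Y\cong\C$, $D=1$), and the paper then concludes in one line by observing that \eqref{eqn-PFeqn-Ellip} is the Picard--Fuchs equation of the family $\{E_\sigma\}$, so that the periods $\int_{\Gamma_i}\Omega_{E_\sigma}$ form a second fundamental system of solutions related to $\{g,h\}$ by a constant invertible matrix. Your matching of initial data at $\sigma=0$ is exactly the implicit content of that deduction, so in essence the approach is the same.

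Two corrections, however. First, $E_\sigma$ does \emph{not} degenerate at $\sigma=0$: the fiber there is the smooth Fermat cubic, and $\sigma=0$ is an ordinary point of the ODE (its singular points are at $\sigma^3=-1$ and $\sigma=\infty$). This is not cosmetic---it is precisely the smoothness of $E_0$ and the regularity of the equation at $\sigma=0$ that make the periods single-valued and holomorphic there, so that matching $\nu(0),\nu'(0)$ against $g(0)=1$, $g'(0)=0$, $h(0)=0$, $h'(0)=1$ is legitimate; at a degenerate fiber this step would fail. Second, your converse step asserts that every $(a,b)\in\C^*\times\C$ yields a solution nonvanishing near $\sigma=0$. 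For an arbitrary integral basis $\{\Gamma_1,\Gamma_2\}$ this is false: both $\int_{\Gamma_1}\Omega_{E_0}$ and $\int_{\Gamma_2}\Omega_{E_0}$ are nonzero (they span a lattice in $\C$), so there exist pairs with $a\neq 0$ for which $a\int_{\Gamma_1}\Omega_{E_0}+b\int_{\Gamma_2}\Omega_{E_0}=0$, and then $1/\nu$ is not a section of $\Hzero^{F,\Omega}$ on any neighborhood of $0$. The correct dictionary is that primitive forms are the $1/\nu$ with $\nu$ a solution satisfying $\nu(0)\neq 0$, equivalently $\nu\notin\C h$; converting this condition into ``$a\neq 0$'' in the period basis is a gloss (one the paper's own statement shares, since $\C h\neq\C\int_{\Gamma_2}\Omega_{E_\sigma}$), not an exact correspondence, so your bookkeeping should carry the nonvanishing-at-$\sigma=0$ condition explicitly rather than claim it ``corresponds exactly'' to $(a,b)\in\C^*\times\C$.
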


\subsection{$E_{12}$-singularity}
The exceptional singularity of type $E_{12}$ is given by the weighted homogeneous polynomial:
   $$f(x, y)=x^3+y^7.$$
with  $s(f)={22\over 21}$. Take the (ordered) set $$\{\phi_1, \cdots, \phi_{12}\}:=\{1, y, y^2, x, y^3, xy, y^4, xy^2, y^5, xy^3,  x y^4, xy^5\}\subset \C[x, y],$$
which represent  a basis of $\Jac(f)$. Consider the universal unfolding
   $$F(x, y, \mathbf{u})=x^3+y^7 +\sum_{i=1}^{12}u_i\phi_i $$
 and take $\Omega_{Z/S}=dx\wedge dy$.
In this case, $Y=\{\mathbf{0}\}$ by  Theorem \ref{thm-space-of-primitiveforms}. That is, there is a unique primitive form up to a nonzero scalar.  Hence, $\Phi_j=\phi_j$ for all $1\leq j\leq 12$.
As an example of Theorem  \ref{thm-algorithm}, we  compute $\zeta_+(R)$ with $R=\C[x, y]/\mathfrak{m}^{11}$ where $\mathfrak{m}=(x,y)$ is the maximal ideal at the origin.

   %Using Theorem/Algorithm \ref{thm-algorithm} and

   By direct calculations (with a computer), we have
\begin{align*}
   \zeta_+(R)&=1+{4\over 3\cdot 7^2}u_{11}u_{12}^2-{64\over 3\cdot 7^4}u_{11}^2u_{12}^4-{76\over 3^2\cdot 7^4}u_{10}u_{12}^5+{937\over  3^3\cdot 7^5}u_9u_{12}^6+{218072\over 3^4\cdot 5\cdot 7^6}u_{11}^3u_{12}^6\\
         &\qquad+{1272169\over 3^4\cdot 5\cdot 7^7}u_{10}u_{11}u_{12}^7+{28751\over 3^4\cdot  7^7}u_{8}u_{12}^8-{1212158\over 3^4 \cdot 7^8}u_9u_{11}u_{12}^8-{38380\over 3^3\cdot 7^8}u_{7}u_{12}^9\\
         &\quad+\big({1\over 7^2} u_{12}^3-{ 101\over 5\cdot 7^4} u_{11}u_{12}^5+ {1588303\over 3^4\cdot 5\cdot 7^7}u_{11}^2u_{12}^7+
                {378083\over 3^4\cdot 5\cdot 7^7}u_{10}u_{12}^8-{ 108144\over 3\cdot 7^8}u_{9}u_{12}^9\big)x\\
         &\quad+\big({1447\over 3^3\cdot 7^6} u_{12}^7-{ 71290\over 3^3\cdot 7^8}u_{11}u_{12}^9\big)y   -{45434\over 3^4\cdot 7^8} u_{12}^{10}xy\\
         &\quad -\big({53\over 3^2\cdot 7^4}u_{12}^6-{ 46244\over 3^3\cdot 7^7}u_{11}u_{12}^8\big) x^2 +{22054\over 3^4\cdot 7^7}u_{12}^9 x^3
\end{align*}
Using our perturbative formula, we can     compute the leading term of the potential $\mc F_0$ of the Frobenius manifold structure on $S$ associated to $\zeta_+$.
For instance, the homogeneous degee 4 component $\mc F_{0}^{(4)}$ of $\mc F_0$ in flat coordinates $(t_1, \cdots, t_{12})$ is given by \cite{LLS}
\begin{align*}
   -\mc F_{0}^{(4)}&={1\over 14} t_5 t_6 t_7^2+{1\over 18} t_6^3t_8+{1\over 7}t_5^2t_7t_8+{1\over 7} t_3t_7^2t_8+{1\over 6} t_4t_6t_8^2+{1\over 14} t_5^2t_6t_9+{1\over 7} t_3t_6t_7t_9\\
                 &\quad +{1\over7} t_3t_5t_8t_9+{1\over 7} t_2t_7t_8t_9+{1\over 14}t_2t_6t_9^2+{1\over 14} t_5^3t_{10}+{1\over 6} t_4t_6^2t_{10}+{2\over 7}t_3t_5t_7t_{10}+{1\over 14}t_2t_7^2 t_{10}\\
                 &\quad  +{1\over 6}t_4^2t_8t_{10}+{1\over 14}t_3^2 t_9t_{10}+{1\over 7}t_2t_5t_9t_{10}+{1\over 7}t_3t_5^2t_{11}+{1\over 6}t_4^2t_6t_{11}+{1\over 7}t_3^2t_7 t_{11}+{1\over 7}t_2t_5t_7t_{11} \\
               &\quad +{1\over 7}t_2t_3t_9t_{11}+{1\over 18}t_4^3t_{12} +{1\over 14} t_3^2t_5t_{12}+{1\over 14}t_2t_5^2t_{12}+{1\over 7}t_2t_3t_7t_{12}+{1\over 14}t_2^2t_9t_{12}.
\end{align*}
Furthermore, this   matches the FJRW 4-point corellators of $f$ with respect to the maximal group symmetry $G_{\rm max}$. By a reconstruction technique using the WDVV equations, we obtain the following in a forthcoming joint work with Shen \cite{LLS}
\begin{thm}  Let $f(x, y)=x^3+y^7$.
   There is an isomorphism of Frobenius manifolds between $S$ and the FJRW space $H_{f, G_{\rm max}}$. Here the Frobenius manifold structure on $S$ is induced from the primitive form associated to the good pair $(\mc L_{\rm can}, (-1)^{22\over 21})$. The  FJRW space $H_{f, G_{\rm max}}$ is equipped with a formal Frobenius manifold structure by genus zero FJRW corellators.
\end{thm}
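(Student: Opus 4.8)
The plan is to verify that both $S$ (with the primitive-form structure) and $H_{f,G_{\rm max}}$ (with the FJRW structure) are Frobenius manifolds of the same rank $\mu=12$ and the same conformal dimension $s(f)={22\over 21}$, and then to identify them by a reconstruction argument based on the WDVV equations. First I would record that, by Corollary \ref{thm-primitive-exceptionalsing}, the space $Y$ of good opposite filtrations for $f=x^3+y^7$ is a single point, so the primitive form $\zeta_+$ associated to $(\mc L_{\rm can},(-1)^{22/21})$ is unique up to scale; the scalar $(-1)^{22/21}$ is the primitive element, chosen precisely to normalize the flat metric and the unit so that they match the FJRW conventions. The associated Frobenius manifold structure on $S$ — flat coordinates $(t_1,\dots,t_{12})$, flat metric coming from the residue pairing $\mc K^F_\Omega$, and genus-zero potential $\mc F_0$ — is then extracted from $\zeta_+$ by the standard Saito construction, with the Euler vector field given by \eqref{euler-field}.

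Next I would set up the comparison at the level of Frobenius algebras at the reference point. The tangent space $T_0S\cong\Jac(f)$ is a graded Frobenius algebra, generated as an algebra by $x$ and $y$, with pairing the residue pairing. On the A-side, the state space of $H_{f,G_{\rm max}}$ has a basis indexed by the narrow sectors, which are in degree-preserving bijection with the monomial basis $\{\phi_i\}$ of $\Jac(f)$ used above; this is the Krawitz mirror map. I would check that this bijection is an isomorphism of graded Frobenius algebras: it sends the unit to the unit, matches the grading by the central charge (using \eqref{eqn-degPhiExpo} for the B-side degrees and the FJRW degree formula on the A-side), and intertwines the residue pairing with the FJRW pairing. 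This pins down the isomorphism of the two structures at the base point and identifies the two Euler vector fields.

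The core of the argument is a reconstruction theorem: for $f=x^3+y^7$ the genus-zero Frobenius potential is uniquely determined by its three- and four-point correlators, because $\Jac(f)$ is generated as an algebra by $x$ and $y$, so every higher-length correlator can be recovered from correlators of shorter length through the WDVV associativity relations (a reconstruction of the type used in \cite{Krawitz-Shen}). I would invoke this reconstruction on both sides, so that it suffices to match finitely many initial data. The three-point correlators are exactly the structure constants of the Frobenius algebra, already matched in the previous step. For the four-point correlators I would use the perturbative formula of Theorem \ref{thm-algorithm} to compute $\zeta_+(R)$ with $R=\C[x,y]/\mathfrak{m}^{11}$, extract the homogeneous degree-four part $\mc F_0^{(4)}$ of the potential in flat coordinates (the explicit expression displayed above), and compare it term-by-term with the FJRW four-point correlators of $f$ relative to $G_{\rm max}$.

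The main obstacle is this last matching of four-point data, since the two sides are computed by entirely different machines. On the B-side the computation is the algebraic perturbative expansion of $\zeta_+$ followed by the nontrivial passage from the deformation parameters $u_i$ to the flat coordinates $t_i$ of the Frobenius metric; the mixing of parameters of positive and negative degree noted in the introduction makes even this expansion delicate. On the A-side the four-point FJRW correlators must be evaluated from the Fan--Jarvis--Ruan virtual cycle, via concavity/index computations for the relevant Witten equation and, where these do not determine a correlator directly, via further WDVV relations among the FJRW numbers. Establishing that these two independent computations produce identical structure constants — and hence, by reconstruction, identical genus-zero potentials and an isomorphism of Frobenius manifolds — is the crux, and is carried out in the companion work \cite{LLS}.
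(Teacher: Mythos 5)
Your proposal follows essentially the same route as the paper: uniqueness of the primitive form from Corollary \ref{thm-primitive-exceptionalsing}, perturbative computation of $\zeta_+$ via Theorem \ref{thm-algorithm} to extract the degree-four potential $\mc F_0^{(4)}$ in flat coordinates, matching with the FJRW four-point correlators, and then reconstruction of the full genus-zero potential from the three- and four-point data via WDVV. The paper likewise does not carry out the four-point matching and reconstruction in detail but defers them to the companion work \cite{LLS}, exactly as you do, so your outline is faithful to the intended proof.
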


We remark that a  weighted homogeneous polynomial $f$ is dual to a distinct $f^T$ in general. The  above theorem and its generalization to all genera    hold whenever either A-side $f$ or B-side $f^T$ is given by Arnold's  exceptional unimodular singularities. That is, under the hypothesis on $f$ or $f^T$,
the LG-LG mirror symmetry between   Saito-Givental theory (LG B-model) of $f^T$ and its mirror FJRW theory (LG A-model) for $(f, G_{\rm max})$ holds. All relevant details are given in  \cite{LLS}.

\subsection{Mirror of $\mathbb{P}^1$}\label{subsec-mirror-P1}
Consider the mirror Landau-Ginzburg model $(X, f)$ of the complex projective line $\mathbb{P}^1$. Here
  $X=\mathbb{C}^*$ and the superpotential $f:X\to \C$  is given by
    $\displaystyle f(z)=z+{q\over z},$
where   $q$ is  a nonzero constant in $\C^*$. % with $0<|q|<1$.
The quantum cohomology ring $QH^*(\mathbb{P}^1, \mathbb{C})$ of $\mathbb{P}^1$ is isomorphic to  the Jacobian ring  $\Jac(f)=\mathbb{C}[z, z^{-1}]/\langle z{\pa f\over \pa z}\rangle$ of $f$.
  We will consider the Milnor fibration  $(Z, S, p, F)$, where
     $Z=\mathbb{C}^*\times \C^2$, $S=\C^2$ %(where, strictly speaking, $S$ is a small Stein open neighborhood of the origin),
      and the universal unfolding  is given by
       $$F(z, u_0, u_1)=u_0+z+{q\exp(u_1)\over z}.$$
 Consider primitive forms with respect to the choice     $\displaystyle \Omega_{Z/S}:={dz\over z}$.
 Then Theorem 3.7 of \cite{Takahashi} (where $Z$ and $S$ are chosen smaller) can be restated  as follows.  Here we  provide an alternative proof, by using Theorem \ref{thm-primitive-local}.
\begin{rmk}
  The pair $((\C^*)^n, f)$ with $f(\mathbf{z}):=z_1+\cdots+z_n+{q\over z_1\cdots z_n}$ is a mirror Landau-Ginzburg model of $\mathbb{P}^n$.
   In this case, it has been shown in \cite{Barannikov-projectivespace} that the identity section $1$ is a primitive form with respect to
   the relative holomorphic volume form ${dz_1\over z_1}\wedge\cdots\wedge {dz_n\over z_n}$.
\end{rmk}

\begin{prop}\label{primitive-mirrorP1}
  The identity section $1\in\Gamma(S, \Hzero^{F, \Omega})$ is a primitive form.
\end{prop}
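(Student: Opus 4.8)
The plan is to apply the main construction (Theorem \ref{thm-primitive-local}) together with Corollary \ref{rmk-primitive} to the specific frame $(Z,S,p,F)$ with $F=u_0+z+q e^{u_1}/z$ and $\Omega_{Z/S}=dz/z$. The strategy is to exhibit a good opposite filtration $\mc L$ of $\mc H^{f,\Omega}$ for which the identity element $1\in\Hzero^{f,\Omega}$ is the primitive element, and then to verify that the formal primitive form produced by the splitting \eqref{splitting-sections} is exactly the constant section $1$. Since $QH^*(\mathbb P^1)\cong\Jac(f)$ is two-dimensional, spanned by $1$ and $z$ (with $z\,\pa_z f=z-q/z$ giving the relation), the lattice $\mc H^{f,\Omega}$ has rank two over $\C((t))$, so all computations are genuinely low-dimensional and explicit.

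First I would record the relevant structure at the central fiber. The Jacobian ring is $\Jac(f)=\C[z,z^{-1}]/(z-q/z)$, with basis represented by $\{1,z\}$; here $f$ is \emph{not} weighted homogeneous, so the grading techniques of Section 5 do not apply, and instead I would directly choose the opposite filtration. The natural candidate is $\mc L=t^{-1}B[t^{-1}]$ with $B=\mathrm{Span}_\C\{1,z\}\subset\Hzero^{f,\Omega}$, and I would check conditions (1), (2), (3) of Lemma/Definition \ref{opposite filtration}: the splitting $\mc H^{f,\Omega}=\Hzero^{f,\Omega}\oplus\mc L$ and $t^{-1}\mc L\subset\mc L$ are immediate from the definition of $B$, while the isotropy $\mc K^f_\Omega(B,B)\subset\C$ must be verified using the higher residue pairing. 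Because $\dim_\C\Jac(f)=2$, this reduces to computing $\mc K^f_\Omega$ on the pairs $(1,1),(1,z),(z,z)$; the leading term is the classical residue pairing (Proposition \ref{compatible-residue}), and the higher-order terms in $t$ should be shown to vanish or to be constant, which I expect follows from a direct residue computation on $\C^*$ analogous to Example \ref{example-An}.

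The heart of the argument is then Corollary \ref{rmk-primitive}: the primitive form $\zeta_+$ associated to $(\mc L,1)$ is the unique section $\eta\in\Gamma(\mc R,\check{\mc H}^{F,\Omega}_{(0)})$ with $e^{(F-f)/t}\eta(R)\in 1+\mc L\otimes_\C R$ for all $R\in\mc R$. To prove $\zeta_+=1$, it suffices to show that $e^{(F-f)/t}\cdot 1\in 1+\mc L\otimes_\C R$, i.e. that $e^{(F-F_0)/t}-1$ represents a class lying in $\mc L\otimes_\C R$ inside $\mc H^{f,\Omega}\otimes_\C R$. Here $F-F_0=u_0+q(e^{u_1}-1)/z$, so $e^{(F-F_0)/t}$ expands as a Laurent series in $t^{-1}$ whose coefficients are polynomials in the deformation parameters times powers of $z^{-1}$ and the constant $u_0$. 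The key computation is to show that each monomial $t^{-k}(u_0+q(e^{u_1}-1)/z)^k$ for $k\ge 1$, when reduced modulo $Q_f$ to the basis $\{1,z\}$ with coefficients in $t^{-1}\C[t^{-1}]$, indeed lands in $\mc L=t^{-1}B[t^{-1}]$. The mechanism, as in the $E_6^{(1,1)}$ example, is that high powers of $z^{-1}$ get reduced via the relation $[z^{-1}]=[z\cdot z^{-2}]$ and $Q_f$-exactness, always producing \emph{negative} powers of $t$, so the positive-$t$ (i.e. $\Hzero$) part of $e^{(F-F_0)/t}$ is precisely the constant $1$.

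The main obstacle I anticipate is the explicit reduction of the powers $(q(e^{u_1}-1)/z)^k$ in the cohomology $\mc H^{f,\Omega}$: unlike the homogeneous cases, one must track how $z^{-m}$ for large $m$ collapses under $Q_f=\dbar_f+t\pa_\Omega$ onto the two-dimensional Jacobian ring while accumulating the correct powers of $t$. I would handle this by establishing a recursion for the classes $[z^{-m}]$ in $\mc H^{f,\Omega}$ — computing $Q_f(z^{-m+1}\pa_z)$ to express $[z^{-m}]$ in terms of $[z^{-m+2}]$ and $t$-shifted lower terms — and then showing by induction that every contribution to $e^{(F-F_0)/t}\cdot 1$ beyond the leading $1$ carries a strictly negative power of $t$ and hence lies in $\mc L\otimes_\C R$. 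Once this containment is verified, Corollary \ref{rmk-primitive} gives $\zeta_+=1$ immediately, and since $f$ has isolated critical points the formal primitive form extends analytically by the comparison discussed in Section \ref{subsec-analyticity}, completing the proof.
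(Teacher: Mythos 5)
Your plan is essentially the paper's own proof. The paper takes the same lattice $B$ (written as $\mathrm{Span}_{\C}\{1,q/z\}$, which equals your $\mathrm{Span}_{\C}\{1,z\}$ inside $\mc H^{f,\Omega}$, because $Q_f(z\pa_z)=z-q/z$ gives $[z]=[q/z]$), verifies $\mc K^f_\Omega(B,B)\subset\C$ by exactly the residue computation modeled on Example \ref{example-An} that you describe, proves your proposed recursion in the form $\bigl[{q/ z^k}\bigr]=\bigl[{1/ z^{k-2}}\bigr]-(k-1)t\bigl[{1/ z^{k-1}}\bigr]$ in $\Hzero^{f,\Omega}$, deduces $e^{(F-F_0)/t}\in 1+\mc L\otimes_\C R$, and concludes via Theorem \ref{thm-primitive-local} and Corollary \ref{rmk-primitive}. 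Your degree count (each reduction step trades at most one unit of $z^{-1}$-degree for one power of $t$, so every term of $e^{(F-F_0)/t}-1$ retains a strictly negative power of $t$) is correct, and it is precisely what justifies the paper's one-line assertion that $1-e^{(F-F_0)/t}\in\mc L\otimes_\C R$.

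There is, however, one genuine gap: you never verify that $\mc L$ is \emph{good} in the sense of Definition \ref{def-good-oppofil}, nor that $1$ satisfies the homogeneity condition of Definition \ref{def-primitive-element}; you only check the opposite-filtration conditions of Lemma/Definition \ref{opposite filtration}. Both are hypotheses of Theorem \ref{thm-primitive-local} and Corollary \ref{rmk-primitive}, and, precisely because $f=z+q/z$ is not weighted homogeneous (as you note yourself), goodness cannot be read off from a grading as in Proposition \ref{prop-ofthm-primitiveform}; one must check by hand that $\nabla^{\Omega}_{t\pa_t}=t\pa_t-f/t$ preserves $\mc L$. The missing computations are short: from $[z]=[q/z]$ one gets
\[
\nabla^{\Omega}_{t\pa_t}1=\Bigl[-\tfrac{f}{t}\Bigr]=-\tfrac{2}{t}\Bigl[\tfrac{q}{z}\Bigr]\in\mc L,
\]
which simultaneously establishes the homogeneity of the primitive element $1$ (with $r=0$); and from $Q_f(\pa_z)=1-q/z^2-t/z$, i.e.\ $[q/z^2]=[1-t/z]$, one gets
\[
\nabla^{\Omega}_{t\pa_t}\Bigl[\tfrac{q}{z}\Bigr]=\Bigl[-\tfrac{2q}{t}+\tfrac{q}{z}\Bigr]\in t^{-1}B+B,
\]
so that $\nabla^{\Omega}_{t\pa_t}\mc L\subset\mc L$. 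With these two lines inserted, your argument is complete and coincides with the paper's. A minor further point: Theorem \ref{thm-analytic}, which you invoke via Section \ref{subsec-analyticity} for analyticity, is stated only for weighted homogeneous $f$ and so does not literally apply here; but no extension result is needed, since your candidate $1$ is already an analytic section of $\Hzero^{F,\Omega}$ and the primitive-form conditions, being identities of holomorphic functions of the unfolding parameters, hold in a neighborhood of $0$ as soon as they hold formally.
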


 \begin{proof} To prove the statement, we define a subspace $\mc L$ of $\mc H^{f,\Omega}$, given by
 $$
\mc L:= t^{-1}B[t^{-1}]\quad \mbox{where}\quad B:=\mbox{Span}_{\C}\fbracket{1, {q\over z}}\subset\mc H^{f,\Omega}.
$$
We first show that $\mc L$ is an  opposite filtration. Note that $\{1, {q \over z}\}$ represent  a basis of $\Jac(f)$.   Clearly, we have
$t^{-1}\mc L\subset \mc L\,\mbox{ and }\,\mc H^{f,\Omega}=\Hzero^{f, \Omega}\oplus \mc L.$
Assume the next claim on the higher residue pairings on  $B\subset\mc H^{f,\Omega}$ first.

\noindent\textbf{Claim: }%%\label{lem-forprojP1}
$
  {}\quad  \mc K^f_\Omega\bracket{1,1} =0,\quad \mc K^f_\Omega\bracket{{q\over z}, {q\over z}} =0, \quad \mc K^f_\Omega\bracket{1, {q\over z}}=-1.
$

\noindent Then It follows  that $\mc K^{f}_\Omega(B, B)=\C$, %and consequently   $\Res_{t=0}\big(\mc K^f^\Omega(\mc L, \mc L)dt\big)=0$.
and hence  $\mc L$ is an opposite filtration by definition.

Since  $Q_{  f}(z \pa_z)=z{\pa_z  f}=z-{q\over z}$,   we have $[z]=[{q\over z}]$ in $\Hzero^{f,\Omega}$.  Hence,
 $$\nabla^{\Omega}_{t\pa_t} 1-0\cdot 1=\nabla^{\Omega}_{t\pa_t} 1= [-{f\over t}]=-[{z+{q\over z}\over t}]=-2[{q\over z}]t^{-1}\in \mc L.$$
Since
$Q_{ f}(\pa_z)=\pa_zf-{t\over z}=1-{q\over z^2}-{t\over z}$,
we have    $[-{q\over z^2}]=[{t\over z}-1]$ in $\Hzero^{f,\Omega}$.
Hence,  $$\nabla^{\Omega}_{t\pa_t} [{q\over z}]= [-{z+{q\over z}\over t}\cdot {q\over z}]=[-{q\over t}+{1\over t}({qt\over z}-q)]=[-{2q\over t}+{q\over z}]\in t^{-1}B+B.$$
It follows that  $\nabla^{\Omega}_{t\pa_t} \mc L\subset \mc L$.
Hence, the opposite filtration  $\mc L$ is good and $1$ is a primitive  element with respect to $\mc L$.

 Note ${F-F_0\over t}=t^{-1}\big(u_0+{q\over z}(e^{u_1}-1)\big)\in \mc L\otimes_{\C} R$.
For $k\geq 2$, in  $\Hzero^{f, \Omega}$, we have
 $${q\over z^k}={1\over z^{k-2}}-{1\over z^{k-2}}\pa_z f={1\over z^{k-2}}+t\cdot z \pa_z {1\over z^{k-2} \cdot z} ={1\over z^{k-2}}-(k-1)t   {1\over z^{k-1}}.$$
It follows that  $1-e^{F-F_0\over t} \in \mc L\otimes_{\C} R$.
 Hence,
  $$e^{F_0-F\over t}=1 +(e^{F_0-F\over t}-1)\in (\Hzero^{F, \Omega}\otimes_{\OO_S}R)\oplus \big(e^{f-F\over t}\mc L\otimes_{\C} R\big).$$
 Therefore, the identity section  $1$ is a primitive form by Theorem \ref{thm-primitive-local}.

 It remains to prove the above claim as follows.

 We simply denote $f':={\pa_z  f}$.  It is easy to compute that
$$
V_{f}={1\over  f^\prime}\pa_z\wedge,\quad\mbox{and}\quad
 \bbracket{Q, V_{f}}g=t{1\over   f^\prime}z{\pa\over \pa z}\big({g\over z}\big)  \quad \mbox{  for any }  g\in \Gamma(X, \OO_X).
$$
\noindent Denote    $B_\varepsilon^{\pm}:=\{z\in X~|~ |z-(\pm \sqrt{q})|=\varepsilon\}$ and $C_\varepsilon^{\pm}:=\pa B_\varepsilon^{\pm}$,  where $0<\varepsilon <<1$.  Take a smooth cut-off function $\rho$ on $X $ with   $\rho|_{B_\varepsilon^\pm}\equiv 1$ and $\rho|_{X\setminus(B_{2\varepsilon}^+\cup B_{2\varepsilon}^-)}\equiv 0$.
Similar to   Example \ref{example-An}, we have
\begin{align*}
    \mc K_\Omega^{f}\big({{1\over z^i}, {1\over z^j}}\big)    &=-\sum_{r\geq 0}t^r\oint_{C_\varepsilon^+\cup C_\varepsilon^-}{1\over z^i}{dz\over z^2\cdot f^\prime} \bracket{-z{\pa\over \pa z}{1\over z   f'}}^r \bracket{{1\over z^j}} \\
    &=\sum_{r\geq 0}t^r\Res_{z\in \{0, \infty\}}{1\over z^i}{dz \over z^2\cdot f^\prime} \bracket{-z{\pa\over \pa z}{1\over z   f'}}^r \bracket{{1\over z^j}} \\
    &=\sum_{r\geq 0}(-t)^r\Res_{z\in \{0, \infty\}} {dz\over z}{1\over z^i}{1\over z-q/z}\bracket{z{\pa\over \pa z}{1\over z-q/z}}^r {1\over z^j}
\end{align*}
where we use the observation that poles of the integrant  can only occur at $\pm\sqrt{q}$, $0$ or $\infty$. By direct calculations, the leading term (up to a constant) of the Laurant series expansion of
$$
A_r(z):={1\over z^i}{1\over z-q/z}\bracket{z{\pa\over \pa z}{1\over z-q/z}}^r {1\over z^j}.
$$
 around the origin $z=0$ is given by  $z^{r+1-i-j}$.
The leading term (up to a constant) of the Laurant series expansion of
$A_r(z)$
 around the origin $z=\infty$ is given by  $({1\over z})^{r+1+i+j}$.
It follows easily that
$$
  \mc K_\Omega^{f}\bracket{1,1}=0, \quad \mc K^f_\Omega\bracket{1, {q\over z}} =q\Res_{z=0}{dz\over z(z^2-q)}=-1,
$$
$$
 \mbox{and}\quad\mc K_\Omega^{f}\big({{1\over z}, {1\over z}}\big)=(-t)\cdot \Res_{z=0}{dz\over z(z^2-q)}  {z{\pa \over \pa z}}({1\over z^2-q})=0.
$$
 \end{proof}

\begin{rmk}
  We thank Atsushi Takahashi for pointing out that $\{z^m\}_{m\in \mathbb{Z}}$ are all primitive forms with respect to the above choice $\Omega_{Z/S}={dz\over z}$.
  Indeed, we can set $B_m:=\mbox{Span}_{\C}\{z^m, q z^{m-1}\}$, $\mc L_m:=t^{-1}B_m[t^{-1}]\subset \mc H^{f, \Omega}$, and $\zeta_0^{(m)}:=z^m$. Then $\mc L_m$ is a good opposite filtration, $\zeta_0^{(m)}$ is a primitive element with respect to $\mc L_m$, together with $m$ being  the constant for the homogeneity property. Furthermore, $z^m$ is the primitive form associated to the pair $(\mc L_m, \zeta_0^{(m)})$.
\end{rmk}

%\bibliography{biblio}
%%%%%%%%%%%%%%%%%%new citations%%%%%%%%%%%

\begin{bibdiv}
\begin{biblist}

\bib{Arnold-Goryunov-Lyashko-Vasilev}{book}{
   author={Arnold, V. I.},
   author={Goryunov, V. V.},
   author={Lyashko, O. V.},
   author={Vasil{\cprime}ev, V. A.},
   title={Singularity theory. I},
  %% note={Translated from the 1988 Russian original by A. Iacob;
  %%% Reprint of the original English edition from the series Encyclopaedia of
  %% Mathematical Sciences [{\it Dynamical systems. VI}, Encyclopaedia Math.
  %% Sci., 6, Springer, Berlin, 1993;  MR1230637 (94b:58018)]},
   publisher={Springer-Verlag},
   place={Berlin},
   date={1998},
   pages={iv+245},
   isbn={3-540-63711-7},
   %%review={\MR{1660090 (99f:58024)}},
}

\bib{Barannikov-thesis}{book}{
   author={Barannikov, S.},
   title={Extended moduli spaces and mirror symmetry in dimensions $n>3$},
   note={Thesis (Ph.D.)--University of California, Berkeley},
   publisher={ProQuest LLC, Ann Arbor, MI},
   date={1999},
   pages={47},
   isbn={978-0599-31395-8},
   %review={\MR{2699129}},
}

\bib{Barannikov-projectivespace}{article}{
   author={Barannikov, S.},
   title={Semi-infinite Hodge structures and mirror symmetry for projective spaces},
   journal={preprint at arxiv: math.AG/0010157},
   date={ },
   number={ },
   pages={ },
   issn={ },

}

\bib{Barannikov-period}{article}{
   author={Barannikov, S.},
   title={Quantum periods. I. Semi-infinite variations of Hodge structures},
   journal={Internat. Math. Res. Notices},
   date={2001},
   number={23},
   pages={1243--1264},
   issn={1073-7928},
   %%review={\MR{1866443 (2002k:32017)}},
 %%  doi={10.1155/S1073792801000599},
}

\bib{Barannikov-Kontsevich}{article}{
   author={Barannikov, S.},
   author={Kontsevich, M.},
   title={Frobenius manifolds and formality of Lie algebras of polyvector
   fields},
   journal={Internat. Math. Res. Notices},
   date={1998},
   number={4},
   pages={201--215},
   issn={1073-7928},
   %%review={\MR{1609624 (99b:14009)}},
  %% doi={10.1155/S1073792898000166},
}

\bib{BCOV}{article}{
   author={Bershadsky, M.},
   author={Cecotti, S.},
   author={Ooguri, H.},
   author={Vafa, C.},
   title={Kodaira-Spencer theory of gravity and exact results for quantum
   string amplitudes},
   journal={Comm. Math. Phys.},
   volume={165},
   date={1994},
   number={2},
   pages={311--427},
}

\bib{Bogomolov}{article}{
   author={Bogomolov, F. A.},
   title={Hamiltonian K\"ahlerian manifolds},
   language={Russian},
   journal={Dokl. Akad. Nauk SSSR},
   volume={243},
   date={1978},
   number={5},
   pages={1101--1104},
   issn={0002-3264},
}

\bib{Brieskorn-GM}{article}{
   author={Brieskorn, E.},
   title={Die Monodromie der isolierten Singularit\"aten von Hyperfl\"achen},
   %language={German, with English summary},
   journal={Manuscripta Math.},
   volume={2},
   date={1970},
   pages={103--161},
   issn={0025-2611},
 %  review={\MR{0267607 (42 \#2509)}},
}

\bib{Cecotti-SUSY}{article}{
   author={Cecotti, S.},
   author={Girardello, L.},
   author={Pasquinucci, A.},
   title={Singularity-theory and $N=2$ supersymmetry},
   journal={Internat. J. Modern Phys. A},
   volume={6},
   date={1991},
   number={14},
   pages={2427--2496},
   issn={0217-751X},
}

\bib{tt}{article}{
   author={Cecotti, S.},
   author={Vafa, C.},
   title={Topological--anti-topological fusion},
   journal={Nuclear Phys. B},
   volume={367},
   date={1991},
   number={2},
   pages={359--461},
   issn={0550-3213},
}
\bib{Coates-Iritani}{article}{
   author={Coates, T.},
   author={Iritani, H.},
   title={On the Convergence of Gromov-Witten Potentials and Givental's Formula},
   journal={preprint at arxiv: math.AG/1203.4193},
   date={ },
   number={ },
   pages={ },
   issn={ },

}

\bib{Coates-Iritani-Tseng}{article}{
   author={Coates, T.},
   author={Iritani, H.},
   author={Tseng, H.-H.},
   title={Wall-crossings in toric Gromov-Witten theory. I. Crepant examples},
   journal={Geom. Topol.},
   volume={13},
   date={2009},
   number={5},
   pages={2675--2744},
  % issn={1465-3060},
  % review={\MR{2529944 (2010i:53173)}},
  % doi={10.2140/gt.2009.13.2675},
}

\bib{Costello}{article}{
   author={Costello, K.},
   title={The partition function of a topological field theory},
   journal={J. Topol.},
   volume={2},
   date={2009},
   number={4},
   pages={779--822},
  % issn={1753-8416},
  % review={\MR{2574744 (2011e:57049)}},
  % doi={10.1112/jtopol/jtp030},
}

\bib{Si-BCOV}{article}{
   author={Costello, K.},
   author={Li, S.},
   title={Quantum BCOV theory on Calabi-Yau manifolds and the higher genus B-model},
   journal={preprint at arxiv: math.QA/1201.4501},
   date={ },
   number={ },
   pages={ },
   issn={ },
}

\bib{Deligne}{article}{
   author={Deligne, P.},
   title={Th\'eorie de Hodge. II},
   language={French},
   journal={Inst. Hautes \'Etudes Sci. Publ. Math.},
   number={40},
   date={1971},
   pages={5--57},
 %  issn={0073-8301},
  % review={\MR{0498551 (58 \#16653a)}},
}

\bib{Douai-Sabbah-I}{article}{
   author={Douai, A.},
   author={Sabbah, C.},
   title={Gauss-Manin systems, Brieskorn lattices and Frobenius structures.
   I},
   %language={English, with English and French summaries},
   booktitle={Proceedings of the International Conference in Honor of
   Fr\'ed\'eric Pham (Nice, 2002)},
   journal={Ann. Inst. Fourier (Grenoble)},
   volume={53},
   date={2003},
   number={4},
   pages={1055--1116},
   issn={0373-0956},
  %% review={\MR{2033510 (2005h:32073)}},
}

\bib{Douai-Sabbah-II}{article}{
   author={Douai, A.},
   author={Sabbah, C.},
   title={Gauss-Manin systems, Brieskorn lattices and Frobenius structures.
   II},
   conference={
      title={Frobenius manifolds},
   },
   book={
      series={Aspects Math., E36},
      publisher={Vieweg},
      place={Wiesbaden},
   },
   date={2004},
   pages={1--18},
   %%review={\MR{2115764 (2006e:32037)}},
}
\bib{Dubrovin}{article}{
   author={Dubrovin, B.},
   title={Geometry of $2$D topological field theories},
   conference={
      title={Integrable systems and quantum groups},
      address={Montecatini Terme},
      date={1993},
   },
   book={
      series={Lecture Notes in Math.},
      volume={1620},
      publisher={Springer},
      place={Berlin},
   },
   date={1996},
   pages={120--348},
  %% review={\MR{1397274 (97d:58038)}},
  %% doi={10.1007/BFb0094793},
}

\bib{Dubrovin-tt}{article}{
   author={Dubrovin, B.},
   title={Geometry and integrability of topological-antitopological fusion},
   journal={Comm. Math. Phys.},
   volume={152},
   date={1993},
   number={3},
   pages={539--564},
   issn={0010-3616},
}

\bib{FJRW}{article}
{
author={Fan, H.},
author={Jarvis, T.J.},
author={Ruan, Y.},
title={The witten equation, mirror symmetry and quantum singularity
theory},
journal={Ann. Math.}
}

\bib{Fan}{article}
{
author={Fan, H.},
title={Schr\"{o}dinger equations, deformation theory and $tt^*$-geometry},
journal={arXiv:1107.1290 [math-ph]}
}

\bib{Givental-quantization}{article}{
   author={Givental, A. B.},
   title={Gromov-Witten invariants and quantization of quadratic
   Hamiltonians},
   language={English, with English and Russian summaries},
   note={Dedicated to the memory of I. G.\ Petrovskii on the occasion of his
   100th anniversary},
   journal={Mosc. Math. J.},
   volume={1},
   date={2001},
   number={4},
   pages={551--568, 645},
   issn={1609-3321},
  %% review={\MR{1901075 (2003j:53138)}},
}

\bib{Givental-symplectic}{article}{
   author={Givental, A. B.},
   title={Symplectic geometry of Frobenius structures},
   conference={
      title={Frobenius manifolds},
   },
   book={
      series={Aspects Math., E36},
      publisher={Friedr. Vieweg, Wiesbaden},
   },
   date={2004},
   pages={91--112},
  %% review={\MR{2115767 (2005m:53172)}},
}

\bib{Griffiths-Harris}{book}{
   author={Griffiths, P.},
   author={Harris, J.},
   title={Principles of algebraic geometry},
   series={Wiley Classics Library},
   %note={Reprint of the 1978 original},
   publisher={John Wiley \& Sons Inc.},
   place={New York},
   date={1994},
   pages={xiv+813},
   isbn={0-471-05059-8},
 %  review={\MR{1288523 (95d:14001)}},
}	

\bib{Hartshorne-residue}{book}{
   author={Hartshorne, R.},
   title={Residues and duality},
   series={Lecture notes of a seminar on the work of A. Grothendieck, given
   at Harvard 1963/64. With an appendix by P. Deligne. Lecture Notes in
   Mathematics, No. 20},
   publisher={Springer-Verlag},
   place={Berlin},
   date={1966},
   pages={vii+423},
  %% review={\MR{0222093 (36 \#5145)}},
}

\iffalse

 \bib{Hartshorne-AlgGeo}{book}{
    author={Hartshorne, Robin},
    title={Algebraic geometry},
    note={Graduate Texts in Mathematics, No. 52},
    publisher={Springer-Verlag},
    place={New York},
    date={1977},
    pages={xvi+496},
   %% isbn={0-387-90244-9},
    %%  review={\MR{0463157 (57 \#3116)}},
 }

 \bib{Hartshorne-Deformation}{book}{
    author={Hartshorne, Robin},
   title={Deformation theory},
    series={Graduate Texts in Mathematics},
   volume={257},
   publisher={Springer},
   place={New York},
   date={2010},
   pages={viii+234},
   isbn={978-1-4419-1595-5},
   review={\MR{2583634 (2011c:14023)}},
   doi={10.1007/978-1-4419-1596-2},
}
\fi

\bib{Hertling-classifyingspace}{article}{
   author={Hertling, C.},
   title={Classifying spaces for polarized mixed Hodge structures and for
   Brieskorn lattices},
   journal={Compositio Math.},
   volume={116},
   date={1999},
   number={1},
   pages={1--37},
 %  issn={0010-437X},
 %  review={\MR{1669448 (2000d:32048)}},
 %  doi={10.1023/A:1000638508890},
}

\bib{Hertling-book}{book}{
   author={Hertling, C.},
   title={Frobenius manifolds and moduli spaces for singularities},
   series={Cambridge Tracts in Mathematics},
   volume={151},
   publisher={Cambridge University Press},
   place={Cambridge},
   date={2002},
   pages={x+270},
   isbn={0-521-81296-8},
}

\bib{Hertling-tt}{article}{
   author={Hertling, C.},
   title={$tt^*$ geometry, Frobenius manifolds, their connections, and
   the construction for singularities},
   journal={J. Reine Angew. Math.},
   volume={555},
   date={2003},
   pages={77--161},
   issn={0075-4102},
}

\bib{IMRS}{article}{
   author={Iritani, H.},
   author={Milanov, T.},
   author={Ruan, Y.},
   author={Shen, Y.},
   title={Gromov-witten theory of quotients of Quintic three-
fold},
   journal={in preparation},
   date={ },
   number={ },
   pages={ },
   issn={ },

}

\bib{Ishibashi-Shiraishi-Takahashi-primtive}{article}{
   author={Ishibashi, Y.},
    author={Shiraishi, Y.},
     author={Takahashi, A.},
   title={Primitive Forms for Affine Cusp Polynomials},
   journal={preprint at arxiv: math.AG/1211.1128},
   date={ },
   number={ },
   pages={ },
   issn={ },

}

\bib{Pantev-Konstevich}{article}{
   author={Katzarkov, L.},
   author={Kontsevich, M.},
   author={Pantev, T.},
   title={Hodge theoretic aspects of mirror symmetry},
   conference={
      title={From Hodge theory to integrability and TQFT $tt^*$-geometry},
   },
   book={
      series={Proc. Sympos. Pure Math.},
      volume={78},
      publisher={Amer. Math. Soc.},
      place={Providence, RI},
   },
   date={2008},
   pages={87--174},
  %% review={\MR{2483750 (2009j:14052)}},
}

\bib{Krawitz-Shen}{article}
{
author={Krawitz, M.},
author={Shen, Y.},
title={Landau-Ginzburg/Calabi-Yau Correspondence of all Genera for Elliptic Orbifold $\mathbb{P}^1$},
journal={preprint at arxiv: math.AG/1106.6270}
}

\bib{LLS}{article}
{
author={Li, C.},
author={Li, S.},
author={Saito, K.},
author={Shen, Y.},
title={Mirror symmetry for exceptional unimodular  singularities},
journal={preprint}
}

\bib{Si-Thesis}{book}{
   author={Li, S.},
   title={Calabi-Yau Geometry and Higher Genus Mirror Symmetry},
   note={Thesis (Ph.D.)--Harvard University},
   publisher={ProQuest LLC, Ann Arbor, MI},
   date={2011},
   pages={174},
   isbn={978-1124-73644-0},
  %% review={\MR{2898602}},
}

\bib{Si-review}{article}{
   author={Li, S.},
   title={Renormalization method and mirror symmetry},
   journal={SIGMA Symmetry Integrability Geom. Methods Appl.},
   volume={8},
   date={2012},
   pages={Paper 101, 17},
   issn={1815-0659},
  %% review={\MR{3007258}},
}

\bib{Si-Frobenius}{article}{
   author={Li, S.},
   title={Variation of Hodge structures, Frobenius manifolds, and gauge theory},
   journal={preprint at arxiv: math.QA/1303.2782},
   date={ },
   number={ },
   pages={ },
   issn={ },

}

\bib{Si-LG}{article}{
   author={Li, S.},
   title={On the quantum theory of Landau-Ginzburg B-model},
   journal={in preparation},
   date={ },
   number={ },
   pages={ },
   issn={ },

}

\bib{Losev}{article}{
   author={Losev, A.},
   title={Hodge strings and elements of K. Saito's theory of primitive form},
   conference={
      title={Topological field theory, primitive forms and related topics
      (Kyoto, 1996)},
   },
   book={
      series={Progr. Math.},
      volume={160},
      publisher={Birkh\"auser Boston},
      place={Boston, MA},
   },
   date={1998},
   pages={305--335},
}

\bib{Manin-book}{book}{
   author={Manin, Y. I.},
   title={Frobenius manifolds, quantum cohomology, and moduli spaces},
   series={American Mathematical Society Colloquium Publications},
   volume={47},
   publisher={American Mathematical Society},
   place={Providence, RI},
   date={1999},
   pages={xiv+303},
   isbn={0-8218-1917-8},
  %% review={\MR{1702284 (2001g:53156)}},
}

\bib{Milanov-Ruan}{article}{
   author={Milanov, T.},
   author={Ruan, Y.},
   title={Gromov-Witten theory of elliptic orbifold $\mathbb{P}^1$ and quasi-modular forms},
   journal={arXiv:1106.2321[math.AG]},
   date={ },
   number={ },
   pages={ },
   issn={ },

}

\bib{Sabbah}{book}{
   author={Sabbah, C.},
   title={D\'eformations isomonodromiques et vari\'et\'es de Frobenius},
   %language={French, with French summary},
   series={Savoirs Actuels (Les Ulis). [Current Scholarship (Les Ulis)]},
   %note={Math\'ematiques (Les Ulis). [Mathematics (Les Ulis)]},
   publisher={EDP Sciences, Les Ulis},
   date={2002},
   pages={xvi+289},
   isbn={2-86883-534-1},
   isbn={2-271-05969-0},
}

 \bib{Saito-quasihomogeneous}{article}{
   author={Saito, K.},
   title={Quasihomogene isolierte Singularit\"aten von Hyperfl\"achen},
   language={German},
   journal={Invent. Math.},
   volume={14},
   date={1971},
   pages={123--142},
   issn={0020-9910},
  %% review={\MR{0294699 (45 \#3767)}},
}

\bib{Saito-simplyElliptic}{article}{
   author={Saito, K.},
   title={Einfach-elliptische Singularit\"aten},
   language={German},
   journal={Invent. Math.},
   volume={23},
   date={1974},
   pages={289--325},
   issn={0020-9910},
   %%review={\MR{0354669 (50 \#7147)}},
}

\bib{Saito-deRham}{article}{
   author={Saito, K.},
   title={On a generalization of de-Rham lemma},
  % language={English, with French summary},
   journal={Ann. Inst. Fourier (Grenoble)},
   volume={26},
   date={1976},
   number={2},
   pages={vii, 165--170},
 %  issn={0373-0956},
 %  review={\MR{0413155 (54 \#1276)}},
}

\bib{Saito-lecutures}{book}{
    author={Saito, K.},
    title={On the periods of primitive integrals},
     note={unpublished},
   series={Lectures in Harvard University },
   volume={ },
   publisher={ },
   place={ },
   date={1980},
   pages={ },
   isbn={ },

}

 \bib{Saito-unfolding}{article}{
   author={Saito, K.},
   title={Primitive forms for a universal unfolding of a function with an
   isolated critical point},
   journal={J. Fac. Sci. Univ. Tokyo Sect. IA Math.},
   volume={28},
   date={1981},
   number={3},
   pages={775--792 (1982)},
   issn={0040-8980},
  %% review={\MR{656053 (84k:32031)}},
}
	
\bib{Saito-residue}{article}{
   author={Saito, K.},
   title={The higher residue pairings $K_{F}^{(k)}$ for a family of
   hypersurface singular points},
   conference={
      title={Singularities, Part 2},
      address={Arcata, Calif.},
      date={1981},
   },
   book={
      series={Proc. Sympos. Pure Math.},
      volume={40},
      publisher={Amer. Math. Soc.},
      place={Providence, RI},
   },
   date={1983},
   pages={441--463},
  %% review={\MR{713270 (85d:32043)}},
}

\bib{Saito-primitive}{article}{
   author={Saito, K.},
   title={Period mapping associated to a primitive form},
   journal={Publ. Res. Inst. Math. Sci.},
   volume={19},
   date={1983},
   number={3},
   pages={1231--1264},
   issn={0034-5318},
  %% review={\MR{723468 (85h:32034)}},
  %% doi={10.2977/prims/1195182028},
}
	
\bib{Saito-exceptional}{article}{
   author={Saito, K.},
   title={Regular system of weights and associated singularities},
   conference={
      title={Complex analytic singularities},
   },
   book={
      series={Adv. Stud. Pure Math.},
      volume={8},
      publisher={North-Holland},
      place={Amsterdam},
   },
   date={1987},
   pages={479--526},
 %%  review={\MR{894306 (88i:32019)}},
}

 \bib{Saito-Takahashi}{article}{
   author={Saito, K.},
   author={Takahashi, A.},
   title={From primitive forms to Frobenius manifolds},
   conference={
      title={From Hodge theory to integrability and TQFT tt*-geometry},
   },
   book={
      series={Proc. Sympos. Pure Math.},
      volume={78},
      publisher={Amer. Math. Soc.},
      place={Providence, RI},
   },
   date={2008},
   pages={31--48},
 %%  review={\MR{2483747 (2010g:32048)}},
}

\bib{Saito-Yano-Sekiguchi}{article}{
   author={Saito, K.},
   author={Yano, T.},
   author={Sekiguchi, J.},
   title={On a certain generator system of the ring of invariants of a
   finite reflection group},
   journal={Comm. Algebra},
   volume={8},
   date={1980},
   number={4},
   pages={373--408},
   issn={0092-7872},
  %% review={\MR{558611 (81g:20095)}},
  %% doi={10.1080/00927878008822464},
}

\bib{Mo.Saito-existence}{article}{
   author={Saito, M.},
   title={On the structure of Brieskorn lattice},
   %language={English, with French summary},
   journal={Ann. Inst. Fourier (Grenoble)},
   volume={39},
   date={1989},
   number={1},
   pages={27--72},
   issn={0373-0956},
  %% review={\MR{1011977 (91i:32035)}},
}

\bib{Mo.Saito-uniqueness}{article}{
   author={Saito, M.},
   title={On the structure of Brieskorn lattices, II},
   journal={preprint at arxiv: math.AG/1312.6629},
   date={ },
   number={ },
   pages={ },
   issn={ },

}
\bib{Scherk-Steenbrink}{article}{
   author={Scherk, J.},
   author={Steenbrink, J. H. M.},
   title={On the mixed Hodge structure on the cohomology of the Milnor
   fibre},
   journal={Math. Ann.},
   volume={271},
   date={1985},
   number={4},
   pages={641--665},
  % issn={0025-5831},
 % review={\MR{790119 (87b:32014)}},
  % doi={10.1007/BF01456138},
}

\bib{Shiraishi-Takahashi-primtive}{article}{
     author={Shiraishi, Y.},
     author={Takahashi, A.},
   title={On the Frobenius Manifolds for Cusp SingularitiesS},
   journal={preprint at arxiv: math.AG/1308.0105},
   date={ },
   number={ },
   pages={ },
   issn={ },

}

\bib{Steenbrink}{article}{
   author={Steenbrink, J. H. M.},
   title={Mixed Hodge structure on the vanishing cohomology},
   conference={
      title={Real and complex singularities (Proc. Ninth Nordic Summer
      School/NAVF Sympos. Math., Oslo, 1976)},
   },
   book={
      publisher={Sijthoff and Noordhoff, Alphen aan den Rijn},
   },
   date={1977},
   pages={525--563},
 %  review={\MR{0485870 (58 \#5670)}},
}
	
\bib{Takahashi}{article}{
    author={Takahashi, A.},
   title={Primitive forms, Topological LG models coupled to Gravity and Mirror Symmetry},
   journal={preprint at arxiv: math.AG/9802059},
   date={ },
   number={ },
   pages={ },
   issn={ },
  }

\bib{Tian}{article}{
   author={Tian, G.},
   title={Smoothness of the universal deformation space of compact
   Calabi-Yau manifolds and its Petersson-Weil metric},
   conference={
      title={Mathematical aspects of string theory},
      address={San Diego, Calif.},
      date={1986},
   },
   book={
      series={Adv. Ser. Math. Phys.},
      volume={1},
      publisher={World Sci. Publishing},
      place={Singapore},
   },
   date={1987},
   pages={629--646},
}
	
 \bib{Todorov}{article}{
   author={Todorov, A. N.},
   title={The Weil-Petersson geometry of the moduli space of ${\rm SU}(n\geq
   3)$ (Calabi-Yau) manifolds. I},
   journal={Comm. Math. Phys.},
   volume={126},
   date={1989},
   number={2},
   pages={325--346},
   issn={0010-3616},
}

\bib{Witten}{article}{
   author={Witten, E.},
   title={Algebraic geometry associated with matrix models of
   two-dimensional gravity},
   conference={
      title={Topological methods in modern mathematics},
      address={Stony Brook, NY},
      date={1991},
   },
   book={
      publisher={Publish or Perish},
      place={Houston, TX},
   },
   date={1993},
   pages={235--269},
}

\end{biblist}
\end{bibdiv}

\end{document}